\numberwithin{equation}{section}
\nc{\tor}{{\R/\Z}}
\nc{\torr}{\mathbb{T}}
\nc{\XX}{X}
\nc{\YY}{Y}
\nc{\tZ}{\wt{Z}}
\nc{\tY}{\wt{Y}}
\nc{\tW}{\wt{W}}
\nc{\bH}{\bs{H}}
\nc{\vphi}{\varphi}
\nc{\tphi}{\wt{\varphi}}
\nc{\tpsi}{\wt{\psi}}
\nc{\uS}{\ul{S}}
\nc{\tH}{\wt{H}}
\nc{\xx}{x}
\nc{\yy}{y}
\nc{\zz}{z}
\nc{\ww}{w}
\nc{\bt}{\bs{t}}
\nc{\bxi}{\bs{\xi}}
\nc{\bx}{\bs{x}}
\nc{\feta}{\bs{\eta}}
\nc{\LA}{\lambda}
\nc{\epp}{\ep}
\nc{\we}{\wt{\e}}
\nc{\hH}{\widehat{H}}
\dmo{\Leb}{Leb}
\nc{\Err}{\Upsilon}
\dmo{\brw}{{BRW}}
\dmo{\cue}{{CUE}}
\dmo{\cbe}{{C\beta E}}
\dmo{\TV}{TV}
\dmo{\cmu}{\wch{\mu}}
\dmo{\cnu}{\wch{\nu}}
\dmo{\bet}{\bs{\eta}}
\dmo{\tnu}{\wt{\nu}}
\nc{\urn}{Q}
\nc{\turn}{\wt{Q}}
\nc{\LL}{V}
\nc{\ET}{\Theta_0}
\nc{\eon}{{i}}
\nc{\Len}{{L}}
\nc{\gen}{{k}}
\nc{\Eons}{{K}}
\nc{\super}{\cS}
\nc{\duper}{\wt{\cS}}
\dmo{\Maj}{Maj}
\nc{\xcrit}{x_0}
\nc{\betacrit}{\beta_0}
\begin{document}

\title[Maximum of the characteristic polynomial for a random permutation]{Maximum of the characteristic polynomial for a random permutation matrix}

\author[N.\ Cook]{Nicholas Cook$^\ddagger$}\thanks{${}^\ddagger$Partially supported by NSF postdoctoral fellowship DMS-1606310}
 \address{$^\ddagger$Department of Mathematics, University of California
\newline\indent Los Angeles, CA 90095-1555}
\author[O.\ Zeitouni]{Ofer Zeitouni$^{\mathsection}$}\thanks{${}^{\mathsection}$Partially 
supported by  ERC advanced grant LogCorFields}
\address{$^{\mathsection}$Department of Mathematics, Weizmann Institute of Science 
 \newline\indent POB 26, Rehovot 76100, Israel}

\date{\today}

\begin{abstract}
Let $P_N$ be a uniform random $N\times N$ permutation matrix and let $\chi_N(z)=\det(zI_N- P_N)$ denote its characteristic polynomial. 
We prove a law of large numbers for the maximum modulus of $\chi_N$ on the unit circle, specifically, 
\[
\sup_{|z|=1}|\chi_N(z)|= N^{\xcrit + o(1)}
\]
with probability tending to one as $N\to \infty$,  
for a numerical constant $\xcrit\approx 0.652$.
The main idea of the proof is to uncover a logarithmic correlation structure for the distribution of (the logarithm of) $\chi_N$, viewed as a random field on the circle, and to adapt a well-known second moment argument for the maximum of the branching random walk. 
Unlike the well-studied \emph{CUE field} in which $P_N$ is replaced with a Haar unitary, the distribution of $\chi_N(e^{2\pi \ii t})$ is sensitive to Diophantine properties of the point $t$. To deal with this we borrow tools from the Hardy--Littlewood circle method in analytic number theory.
\end{abstract}


\maketitle

\let\oldtocsubsection=\tocsubsection
\renewcommand{\tocsubsection}[2]{\hspace*{1.0cm}\oldtocsubsection{#1}{#2}}
\let\oldtocsubsubsection=\tocsubsubsection
\renewcommand{\tocsubsubsection}[2]{\hspace*{1.8cm}\oldtocsubsubsection{#1}{#2}}

\setcounter{tocdepth}{1}
\tableofcontents

\section{Introduction}
\label{sec:intro}

For a large integer $N$ let $P_N$ be an $N\times N$ permutation matrix drawn uniformly at random, and consider its characteristic polynomial 
\[
\chi_N(z) = \det(zI_N-P_N). 
\]
Our goal is to understand the asymptotic size of the maximum of $|\chi_N(z)|$ over the unit circle, up to sub-polynomial factors. 
It will be convenient for us to work with the logarithm of the following modification of $\chi_N$:
\[
\wt{\chi}_N(z) = \det(I_N-zP_N) = z^N\chi_N(1/z).
\]
Note that for $|z|=1$ we have $|\chi_N(z)| = |\wt{\chi}_N(\bar{z})|$. 
Hence we will consider the following random field on the torus $\tor$: 
\begin{equation}
\XX_N(t) = \log\abs{\wt{\chi}_N(e(t))} = \log\abs{\det(I_N-e(t)P_N)}
\end{equation}
taking values in $[-\infty,\infty)$.
Here and throughout we abbreviate $e(t):= \exp(2\pi\ii t)$.
See Figure \ref{fig:XN} for some numerical simulations.

\begin{figure}
\captionsetup{singlelinecheck=off}
\subfloat[$N=100$, $I=(0,1)$]{\includegraphics[width = 8cm]{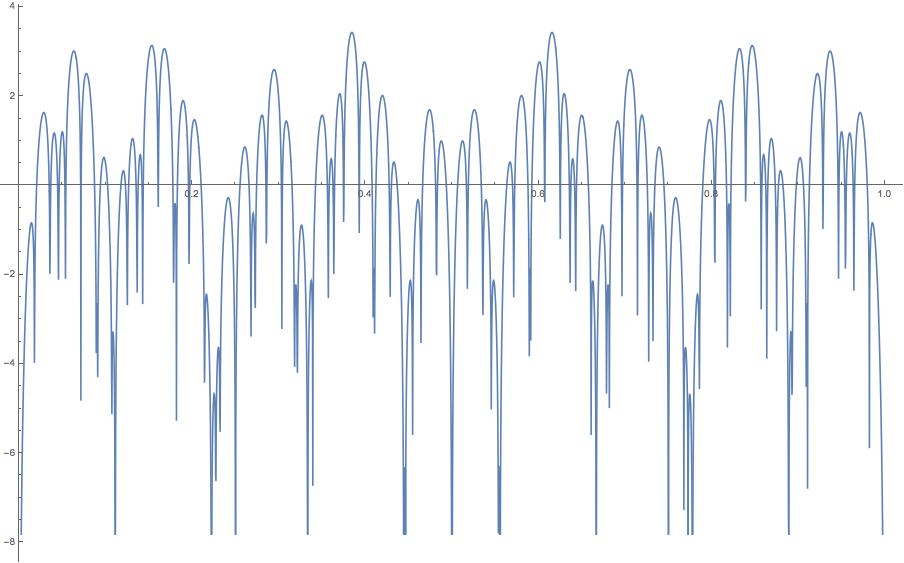}} 
\subfloat[$N=10^4$, $I=(0,1)$]{\includegraphics[width = 8cm]{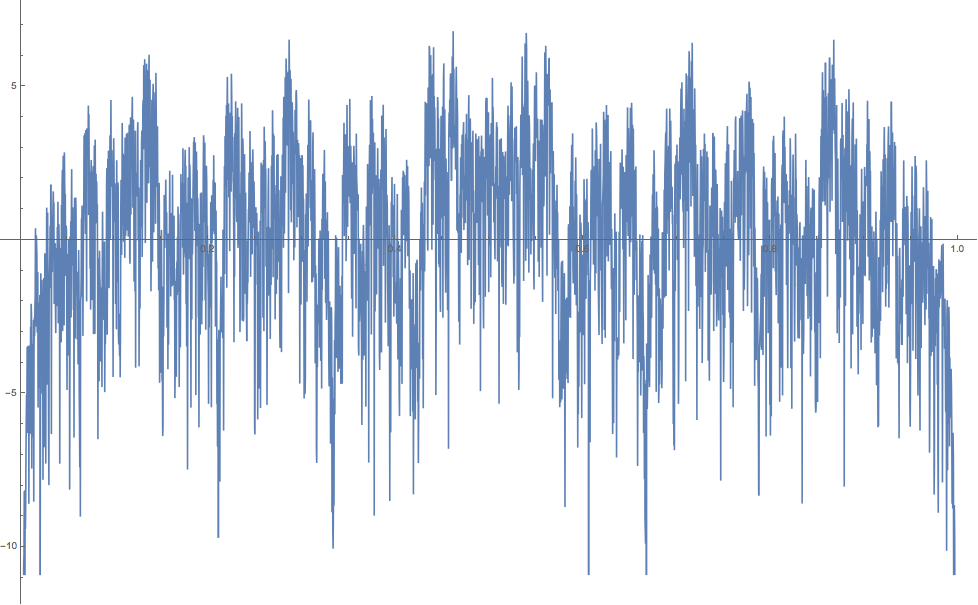}}\\
\subfloat[$N=10^4$, $I=(0.1,0.11)$]{\includegraphics[width = 8cm]{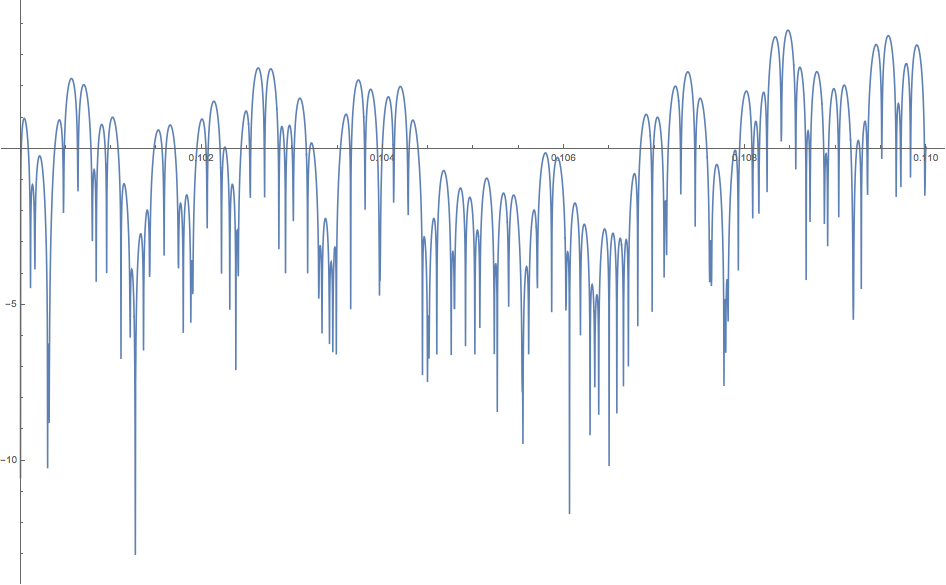}}
\subfloat[$N=10^9$, $I=(0,0.6)$]{\includegraphics[width = 8cm]{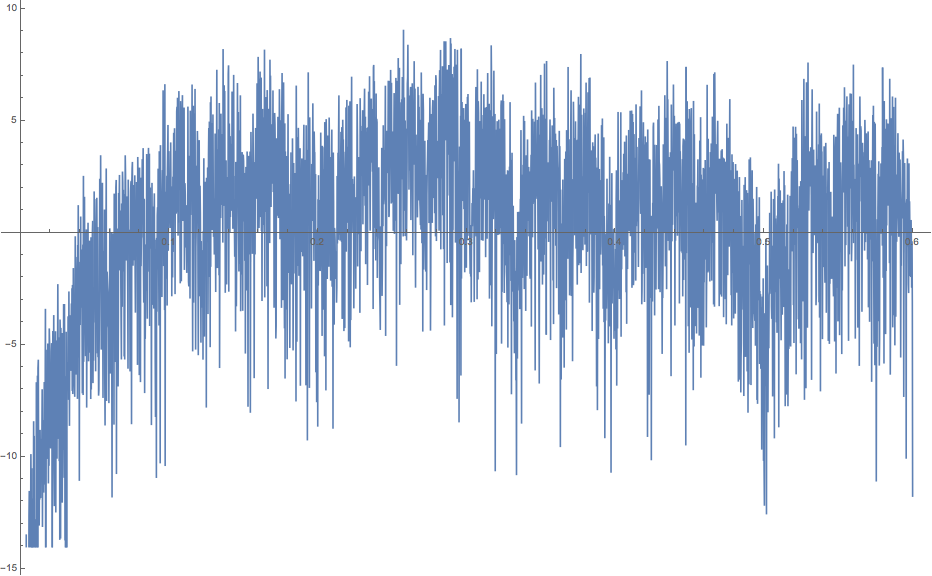}} 
\caption[cap]{Simulations of the field $X_N$ on subintervals $I\subset\tor$, computed from the cycle structures for the permutations $P_N$ using the formula \eqref{XN.cyc}). The cycle structures are random partitions of $[N]$ generated using the Chinese restaurant process. The respective partitions are: \begin{itemize}
\item[(A):]  $\{56, 22, 9, 9, 4\}$, 
\item[(B/C):]  $\{6310, 1914, 909, 668, 79, 47, 33, 19, 12, 5, 3, 1\}$, 
\item[(D):]  $\{892060223, 78087020, 19479718, 9152317, 630684, 352623, 114502, 104059, $
\item[] \text{ }\text{ }\text{ }\text{ }\text{ }\text{ }\text{ } $8973, 8193, 1641, 33, 5, 3, 2, 2, 1, 1\}$.
\end{itemize}
In (D) there are noticeable dips in the field near the rationals $0$, $1/2$, and $1/3$.}
\label{fig:XN}
\end{figure}

In \cite{HKOS}, Hambly, Keevash, O'Connell and Stark obtained a central limit theorem for the value of $X_N$ at fixed points $t\in \tor$ satisfying a qualitative condition on approximability by rationals:

\begin{theorem}[{\cite{HKOS}}]
\label{thm:HKOS}
Let $t\in \tor$ be of \emph{finite type}, that is,
\begin{equation}	\label{def:finite-type}
\liminf_{n\to \infty} n^{\gamma}\|nt\|_\tor >0
\end{equation}
for some constant $\gamma<\infty$, where $\|s\|_\tor$ is the distance from $s$ to the nearest integer. 
Then 
$
X_N(t)/\sqrt{\frac{\pi^2}{12}\log N}
$
converges in distribution to a standard normal variable.
Moreover, the same conclusion holds for $\Im\chi_N(e(t))$ in place of $X_N(t)$.
\end{theorem}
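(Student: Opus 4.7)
The starting point is the factorization of $\det(I - zP)$ over the cycle decomposition of $P_N$: if $C_k$ denotes the number of $k$-cycles, then
\[
X_N(t) = \sum_{k=1}^N C_k f_k(t), \qquad f_k(t) := \log\bigl|2\sin(\pi k t)\bigr|.
\]
The natural approach is to use that $(C_k)_{k \leq N^{1-\delta}}$ is close in total variation to a vector of independent Poisson variables $(Z_k)$ with $Z_k \sim \mathrm{Poi}(1/k)$ (via the Feller coupling or the Arratia--Barbour--Tavar\'e comparison), and to control the long-cycle tail $\sum_{k>N^{1-\delta}} C_k f_k(t)$ using the bound $|f_k(t)| \leq \gamma\log k + O(1)$ coming from \eqref{def:finite-type}, together with the fact that the expected number of long cycles is only $O(\log N)$. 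The resulting tail is $o(\sqrt{\log N})$ in $L^2$, so it suffices to prove a CLT for the independent sum $S_N(t) := \sum_{k \leq N^{1-\delta}} Z_k f_k(t)$.

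The variance of $S_N(t)$ equals $\sum_k f_k(t)^2/k$, and is computed by inserting the Fourier expansion $f_k(t) = -\sum_{m\geq 1}\cos(2\pi m k t)/m$ and expanding the square:
\[
\sum_{k=1}^N \frac{f_k(t)^2}{k} = \sum_{m,m'\geq 1}\frac{1}{m m'}\sum_{k=1}^N \frac{\cos(2\pi m k t)\cos(2\pi m' k t)}{k}.
\]
The diagonal contribution $m=m'$ equals $\tfrac12\sum_{m}m^{-2}\sum_{k\leq N}k^{-1} = \frac{\pi^2}{12}\log N + O(1)$. The off-diagonal terms, after the product-to-sum identity, reduce to sums $\sum_{k\leq N}\cos(2\pi \ell k t)/k$ with $\ell = m\pm m'\neq 0$; by Abel summation and the finite-type bound $\|\ell kt\|_\tor \geq c_\gamma (\ell k)^{-\gamma}$, each such sum is $O(\log\ell)$ uniformly in $N$, and the resulting double sum over $m,m'$ converges. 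Thus $\mathrm{Var}(S_N(t)) = \frac{\pi^2}{12}\log N + o(\log N)$. The mean $\mathbb{E}[S_N(t)] = \sum_k f_k(t)/k$ is $O(1)$ by essentially the same Diophantine estimate.

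The final step is the Lindeberg condition for the independent sum. The per-summand variance is at most $O((\log k)^2/k)$, hence $o(\log N)$ individually; combined with Poisson tail bounds, the truncated second moments $\mathbb{E}[(Z_k f_k(t))^2\mathbf{1}_{|Z_k f_k(t)|>\epsilon\sqrt{\log N}}]$ sum to $o(\log N)$. The result for $\Im \chi_N(e(t))$ follows from the parallel representation of $\arg(1-e(kt))$ as the sawtooth Fourier series $-\sum_{m\geq 1}\sin(2\pi m k t)/m$, which produces the same diagonal variance $\pi^2/12 \cdot \log N$. I expect the main obstacle to be the Diophantine estimate for the oscillatory sums $\sum_{k\leq N}\cos(2\pi \ell k t)/k$, uniformly in $\ell$: this is precisely where the finite-type hypothesis enters, and without it both the variance asymptotic and the Lindeberg condition can fail (a point that is made vivid by the dips near rationals visible in Figure \ref{fig:XN}(D)).
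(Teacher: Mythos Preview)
The paper does not prove this theorem: it is quoted from \cite{HKOS} as background, and no proof is given anywhere in the text. So there is nothing to compare your sketch against in this paper.

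That said, your outline is broadly the right strategy (and is essentially the one in \cite{HKOS}): decompose via the cycle structure, replace short-cycle counts by independent Poissons, compute the variance via the Fourier expansion of $\log|2\sin(\pi\cdot)|$, and verify Lindeberg. One point deserves care. Your control of the long-cycle tail $\sum_{k>N^{1-\delta}} C_k f_k(t)$ via the crude pointwise bound $|f_k(t)|\le \gamma\log k + O(1)$ is too weak: combined with the expected number $\asymp\delta\log N$ of long cycles, this only gives an $L^2$ bound of order $\delta(\log N)^3$, which cannot be made $o(\log N)$ while keeping $N^{\delta}\to\infty$ (needed for the Poisson approximation). The fix is to observe that $f_k(t)=O(1)$ for \emph{most} $k$, and to use the same Fourier/Diophantine estimate you invoke for the variance to show $\sum_{k>N^{1-\delta}} f_k(t)^2/k = O(\delta\log N)$ directly; then any $\delta=\delta(N)\to 0$ with $N^{\delta}\to\infty$ (e.g.\ $\delta=(\log\log N)/\log N$) works. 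With that correction, the rest of your plan goes through.
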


It is not hard to see that for any fixed rational $t=p/q\in \tor$ the distribution of $X_N(t)$ has an atom at $-\infty$, which explains why a hypothesis of the form \eqref{def:finite-type} should be needed (note in particular that $X_N(0)\equiv-\infty$). 

Earlier, Wieand had obtained a multidimensional CLT for the number of eigenvalues lying in a fixed collection of disjoint arcs \cite{Wieand_permutation}, which, as noted in \cite{HKOS}, is related to the distribution of the imaginary part of $\chi_N$ by the argument principle.
Ben Arous and Dang established non-Gaussian fluctuations for sufficiently smooth linear statistics of eigenvalues \cite{BeDa}. 
Theorem \ref{thm:HKOS} was extended by Dang and Zeindler \cite{DaZe} to a multidimensional CLT for fixed $d$-tuples $(t_1,\dots, t_d)\in (\tor)^d$ satisfying a multivariate version of the finite type condition \eqref{def:finite-type}.
We note that the multi-dimensional CLT is related to the notion of mod-Gaussian convergence for permutons, see \cite{FMN} for details. For information on mesoscopic 
and microscopic scales, see recent work of Bahier
\cite{bahier,Bahier18_micro}.
Many of these works also considered permutations sampled from the Ewens distribution (see also \cite{Zeindler13}); even more general distributions were considered in \cite{HNNZ}.
There has also been extensive work on ``modified permutation matrices" with entries weighted by i.i.d.\ complex variables $(z_i)_{i=1}^N$ \cite{Wieand_wreath, Evans_wreath, DaZe, bahier,NaNi13}; in the case that $|z_i|=1$ these matrices are representations of random elements of the wreath product $S^1\wr\fkS_N$ of the circle with the symmetric group on $N$ letters.

In the present article we are concerned with the global maximum of the field $X_N(t)$, for which we obtain a law of large numbers:

\begin{theorem}[Main result]	
\label{thm:main}
We have
\[
\frac{1}{\log N} \sup_{t\in \tor} \XX_N(t)\longrightarrow \xcrit\qquad \text{in probability},
\]
where $\xcrit$ is a numerical constant, defined implicitly in \eqref{def:xxc} below, with approximate value $\xcrit\approx 0.6524$.
\end{theorem}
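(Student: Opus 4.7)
The plan is to exploit the cycle expansion
\[
\XX_N(t) = \sum_{\ell=1}^N C_\ell(P_N)\log\bigl|1-e(\ell t)\bigr|,
\]
combined with the Feller coupling, under which the counts $(C_\ell)_{\ell\le N}$ are close in total variation to independent $\mathrm{Poisson}(1/\ell)$ variables. The Poisson moment generating function then yields
\[
\log \mathbb{E}\bigl[e^{\lambda \XX_N(t)}\bigr] = \sum_{\ell=1}^N \frac{1}{\ell}\bigl(|1-e(\ell t)|^\lambda-1\bigr),
\]
which, when $t$ is ``generic'' (irrational with adequate Diophantine properties), Weyl-equidistributes and behaves like $(\log N)\cdot F(\lambda)$ for a convex function $F$ built from $\int_0^1 (|2\sin\pi s|^\lambda-1)\,ds$ and possible corrections. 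A direct computation of $\operatorname{Cov}(\XX_N(t),\XX_N(s))$ shows the field is logarithmically correlated at scale $1/N$, so a branching random walk (BRW) analogy with dyadic tree of depth $K\asymp\log_2 N$ suggests $\sup_t \XX_N/\log N$ converges to the constant $\xcrit$ determined by a Cram\'er-type variational problem (equation \eqref{def:xxc}).

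For the \emph{upper bound} I would apply Chernoff pointwise: $\mathbb{P}(\XX_N(t)\ge x\log N)\le N^{F(\lambda)-\lambda x}$, optimise in $\lambda$, and take a union bound over a polynomially-fine discretization of $\tor$. This gives $\sup_t \XX_N\le (\xcrit+\varepsilon)\log N$ off an event of vanishing probability, provided the MGF estimate is \emph{uniform} in $t$. This uniformity is where the circle method first enters: writing $\tor = \mathfrak{M}\sqcup\mathfrak{m}$ as major and minor arcs at a threshold $Q=Q(N)$, one proves the Weyl-type MGF bound on $\mathfrak{m}$ and argues separately that on $\mathfrak{M}$ the field is actually \emph{smaller} than $\xcrit\log N$---consistent with the dips near rationals visible in Figure~\ref{fig:XN}(D)---so the supremum is attained on minor arcs. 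The continuous supremum is then recovered from the discrete one via a modulus-of-continuity estimate for $\XX_N$ off a small neighbourhood of the rationals.

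For the matching \emph{lower bound} I would adapt the truncated second-moment method for BRW. Decompose $\XX_N(t)=\sum_{k=0}^K \XX_N^{(k)}(t)$ into dyadic blocks $\XX_N^{(k)}(t)=\sum_{\ell\in[2^k,2^{k+1})}C_\ell\log|1-e(\ell t)|$. Within a fixed generic $t$ these blocks are approximately independent with comparable MGFs; across $t,s$, the block $\XX_N^{(k)}$ decorrelates at spatial scale $\sim 2^{-k}$, giving the cascade structure of depth $K$. Consider
\[
Z_N := \#\biggl\{t\in\mathcal{D}_N \,:\, \sum_{j\le k}\XX_N^{(j)}(t)\le k\xcrit+L\text{ for all }k\le K,\ \ \XX_N(t)\ge \xcrit\log N - L\biggr\}
\]
on a net $\mathcal{D}_N\subset\mathfrak{m}$ of cardinality $N^{1+o(1)}$ with slowly-growing buffer $L$. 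The barrier constraint $\sum_{j\le k}\XX_N^{(j)}(t)\le k\xcrit+L$ is the Bramson-style truncation needed to tame the second moment: pairs $(t,s)$ both achieving the high value would force an atypically large correlated ``early leader'' path which the barrier forbids. A Cram\'er tilt gives $\mathbb{E}Z_N\gtrsim 1$, while a pair MGF bound (again via circle-method analysis) plus the barrier yields $\mathbb{E}Z_N^2\lesssim (\mathbb{E}Z_N)^2$; Paley--Zygmund gives $\mathbb{P}(Z_N>0)\gtrsim 1$, and an independence-across-disjoint-subintervals argument then boosts this to $\mathbb{P}(Z_N>0)\to 1$.

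The principal obstacle throughout is the Diophantine dependence. Both the single-point MGF and the pair quantity controlling the second moment depend sensitively on how $\ell t$ and $\ell(t-s)$ distribute modulo $1$ for $\ell\le N$; cycles whose length is divisible by the denominator $q$ of a nearby rational $p/q$ skew these quantities. Consequently both single-point and pair estimates require Hardy--Littlewood-style splittings into major and minor arcs---at each dyadic scale---and considerable care is needed to choose $\mathcal{D}_N$ so that all pairs $(t,s)\in\mathcal{D}_N\times\mathcal{D}_N$ avoid Diophantine resonances simultaneously at every scale $2^{-k}$. This circle-method analysis, wholly absent in the CUE case, is the technical heart of the argument.
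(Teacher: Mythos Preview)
Your plan captures several correct ingredients---the cycle expansion, Poisson approximation, major/minor arc split, and a second-moment method for the lower bound---but there is a genuine gap in the upper bound that would give the \emph{wrong constant}. The unconditioned Poisson MGF you write down satisfies, at generic $t$,
\[
\log \e\,e^{\beta Y_N(t)} \approx (\log N)\Bigl(\int_0^1|1-e(u)|^\beta\,du - 1\Bigr) = (\log N)\bigl(e^{\lambda(\beta)}-1\bigr),
\]
and the Legendre transform of $\beta\mapsto e^{\lambda(\beta)}-1$ balances the entropy $\log N$ at critical height $1$, not $\xcrit\approx 0.652$; your Chernoff-plus-union-bound would therefore only yield $\sup_t X_N(t)\le(1+o(1))\log N$, which does not even beat the trivial bound $(\log 2)\sum_\ell C_\ell(P_N)$. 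The reason is a class of rare ``clumping'' events---an interval $[\ell,(1+\delta)\ell]$ containing atypically many cycle lengths---that create many spuriously high points and inflate the first moment without being probable enough to affect the true maximum. The paper's fix is to condition on a \emph{typical coarse-scale cycle profile}: partition $[1,N]$ into lacunary intervals $I_k=[e^{\varrho k},e^{\varrho(k+1)})$ and restrict to the high-probability event $\cQ$ that each $I_k$ contains at most one cycle length. Under this conditioning the MGF changes shape to $\approx e^{\lambda(\beta)|Q|}$ with $|Q|\approx\log N$, so the relevant rate function becomes $\lambda^*$ itself, yielding the correct $\xcrit$ from $\lambda^*(\xcrit)=1$. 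This conditioning step is absent from your proposal and is the conceptual core of the argument; it is equally needed in the lower bound to get sharp one- and two-point tail asymptotics.

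Two further points on the lower bound. First, your Bramson barrier is heavier than necessary: the paper uses Kistler's $K$-level coarse graining (requiring only that each of $K$ macroscopic increments exceed $(\xcrit-\epp)$ times its length), which suffices at leading order and pairs more cleanly with the conditioned tail estimates. Second, you omit a significant hurdle: once the Poisson approximation fails for $\ell>N/W$, one must show the high-frequency tail $\sum_{\ell>N/W}C_\ell(P_N)\log|1-e(\ell t)|$ does not wipe out all surviving high points, which is nontrivial because of the singularities of $\log|1-e(\cdot)|$. The paper handles this via a structural dichotomy: if too many high-frequency Bohr sets $B_\ell$ each capture a large fraction of the survivors, a Vinogradov-type lemma and pigeonholing force some survivor into a thin \emph{low}-frequency Bohr set, which has already been excluded from the net. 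Finally, your plan to ``choose $\mathcal{D}_N$ so that all pairs avoid Diophantine resonances at every scale'' is not feasible as stated; the paper instead stratifies the second-moment sum according to the arithmetic distance $d_{\xi_0}(s,t)=\min_{0<|\xi|,|\xi'|\le\xi_0}\|\xi s+\xi' t\|_{\tor}$ and controls each stratum, rather than excising bad pairs from the net.
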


A similar result for the imaginary part of the log-characteristic polynomial can be obtained by a much shorter argument -- see Section \ref{sec:overview.cycle}.

Theorem \ref{thm:main} parallels recent results on
the \emph{CUE field} $X_N^{\cue}(t)$, in which $P_N$ is replaced by an $N\times N$ Haar unitary matrix $U_N$. 
It was conjectured by Fyodorov, Hiary and Keating that 
\begin{equation}	\label{conj:FHK}
\max_{t\in \tor} X_N^{\cue}(t) = \log N - \frac34\log\log N + M_N,
\end{equation}
where $M_N$ is a sequence of random variables converging in law to an explicit distribution
\cite{FHK_freezing, FyKe_freezing}.
The leading order $\log N$ term in \eqref{conj:FHK} was established by Arguin, Belius and Bourgade in \cite{ABB}, thus obtaining the analogue of our Theorem \ref{thm:main} (with $\xcrit$ replaced by 1). 
The order $\log\log N$ correction was subsequently obtained by Paquette and the second author \cite{PaZe}.
Currently, the best result is due to Chhaibi, Madaule and Najnudel, who showed \eqref{conj:FHK} holds with $M_N$ having a tight sequence of distributions; moreover, they handled the more general C$\beta$E field, which specializes to the CUE at $\beta=2$ \cite{CMN}. Some important progress toward the identification of the limit law (for the case $\beta=2$) 
 is contained in \cite{Remy}.

The conjecture \eqref{conj:FHK} was motivated by a well-known analogy between the characteristic polynomial of a Haar unitary matrix and the Riemann zeta function $\zeta(s)$ in the neighborhood of a random point on the critical axis $\Re s = 1/2$. 
Specifically, letting $T$ be a large positive real, we draw $t_0\in [T,2T]$ uniformly at random and study the random function
$t\mapsto\zeta(1/2 + \ii (t_0+t))$.
In the analogy, $\log T$ plays the role of $N$, the dimension of the Haar unitary.
At the level of microscopic spacing of zeros (at scale $1/\log T$) the analogy is formalized in the Montgomery pair correlation conjecture \cite{Montgomery_pair}, as well as the stronger GUE hypothesis (see \cite{KaSa}).
A central limit theorem for $\log|\zeta(1/2 + \ii (t_0+t)|$ for fixed $t\in \R$ was obtained by Selberg  \cite{Selberg_clt}, and his result was extended to multidimensional CLTs in \cite{HNY, Bourgade_zeta-clt}.
The analogue of \eqref{conj:FHK} for the zeta function was recently established to leading order in \cite{ABBRS}, where it was shown
\[
\frac{1 }{\log\log T} \max_{|t-t_0|\le 1} \log|\zeta(1/2 + \ii (t_0+t))|\longrightarrow 1
\]
in probability. The same result was independently established in \cite{Najnudel_zetamax} (where the maximum of the imaginary part of $\log\zeta$ was also considered) conditional on the Riemann hypothesis.
The order $\log\log\log T$ correction was proved for a randomized model of the zeta function in \cite{ABH_randzeta}.

A key property of $X_N^{\cue}$ is that it is a \emph{logarithmically correlated field}, an archetypical example of which is the branching random walk. 
All of the aforementioned works made use of approaches developed for the extremes of branching random walk going back to Bramson \cite{Bramson78}.
See Section \ref{sec:overview} for further discussion of these ideas.

In the present work we also make use of analogies with branching random walk.
However, the discrete nature of the permutation matrix $P_N$ presents unique challenges for establishing Theorem \ref{thm:main}.
Here we highlight three key differences from the CUE field:
\begin{enumerate}[(a)]
\item\label{diff:dio} 
In contrast to the CUE field, the distribution of $X_N(t)$ is not invariant under rotations $t\mapsto t+s$. 
In particular, as we saw in Theorem \ref{thm:HKOS}, the distribution is sensitive to Diophantine properties of the point $t$.

\item\label{diff:clump}
Some difficulties arise from Poissonian aspects of the field $X_N(t)$. In particular,
the permutation is determined by $N$ discrete random
variables describing the number of cycles of different lengths. For fixed $t$,
all these variables participate
in determining the value of $X_N(t)$.
However,  certain 
unlikely (but not very unlikely) ``clumpings" of these variables can globally
affect $X_N$. 

\item\label{diff:tails}

For the CUE, for fixed $t \in\R/\Z$ a central limit theorem was proved for $X_N^{\cue}(t)/\sqrt{\frac12\log N}$ in \cite{KeSn} (see also \cite{BHNY}). 
Moreover, it turns out that the level $\sim \log N$ of the maximum is correctly predicted by modelling the field by a sequence of $N$ i.i.d.\ Gaussians of variance $\frac12\log N$. 
In contrast, a sequence of $N$ i.i.d.\ Gaussians of variance $\frac{\pi^2}{12}\log N$ (as suggested by Theorem \ref{thm:HKOS}) incorrectly predicts a maximum of $\sim \frac{\pi}{\sqrt{6}}\log N\approx 1.28\log N$ for $X_N(t)$.
The difference is due to the fact that the CUE enjoys strong comparisons with Gaussian tails, even in the large deviations regime, whereas the tails of $X_N(t)$ are non-Gaussian (even at points obeying a condition like \eqref{def:finite-type}). 

\end{enumerate}
We elaborate further on these points below.

To deal with \eqref{diff:dio} we borrow ideas from the Hardy--Littlewood circle method in analytic number theory, in particular the more Fourier-analytic version developed by Vinogradov in his work on the odd Goldbach conjecture.
In many applications of the method, one is faced with obtaining uniform control over the torus on an exponential sum $S_I(t) = \sum_{n\in I} f(n)e(nt)$, for some interval $I\subset \N$. If $I$ is of length $N$ and $f:I\to \C$ is bounded then the triangle inequality gives the trivial bound $\sup_{t\in \tor} |S_I(t)| \ll N$ (here we use the Vinogradov symbol; see Section \ref{sec:notation} for our conventions on asymptotic notation). 
To improve this estimate to $\sup_{t\in \tor} |S_I(t)| =o (N)$, one has to argue in different ways on two complementary subsets of the torus: the set of \emph{major arcs}, consisting of points lying close to a rational with small denominator, and the complementary set of \emph{minor arcs}. 
See \cite{Vaughan}, \cite[Chapter 13]{IwKo_book} and \cite{Tao_blog_254A8} for more background on the circle method.\footnote{We note that in most applications of the circle method, the major arcs are where the dominant contribution is; as we will see, here the major arcs are actually a nuisance region and do not play a role in the determination of
the maximum. See however Section \ref{sec:overview.cycle} where we argue that the opposite is true for the maximum of the imaginary part of $\log\chi_N(e(t))$.
We further note that major arcs were also a nuisance region for establishing concentration for the log-modulus of Kac polynomials in \cite{TaVu_poly}. }

To prove Theorem \ref{thm:main} we will also separately consider the behavior of the field $X_N(t)$ for $t$ lying in major and minor arcs. 
While the field is badly behaved on major arcs, it is not hard to see that it is likely to be very \emph{negative} there, so with high probability the points in major arcs are not contenders as maximizers for $X_N$.
For points $t\in \tor$ lying in minor arcs we can obtain accurate large deviation estimates for the field $X_N(t)$ that are uniform in $t$ by Fourier-analytic arguments.
We also obtain joint upper tail estimates for the field at two minor arc points $s,t$, which are crucial for our proof of the lower bound in Theorem \ref{thm:main} by the second moment method. 
These joint tail estimates reflect the logarithmic correlation structure, with respect to a certain notion of ``arithmetic distance" between $s$ and $t$ which quantifies the degree to which $1, s$ and $t$ are linearly independent over $\Z$.

To elaborate on \eqref{diff:clump}, the first step of our argument is to decompose $X_N$ into modes corresponding to the contribution of cycles of different lengths (see \eqref{XN.cyc}). We then replace the variables $C_\ell(P_N)$ counting cycles of length $\ell$ with independent Poi$(1/\ell)$ variables. (One can actually only replace the first $N^{1-\eps}$ of the variables $C_\ell(P_N)$ with Poisson variables; arguing that the cycles of length larger than $N^{1-\eps}$ have negligible impact is a significant technical hurdle for the proof of the lower bound.) In passing, we note that the tails 
of $|X_N(t)|$ for $t$ fixed are Poissonian, in contrast with
the Gaussian tails encountered in case of C$\beta$E.
The relatively heavy-tailed Poisson variables give rise to certain ``clumping" events which, though rare, are large enough to spoil the usual first and second moment arguments for the maximum of log-correlated fields.
This requires us (roughly speaking) to condition on a ``typical" realization of the counts of cycles with lengths in a fixed lacunary sequence of intervals. 


Finally, we elaborate on \eqref{diff:tails}.
First let us define the constant $\xcrit$ in Theorem \ref{thm:main}. 
Let $U\in \tor$ be uniformly distributed in the torus and consider the random variable 
\begin{equation}	\label{def:LL}
\LL= \log|1-e(U)|.
\end{equation}
From the fact that $\log|1-z|$ is harmonic on the open disk and the dominated convergence theorem, we have
\begin{equation}	\label{EV}
\e V = \int_\tor \log |1-e(u)|du = 0.
\end{equation}
Let us denote the logarithm of its Fourier--Laplace transform by
\begin{equation}	\label{def:LA}
\LA: \C_+\to \C, \quad \LA(z)= \log\e\expo{ z \LL} = \log\int_\tor |1-e(u)|^z du,
\end{equation}
where $\C_+$ denotes the open right half-plane, and we take the usual principal branch of the logarithm with branch cut along the negative real axis.
We have $\lambda(0)=0$, and (by routine considerations for cumulant generating functions) the restriction of $\LA$ to the 
positive real line is strictly increasing and convex.
We denote the Legendre transform 
\begin{equation}	\label{def:lambdastar}
\LA^*(x)= \sup_{\beta>0} \{x\beta - \LA(\beta)\} .
\end{equation}
The supremum is attained at 
\begin{equation}	\label{def:betastar}
\beta=\beta_*(x) := (\LA')^{-1}(x) = \frac{d}{dx} \LA^*(x).
\end{equation}
Since $\lambda:\R^+\to \R^+$ is convex and increasing, and 
$\lambda(\beta)\sim \beta \log 2- (\log \beta)/2$ as $\beta\to \infty$, 
we have that $\lambda^*$ is a bijection from $[0,\log 2)$ to the positive real line.
We define $\xcrit>0$ as the unique solution to 
\begin{equation}	\label{def:xxc}
\LA^*(\xcrit)=1.
\end{equation}
Numerically solving the above equation gives 
$\xcrit\approx 0.6524$,
with the supremum in $\lambda^*(\xcrit)$ attained at $\betacrit=\beta_*(\xcrit)\approx 11.746$. See Figure \ref{fig:ratefunction} for a comparison of the Gaussian rate function with $\lambda^*$, which is in some sense the ``effective" rate function for $X_N(t)$.

\begin{figure}
\includegraphics[width = 12cm]{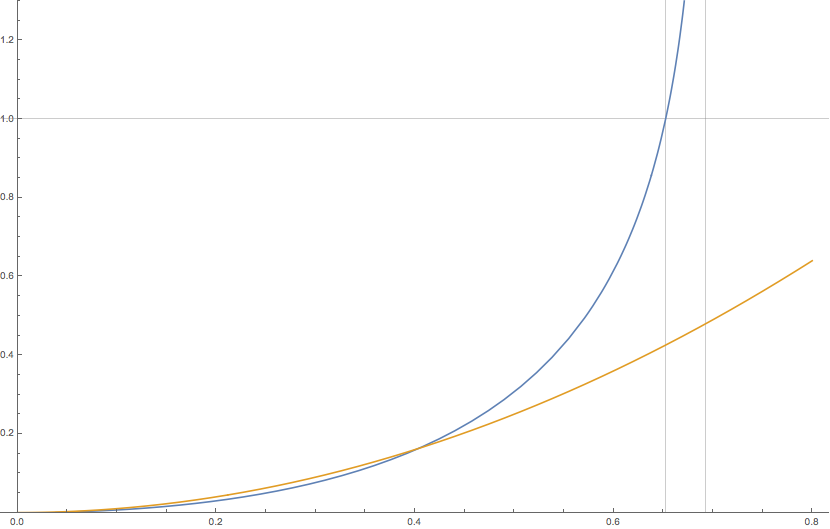}
\caption{
Numerically generated plot of the rate function $\lambda^*(x)$ from \eqref{def:lambdastar} (blue), with the Gaussian rate function $x^2$ (orange) for comparison (normalized as in the upper tail $N^{-x^2+o(1)}$ for the CUE field $X_N^{\cue}(t)$).
Included are the vertical lines $x=\xcrit=(\lambda^*)^{-1}(1)\approx 0.652$, and $x=\log 2$, where $\lambda^*$ diverges.
The lighter tail for $X_N(t)$ as compared to that of the CUE helps explain the smaller value for the maximum.
}

\label{fig:ratefunction}
\end{figure}

We briefly indicate the way in which the function $\lambda$ and the constant $\xcrit$ arise in the proof of Theorem \ref{thm:main}.
Roughly speaking, we will model the field $X_N(t)$ by a sequence $(\tY(t))_{t\in T_N}$ of weakly correlated variables indexed by a discrete set $T_N$ of roughly $N$ equally spaced points $t$ in $\tor$.
For each $t\in T_N$, $\tY(t)$ is a sum of roughly $\log N$ i.i.d.\ copies of the variable $\LL$ in \eqref{def:LL}.
(Such an approximation of $X_N(t)$ can only be justified for points $t\in \tor$ obeying a quantitative ``minor arc" condition similar in spirit to the finite type condition \eqref{def:finite-type}.)
Assuming the variables $\{\tY(t)\}_{t\in T_N}$ are sufficiently weakly correlated, one can show that $\max_{t\in T_N}\tY(t) =(\xcrit+o(1))\log N$ with high probability. 
Indeed, the expected number of points $t\in T_N$ for which $\tY(t)\ge x\log N$ is 
\[
|T_N|e^{-(\lambda^*(x)+o(1))\log N} = N^{1-\lambda^*(x)+o(1)}, 
\]
which is $o(1)$ for $x\ge \xcrit+\epp$.
This leads to an upper bound on the maximum of $\tY(t)$ by Markov's inequality. The lower bound is more involved and requires understanding the two-point correlations for the field $X_N$, which decay logarithmically with respect to a certain ``arithmetic" distance.

A more detailed overview of proof ideas is provided in Section \ref{sec:overview}.

\subsection{Organization of the paper}

The rest of the paper is organized as follows. After setting up notation in Section \ref{sec:notation}, we give a high-level description of the main ideas of the proof in Section \ref{sec:overview}.
In Section \ref{sec:prelim} we prove some preliminary estimates: Section \ref{sec:cyc} gives estimates on the distribution of cycle lengths for a random permuation, while Sections \ref{sec:phi} and \ref{sec:kronecker} provide lemmas that are used in Section \ref{sec:FL} to estimate Fourier--Laplace transforms of the Poisson field. In Section \ref{sec:FL} we also provide upper and lower bounds on large deviation events for the Poisson field (at one and two points). 
In Section \ref{sec:upper} we prove the upper bound in Theorem \ref{thm:main}, and we prove the matching lower bound in Sections \ref{sec:lower} and \ref{sec:late}.

\subsection{Notation}	\label{sec:notation}

We use the Vinogradov symbol $\ll$; thus, $f\ll g$, $g \gg f$, $f=O(g)$ all mean $|f|\le Cg$ for some universal constant $C<\infty$. $f\asymp g$ means $f\ll g \ll f$. We indicate dependence of the implied constant on parameters with subscripts, e.g.\ $f\ll_\alpha g$. 
$f=o_{\alpha;\, q\to q_0}(g)$ means that $f/g\to 0$ as $q\to q_0$ for any fixed value of the parameter $\alpha$ (so the rate of convergence may depend on $\alpha$). 
In the usual case that the asymptotic parameter is $N$ we suppress the subscript $N\to \infty$: thus, $f=o(g)$, $f=o_\alpha(g)$ mean $f=o_{N\to \infty}(g)$, $f=o_{\alpha;\, N\to \infty}(g)$. 
$f=\omega(g)$ means $g=o(f)$. 

For $x\in \tor$,
$\|x\|_\tor$ denotes the distance from $x$ to zero. 


We use $|S|$ to denote the cardinality of a set $S$, and $\Leb(E)$ to denote the Lebesgue measure of a Borel set $E\subset\T$. 

For an interval $J\subset\R$, by $\sum_{j\in J}$ we mean $\sum_{j\in J\cap \Z}$. 
Sums written $\sum_{j\le M}$ are understood to mean $\sum_{1\le j\le M}$. 

We denote the Fourier coefficients
of a function $f:(\tor)^d\to \C$ by
\begin{equation}	\label{def:fourierco}
\wh{f}(\bxi) = \int_\tor e(-\bxi \cdot \bt)f(\bt)dt_1\cdots dt_d,\quad \bxi\in \Z^d.
\end{equation}

\section{Proof overview}	
\label{sec:overview}

In this section we provide a high-level overview of proof ideas.

\subsection{Decomposition of $X_N$ into modes according to cycle structure}
\label{sec:overview.cycle}

A routine computation shows that if $Q$ is the $\ell\times \ell$ permutation matrix for a cycle of length $\ell$, then 
\begin{equation}
\det(1-zQ)= 1-z^\ell.
\end{equation}
Thus, letting $C_\ell(P_N)$ denote the number of cycles of length $\ell$ in the cycle decomposition of $P_N$, we have
\begin{equation}	\label{XN.cyc}
\XX_N(t) = \sum_{\ell=1}^N C_\ell(P_N)\log |1-e(\ell t)|.
\end{equation}

Before proceeding we make a couple of quick observations.
Since $\log|1-e(\cdot)|$ is uniformly bounded by $\log 2$ on $\tor$, we have
\begin{equation}
\sup_{t\in \tor}X_N(t)\le (\log 2) \sum_{\ell=1}^N C_\ell(P_N) \qquad \text{ almost surely.}
\end{equation}
Moreover, an easy first and second moment argument for the total number of cycles (similar to the proof of Lemma \ref{lem:Xtail.ub} below) yields 
\begin{equation}	\label{XN.crudeUB}
\sup_{t\in \tor}X_N(t) \le (\log 2 + \epp) \log N \qquad \text{ with probability $1-o(1)$}
\end{equation}
for any fixed $\epp>0$. Since $\log 2\approx 0.693$, this fails to achieve the sharp bound of Theorem \ref{thm:main}. 
However, we point out here that the same argument does achieve the sharp bound for the supremum of the imaginary part of $\log\chi_N(e(t))$. Indeed, similarly to \eqref{XN.cyc} we have
\begin{equation}
\Im\log\det(I_N-e(t)P_N) = \sum_{\ell=1}^N C_\ell(P_N) \arg(1-e(\ell t)),
\end{equation}
where we take the principal branch of $\arg$ so that $\arg(1-e(u))\in [-\pi/2,\pi/2)$ for $u\in \tor$. Now we can bound
\[
\sup_{t\in \tor}\Im\log\det(I_N-e(t)P_N)\le \frac\pi2\sum_{\ell=1}^N C_\ell(P_N) \le \Big(\frac\pi2+\epp\Big)\log N \qquad \text{with probability $1-o(1)$.}
\]
Conversely, one notes that for $t\in (1-\epp/N, 1)$ we have $\arg(1-e(\ell t))>\frac\pi2(1-\epp)$ for all $1\le \ell\le N$, so in fact
\[
\sup_{t\in \tor}\Im\log\det(I_N-e(t)P_N)= \Big(\frac\pi2+o(1)\Big)\log N \qquad \text{with probability $1-o(1)$.}
\]
The reason the trivial upper bound for the imaginary part is sharp is that the function $\arg(1-e(u))= \pi(u-\frac12)$ achieves its maximum in a neighborhood of zero, and for points $t$ very close to zero, the sequence $(\ell t)_{\ell\in [N]}$ stays within such a neighborhood. 
By contrast, the function $\log|1-e(u)|$ diverges to $-\infty$ near zero, while the maximum is achieved at $1/2$. 
This makes the upper bound \eqref{XN.crudeUB} impossible to achieve (any sequence $(\ell t)_{\ell\in [N]}$ visiting a neighborhood of $1/2$ very often will also visit a neighborhood of zero very often). 
Instead, as we will see below, the maximum tends to be attained at ``minor arc" points $t$, for which the sequence $(\ell t)_{\ell \in [N]}$ mixes rapidly in the torus, or equivalently, does not visit a small neighborhood of zero very often.

\subsection{Poisson approximation}

It is well known that for any fixed $k\in \N$, as $N\to \infty$ the joint distribution of the cycle counts $(C_1(P_N), \dots, C_k(P_N))$ converges in distribution to a sequence of independent Poisson variables $(Z_1,\dots, Z_k)$, where $\e Z_\ell = 1/\ell$. 
In fact, this still holds if $k$ grows with $N$ at any speed with $k=o(N)$, as shown by the following result of Arratia and Tavar\'e.

\begin{theorem}[Poisson approximation {\cite[Theorem 2]{ArTa92}}]		\label{thm:arta}
Let $(Z_\ell)_{\ell\in \N}$ denote a sequence of independent Poisson random variables with $\e Z_\ell=1/\ell$ for each $\ell\in \N$.
Let $1\le M\le N$.
Then then following bound on the total variation distance between the sequences $ (C_\ell(P_N))_{\ell\le M}$ and $(Z_\ell)_{\ell\le M}$ holds:
\begin{equation*}
\sup_{A\in \Z_{\ge0}^{M}} \Big| \pr\Big( (C_\ell(P_N))_{\ell\le M}\in A\Big) - \pr\Big((Z_\ell)_{\ell\le M}\in A\Big)\Big|  \le \expo{ -(1+o(1))\frac{N}{M}\log\frac{N}{M}}.
\end{equation*}
\end{theorem}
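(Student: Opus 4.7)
The natural route is the classical Poissonization identity associated with the Ewens sampling formula at parameter $\theta=1$. A direct computation with the joint mass functions shows that if $(Z_\ell)_{\ell\ge 1}$ are independent with $Z_\ell\sim\mathrm{Poi}(1/\ell)$ and $T_N:=\sum_{\ell=1}^N \ell Z_\ell$, then
\[
\mathcal{L}\bigl((C_\ell(P_N))_{\ell\le N}\bigr) = \mathcal{L}\bigl((Z_\ell)_{\ell\le N}\,\big|\,T_N = N\bigr), \qquad \pr(T_N=N)=e^{-H_N},
\]
where $H_N=\sum_{\ell=1}^N 1/\ell$. First I would split $T_N = T_M + R_{N,M}$ into the independent pieces $T_M:=\sum_{\ell\le M}\ell Z_\ell$ and $R_{N,M}:=\sum_{M<\ell\le N}\ell Z_\ell$, and observe that for any $\mathbf{c}=(c_\ell)_{\ell\le M}$ with $s(\mathbf{c}):=\sum_{\ell\le M}\ell c_\ell$,
\[
\frac{\pr((C_\ell(P_N))_{\ell\le M}=\mathbf{c})}{\pr((Z_\ell)_{\ell\le M}=\mathbf{c})} = \frac{\pr(R_{N,M}=N-s(\mathbf{c}))}{\pr(T_N=N)}=:p(s(\mathbf{c})).
\]
Since the supremum over events $A\subseteq\Z_{\ge 0}^M$ equals half the $L^1$ difference between the two mass functions, the theorem reduces to the estimate $\tfrac12\,\e\bigl|p(T_M)-1\bigr|\le \exp\bigl(-(1+o(1))(N/M)\log(N/M)\bigr)$.

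The technical core is a local limit estimate for $R_{N,M}$. Its probability generating function is
\[
\e z^{R_{N,M}} = \exp\Bigl(\sum_{M<\ell\le N}\frac{z^\ell-1}{\ell}\Bigr),
\]
so Fourier inversion combined with a saddle-point deformation should yield an asymptotic of the form $\pr(R_{N,M}=N-s) = \pr(T_N=N)(1+o(1))$ uniformly for $s$ in a window of size $M(\log N)^{O(1)}$ around the mean $M$ of $T_M$, with an error decaying faster than the target rate. Since $\mathrm{Var}(R_{N,M})=\sum_{M<\ell\le N}\ell\asymp N^2$, the LLT holds at scale $O(1)$ so that the perturbation $s$ is absorbed smoothly. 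A Chernoff bound (using that each summand $\ell Z_\ell$ has light Poisson tails and $\e T_M=M$) then shows that $T_M$ leaves this good window only with probability $\exp(-\omega((N/M)\log(N/M)))$, taking care of the tail contribution to $\e|p(T_M)-1|$.

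The main obstacle is recovering the precise exponential rate $\exp(-(1+o(1))(N/M)\log(N/M))$. This is \emph{not} a generic local-limit fluctuation but reflects the Dickman asymptotic $\rho(N/M)\sim(N/M)^{-N/M}$, which governs the classical de Bruijn/Erd\H{o}s--Tur\'an estimate for the probability that a uniform random permutation of $[N]$ has no cycles of length $\le M$. Extracting this rate requires a careful second-order saddle-point analysis of $\pr(R_{N,M}=n)$ at the real saddle $z_\ast\in(0,1)$ solving $\sum_{M<\ell\le N}z_\ast^{\ell}=N$, or equivalently an appeal to the Buchstab-type functional equation satisfied by the generating series of $\pr(T_M=0)$. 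I would expect to either (i) first establish the Dickman-type estimate for $\pr(T_M=0)$ by inclusion--exclusion on cycle lengths and then interpolate to general $s$ via a local-limit ratio bound, or (ii) follow Arratia--Tavar\'e in folding the LLT and the Dickman asymptotic into a single saddle-point computation, tracking the contour contribution that produces the factor $(N/M)^{-N/M}$.
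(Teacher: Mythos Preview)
The paper does not prove this theorem: it is quoted verbatim from \cite{ArTa92} and used as a black box, with the remark ``See \cite{ArTa92} for a more specific bound; for us it is only important that the total variation distance goes to zero if $M=o(N)$.'' So there is no proof in the paper to compare against.

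That said, your outline is essentially the Arratia--Tavar\'e argument. The conditioning relation $\mathcal{L}((C_\ell)_{\ell\le N})=\mathcal{L}((Z_\ell)_{\ell\le N}\mid T_N=N)$ and the density-ratio formula $p(s)=\pr(R_{N,M}=N-s)/\pr(T_N=N)$ are exactly the starting point of their proof, and the identification of the rate with the Dickman function $\rho(N/M)$ is correct. One clarification: in their actual argument the sharp rate does not come from a saddle-point analysis of $R_{N,M}$ per se, but from writing the total variation distance as $\pr(T_M\ne T_M^\star)$ for a size-biased coupling and then invoking known Dickman asymptotics for the probability that all cycle lengths exceed $M$; your option (i) is closer to what they do than your option (ii). The saddle-point route you sketch would work but is more labor than needed.
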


See \cite{ArTa92} for a more specific bound; for us it is only important that the total variation distance goes to zero if $M=o(N)$. 

This motivates splitting the permutation field $X_N$ as follows. 
We first truncate the sum in \eqref{XN.cyc}.
Let $W$ be a sowly growing parameter satisfying
\begin{equation}	\label{2W}
\omega(1)\le W\le N^{o(1)}
\end{equation}
and write
\begin{align}
X_N(t) &= \sum_{\ell\le N/W} C_\ell(P_N)\log|1-e(\ell t)| + \sum_{N/W<\ell\le N} C_\ell(P_N) \log |1-e(\ell t)|	=: X_{N}^\le(t) + X_{N}^>(t). 	\label{XNW}
\end{align}
Let $(Z_\ell)_{\ell\ge1}$ be a sequence of independent Poisson variables with $\e Z_\ell=1/\ell$, and define the \emph{Poisson field}
\begin{equation}	\label{YN}
Y_N(t) = \sum_{\ell\le N} Z_\ell \log|1-e(\ell t)|.
\end{equation}
From Theorem \ref{thm:arta}, we can understand the maximum of the field $X_{N}^\le$ by instead considering the Poisson field $Y_{N/W}$.
Then we have two problems: 
\begin{enumerate}[(I)]
\item \label{probi} Estimate the maximum of $Y_{N/W}(t)$.
\item \label{probii} Show that the high frequency tail $X_{N}^>$ has negligible impact.
\end{enumerate}

\subsection{Upper bound: Failure of the first moment method}

We begin the discussion of how to prove the upper bound in Theorem \ref{thm:main}, i.e.\ that for any fixed $\epp\in (0,1)$, 
\begin{equation}	\label{2goal.upper}
\sup_{t\in \tor} X_N(t) \le (\xcrit+\epp) \log N \qquad \text{ with probability $1-o_\epp(1)$}.
\end{equation}
For the upper bound, problem (\ref{probii}) is relatively simple:
a straightforward second moment computation shows that with probability $1-o(1)$,
\begin{equation}
\max_{t\in \tor}X_{N}^{>}(t) \le O(\log W) = o(\log N)\,;
\end{equation}
see Lemma \ref{lem:Xtail.ub}. The key point is that the functions $\log|1-e(\ell\,\cdot)|$ are bounded pointwise by $\log 2<1$ on $\tor$, so $X_{N}^>(t)$ is uniformly controlled by the number of cycles of length between $N/W$ and $N$.
(As we discuss below, problem (\ref{probii}) is significantly more challenging for proving the lower bound due to the singularities of $\log|1-e(\ell\,\cdot)|$.)
Thus, to prove \eqref{2goal.upper} our task is reduced to showing the same estimate for the Poisson field $Y_{N/W}$, which from \eqref{2W} is equivalent to showing
\begin{equation}	\label{2goal.upper2}
\sup_{t\in \tor} Y_N(t) \le (\xcrit+\epp) \log N \qquad \text{ with probability $1-o_\epp(1)$}.
\end{equation}

It turns out it is enough to control the maximum on a fine mesh for the torus. (Actually, in the proof it will be more convenient to pass to a mesh \emph{before} replacing the permutation field $X_N$ with the Poisson field $Y_N$, but we ignore this point here.)
Specifically, it will suffice to show
\begin{equation}	\label{2goal.upper3}
\max_{t\in T_N}Y_N(t) \le (\xcrit+\eps)\log N\qquad \text{with probability } 1-o(1)
\end{equation}
where $T_N\subset \tor$ is a finite mesh for $\tor$ of cardinality $O(N)$. 

Following the standard first moment paradigm, see e.g. 
\cite{ABB} for a similar approach,
we consider the super-level sets
\begin{equation}	\label{def:superY}
\super^Y_N(T,\xx) :=  \{ t\in T: Y_N(t) \ge \xx\log N\}
\end{equation}
with $T=T_N$. From Markov's inequality, for the upper bound in Theorem \ref{thm:main} it will suffice to establish the first moment estimate
\begin{equation}	\label{naive1mom}
\e |\super^Y_N(T, \xcrit+\epp) | = \sum_{t\in T} \pr( Y_N(t) \ge (\xcrit+\epp)\log N) = o_\epp(1).
\end{equation}
Our task thus reduces to a large deviations problem: to obtain accurate estimates for the tail events
$
\{Y_N(t)\ge \xx \log N\}
$
at different points $t\in \tor$. 
(Recall that, in contrast to the CUE field discussed around \eqref{conj:FHK}, the distribution of $Y_N(t)$ depends on $t$.)
As $Y_N(t)$ is a sum of independent variables, we can obtain accurate upper bounds on tail events by Laplace's method. We can easily evaluate the Laplace transform:
\begin{equation}
\e e^{\beta Y_N(t)} = \prod_{\ell\le N} \e \expo{ \beta Z_\ell \log|1-e(\ell t)|} = \exp\bigg( \sum_{\ell\le N} \frac1\ell \left( |1-e(\ell t)|^\beta - 1\right)\bigg).
\end{equation}
From considerations of Diophantine approximation that will be discussed in Section \ref{sec:overview.tails}, one can show that for \emph{generic} points $t\in \tor$ (points that are in a certain quantitative sense ``sufficiently irrational"), 
\begin{equation}	\label{mgf:1st}
\log \e e^{\beta Y_N(t)} = \sum_{\ell\le N} \frac1\ell \left( |1-e(\ell t)|^\beta - 1\right) = \bigg(\int_\tor( |1-e(u)|^\beta -1)du +o(1) \bigg)\log N,
\end{equation}
where, crucially, the error $o(1)$ is uniform over all generic points $t$. 
Applying Markov's inequality to an exponential moment $\e \exp(\beta Y_N(t))$ and optimizing $\beta$ one obtains a sharp upper tail bound for $Y_N(t)$ at such points. Assuming (optimistically) that our mesh $T_N$ contains only generic points, then upon inserting the resulting tail bounds into \eqref{naive1mom}, a computation shows
\[
\e |\super^Y_N(T,\xx)| \asymp \begin{cases} N^{-c(\eps)} & \text{for } \xx= 1+ \eps\\ N^{c'(\eps)} & \text{for } \xx=1-\eps\end{cases}
\]
for any sufficiently small $\eps>0$, where $c(\eps),c'(\eps)$ are positive constants depending only on $\eps$.
In particular, this first moment computation suggests (incorrectly) that $\xcrit=1$. From Markov's inequality we obtain the upper bound
\begin{equation}	\label{sketch:Ywrong}
\max_{t\in T_N}Y_N(t) \le (1+\eps)\log N\qquad \text{with probability } 1-o_\epp(1).
\end{equation}
Recalling that $\xcrit\approx 0.652$, this bound is consistent with \eqref{2goal.upper3} but not sharp 
-- in fact, it even fails to beat the trivial bound \eqref{XN.crudeUB}.

\subsection{Conditioning on a typical distribution of cycle lengths at coarse scale }

To obtain the sharp bound in Theorem \ref{thm:main} we need to modify our first and second moment computations to avoid a certain class of rare events.
Indeed, there is a class of small events on which the maximum of $Y_N(t)$ is atypically large (of size $\sim \xx\log N$ for some $\xx\in (\xcrit,1)$), but which are of 
probability $N^{-c}$ for some $c\in (0,1)$. When we are looking at height $\xx$ these events cause the first moment to blow up. However, the first moment (i.e.\ the union bound) is wasteful as these events have significant overlap for different values of $t\in \tor$. 

Roughly speaking, the problematic events occur when there is a ``clump" in the sequence of Poisson variables $Z_\ell$ (or cycle counts $C_\ell(P_N)$) -- specifically, when $\cN(I):=\sum_{\ell\in I} Z_\ell$ is atypically large for a short interval $I$. 
Recall that $\e Z_\ell=1/\ell$, so ``large" and ``short" in the preceding sentence depend on the position of $\ell\in [N]$ (for instance when $\ell$ is of order $N$ the event that $\cN(\{\ell\})=Z_{\ell}\ge2$ is atypical).
Since $\e \sum_{\ell\le N} Z_\ell =\log N + O(1)$, we expect gaps between consecutive nonzero variables $Z_\ell$ to be of order $\ell$. 
Thus, for a given expected size for $\cN(I)$ we should consider intervals of exponentially growing length. 
If it occurs that a large number of cycles have lengths in the interval $[\ell, (1+\delta)\ell]$ for some $\ell\sim N^c$ and small $\delta>0$, then the corresponding modes $t\mapsto \log|1-(\ell t)|$ in the expansion \eqref{XN.cyc} (or \eqref{YN}) will add constructively on large subsets of $\tor$ (roughly, on intervals of length $\sim N^{-c}$ spaced at distance $\sim1/\delta$ from each other), creating an atypically large number of high points (on the order of $\delta N^{1-c}$).

We deal with these rare clumping events by grouping the summands in \eqref{YN} into intervals with a lacunary sequence of endpoints, and conditioning on the event that (for most intervals) there is at most one nonzero summand. 
To state this more precisely we develop some additional notation. 
For $J\subset\R_+$ we let
\begin{equation}	\label{def:NJ}
\cN(J) = \sum_{\ell\in J\cap \Z} Z_\ell
\end{equation}
denote the number of ``cycles" with lengths in the set $J$ (recall the $Z_\ell$ are actually Poisson variables that model the cycle counts), 
and for $t\in \tor$ we write	
\begin{equation}	\label{def:YJ}
\YY_{J}(t) = \sum_{\ell\in J\cap \Z} Z_\ell\log|1-e(\ell t)|
\end{equation}
for the contribution of these cycles to the field $Y_N(t)$ (assuming $J\subset(0,N]$). 
We let $\varrho\in (0,1/2)$ be a small parameter (which may depend on $N$ and $\epp$) and for $\gen\ge 0$ denote
\begin{equation}	\label{def:micro}
I_\gen = [e^{\varrho \gen}, e^{\varrho (\gen+1)}),\qquad \rho_\gen= \e \cN(I_\gen) = \sum_{\ell\in I_\gen} \frac1\ell.
\end{equation}
Taking $\varrho, n$ depending on $N,\epp$ is such a way that
\begin{equation}
\varrho n = (1+o(1))\log N
\end{equation}
it now suffices to show
\begin{equation}	\label{2goal.upper4}
\max_{t\in T_N} Y_{[1,e^{\varrho n})}(t) = \max_{t\in T_N}\sum_{1\le \gen<n} \YY_{I_\gen}(t) \le (1+\epp) \varrho n\qquad \text{ with probability $1-o_\epp(1)$}.
\end{equation}

The variables $\cN(I_\gen)$ are independent Poisson variables with expectation $\rho_\gen$, and for $\gen$ reasonably large we have $\rho_\gen\approx \varrho$ (specifically, if $\gen=\omega(\frac1\varrho\log\frac1\varrho)$ then $\rho_\gen=(1+o(1))\varrho$).
To avoid the atypical clumping events described above 
we will condition on a ``typical" sequence of the partial sums $\cN(I_\gen)$. 
For $a\ge0$ let
\begin{equation}	\label{def:Qa}
\urn_a=\{\gen\ge 0: \cN(I_\gen) =a\}, \qquad \urn_{\ge a}= \{\gen\ge 0: \cN(I_\gen)\ge a\}
\end{equation}
denote the sets $\gen$ indexing intervals $I_\gen$ containing exactly $a$ cycles and at least $a$ cycles, respectively.
When $\varrho$ is sufficiently small, for a long interval $[m,n)$ we expect (roughly speaking)
\begin{equation}	\label{urn.approx}
|\urn_0\cap [m,n)|\approx (1-\varrho)(n-m), \qquad |\urn_1\cap [m,n)|\approx \varrho (n-m), \qquad |\urn_{\ge 2}\cap [m,n)|\approx 0.
\end{equation}
We will condition on fixed realizations $\urn,\turn$ for $\urn_1\cap [m,n)$ and $\urn_{\ge 2}\cap [m,n)$, respectively, satisfying the above.
For given $m,n,\urn,\turn$ we denote the event
\begin{equation}	\label{def:Qevent}
\cQ:=\cQ(m,n,\urn, \turn) := \left\{ \urn_1\cap [m,n)=\urn , \quad \urn_{\ge 2}\cap [m,n) = \turn\right\}.
\end{equation}
With a fixed choice of $m,n,\urn,\turn$ we will often abbreviate
\begin{equation}	\label{def:eHK}
\e^\cQ(\,\cdot\,) := \e\left( \, \cdot\ \middle| \ \cQ\right), \qquad 
\pr^\cQ(\,\cdot\,) := \e^\cQ\ind(\,\cdot\,).
\end{equation}
Now in place of \eqref{naive1mom} we want to show that for a typical choice of $\urn,\turn$, 
\begin{equation}	\label{new1mom}
\e^{\cQ}|\super^Y_{\lf e^{\varrho n}\rf}(T,\xcrit+\epp)| = \sum_{t\in T} \pr^{\cQ}\Big(Y_{[1,e^{\varrho n})} (t) \ge (\xcrit+\epp)\varrho n \Big) = o_\epp(1).
\end{equation}
\eqref{2goal.upper4} then follows from Markov's inequality and the fact that we conditioned on typical realizations of $\urn_1,\urn_{\ge2}$.

Note that
\eqref{urn.approx} suggests that for typical choices of $\urn,\turn$, on the event $\cQ(1,n,\urn,\turn)$, 
\[
Y_{[1,e^{\varrho n})}(t) \approx \sum_{\gen\in \urn} Y_{I_\gen}(t),
\]
where for $k\in Q$, 
\[
Y_{I_k}(t) = \log|1-e(\bs{\ell}_k t)|
\]
and $\bs{\ell}_k$ is a random element of $I_k$ with distribution $\pr(\bs{\ell}_k=\ell)\propto \frac1\ell1_{\ell\in I_k}$.
As will be discussed below, in the generic case that $t\in \tor$ is not too close to a rational with small denominator, the sequence $\{\ell t\}_{\ell \in I_k} \subset \tor$ rapidly equidistributes in the torus. One can use this together with \eqref{EV} to show that for such points $t$ and for $\gen\in \urn$,
\[
\e^{\cQ} Y_{I_\gen}(t) = \e\left( Y_{I_\gen}(t) \,\big| \, \cN(I_\gen)=1\right)\approx 0.
\] 
One can similarly show that at such points $t$ the variables $\{Y_{I_\gen}(t)\}_{\gen\in \urn}$ have comparable variance.
Thus we expect the variables $Y_{I_\gen}(t)$ to have comparable contribution to the fluctuations of $Y_{[1,e^{\varrho n})}(t)$ (at least for generic points $t\in \tor$).

\subsection{Upper bound: major and minor arcs}
\label{sec:overview.arcs}

To show \eqref{new1mom}
we need accurate estimates on the summands; the value for $\xcrit$ will then be the critical value of $\xx$ at which the tail probability
\begin{equation}	\label{mod.tails}
\pr^\cQ\Big(Y_{[1,e^{\varrho n})} (t) \ge \xx\varrho n \Big) 
\end{equation}
is of order $e^{-\varrho n +o(1)} = N^{-1+o(1)}$, balancing the entropy cost $|T|\asymp N$. 

A major difficulty is that the distribution of $Y_N (t)$, and in particular the tail probabilities \eqref{mod.tails}, depend on the choice of $t\in \tor$.
Indeed, a moment's thought reveals there is no hope of showing a
probability estimate of size $e^{-\varrho n+o(1)}$ for all $t$: consider for instance taking $t=0$, where $Y_N$ (and $X_N$) are $-\infty$ for all $N$. 
Moreover, for any rational $p/q\in \tor$ with $q=O(1)$ we have that $\pr(Z_q\ne 0)$ is of constant order, and from \eqref{YN} we see that $Y_N(p/q)=-\infty$ on this event. 

One thus sees that a general obstruction to obtaining nice tail estimates for $Y_N(t)$ is that $t$ is well-approximated by a rational with small denominator. 
We quantify this in terms of \emph{Bohr sets}. Recall that for an integer frequency $\xi\in \Z$ and $\kappa\in (0,1)$, the associated Bohr set is defined
\begin{equation}	\label{def:bohr}
B_\xi(\kappa) = \{t\in \tor: \|\xi t\|_\tor \le \kappa\}
\end{equation}
(recall our notation $\|s\|_\tor$ for the distance from $s$ to 0 in the torus). 
For a cuttoff frequency $\xi_0\in \N$ we denote the set of ``major arc points"
\begin{equation}	\label{def:Maj}
\Maj(\xi_0,\kappa) = \bigcup_{1\le \xi\le \xi_0} B_\xi(\kappa).
\end{equation}
Borrowing terms from the Hardy--Littlewood circle method, we refer to elements of the complement $\tor\setminus \Maj(\xi_0,\kappa)$ as ``minor arc points". 
(Viewed as a subset of the circle $\partial \mathbb{D}=e(\tor)$, $\Maj(\xi_0,\kappa)$ is a union of arcs with lengths in $\{2\kappa/\xi: 1\le\xi\le \xi_0\}$.) 
We will take $\xi_0,\kappa$ small enough that $\Maj(\xi_0,\kappa)$ has Lebesgue measure $o(1)$, so that \emph{generic} points in the torus are minor arc.

For major arc points $t$ the sequence $\{\ell t\}_{\ell\ge 1}\subset\tor$ is poorly mixing, and for small values of $\ell$ it returns often to a small neighorhood of $0$. 
Since $\log|1-e(\,\cdot\,)|$ is very negative there, from \eqref{YN} one can (correctly) guess it is possible to choose $\xi_0,\kappa$ in such a way that, with high probability, the values of $\ell$ corresponding to these visits cause a large deficit in the value of $Y_N(t)$ that cannot be compensated by all of the other terms in the sum. 
Thus, with high probability, the Poisson field does not even take positive values at major arc points, and it only remains to establish \eqref{2goal.upper4} with the supremum restricted to minor arcs.

While major arcs were fairly easy to handle in the proof of the upper bound, they will enter the proof of the lower bound in a more subtle way, as will be discussed in Section \ref{sec:overview.lower}.

\subsection{Large deviation estimates for the field on minor arcs}
\label{sec:overview.tails}

For $t\in \tor$ that are minor arc we can obtain fairly accurate estimates on the tails of partial sums $Y_{[e^{\varrho m}, e^{\varrho n})}(t)$, with uniform error bounds depending on $\xi_0,\kappa$. 
We briefly indicate why Bohr sets should play such a role. 
For the sake of discussion we ignore the contribution of cycles with lengths in $\turn$.
From large deviations theory we can deduce accurate tail estimates from estimates on the moment generating function
\begin{equation}	\label{sketch:mgf}
\e^\cQ \exp\bigg( \beta \sum_{\gen\in \urn} Y_{I_\gen}(t) \bigg) = \prod_{\gen\in \urn} \expo{ \beta Y_{I_\gen}(t) \,\middle|\, \cN(I_\gen) =1} = \prod_{\gen\in Q}\bigg( \frac1{\rho_\gen} \sum_{\ell\in I_\gen} \frac{|1-e(\ell t)|^\beta}{\ell}\bigg)
\end{equation}
(compare with \eqref{mgf:1st}).
Recalling that $\rho_\gen=\sum_{\ell\in I_\gen} 1/\ell$, we recognize the above as a logarithmic average of the function $\phi_\beta:= |1-e(\,\cdot\,)|^\beta$ from $\tor$ to $\R$ along the \emph{Kronecker sequence} $\{\ell t\}_{m\le \ell<n}\subset \tor$. 
Now it is well known that that if $t\in \tor$ is irrational and $f:\tor\to \C$ is continuous, we have
\[
\frac1n\sum_{\ell\le n} f(\ell t) \longrightarrow \int_\tor f(u)du \qquad \text{ as $n\to \infty$};
\]
from summation by parts we get the same limit for the logarithmic averages $\frac1{\log n}\sum_{\ell\le n} \frac{f(\ell t)}{\ell}$.
However, we require quantitative bounds depending on parameters $\xi_0,\kappa$, which we obtain in Section \ref{sec:prelim} by standard Fourier-analytic arguments.
These estimates show 
\[
\left|\frac1{\rho_\gen} \sum_{\ell\in I_\gen} \frac{|1-e(\ell t)|^\beta}{\ell} - \int_\tor |1-e(u)|^\beta du\right| \le \text{Error}(\gen,\varrho, \xi_0,\kappa)
\]
where the right hand side is $o_{\gen\to\infty}(1)$ for $\varrho$ sufficiently small and appropriate choices of $\xi_0,\kappa$.
From the above and \eqref{sketch:mgf}, and recalling \eqref{def:LA}, we can show 
\begin{equation}	\label{sketch:mgf2}
\e^\cQ\exp\bigg( \beta \sum_{\gen\in \urn} Y_{I_\gen}(t) \bigg) = (1+o_{n-m\to\infty}(1))e^{\lambda(\beta)|\urn| }
\end{equation}
for appropriate $\xi_0,\kappa$ depending on $n,m$ (see Proposition \ref{prop:Phi} for a precise statement).

The above estimate suggests comparing $Y_{[e^{\varrho m}, e^{\varrho n})}(t)$ with a sum of i.i.d.\ copies of the variable $V$ from \eqref{def:LL}. 
Let $(U_\gen)_{\gen\in \urn}$ be a sequence of i.i.d.\ uniform elements of $\tor$ and put 
\begin{equation}
V_\gen = \log|1-e(U_\gen)|, \qquad \tY_\urn = \sum_{\gen\in \urn} V_\gen.
\end{equation}
We recognize the main term on the right hand side of \eqref{sketch:mgf2} as the moment generating function for $\tY_\urn$. 
We can use this estimate to make a comparison of the form
\begin{equation}	\label{sketch:tail1}
\pr^\cQ\big( Y_{[e^{\varrho m}, e^{\varrho n})}(t) \ge \yy |\urn| \big) = (1+o(1))\pr\big( \tY_\urn \ge \yy |\urn| \big)
\end{equation}
(see Proposition \ref{prop:Yt.approx} for the quantitative estimate).
This comparison yields a tail bound that is sufficient to complete the proof of \eqref{2goal.upper4} for minor arc points, and hence the upper bound in Theorem \ref{thm:main} (in fact we can even get by with a weaker tail estimate given by Corollary \ref{cor:Yt.upper}). 
Note the comparison \eqref{sketch:tail1} and the form of the moment generating function for $\tY_\urn$ explain where the definition of the constant $\xcrit$ in \eqref{def:xxc} comes from.

For the proof of the lower bound by a second moment argument we will also need joint tail estimates for partial sums of the Poisson field at two points $s,t\in \tor$. We defer discussion of this to the following subsection.


\subsection{Lower bound: Logarithmic decay of correlations and structural dichotomy for high points}
\label{sec:overview.lower}

On a conceptual level we find it convenient to adopt some terminology that is suggestive of analogies with branching processes.
For $M\le N$, let us denote the partially summed field
\begin{equation}	\label{def:X.partial}
X_{\le M}(t) = \sum_{\ell \le M} C_\ell(P_N) \log|1-e(\ell t)|,
\end{equation}	\label{def:super.partial}
and for a level $x\in \R$, the associated set of ``survivors"
\begin{equation}
\cS_{\le M}(x) = \{ t\in T_N: X_{\le M}(t)\ge x\log M\},
\end{equation}
where, as before, $T_N$ is a fixed mesh for $\tor$ of size $O(N)$. (We will need to take $T_N$ to be a slight rotation of the set of $N'$-th roots of unity for some $N'=O(N)$, but we do not discuss this issue here.)
We think of the cutoff $M$ as a time parameter, and $\cS_{\le M}(x)$ as the portion of an initial ``population" $T_N$ that has attained a ``fitness level" $x$. 

Our goal is to show that the final set of survivors
$
\cS_{\le N}(\xcrit-\epp)
$
is non-empty with high probability. 
This task is considerably more delicate than for the upper bound.
We track the population across three epochs:
\begin{enumerate}
\item (Early generations): We show 
\begin{equation}	\label{bd:early}
\cS_{\le N^c}(-C)\gg N \qquad \text{ with high probability}
\end{equation}
for some constants $C,c>0$ sufficiently large and small, respectively. 
\item (Middle generations): Letting $W$ be a slowly growing parameter as in \eqref{2W}, we show 
\begin{equation}	\label{bd:middle}
|\cS_{\le N/W}(\xcrit-\epp)|\ge N^{c(\epp)}\qquad\text{ with high probability}.
\end{equation}
\item (Late generations): Finally, we argue 
\begin{equation}	\label{bd:late}
|\cS_{\le N}(\xcrit-2\epp)|\ge 1\qquad\text{ with high probability.}
\end{equation}
\end{enumerate}
For early and middle generations we can use Theorem \ref{thm:arta} to replace the cycle counts $C_\ell(P_N)$ with Poisson variables $Z_\ell$, as we did for the upper bound.
The reason to consider early generations separately is to gain some independence between different ``lineages" for the second moment argument for middle generations. 
The basic idea for establishing \eqref{bd:early} is that the partially summed Poisson field $Y_{\le M}(t)=\sum_{\le M} Z_\ell \log |1-e(\ell t)|$ is an average over samples from the mean zero function $\log|1-e(\cdot)|$; since we already have control on such random averages from above, we can argue that the truncated field can't be too much smaller than its average value at too many points $t\in \tor$.

For the middle generations we adapt a well known second moment argument for the maximum of branching random walk, the basic ideas of which go back to \cite{Bramson78}.
(The analogue of Theorem \ref{thm:main} for branching random walk goes back a bit further -- see \cite[Theorem 2]{Hammersley74}, as well as \cite{Kingman75, Kingman76, Biggins77}.)
The second moment argument is also used in the works \cite{ABB, PaZe, CMN} on the maximum for the CUE field.
We will not review the detailed argument here, and instead refer  the reader to 
\cite{Arguin_notes, Kistler_notes,Zeitouni_brw-notes} for an exposition.
The main point is that a straightforward application of the second moment method to the number of survivors $|\cS_{\le N/W}(\xcrit-\epp)|$ fails to show it is nonzero with high probability, despite the fact that its first moment is of size $N^{c(\epp)}$. 
Instead, we compute the first and second moments of a (smaller) number of points $t\in T_N$ for which the partially summed Poisson field $Y_{\le M}(t)$ makes steady progress towards the extreme level $\xcrit\log N$. We measure the notion of \textit{steady progress}, in terms advocated by Kistler \cite{Kistler_notes}, as follows.
Conditioning on an event $\cQ$ as in \eqref{def:Qevent}, for integers $n_1<n_2$ we denote the \emph{partial lineage} of a point $t\in T_N$
\begin{equation}
\label{eq-june11a}
\Delta_{n_1,n_2}(t):= \sum_{k\in \urn\cap[n_1,n_2)} Y_{I_k}(t) = \sum_{k\in Q\cap [n_1,n_2)} \sum_{e^{\varrho n_1}\le \ell < e^{\varrho n_2}} Z_\ell \log|1-e(\ell t)|.
\end{equation}
For a large parameter $K$ (which we eventually take to be of size $\gg 1/\epp$) we cut the set $\urn\cap [1,n)$ into $K$ legs $\urn_i=\urn\cap [(i-1)n/K, in/K)$, and aim to show that the set of ``rapidly rising" points
\begin{equation}
\cR_n(T_N, \xcrit-\epp) = \Big\{ t\in T_N: \Delta_{\frac{(i-1)n}{K}, \frac{in}{K}}(t) \ge (\xcrit-\epp) |\urn_i| \; \forall 2\le i\le K\Big\}
\end{equation}
is nonempty with high probability. This is enough to establish \eqref{bd:middle} since the set of rapidly rising points, intersected with the set of survivors $\cS_{\le N^c}(-C)$ from early generations, is a subset of the set of high points for the Poisson field (once $K$ is sufficiently large depending $c$). 

To show $|\cR_n(T_N, \xcrit-\epp)| \ge N^{c(\epp)}$ with high probability, we lower bound its expectation (first moment) and show it is concentrated (bound its second moment). 
The lower bound on the first moment can be obtained from the tail estimate \eqref{sketch:tail1} applied to each leg $\urn_i$ (the associated increments $\Delta_{\frac{(i-1)n}{K}, \frac{in}{K}}(t)$ are independent across $i$). 

The upper bound on the second moment is where we need to expose the logarithmic correlation structure of the Poisson field. Specifically, the key for a satisfactory bound on the second moment is to show that tail events for partial lineages
become approximately independent for points $s,t\in \tor$ that are sufficiently far apart, i.e.
\begin{equation}	\label{sketch:tail2}
\pr^{\cQ}\Big( \Delta_{n_1, n_2}(s) , \;  \Delta_{n_1, n_2}(t) \ge y|\urn_i| \Big) \le (1+o(1)) \pr\big(\tY_{\urn\cap[n_1,n_2)} \ge y|\urn\cap [n_1,n_2)|\big)^2,
\end{equation}
with $\tY_{\urn}$ as in \eqref{sketch:tail1}. 
It turns out that for the above to hold, it is not enough for $s$ and $t$ to be well separated under the usual metric $d(s,t)=\|s-t\|_\tor$ on the torus: we also need all linear combinations with sufficiently small integer weights to be well separated from zero. 
Specifically, for a parameter $\xi_0\in \N$ we define the ``distance" 
\begin{equation}	\label{def:dq}
d_{\xi_0}(s,t):= \min_{\xi,\xi'\in \{-{\xi_0},\cdots,{\xi_0}\}\setminus \{0\}} \|\xi s+\xi't\|_\tor,\qquad s, t\in \tor.
\end{equation}
Writing $A_{\xi_0}=\{-{\xi_0},\cdots,{\xi_0}\}\setminus \{0\}$, one sees this is the distance between the dilates $sA_{\xi_0}$ and $tA_{\xi_0}$ in $\tor$.
A lower bound on $d_{\xi_0}(s,t)$ amounts to a quantitative linear independence of $\{1,s,t\}$ over $\Q$, which can be seen as a two-dimensional minor arc condition 
(note that $t\notin \Maj(\xi_0,\kappa)$ enforces quantitative linear independence of $\{1,t\}$, i.e.\ quantitative irrationality).

Roughly speaking, we get \eqref{sketch:tail2} when $d_{\xi_0}(s,t) = \omega(e^{-\varrho m})$.
This is what we should expect, as $e^{-\varrho n_1}$ is the reciprocal of the lowest frequency summand appearing in \eqref{eq-june11a},
so the field should become decorrelated at separations that are large compared with this wavelength.

Finally, we discuss the argument for \eqref{bd:late}.
For the late stage the Poisson approximation is no longer available. 
Here we will condition on the cycles in $P_N$ of length $\ell\le N/W$, which fixes the set of survivors $\cS_{\le N/W}(\xcrit-\epp)$ up to generation $N/W$, and aim to show that they are not wiped out by the high frequency tail $X_N^> = X_N - X_{\le N/W}$. 

First we identify points in the torus which are ``at risk". Note that $\log|1-e(\ell t)|\le -\epp\log N$ for all $t$ in the Bohr set 
\begin{equation}
B_\ell(N^{-\epp}) = \{ t\in \tor: \|\ell t\|_\tor \le N^{-\epp}\}.
\end{equation}
If $\cS_{\le N/W}(\xcrit-\epp) \subset B_\ell(N^{-\epp})$ for some $\ell\in (N/W,N]$, then on the event that $P_N$ has a cycle of length $\ell$ we have $\max_{t\in T_N} \cS_{\le N}(\xcrit-2\epp) = \emptyset$, which we want to avoid. 
Note that $|B_\ell(N^{-\epp})\cap T_N|\approx N^{1-\epp}$, while we have only shown $|\cS_{\le N/W}(\xcrit-\epp)|\ge N^{c(\epp)}$, so we get no help from the cardinality of survivors alone. 

We will show \eqref{bd:late} using a structural dichotomy for the set of survivors $\cS_{\le N/W}(\xcrit-\epp)\subset \tor$: we say that $\cS_{\le N/W}(\xcrit-\epp)$ is \emph{structured} if $\cS_{\le N/W}(\xcrit-\epp)$ has large overlap with $B_\ell(N^{-\epp})$ for many different frequencies $\ell\in (N/W, N]$ (on the order of $N/W^{O(1)}$), and \emph{unstructured} otherwise. 
In the unstructured case, the number of bad frequencies $\ell\in (N/W,N]$ is small enough that we can argue it is unlikely that any of them are selected (i.e.\ that $P_N$ contains a cycle of that length).
For the structured case,  we can use a Vinogradov-type lemma (Lemma \ref{lem:vino}) and a pigeonholing argument to show that $\cS_{\le N/W}(\xcrit-\epp)$ must actually contain an element that is major arc -- specifically, an element of a thin, low-frequency Bohr set $B_\xi(\eta)$. But as we showed in the proof of the upper bound, such major arc points are unlikely to be in $\cS_{\le N/W}(\xcrit-\epp)$ in the first place.

\section{Preliminaries}	
\label{sec:prelim}

In this section we gather some preliminary lemmas.
In Section \ref{sec:cyc} we establish the heuristics \eqref{urn.approx} for the ``typical" distribution of cycle lengths among intervals of the form \eqref{def:micro}.
Sections \ref{sec:phi} and \ref{sec:kronecker} contain some Fourier-analytic preliminaries for Section \ref{sec:FL}.
In particular, in Section \ref{sec:kronecker} we provide quantitative estimates for logarithmic averages of a function $f:(\tor)^d\to \C$ along sequences of the form $(\ell t)_{\ell \in \N}$, in terms of the Fourier coefficients of $f$. 
In Section \ref{sec:FL} these will be applied with $f(t)=\phi_z(t):= |1-e(t)|^z$ (whose Fourier coefficients are estimated in Section \ref{sec:phi}) to obtain accurate tail estimates for the Fourier--Laplace transform of partial sums of the Poisson field $Y_N(t)$ at minor arc points $t\in \tor$.

\subsection{Typical distribution of cycle lengths at coarse scale}
\label{sec:cyc}

Recall the notation \eqref{def:NJ} and \eqref{def:Qa}.
In this subsection we prove the following:

\begin{lemma}	\label{lem:typical}
Let $m<n$ be integers and $\varrho\in (0,1)$. Assume
\begin{equation}	\label{assume:typical}
m\ge \frac{C}{\varrho}\log \frac1\varrho
\end{equation}
for a sufficiently large constant $C>0$.
Then for any $\eps\in (0,1)$,
\begin{equation}	\label{S.conc}
\cN([e^{\varrho m}, e^{\varrho n})) = \sum_{m\le\gen<n} \cN(I_\gen) = (1+O(\eps)) \varrho (n-m)
\end{equation}
with probability $1-O(\exp(-c\eps^2\varrho (n-m)))$, and 
\begin{equation}	\label{typical.Q1}
 |\urn_1\cap [m,n)| = (n-m)\big[\varrho (1-\varrho ) + O (\eps + \varrho^3) \big]
\end{equation}
with probability $1-O(\expo{ -c\eps\varrho (n-m)})$.
Moreover, 
\begin{equation}	\label{EQ2}
\e |Q_{\ge 2} \cap [m,n)| \ll \varrho^2(n-m)
\end{equation}
and for all $K\ge C'$ for a sufficiently large constant $C'>0$,
\begin{equation}	\label{typical.Q2}
\pro{  |\urn_{\ge 2}\cap [m,n)| \ge K\varrho^2 (n-m)} \le \expo{ -cK \varrho^2 (n-m)}.
\end{equation}
\end{lemma}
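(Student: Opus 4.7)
My plan is to exploit that each $\cN(I_\gen)$ is a sum of independent Poisson variables, hence itself Poisson with mean $\rho_\gen$, and that the means $\rho_\gen$ are uniformly close to $\varrho$ on the range of interest. Comparing the harmonic sum $\rho_\gen = \sum_{\ell \in I_\gen} 1/\ell$ to the integral $\int_{e^{\varrho \gen}}^{e^{\varrho(\gen+1)}} dt/t = \varrho$ gives $\rho_\gen = \varrho + O(e^{-\varrho \gen})$. Under hypothesis \eqref{assume:typical}, $e^{-\varrho \gen} \leq \varrho^C$ for all $\gen \geq m$ once $C$ is large, so $\rho_\gen = \varrho(1+O(\varrho^{C-1}))$ uniformly in $\gen \geq m$. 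With this in hand each of the four assertions reduces to a standard Chernoff-type computation.

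For \eqref{S.conc}, sums of independent Poissons are Poisson, so $\cN([e^{\varrho m}, e^{\varrho n}))$ has mean $\mu = \sum_{\gen=m}^{n-1}\rho_\gen = \varrho(n-m)(1+O(\varrho^{C-1}))$, and the Poisson concentration bound $\pr(|X-\mu| \geq \eps\mu) \leq 2\exp(-c\eps^2\mu)$ delivers the claim once the $\varrho^{C-1}$ factor is absorbed into $O(\eps)$. For \eqref{EQ2}, the elementary expansion $\pr(\cN(I_\gen) \geq 2) = 1 - e^{-\rho_\gen}(1+\rho_\gen) = \tfrac{1}{2}\rho_\gen^2 + O(\rho_\gen^3)$ bounds each summand by $O(\varrho^2)$, and summing over $\gen \in [m,n)$ gives the stated bound on the expectation. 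For \eqref{typical.Q2}, the variable $|\urn_{\geq 2} \cap [m,n)|$ is a sum of independent Bernoulli indicators with mean-sum $\asymp \varrho^2(n-m)$, and multiplicative Chernoff in the upper-tail regime $X \geq K \, \e X$ with $K$ large gives the stated exponential bound.

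The main substance lies in \eqref{typical.Q1}. Taylor expanding $x \mapsto x e^{-x}$ about $\varrho$ and using $\rho_\gen - \varrho = O(\varrho^C)$ with $C \geq 3$ gives $\pr(\cN(I_\gen) = 1) = \rho_\gen e^{-\rho_\gen} = \varrho(1-\varrho) + O(\varrho^3)$ for each $\gen$, so summing yields $\e|\urn_1 \cap [m,n)| = (n-m)[\varrho(1-\varrho) + O(\varrho^3)]$. Since $|\urn_1 \cap [m,n)| = \sum_\gen \ind(\cN(I_\gen)=1)$ is a sum of independent Bernoullis with expectation $\asymp \varrho(n-m)$, multiplicative Chernoff controls deviations of size $\eps(n-m)$ with exponential decay at least as strong as $\exp(-c\eps\varrho(n-m))$, and combining with the expectation computation yields the claim. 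I do not anticipate any genuine obstacle: the entire lemma is routine Poisson and Bernoulli concentration once the quantitative approximation $\rho_\gen \approx \varrho$ is in place. The only care required is in propagating the $O(\varrho^{C-1})$ error through the small-$\rho_\gen$ Taylor expansions of $\rho_\gen e^{-\rho_\gen}$ and $1 - e^{-\rho_\gen}(1+\rho_\gen)$.
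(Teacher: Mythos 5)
Your proposal is correct and follows essentially the same route as the paper: approximate $\rho_\gen = \varrho + O(e^{-\varrho\gen})$ (which under \eqref{assume:typical} is uniformly $\varrho(1+O(\varrho^{C-1}))$), then observe that $\cN([e^{\varrho m},e^{\varrho n}))$ is Poisson and that $|\urn_1\cap[m,n)|$, $|\urn_{\ge2}\cap[m,n)|$ are sums of independent Bernoulli indicators, and finish with standard concentration. The only cosmetic differences are in the concentration tools: for \eqref{typical.Q1} the paper invokes Bernstein's inequality where you invoke multiplicative Chernoff (equivalent here), and for \eqref{typical.Q2} the paper bounds $\e \exp(|\urn_{\ge 2}\cap[m,n)|)$ directly and applies Markov, whereas you invoke the upper-tail Chernoff bound for Bernoulli sums — again equivalent, since that Chernoff bound is proved by precisely the same exponential-moment computation.
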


\begin{proof}
Recalling \eqref{def:micro},
we begin by noting that if $C>0$ is sufficiently large then we have $\rho_\gen\asymp \varrho$ for all $m\le\gen<n$. 

For \eqref{S.conc}, $\cN([e^{\varrho m}, e^{\varrho n}))$ has Poisson distribution with expectation
\[
\e \cN([e^{\varrho m}, e^{\varrho n})) = \sum_{e^{\varrho m}\le \ell<e^{\varrho n}} \frac1\ell = \varrho (n-m) + O(e^{-c\varrho m})
\]
and the claim follows from standard concentration for Poisson variables. 
For \eqref{typical.Q1},
\begin{align*}
\e |\urn_1\cap [m,n)| 
&= \sum_{m\le\gen<n} \pr(\cN(I_\gen)=1) 
= \sum_{m\le\gen<n} \rho_\gen e^{-\rho_\gen}
= \sum_{m\le\gen<n} \rho_\gen - \rho_\gen^2+O(\rho_\gen^3).
\end{align*}
Since $\rho_\gen = \varrho +O(e^{-\varrho\gen})$,
\begin{align}
\e |\urn_1\cap [m,n)| &=  \varrho (n-m)-\varrho^2(n-m) + O(\varrho^3(n-m)) + O\left(\sum_{\gen\ge m} e^{-\varrho\gen}\right)	\notag\\
&=\varrho (n-m)-\varrho^2(n-m) + O(\varrho^3(n-m) + \frac1\varrho e^{-\varrho m})	\notag\\
&= \varrho (n-m)-\varrho^2(n-m) + O(\varrho^3(n-m) ),	\label{EH1.fine}
\end{align}
where in the third line we used \eqref{assume:typical}.
\eqref{typical.Q1} is a direct consequence of the above estimate and Bernstein's inequality.
For \eqref{EQ2},
\begin{align*}
\e |\urn_{\ge2}\cap [m,n)| 
&= \sum_{m\le\gen<n} \pr(\cN(I_\gen)\ge2) 
= \sum_{m\le\gen<n} 1- e^{-\rho_\gen}(1+\rho_\gen)
\ll \varrho^2(n-m).
\end{align*}
For \eqref{typical.Q2}, 
for each $\gen\in [m,n)$ we have
\begin{align*}
\e \expo{ \ind(\cN(I_\gen)\ge2)} = e^{-\rho_\gen}(1+\rho_\gen +O(\varrho^2) ) = 1+O(\varrho^2).
\end{align*}
By independence,
\[
\e\expo{ |\urn_{\ge 2}\cap [m,n)| } = \expo{ O(\varrho^2(n-m)},
\]
and \eqref{typical.Q2} now follows from Markov's inequality. 
\end{proof}

\subsection{Estimates for $\wh{\phi_z}$}
\label{sec:phi}

In this section we record some preliminary estimates on the Fourier coefficients of the function
\begin{equation}	\label{def:phizt}
\phi:\C\times \tor\to \C, \quad \phi(z,t) =\phi_z(t)= |1-e(t)|^z.
\end{equation}
We denote the projection of $\phi$ to the subspace of functions that are mean-zero in the second argument by
\begin{equation}	\label{def:phi0}
 \phi_0(z,t) = \phi(z,t)- \int_\tor \phi(z,u)du.
\end{equation}
We consider points $z=\beta+\ii \tau$ with $\beta>0$. 
We note the pointwize bound
\begin{equation}	\label{ub:phi}
|\phi(z,u)|\le 2^\beta, \qquad u\in \tor. 
\end{equation}
For $\xi\in\Z$ we denote the Fourier coefficients
\[
\wh{\phi}_z(\xi) = \int_\tor \phi_z(t) e(-\xi t)dt.
\]

\begin{lemma}	\label{lem:phi.lb}
For $z=\beta+\ii\tau$ with $\beta\ge1$, we have 
\begin{equation}	\label{lb:intphi}
|\wh{\phi}_z(0)| = \left|\int_\tor \phi(z, u)du\right|  \gg(1+|\tau|)^{-O(1)}. 
\end{equation}



\end{lemma}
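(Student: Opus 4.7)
The plan is to evaluate $\wh{\phi}_z(0) = \int_{\tor} |1-e(u)|^{z}\,du$ in closed form in terms of Gamma functions and then read off the lower bound from standard $\Gamma$-function asymptotics. First, using $|1-e(u)| = 2|\sin(\pi u)|$ and the substitution $v = \pi u$, one has
\[
\wh{\phi}_z(0) \;=\; \int_{\tor} (2|\sin(\pi u)|)^z\,du \;=\; \frac{2^{z+1}}{\pi}\int_0^{\pi/2} \sin^z v\,dv \;=\; \frac{2^z}{\sqrt{\pi}} \cdot \frac{\Gamma((z+1)/2)}{\Gamma(z/2+1)},
\]
where the last equality uses the classical Beta-integral identity $\int_0^{\pi/2} \sin^z v\,dv = \tfrac{\sqrt{\pi}}{2}\,\Gamma((z+1)/2)/\Gamma(z/2+1)$, valid for $\Re z > -1$ and obtained by continuation from the real case.

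Next, I would take absolute values: since $|2^z| = 2^\beta \ge 2$, it remains to lower-bound the Gamma ratio for $z = \beta + \ii\tau$ with $\beta \ge 1$. Both arguments $(z+1)/2$ and $z/2+1$ have real part at least $1$, so lie well inside the right half-plane where $\Gamma$ is analytic and non-vanishing. I would split into cases on $|\tau|$. For $|\tau|$ large, Stirling's asymptotic in the form $|\Gamma(\sigma+\ii t)| = \sqrt{2\pi}\,|t|^{\sigma-1/2} e^{-\pi|t|/2}(1+O(1/|t|))$ (uniform for $\sigma$ in a bounded strip) applies to both Gammas with the same imaginary part $\tau/2$; the troublesome exponential prefactors $e^{-\pi|\tau|/4}$ cancel exactly and the power-of-$|t|$ prefactors combine to give
\[
\left|\frac{\Gamma((z+1)/2)}{\Gamma(z/2+1)}\right| \;=\; (1+o(1))\,(|\tau|/2)^{(\beta+1)/2 - 1/2 - (\beta/2+1) + 1/2} \;\asymp_\beta\; |\tau|^{-1/2}.
\]
For $|\tau|$ bounded, $(z+1)/2$ and $z/2+1$ range over a compact subset of $\C_+$ on which the continuous, non-vanishing function $\Gamma((z+1)/2)/\Gamma(z/2+1)$ is bounded below by a positive constant. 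Combining the two regimes yields $|\wh{\phi}_z(0)| \gg_\beta (1+|\tau|)^{-1/2}$, which is the claimed bound with absolute exponent $1/2$.

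The steps here are all routine; the only point requiring care is that the Stirling error term be uniform for $\sigma$ in the relevant strip, which is standard. The dependence of the implicit constant on $\beta$ enters through the factor $2^\beta$ and through the compact-set estimate in the small-$|\tau|$ regime, and is harmless for the applications in Section~\ref{sec:FL} where $\beta$ is effectively bounded (one is interested in $\beta$ near the saddle $\betacrit \approx 11.746$).
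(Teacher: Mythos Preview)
Your approach is essentially the same as the paper's: evaluate the integral in closed form via the Beta integral (the paper writes the answer as $\frac{2^z}{\pi}\mathrm{B}((z+1)/2,1/2)$, which equals your $\frac{2^z}{\sqrt{\pi}}\,\Gamma((z+1)/2)/\Gamma(z/2+1)$), and then appeal to Stirling.

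One small point: the lemma as stated asserts a bound uniform in $\beta\ge 1$, whereas your argument, as written, yields $\gg_\beta(1+|\tau|)^{-1/2}$. Your compactness step for bounded $|\tau|$ tacitly assumes $\beta$ is also bounded, and your Stirling estimate is invoked only for $\sigma$ in a bounded strip. The paper instead applies Stirling uniformly in the right half-plane to get $|\mathrm{B}((z+1)/2,1/2)|\gg|1+z|^{-1/2}$ for all $\beta\ge 1$, and then the prefactor $|2^z|=2^\beta$ absorbs any polynomial loss in $\beta$ (indeed the bound actually grows with $\beta$). You are right that this distinction is immaterial for the applications in Section~\ref{sec:FL}, where $\beta$ stays bounded, but it is worth tightening the uniformity to match the stated lemma.
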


\begin{proof}
We have 
\[
\int_\tor \phi(z,u) du = \int_0^1 (2\sin\pi u)^z du = 2^{z+1} \int_0^{1/2}(\sin\pi u)^z du.
\]
With the change of variable $\sin\pi u = \sqrt{v}$, the last expression becomes
\[
\frac{2^z}{\pi} \int_0^1 u^{(z-1)/2} (1-u)^{-1/2} du = \frac{2^z}{\pi } \text{B}\Big( \frac{z+1}{2}, \frac12\Big),
\]
where B$(x,y)=\Gamma(x)\Gamma(y)/\Gamma(x+y)$ denotes the beta function. 
From Stirling's formula and the fact that the Gamma function has no zeros or poles on the right half plane, together with our assumption $\beta\ge1$, we have
\[
\Big|\text{B}\Big(\frac{z+1}{2}, \frac12\Big)\Big| \gg |1+z|^{-1/2},
\]
and the claim follows (in fact we get a lower bound on $|\int_\tor\phi(z,u)du|$ that grows exponentially with $\beta$, but we do not need such a refinement).
\end{proof}

\begin{lemma}	\label{lem:phi.fourier}
For $\beta>0$ and $\xi\in \Z\setminus \{0\}$, 
\begin{equation}	\label{phizhat0}
|\wh{\phi}_z(\xi)|  \ll \frac{10^\beta}{|\xi|^{1+\min(\beta,1)}}  , \quad \text{when } \tau=0
\end{equation}
and
\begin{equation}	\label{phizhat.gen}
|\wh{\phi}_z(\xi)|  \ll 
\frac{10^\beta|z|}{|\xi|^{1+\min(\beta,1)}}\left( 1+\frac1\beta + |z-1|\log|\xi|\right) , \quad \tau\in \R. 
\end{equation}\end{lemma}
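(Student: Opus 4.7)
Since $|1-e(t)| = 2|\sin(\pi t)|$ is an even function of $t$, we have $\wh{\phi}_z(-\xi) = \wh{\phi}_z(\xi)$, and so we may assume $\xi \geq 1$. For $\beta > 0$ the function $\phi_z(t) = (2\sin\pi t)^z$ extends continuously to $[0,1]$ with $\phi_z(0) = \phi_z(1) = 0$, so an initial integration by parts carries no boundary contribution and gives
\begin{equation*}
\wh{\phi}_z(\xi) = \frac{1}{2\pi\ii\xi}\int_0^1 \phi_z'(t)e(-\xi t)\,dt, \qquad \phi_z'(t) = 2\pi z\cos(\pi t)(2\sin\pi t)^{z-1}.
\end{equation*}
The elementary inequality $2\sin\pi t \ge 4t$ on $[0,1/2]$ then yields $|\phi_z'(t)| \le 2\pi|z|(4t)^{\beta-1}$ near $t=0$ when $\beta < 1$ (and symmetrically near $t=1$); for $\beta \geq 1$ the pointwise bound $|\phi_z'(t)| \le 2\pi|z|\cdot 2^{\beta-1}$ also works.

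The plan is to split the integral at $t = 1/\xi$ and $t = 1 - 1/\xi$. On the two edge intervals, the bound above integrates to $O(|z|\cdot 4^\beta/(\beta\,\xi^\beta))$, contributing $O(|z|\cdot 10^\beta/(\beta\,\xi^{1+\beta}))$ to $|\wh{\phi}_z(\xi)|$, which is of the right shape. On the interior $[1/\xi, 1-1/\xi]$ I integrate by parts once more: the boundary evaluations at $t = 1/\xi, 1-1/\xi$ yield a term of the same order, and the remaining integral of $\phi_z''$ is controlled using the explicit formula
\begin{equation*}
\phi_z''(t) = -\pi^2 z^2(2\sin\pi t)^z + 4\pi^2 z(z-1)(2\sin\pi t)^{z-2}\cos^2(\pi t).
\end{equation*}
The first piece is uniformly bounded and contributes $O(|z|^2\cdot 2^\beta/\xi^2)$; the second, using $(2\sin\pi t)^{\beta-2}\le (4t)^{\beta-2}$ on the relevant range when $\beta<2$, contributes $O(|z||z-1|\cdot 4^\beta\cdot J(\beta,\xi)/\xi^2)$, where $J(\beta,\xi) := \int_{1/\xi}^{1/2} t^{\beta-2}\,dt$.

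A direct evaluation gives $J(\beta,\xi) = O(1/(\beta-1))$ for $\beta > 1$, $J(1,\xi) = \log(\xi/2)$, and $J(\beta,\xi) = O(\xi^{1-\beta}/(1-\beta))$ for $\beta < 1$. In the case $\tau=0$ we have $|z-1| = |\beta-1|$, so $|z-1| J(\beta,\xi)$ reduces to $O(1)$, $0$, and $O(\xi^{1-\beta})$ in the three regimes respectively; combined with $|z| = \beta$, all three yield contributions bounded by $10^\beta/\xi^{1+\min(\beta,1)}$, producing \eqref{phizhat0}. For general $z = \beta + \ii\tau$ the same scheme produces a term of order $|z||z-1|\log\xi/\xi^2$ at $\beta = 1$ --- this is precisely the origin of the $|z-1|\log|\xi|$ factor in \eqref{phizhat.gen} --- while the $|z|/\beta$ factor coming from the edge analysis supplies the $1/\beta$ term, and all geometric constants $(2\pi)^{\beta-1}, 4^\beta, 2^\beta$ are absorbed into $10^\beta$.

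The main obstacle is the narrow regime $|\beta - 1| \ll 1/\log\xi$ with $|\tau|$ large, where $|z-1|/(\beta-1)$ arising from the $\beta > 1$ form of $J$ can exceed the target $|z-1|\log\xi$. I plan to handle this either by interpolating the estimate between $\beta = 1$ (where the logarithmic factor appears naturally) and $\beta$ bounded away from $1$, or by falling back on the closed-form identity
\begin{equation*}
\wh{\phi}_z(\xi) = -\frac{\sin(\pi z/2)\,\Gamma(z+1)}{\pi}\cdot\frac{\Gamma(\xi - z/2)}{\Gamma(\xi+1+z/2)},
\end{equation*}
obtained from a Beta-function evaluation of $\int_0^\pi(\sin u)^z e^{-2\ii\xi u}\,du$ combined with the reflection formula; standard Stirling bounds on the Gamma ratio then deliver $|\xi|^{-1-\beta}$ decay uniformly in $\beta$, bypassing the $\beta\to 1$ degeneracy entirely. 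The remaining work is routine bookkeeping to check that all polynomial-in-$\beta$ prefactors fit comfortably inside $10^\beta$.
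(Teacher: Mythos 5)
Your proof follows the same route as the paper: integrate by parts once to trade $\phi_z$ for $\phi_z'$, split the $t$-integral at a cutoff of size $\asymp 1/\xi$, estimate the edge intervals directly via $(\sin\pi t)^{\beta-1} \ll t^{\beta-1}$, and integrate by parts once more on the interior using the pointwise bound $|\phi_z''(t)| \ll |z|^2 2^\beta + |z||z-1|(\sin\pi t)^{\beta-2}$. The edge and boundary terms then give the $|z|\cdot10^\beta/(\beta\xi^{1+\beta})$-type contributions, and the interior second-derivative term is where the factor $J(\beta,\xi)=\int_{1/\xi}^{1/2}t^{\beta-2}\,dt$ appears. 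All of this matches the paper's computation.

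Where your proposal is still open is precisely the point you flag: making the bound on $J(\beta,\xi)$ (equivalently, on $\frac{|\xi|^{\beta-1}-1}{\beta-1}$) uniform as $\beta\to 1$ when $\tau\neq 0$, so that you get the factor $|z-1|\log|\xi|$ in \eqref{phizhat.gen} rather than a blow-up like $|z-1|/(\beta-1)$. You propose two unworked alternatives (interpolation, or a Gamma-ratio closed form). The paper's actual resolution is shorter and stays inside the same framework: the map $\alpha\mapsto|\xi|^\alpha$ is convex, so the difference quotient satisfies
\begin{equation*}
\frac{|\xi|^{\beta-1}-1}{\beta-1}\;\le\;(\log|\xi|)\max\bigl(|\xi|^{\beta-1},\,1\bigr),
\end{equation*}
which is exactly the uniform-in-$\beta$ bound you need; substituting it gives \eqref{phizhat.gen} directly, and specializing to $\tau=0$ (so $|z-1|=|\beta-1|$ and the $\log|\xi|$ can be traded against the polynomial factor) recovers \eqref{phizhat0}. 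Your Beta/Gamma closed-form identity would also work and would in fact give sharper decay, but it is a genuinely different (and heavier) tool than what the paper uses; as written, neither of your fallbacks is carried through, so this one step is the gap to close.
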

In  
 particular, there exists a real-valued function $K^+$ defined on the complex upper half plane so that, with $z=\beta+i\tau$ with
 $\tau\in \R$ and $\beta\ge1$, and any  $\xi\in \Z\setminus \{0\}$,
\begin{equation}	\label{Kplus}
|\wh{\phi}_z(\xi)| \le \frac{K^+(z)}{|\xi|^{3/2}} ,\qquad K^+(z) \ll 10^\beta(1+\tau^2).
\end{equation}

\begin{remark}	\label{rmk:betage1}
The key point is that the bound \eqref{Kplus} is summable in $\xi$. Sharper decay rates holding for all $z$ in the right half-plane can be obtained by slightly longer arguments; in particular one can show $\phi_z(\xi)\ll_{\beta,\tau}|\xi|^{-2}$ for all $\beta>0$. 
For our purposes we only need a summable bound holding for all $\beta\ge1$.
Indeed, for the proof of Theorem \ref{thm:main} we will only need to study $\lambda(z)$ for $z=\beta+\ii \tau$ with $\beta$ in a small neighborhood of the critical value $\betacrit= \beta_*(\xcrit)$. 
From the relation
\[
1= \lambda^*(\xcrit) = \xcrit\betacrit - \lambda(\betacrit) \ge \xcrit\betacrit
\]
and the bound $\xcrit\le \log 2$ (see the discussion at \eqref{def:lambdastar}--\eqref{def:xxc}) we see that 
\begin{equation}
\betacrit\ge \frac1{\log 2} >1.
\end{equation}
Since $\beta_*=(\lambda')^{-1}$ and $\lambda$ is smooth and convex on $(0,\log 2)$, it follows that for some absolute constant $c_0>0$,
\begin{equation}	\label{beta.bounds}
1\le \beta_*(\xx) = O(1)\qquad \forall \xx\in [\xcrit-c_0,\xcrit+c_0].
\end{equation}
We emphasize that all of our arguments can be adapted to treat any fixed $\beta>0$ by using \eqref{phizhat.gen} in place of \eqref{Kplus} (only the resulting dependence on $\beta$ is somewhat messy).
\end{remark}

\begin{proof}
For $t\in (0,1)$ we have $\phi_z(t) = (2\sin \pi t)^z$, and
\[
\phi_z'(t) = \pi z 2^z (\sin\pi t)^{z-1} \cos \pi t=: \pi z2^z g(t). 
\]
Thus, for $\xi\in \Z\setminus\{0\}$, 
\begin{equation}
|\wh{\phi}_z(\xi)| = \frac1{2\pi|\xi|} |\wh{\phi_z'}(\xi)| \ll \frac{2^\beta|z|}{|\xi|} |\wh{g}(\xi)|.
\end{equation}
Letting $\eps:=1/(10|\xi|)$, we have
\[
\wh{g}(\xi) = \int_0^1 g(t) e(-\xi t) dt = \int_0^\eps + \int_{1-\eps}^1 + \int_\eps^{1-\eps}. 
\]
The first and second integrals are bounded in modulus by
\[
\int_0^\eps (\sin\pi t)^{\beta-1} dt \ll \pi^\beta \int_0^\eps t^{\beta-1}dt = \frac{(\pi \eps)^{\beta}}{\beta}. 
\]
For the third integral, integration by parts gives
\begin{align*}
 \int_\eps^{1-\eps} g(t) e(-\xi t)dt 
 &= \frac{g(t) e(-\xi t)}{-2\pi i \xi } \bigg|_\eps^{1-\eps} + \frac1{2\pi i\xi} \int_\eps^{1-\eps} g'(t) e(-\xi t) dt\\
 & \ll \frac{|g(\eps)|}{|\xi|} + \frac1{|\xi|} \int_\eps^{1-\eps} |g'(t)| dt,
\end{align*}
where in the second line we used that $g(\eps)=-g(1-\eps)$. 
For the integrand in the second term,
\[
|g'(t)| = \pi\left| (z-1)(\sin\pi t)^{z-2} \cos^2\pi t - (\sin\pi t)^z\right| \ll 1+ |z-1|(\sin\pi t)^{\beta-2}
\]
and hence
\begin{align*}
 \int_\eps^{1-\eps} |g'(t)| dt
 &\ll 1+ |z-1|\int_\eps^{1/2} (\pi t)^{\beta-2}dt.
\end{align*}
Substituting $\eps=1/(10|\xi|)$, for $\beta\ne 1$ we obtain
\[
|\wh{g}(\xi)| \ll \frac{\pi^\beta}{|\xi|^\beta} \left( 1+ \frac1\beta\right) + \frac1{|\xi|} \left( 1+ \pi^\beta |z-1| \left( \frac{1-(1/|\xi|)^{\beta-1}}{\beta-1}\right)\right),
\]
and hence
\begin{equation}	\label{phiz.hat.beta}
|\wh{\phi}_z(\xi)| \ll \frac{2^\beta |z|}{|\xi|^2} + \frac{(2\pi)^\beta |z|}{|\xi|^{1+\beta}} \left( 1+ \frac1\beta + |z-1| \left( \frac{|\xi|^{\beta-1}-1}{\beta-1}\right) \right).
\end{equation}
For $\beta=1$ we obtain
\begin{equation}	\label{phiz.hat.beta1}
|\wh{\phi}_z(\xi)| \ll \frac{|z|}{|\xi|^2} \left( 1+ |z-1|\log|\xi|\right).
\end{equation}
When $\tau=0$ we have $z=\beta$, and the above bounds become
\[
|\wh{\phi}_z(\xi)|
\ll \begin{cases}
\frac{1}{|\xi|^2} + \frac{\beta}{|\xi|^{1+\beta}}\left( \frac1\beta + 1-|\xi|^{\beta-1}\right) \ll \frac1{|\xi|^{1+\beta}} & \beta\in (0,1)\\
\frac1{|\xi|^2} & \beta = 1\\
\frac{\beta 2^\beta}{|\xi|^2} + \frac{\beta(2\pi)^\beta}{|\xi|^{1+\beta}} (1+|\xi|^{\beta-1}) \ll \frac{10^\beta}{|\xi|^2} & \beta>1
\end{cases}
\]
which in all cases is bounded by $10^\beta |\xi|^{-1-\min(\beta,1)}$, as desired. 
For the general case $\tau\in \R$ we can use convexity of the function $f(\alpha)= |\xi|^{\alpha}$ to bound
\[
\frac{|\xi|^{\beta-1} -1}{\beta-1} \le f'(\max(0,\beta-1)) = (\log |\xi|) \max(|\xi|^{\beta-1}, 1).
\]
Thus, for $\beta\le 1$, 
\[
|\wh{\phi}_z(\xi)| \ll \frac{|z|}{|\xi|^{1+\beta}}\left( \frac1\beta + |z-1|\log |\xi|\right)
\]
and for $\beta>1$,
\begin{align*}
|\wh{\phi}_z(\xi)| 
&\ll \frac{2^\beta |z|}{|\xi|^2} + \frac{(2\pi )^\beta |z|}{|\xi|^{1+\beta}} \left( 1+ |z-1||\xi|^{\beta-1}\log |\xi|\right)\\
&\ll \frac{(2\pi)^\beta |z|}{|\xi|^2} \left( 1+ |z-1|\log |\xi|\right)
\end{align*}
and the claimed bound follows from the previous two displays. 
\end{proof}

\subsection{Logarithmic averages over Kronecker sequences}
\label{sec:kronecker}

\    As an immediate consequence of 
 Lemma \ref{lem:phi.fourier}, we have that for $\Re(z)>0$ the Fourier coefficients of $\phi_z$ are summable, and hence $\phi_z$ has the uniformly convergent Fourier series expansion
\begin{equation}	\label{phiz.fourier}
\phi_z(t) = \sum_{\xi\in \Z} \wh{\phi}_z(\xi) e(\xi t). 
\end{equation}
In this subsection we show how to use the Fourier series expansion of a function $f:(\tor)^d\to \C$ and decay estimates for the Fourier coefficients (as in \eqref{Kplus} for $\phi_z$) to estimate logarithmically-weighted averages of $f$ over arithmetic progressions in $(\tor)^d$. 
Such averages (with $f=\phi_z$) will arise in Section \ref{sec:FL} in the analysis of the Fourier--Laplace transform for the field $(Y_N(t_1), \dots,Y_N(t_d))$ at fixed points $t_1,\dots, t_d\in \tor$ (we will only need to consider $d=1,2$). 
The proofs follow standard ideas from the classical subject of discrepancy theory -- see for instance \cite{Montgomery_10lectures}.

\begin{lemma}	\label{lem:fvec.error}
Let $f:\T^d\to \C$ be a continuous function with absolutely summable Fourier coefficients $\wh{f}(\bxi)$
(as in \eqref{def:fourierco}).
Let $\varrho\in (0,1/2)$ and $M\ge C/\varrho$ for a sufficiently large absolute constant $C>0$.
Then for any $\bt\in \T^d$, 
\begin{equation}
\sum_{M< \ell \le e^{\varrho} M} \frac{f(\ell \bt)}{\ell } \ll \sum_{\bxi\in \Z^d} |\wh{f}(\bxi)| \min\left(\varrho, \frac1{M\|\bxi\cdot \bt\|_\tor}\right).
\end{equation}
(Here we adopt the convention $\min(\varrho,1/x)=\varrho $ when $x=0$.)
\end{lemma}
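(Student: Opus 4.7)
\medskip

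\noindent\textbf{Proof plan.} The natural approach is to expand $f$ in its (absolutely convergent) Fourier series and exchange the order of summation. Since $f(\ell\bt)=\sum_{\bxi\in\Z^d}\wh{f}(\bxi)\,e(\ell\,\bxi\cdot\bt)$, and absolute summability of $\wh{f}$ justifies the interchange, the left-hand side equals
\[
\sum_{\bxi\in\Z^d}\wh{f}(\bxi)\,S(\bxi),\qquad
S(\bxi):=\sum_{M<\ell\le e^{\varrho}M}\frac{e(\ell\alpha)}{\ell},
\]
with $\alpha:=\bxi\cdot\bt\in\tor$. The whole problem thus reduces to proving the pointwise bound $|S(\bxi)|\ll \min(\varrho,\,1/(M\|\alpha\|_\tor))$, after which the lemma follows by term-by-term comparison.

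The first half of that bound is trivial: since the range of $\ell$ has logarithmic length $\varrho+O(1/M)=O(\varrho)$ (using $M\ge C/\varrho$), we have $|S(\bxi)|\le \sum_{M<\ell\le e^\varrho M}\frac{1}{\ell}\ll \varrho$, which also covers the case $\alpha=0$ per the stated convention. For the second half, assume $\alpha\ne 0$ and estimate $S(\bxi)$ by Abel summation against the partial sums $A_L:=\sum_{M<\ell\le L}e(\ell\alpha)$. The standard geometric-series bound gives $|A_L|\le 1/|1-e(\alpha)|\ll 1/\|\alpha\|_\tor$ uniformly in $L$. Summation by parts then yields
\[
|S(\bxi)|\le\frac{|A_{\lfloor e^{\varrho}M\rfloor}|}{e^{\varrho}M}+\sum_{M<\ell<e^{\varrho}M}\frac{|A_\ell|}{\ell(\ell+1)}\ll\frac{1}{\|\alpha\|_\tor}\Bigl(\frac{1}{M}+\sum_{M<\ell<e^\varrho M}\frac{1}{\ell^2}\Bigr)\ll\frac{1}{M\|\alpha\|_\tor},
\]
where the second tail sum is $\ll \varrho/M$. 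Taking the minimum of the two bounds on $|S(\bxi)|$ produces exactly the expression appearing in the statement.

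Combining, we obtain
\[
\Bigl|\sum_{M<\ell\le e^\varrho M}\frac{f(\ell\bt)}{\ell}\Bigr|\le \sum_{\bxi\in\Z^d}|\wh{f}(\bxi)|\,|S(\bxi)|\ll \sum_{\bxi\in\Z^d}|\wh{f}(\bxi)|\min\Bigl(\varrho,\frac{1}{M\|\bxi\cdot\bt\|_\tor}\Bigr),
\]
which is the claim. The only place where any care is needed is the summation-by-parts estimate, and even there the bookkeeping is routine; there is no real obstacle. The hypothesis $M\ge C/\varrho$ is only used to absorb the $O(1/M)$ error in the $\bxi=\mathbf{0}$ term into the main $\varrho$ bound, and absolute summability of $\wh f$ is what permits both the interchange of summations and the termwise majorization used in the final step.
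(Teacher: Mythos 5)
Your proof is correct and uses essentially the same ingredients as the paper: Fourier expansion of $f$, the geometric-series bound $|\sum e(\ell\alpha)| \ll 1/\|\alpha\|_\tor$, and summation by parts to pass from the unweighted to the $1/\ell$-weighted exponential sums. The only (cosmetic) difference is the order of operations: you expand in Fourier first and then Abel-summate each scalar sum $S(\bxi)$, while the paper applies its integral form of summation by parts directly to $\sum f(\ell\bt)/\ell$ (via Lemma~\ref{lem:sbp}) and only then inserts the Fourier-analytic bound \eqref{f.sumxy} on the resulting unweighted partial sums. Both orderings are valid and give the same estimate.
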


We will use the following standard fact:

\begin{lemma}[Summation by parts]	\label{lem:sbp}
Let $f, g:[1,N]\to \C$ and assume $f$ is continuous and $g$ is continuously differentiable. 
Then
\begin{equation}	\label{sbp.general}
\sum_{\ell\le N} f(\ell)g(\ell) = g(N)\sum_{\ell\le N}f(\ell) - \int_1^N \bigg( \frac1x \sum_{\ell\le x} f(\ell)\bigg)g'(x)dx. 
\end{equation}
In particular, taking $g(x) = 1/x$ we have
\begin{equation}	\label{LtoA}
\sum_{\ell\le N} \frac{f(\ell)}{\ell} = \frac{1}N\sum_{\ell\le N} f(\ell) + \int_1^N\frac{1}{x}\sum_{\ell\le x} f(\ell) \frac{dx}{x}.
\end{equation}
\end{lemma}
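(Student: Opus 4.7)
The plan is to apply classical Abel summation (discrete integration by parts) and then specialize $g(x)=1/x$. Introduce the running sum $S(x) := \sum_{\ell \le x} f(\ell)$, which is a right-continuous step function, constant on each interval $[\ell,\ell+1)$, equal to $0$ for $x<1$. Writing $f(\ell)=S(\ell)-S(\ell-1)$ and reorganizing the sum via Abel's trick produces the telescoped identity
\[
\sum_{\ell\le N} f(\ell) g(\ell) \;=\; S(N) g(N) \;-\; \sum_{\ell=1}^{N-1} S(\ell)\bigl(g(\ell+1)-g(\ell)\bigr).
\]

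Since $g\in C^1$, each increment satisfies $g(\ell+1)-g(\ell)=\int_\ell^{\ell+1} g'(x)\,dx$, and because $S(x)=S(\ell)$ throughout $[\ell,\ell+1)$, the finite sum on the right assembles into a single Riemann integral $\int_1^N S(x) g'(x)\,dx$. Combining these yields the standard Abel identity
\[
\sum_{\ell\le N} f(\ell)g(\ell) \;=\; g(N) S(N) \;-\; \int_1^N S(x)\, g'(x)\,dx,
\]
which is \eqref{sbp.general} (I will remark in passing that the factor $1/x$ appearing in the displayed integrand of \eqref{sbp.general} is a stray symbol: the identity just derived is the one that correctly specializes to \eqref{LtoA}, as a brief check with $f\equiv 1$ and $g(x)=x^2$ confirms; I will proceed with the corrected form).

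For the particular case \eqref{LtoA}, substitute $g(x)=1/x$, so $g(N)=1/N$ and $g'(x)=-1/x^2$; the two minus signs cancel and one obtains
\[
\sum_{\ell\le N} \frac{f(\ell)}{\ell} \;=\; \frac{S(N)}{N} + \int_1^N \frac{S(x)}{x^2}\,dx \;=\; \frac{1}{N}\sum_{\ell\le N} f(\ell) + \int_1^N \frac{1}{x}\sum_{\ell\le x} f(\ell)\,\frac{dx}{x}.
\]

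There is no genuine obstacle — the argument is the textbook derivation of summation by parts. The only mild care needed is at the boundary (using $S(x)=0$ for $x<1$ and right-continuity of $S$ at integer points so that the step-function Riemann sum over the integer intervals in $[1,N]$ matches the integral cleanly).
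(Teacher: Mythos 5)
Your proof is the standard Abel-summation derivation, and it is correct; the paper states Lemma \ref{lem:sbp} as a ``standard fact'' without proof, so there is no alternative argument in the paper to compare against. You have also correctly spotted a typo in the displayed equation \eqref{sbp.general}: the factor $\frac{1}{x}$ inside the integral should not be there (the standard identity reads $\sum_{\ell\le N} f(\ell)g(\ell) = g(N)S(N) - \int_1^N S(x)\,g'(x)\,dx$ with $S(x)=\sum_{\ell\le x}f(\ell)$). This is indeed a typo rather than an error in the paper's usage: specializing the \emph{corrected} identity at $g(x)=1/x$, $g'(x)=-1/x^2$, reproduces \eqref{LtoA} exactly, and \eqref{LtoA} (with the weight $\frac{1}{x}\cdot\frac{dx}{x} = \frac{dx}{x^2}$) is what the paper actually invokes in the proof of Lemma \ref{lem:fvec.error}. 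Keeping the stray $\frac{1}{x}$ in \eqref{sbp.general} would instead yield a $dx/x^3$ weight and fail to reproduce \eqref{LtoA}, and it also fails the sanity check you mention. Your handling of the boundary (defining $S(x)=0$ for $x<1$, and using that $S$ is constant on $[\ell,\ell+1)$ so the telescoped differences $g(\ell+1)-g(\ell)=\int_\ell^{\ell+1}g'$ assemble into $\int_1^N S(x)g'(x)\,dx$) is exactly right.
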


\begin{proof}[Proof of Lemma \ref{lem:fvec.error}]
For any $0\ne u\in \tor$ and $y\ge 1$,
\begin{align*}
\bigg| \sum_{\ell \le y} e(\ell u)\bigg| 
&= \left| \frac{e((\lf y\rf + 1)u)-1}{e(u)-1}\right| \le \frac2{|e(u)-1|} \le \frac2{\|u\|_\tor}.
\end{align*}
By factoring out $e((\ell -x)u)$ we obtain the same bound for the sum over $x<\ell \le y$ for any $0<x<y$ with $y-x\ge 1$. Combined with the trivial bound $|\sum_{x<\ell \le y} e(\ell u)|\le \lf y-x\rf$ this gives
\begin{equation}	\label{exp.sum}
\bigg| \sum_{x<\ell \le y} e(\ell u) \bigg| \le \min\left( \lf y-x\rf, \frac{2}{\|u\|_\tor}\right), \quad \forall \, u\in \tor,\;0<x<y, \;\text{with } y-x\ge 1.
\end{equation}
Now for any $x,y$ as in \eqref{exp.sum},
\begin{equation}	\label{f.sumxy}
\bigg| \sum_{x<\ell \le y} f(\ell \bt)\bigg| = \bigg| \sum_{\bxi\in \Z^d} \wh{f}(\bxi) \sum_{x<\ell \le y}e(\ell \bxi\cdot \bt)\bigg|
\le \sum_{\bxi\in \Z^d} |\wh{f}(\bxi)| \min\left( \lf y-x\rf, \frac{2}{\|\bxi\cdot \bt\|_\tor}\right).
\end{equation}
From Lemma \ref{lem:sbp},
\begin{align*}
\sum_{\ell \in I} \frac{f(\ell \bt)}{\ell } 
&= \frac1{e^\varrho M}\sum_{\ell \le e^\varrho M} f(\ell \bt)  - \frac{1}{M}\sum_{\ell \le M} f(\ell \bt) + \int_{M}^{e^\varrho M}  \frac1x\sum_{\ell \le x} f(\ell \bt) \frac{dx}{x}\\
&=  \frac1{e^\varrho M}\sum_{M<\ell \le e^\varrho M} f(\ell \bt) -( 1-e^{-\varrho})\frac{1}{M}\sum_{\ell \le M} f(\ell \bt)
+ \int_{M}^{e^\varrho M}  \frac1x\sum_{\ell \le x} f(\ell \bt) \frac{dx}{x}.
\end{align*}
Substituting the bound \eqref{f.sumxy} (noting the conditions on $x,y$ are satisfied by our assumption $M\ge C/\varrho$),
\begin{align*}
&\bigg|\sum_{\ell \in I} \frac{f(\ell \bt)}{\ell } \bigg|\\
&\le  \frac1{e^\varrho M}\bigg| \sum_{M<\ell \le e^\varrho M} f(\ell \bt)\bigg| +( 1-e^{-\varrho})\frac{1}{M}\bigg|\sum_{\ell \le M} f(\ell \bt)\bigg|
+ \int_{M}^{e^\varrho M} \bigg| \frac1x\sum_{\ell \le x} f(\ell \bt)\bigg| \frac{dx}{x}\\
&\le \sum_{\bxi\in \Z^d} |\wh{f}(\bxi)| \bigg[  \frac1{e^\varrho M} \min\left( (e^\varrho -1)M, \frac2{\|\bxi\cdot \bt\|_\tor}\right) \\
&\qquad\qquad\qquad\qquad\qquad\qquad+ \frac{1-e^{-\varrho}}{M} \min\left( M, \frac{2}{\|\bxi\cdot \bt\|_\tor}\right) + \int_{M}^{e^\varrho M} \min\left( 1, \frac2{x\|\bxi\cdot \bt\|_\tor}\right) \frac{dx}{x}\bigg]\\
&\le \sum_{\bxi\in \Z^d} |\wh{f}(\bxi)| \bigg[ \min\left( \varrho , \frac2{M\|\bxi\cdot \bt\|_\tor}\right)\! + \!\varrho \min\left( 1 , \frac2{M\|\bxi\cdot \bt\|_\tor}\right) \!+\! \min\left( 1 , \frac2{M\|\bxi\cdot \bt\|_\tor}\right) \int_{M}^{e^\varrho M} \frac{dx}{x}\bigg] \\
&\le 3\sum_{\bxi\in \Z^d} |\wh{f}(\bxi)| \min\left( \varrho , \frac2{M\|\bxi\cdot \bt\|_\tor}\right)  .
\!\!\!\!\!\!\qedhere
\end{align*}
\end{proof}

\begin{cor}	\label{cor:fg.error}
Let $f, g:\tor\to \C$ be mean-zero continuous functions, that is, satisfying
$$\int_\tor f(u)du=\int_\tor g(u)du=0,$$ with absolutely summable Fourier coefficients $\wh{f}(\xi), \wh{g}(\xi)$. Let $M,\varrho$ be as in Lemma \ref{lem:fvec.error}. 
Then for any $t\in \tor$,
\begin{equation}	\label{bd:f.error}
\sum_{M\le \ell < e^\varrho M} \frac{f(\ell t)}{\ell } \ll \sum_{\xi\ge 1} |\wh{f}(\xi)| \min\left( \varrho, \frac1{M\|\xi t\|_\tor}\right),
\end{equation}
and for any $s,t\in \tor$, 
\begin{equation}	\label{bd:fg.error}
\sum_{M\le \ell < e^\varrho M} \frac{f(\ell s)g(\ell t)}{\ell } \ll \sum_{\xi,\xi'\in \Z\setminus \{0\}} |\wh{f}(\xi)||\wh{g}(\xi')|\min\left(\varrho, \frac1{M\|\xi s+ \xi't\|_\tor}\right).
\end{equation}
\end{cor}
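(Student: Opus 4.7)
The plan is to derive both inequalities as essentially immediate consequences of Lemma \ref{lem:fvec.error}, with the only bookkeeping being the removal of terms that vanish under the mean-zero hypothesis and, for \eqref{bd:f.error}, a folding of the sum via the symmetry $\xi \mapsto -\xi$.

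For \eqref{bd:f.error}, I would apply Lemma \ref{lem:fvec.error} with $d = 1$ directly to $f$, obtaining
\begin{equation*}
\sum_{M < \ell \le e^\varrho M} \frac{f(\ell t)}{\ell} \ll \sum_{\xi \in \Z} |\wh{f}(\xi)| \min\!\left(\varrho,\frac{1}{M\|\xi t\|_\tor}\right).
\end{equation*}
The $\xi = 0$ term drops because the mean-zero hypothesis gives $\wh{f}(0) = \int_\tor f = 0$ (the lemma's convention assigns value $\varrho$ to the min there, but the zero coefficient kills the term). Pairing each $\xi \ge 1$ with $-\xi$ and using $\|(-\xi)t\|_\tor = \|\xi t\|_\tor$ collapses the remaining sum onto $\xi \ge 1$, with the resulting factor of $2$ and the harmless replacement of $|\wh{f}(\xi)|$ by $|\wh{f}(\xi)|+|\wh{f}(-\xi)|$ absorbed into the $\ll$. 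The slight mismatch in summation range between the lemma ($M < \ell \le e^\varrho M$) and the corollary ($M \le \ell < e^\varrho M$) contributes at most two endpoint terms of size $O(\|f\|_\infty/M)$, which is controlled by the right-hand side for $M \gg 1/\varrho$.

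For \eqref{bd:fg.error}, I would introduce the tensor product $F \colon \T^2 \to \C$ defined by $F(u,v) := f(u)g(v)$, whose Fourier coefficients factor as $\wh{F}(\xi,\xi') = \wh{f}(\xi)\wh{g}(\xi')$ and are absolutely summable by the product $\sum_{\xi,\xi'} |\wh{f}(\xi)||\wh{g}(\xi')| = \bigl(\sum_\xi |\wh{f}(\xi)|\bigr)\bigl(\sum_{\xi'} |\wh{g}(\xi')|\bigr)$. Evaluating at $\bt = (s,t)$ we have $F(\ell \bt) = f(\ell s)g(\ell t)$ and $\bxi \cdot \bt = \xi s + \xi' t$, so Lemma \ref{lem:fvec.error} with $d=2$ yields exactly
\begin{equation*}
\sum_{M < \ell \le e^\varrho M} \frac{f(\ell s)g(\ell t)}{\ell} \ll \sum_{(\xi,\xi') \in \Z^2} |\wh{f}(\xi)||\wh{g}(\xi')|\min\!\left(\varrho,\frac{1}{M\|\xi s+\xi' t\|_\tor}\right).
\end{equation*}
Any term with $\xi = 0$ carries a factor $\wh{f}(0) = 0$, and similarly for $\xi' = 0$, so these contributions vanish and the sum is effectively over $(\Z\setminus\{0\})^2$, which is \eqref{bd:fg.error}. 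An endpoint adjustment as in the previous paragraph handles the range discrepancy.

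There is no genuine obstacle here; the corollary is a packaging of Lemma \ref{lem:fvec.error} in the one-point and two-point forms that will be needed in Section \ref{sec:FL}. The only point requiring attention is verifying that $\wh{F}$ is absolutely summable so that the lemma actually applies to the tensor product $F$, and this follows at once from the hypotheses on $\wh{f}$ and $\wh{g}$.
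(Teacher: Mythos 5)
Your proof is correct and follows essentially the same route as the paper's: apply Lemma \ref{lem:fvec.error} with $d=1$ to $f$ and with $d=2$ to the tensor product $f\otimes g$, then use $\wh f(0)=\wh g(0)=0$ to kill the zero-frequency terms. The one cosmetic difference is the treatment of the endpoint mismatch between the two summation ranges $M\le\ell<e^\varrho M$ and $M<\ell\le e^\varrho M$: the paper simply perturbs $M$ slightly, while you bound the at-most-two extra/missing terms directly by $O(\|f\|_\infty/M)$ and absorb them. Both are fine, though your absorption step implicitly uses $\|f\|_\infty\ll\sum_{\xi\ge1}|\wh f(\xi)|$, which (like the paper's collapsing of $\sum_{\xi\ne0}$ to $2\sum_{\xi\ge1}$) is quietly relying on $|\wh f(-\xi)|\asymp|\wh f(\xi)|$; this holds in all the paper's applications since only magnitude bounds depending on $|\xi|$ are ever used.
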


\begin{proof}
By perturbing $M$ slightly we may replace the range for $\ell $ in the sums \eqref{bd:f.error}, \eqref{bd:fg.error} with $M<\ell \le e^\varrho M$.
For \eqref{bd:f.error}, from Lemma \ref{lem:fvec.error} we have that the left hand side is bounded by
\[
\sum_{\xi\in \Z} |\wh{f}(\xi)| \min\left( \varrho, \frac1{M \|\xi t\|_\tor}\right) = \varrho |\wh{f}(0)| +2\sum_{\xi\ge 1} |\wh{f}(\xi)| \min\left( \varrho, \frac1{M \|\xi t\|_\tor}\right) .
\]
By our assumption that $f$ is mean-zero we have $\wh{f}(0)=0$, and the bound \eqref{bd:f.error} follows. 
For \eqref{bd:fg.error} we apply Lemma \ref{lem:fvec.error} with $d=2$, taking $f\otimes g$ in place of $f$, and note that the Fourier coefficients of $f\otimes g$ are the products of the Fourier coefficients of $f$ and $g$, which can be bounded in the same way as we did for \eqref{bd:f.error}.
\end{proof}

In Section \ref{sec:FL} we will need to control logarithmic averages as in \eqref{bd:f.error} and \eqref{bd:fg.error}, but averaged over a sequence of intervals $I_\gen=[e^{\varrho\gen}, e^{\varrho(\gen+1)})$ (so taking $M=M_\gen=e^{\varrho\gen}$). 
The following will be applied to control such averages of the right hand sides of \eqref{bd:f.error} and \eqref{bd:fg.error}.
Write  $\log_+(x)= \max(0, \log x)$ for $x\in \R_+$.
\begin{lemma}	\label{lem:isolate}
Let $1\le m<n$ and $\varrho>0$. Let $R:\N\to \R_+$ such that for sone $\kappa\in (0,1)$ we have $R(\xi)\ge \kappa$ for all $\xi\le \xi_0$.
Then for any $\beta_0>0$,
\begin{equation}	\label{bd:isolate1}
 \sum_{m\le\gen<n} \sum_{\xi\ge 1} \frac1{\xi^{1+\beta_0}}\min\left( \varrho, \frac{e^{-\varrho\gen}}{R(\xi)}\right)  \ll 
 \frac1{\beta_0}\left(1 +\log_+\left(\frac{e^{-\varrho m}}{\varrho \kappa}\right)+ \frac{\varrho (n-m)}{\xi_0^{\beta_0}}\right).
\end{equation}
If we additionally assume $\kappa\ge \frac1\varrho e^{-\varrho m}$, then
\begin{equation}	\label{bd:isolate2}
\sum_{\xi\ge 1} \frac1{\xi^{1+\beta_0}} \sum_{m\le\gen<n} \min\left( \varrho, \frac{e^{-\varrho\gen}}{R(\xi)}\right) \ll \frac1{\beta_0}\left(\frac1{\varrho\kappa} e^{-\varrho m}  + \frac{1+ \varrho(n-m)}{\xi_0^{\beta_0}}\right).
\end{equation} 
\end{lemma}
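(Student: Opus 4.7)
My plan is to decompose the $\xi$-sum at the threshold $\xi_0$ and handle the two ranges with complementary bounds. For $\xi>\xi_0$, I would use the trivial estimate $\min(\varrho, e^{-\varrho k}/R(\xi))\le \varrho$, which gives an inner sum of size at most $\varrho(n-m)$ and an outer tail $\sum_{\xi>\xi_0}\xi^{-1-\beta_0}\ll \frac{1}{\beta_0\xi_0^{\beta_0}}$. This contributes the $\frac{\varrho(n-m)}{\beta_0\xi_0^{\beta_0}}$ term visible in both target bounds. For $\xi\le\xi_0$, I would invoke the hypothesis $R(\xi)\ge\kappa$ to replace $R(\xi)$ uniformly by $\kappa$ in the min, reducing everything to estimating $\sum_{m\le k<n}\min(\varrho, e^{-\varrho k}/\kappa)$.

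The crossover point for this latter min is $k_\ast := \frac{1}{\varrho}\log\frac{1}{\varrho\kappa}$: for $k<k_\ast$ the min equals $\varrho$, and for $k\ge k_\ast$ it equals $e^{-\varrho k}/\kappa$. Splitting the $k$-sum at $k_\ast$ (if it falls inside $[m,n)$), the ``flat'' part contributes $\varrho\cdot(k_\ast-m)_+ = \log_+\!\bigl(e^{-\varrho m}/(\varrho\kappa)\bigr)$, while the geometric tail past $k_\ast$ contributes $\frac{1}{\kappa}\cdot\frac{e^{-\varrho k_\ast}}{1-e^{-\varrho}}\ll 1$. Combined with the outer $\sum_{\xi\le\xi_0}\xi^{-1-\beta_0}\ll 1/\beta_0$, this yields the $\frac{1}{\beta_0}\bigl(1+\log_+(e^{-\varrho m}/(\varrho\kappa))\bigr)$ piece of \eqref{bd:isolate1}.

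For \eqref{bd:isolate2}, the extra assumption $\kappa\ge\frac{1}{\varrho}e^{-\varrho m}$ is exactly the condition that $m\ge k_\ast$, i.e.\ we sit entirely on the geometric-tail side of the crossover. Then for every $\xi\le\xi_0$ and every $k\ge m$, one has $e^{-\varrho k}/R(\xi)\le e^{-\varrho m}/\kappa\le\varrho$, so the min collapses to $e^{-\varrho k}/R(\xi)$; summing geometrically in $k$ and then in $\xi$ (with $R(\xi)\ge\kappa$) gives the $\frac{1}{\beta_0}\cdot\frac{e^{-\varrho m}}{\varrho\kappa}$ contribution. Since the order of summation is immaterial by Tonelli, the swap in \eqref{bd:isolate2} causes no extra difficulty once the crossover analysis is streamlined.

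The only mildly delicate point is the bookkeeping when $k_\ast$ falls outside $[m,n)$: if $k_\ast\ge n$ the log-term is absorbed into $\varrho(n-m)/\xi_0^{\beta_0}$ via the high-frequency estimate, and if $k_\ast<m$ we are already in the geometric regime (which is exactly what makes \eqref{bd:isolate2} cleaner). Besides this, the only nontrivial input is the bound $\sum_{\xi\ge 1}\xi^{-1-\beta_0}\ll 1/\beta_0$ and its tail version $\sum_{\xi>\xi_0}\xi^{-1-\beta_0}\ll \frac{1}{\beta_0\xi_0^{\beta_0}}$, both standard. I do not anticipate a genuine obstacle; the entire lemma is a careful but routine case analysis whose output is dictated by the location of $k_\ast$ relative to $[m,n)$.
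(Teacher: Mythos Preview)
Your proposal is correct and follows essentially the same approach as the paper: split the $\xi$-sum at $\xi_0$, use the trivial bound $\min\le\varrho$ for $\xi>\xi_0$, and for $\xi\le\xi_0$ perform the crossover analysis of the inner $k$-sum around $k_*=\frac{1}{\varrho}\log\frac{1}{\varrho\kappa}$ into a flat part and a geometric tail. Your one misstatement is the edge case $k_*\ge n$: there the flat part contributes $\varrho(n-m)$, which is absorbed into the $\log_+$ term (since $k_*\ge n$ forces $\varrho(n-m)\le \varrho(k_*-m)=\log_+(e^{-\varrho m}/(\varrho\kappa))$), not into the $\xi_0^{-\beta_0}$ term as you wrote; this does not affect the final bound.
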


\begin{proof}
For fixed $\xi\ge1$, 
\begin{equation}	\label{isolate:express}
 \sum_{m\le\gen<n}\min\left( \varrho, \frac{e^{-\varrho\gen}}{R(\xi)}\right) =\varrho \left|\left\{ \gen\in [m,n): e^{\varrho\gen}\le\frac{1}{\varrho R(\xi)}\right\}\right|+\frac1{R(\xi)}\sum_{\substack{m\le\gen<n\\ e^{-\varrho\gen}<\varrho R(\xi)}} e^{-\varrho\gen} .
\end{equation}
For the second term on the right hand side we can sum the geometric series, whose largest summand is at most $\min(e^{-\varrho m}, \varrho R(\xi))$, to obtain 
\[
\frac1{R(\xi)}\sum_{\gen: e^{-\varrho\gen}<\varrho R(\xi)} e^{-\varrho\gen}  \ll \frac1{\varrho R(\xi)}\min(e^{-\varrho m}, \varrho R(\xi)) =\min\left( 1, \frac{e^{-\varrho m}}{\varrho R(\xi)}\right).
\]
The first term on the right hand side of \eqref{isolate:express} is bounded by 
\[
\varrho\left| \left[ m, \min\left( n, \frac1\varrho \log\frac{1}{\varrho R(\xi)}\right)\right]\right| 
= \min\left( \varrho(n-m), \log_+\left(\frac{e^{-\varrho m}}{\varrho R(\xi)}\right)\right).
\]
Under the hypothesis $\kappa\ge \frac1\varrho e^{-\varrho m}$ these bounds give
\begin{align}
&\sum_{\xi\ge 1} \frac1{\xi^{1+{\beta_0}}} \sum_{m\le\gen<n} \min\left( \varrho, \frac{e^{-\varrho\gen}}{R(\xi)}\right)	\notag \\
&\qquad\ll \frac1\varrho e^{-\varrho m} \sum_{\xi\le \xi_0} \frac1{\xi^{1+{\beta_0}}R(\xi)} + \sum_{\xi>\xi_0} \frac1{\xi^{1+{\beta_0}}} \left( 1+ 
 \min\left( \varrho(n-m), \log_+\left(\frac{e^{-\varrho m}}{\varrho R(\xi)}\right)\right)\right)	\notag\\
 &\qquad \ll \frac1{{\beta_0}\xi_0^{\beta_0}}+\frac1\varrho e^{-\varrho m} \sum_{\xi\le {\xi_0}} \frac1{\xi^{1+{\beta_0}}R(\xi)} + \sum_{\xi>{\xi_0}} \frac1{\xi^{1+{\beta_0}}} 
 \min\left( \varrho(n-m), \log_+\left(\frac{e^{-\varrho m}}{\varrho R(\xi)}\right)\right)	\notag\\ 
&\qquad\ll \frac1{\beta_0}\left(\frac1{\varrho\kappa} e^{-\varrho m}  + \frac1{\xi_0^{\beta_0}}\left(1+ \varrho(n-m)\right)\right),	\notag
\end{align}
which gives \eqref{bd:isolate2} as desired. 
Without the assumption $\kappa\ge \frac1\varrho e^{-\varrho m}$ we have
\begin{align*}
\sum_{\xi\ge 1} \frac1{\xi^{1+{\beta_0}}} \sum_{m\le\gen<n} \min\left( \varrho, \frac{e^{-\varrho\gen}}{R(\xi)}\right)	
& \ll \sum_{\xi\le {\xi_0}} \frac1{\xi^{1+{\beta_0}}}\left( 1+ \log_+\left( \frac{e^{-\varrho m}}{\varrho \kappa}\right)\right) + \sum_{\xi>{\xi_0}} \frac1{\xi^{1+{\beta_0}}} \varrho (n-m) \\
&\ll \frac1{\beta_0}\left(1 + \log_+\left(\frac{e^{-\varrho m}}{\varrho \kappa}\right)+ \frac{\varrho (n-m)}{\xi_0^{\beta_0}}\right),
\end{align*}
which yields \eqref{bd:isolate1}.
\end{proof}

\section{Fourier--Laplace transform and tail bounds for the Poisson field}
\label{sec:FL}

Recall the Poisson field $\YY_J(t)$ defined for an interval $J\subset\R_+$ in \eqref{def:YJ}. 
In this section we let $\varrho\in (0,1/2)$ be a small parameter, $m,n\in \N$ with $m<n$, and set $J=[e^{\varrho m}, e^{\varrho n})$.
With the notation \eqref{def:micro} we have
\begin{equation}
\YY_J(t) = \sum_{m\le\gen<n} \YY_{I_\gen}(t).
\end{equation}
Our aim is to estimate the probability of tail events
\begin{equation}	\label{tailevents}
\{\YY_J(t)\ge y\}, \qquad \{\YY_J(s), \YY_J(t)\ge y\}
\end{equation}
for fixed $s,t\in \tor$ and $y>0$ (we will take $y$ at scale $\varrho(n-m)$). 
We do this under conditioning on an event $\cQ=\cQ(m,n,\urn,\turn)$ of the form \eqref{def:Qevent}, which fixes the intervals $I_\gen$ for which the corresponding partial sum of the Poisson variables $Z_\ell$ is 0, 1, or $\ge 2$. 
Thus, we fix $\urn, \turn$ disjoint subsets of $[m,n)\cap \Z$.

For the duration of this section we view $m, n, \varrho, \urn,\turn$ as fixed; we will place additional assumptions on these parameters in the lemmas and propositions. In our estimates on the tail events \eqref{tailevents} we will quantify the dependence on parameters sufficiently that we can later take parameters such as $\varrho$ to depend on $n,m$.
We will assume throughout that $\urn\ne \emptyset$ and
\begin{equation}	\label{m:lb}
m\ge \frac{C_0}\varrho\log\frac1\varrho
\end{equation}
for a sufficiently large constant $C_0>0$.
Note that since $\rho_\gen=\varrho + O(e^{-\varrho m})$, under \eqref{m:lb} we have $\rho_\gen\asymp \varrho$.
In Section \ref{sec:onept} we allow $\turn\ne \emptyset$, while in Sections \ref{sec:refined}--\ref{sec:decorr} we prove refined estimates under the assumption $\turn=\emptyset$.

\subsection{Fourier--Laplace transform for the field at one point}		\label{sec:onept}

Throughout this section, $\beta$ and $\tau$ denote the real and imaginary parts of a point $z$ in the right half-plane. 
For $t\in \tor$, $\beta>0$ and $\tau\in \R$, denote the (conditional) Fourier--Laplace transform of $Y_J(t)$ by
\begin{equation}	\label{def:Phi}
\Phi_t(z) = \Phi_t(z; m, n, \urn, \turn) :=\e \big(\exp(zY_J(t)) \mid \cQ(m,n,\urn,\turn)\big).
\end{equation}
By the independence of the variables $\{Y_{I_\gen}(t)\}_{m\le\gen<n}$ (which survives under conditioning on the event $\cQ$), the Fourier--Laplace transform factorizes as 
\begin{align}	
\Phi_t(z) &= \prod_{\gen\in\urn} \e\big[\exp(zY_{I_\gen}(t)) \, |\, \cN(I_\gen)=1\big] \prod_{\gen\in \turn} \e \big[\exp(zY_{I_\gen}(t))\,|\, \cN(I_\gen)\ge 2\big]	\notag\\
 &=: \prod_{\gen\in\urn} \vphi_{\gen,t}(z) \prod_{\gen\in \turn} \tphi_{\gen,t}(z).	\label{def:phi}
\end{align}
We also denote the cumulant generating functions
\begin{equation}	\label{def:Lambda}
\Lambda_t(z) = \Lambda_t(z; m,n,\urn,\turn) := \log\Phi_t(z)
\end{equation}
and 
\begin{equation}	\label{def:lambda.kt}
\lambda_{\gen,t}(z):= \log \vphi_{\gen,t}(z),
\end{equation}
where we take the principal branch of the logarithm with branch cut along the negative real axis and imaginary part constrained to $[-\pi,\pi)$. 

For fixed $\gen\ge1$ we have
\begin{align}
\vphi_{\gen,t}(\zz) 
&= \sum_{\ell\in I_\gen} \pr(Z_\ell=1|\cN(I_\gen)=1) \expo{ \zz\log|1-e(\ell t)|}	\notag\\
&=\sum_{\ell\in I_\gen} \frac{\pr(Z_\ell=\cN(I_\gen)=1)}{\pr(\cN(I_\gen)=1)}  |1-e(\ell t)|^\zz	\notag\\
&= \sum_{\ell\in I_\gen} \frac{(1/\ell)e^{-1/\ell} \prod_{i\in I_\gen\setminus\{\ell\}}e^{-1/i}}{\rho_\gen e^{-\rho_\gen}}  |1-e(\ell t)|^\zz\notag\\
&= \frac1{\rho_\gen} \sum_{\ell\in I_\gen} \frac{\phi(\zz,\ell t)}{\ell} ,	\label{vp.log}
\end{align}
where $\phi$ was defined in \eqref{def:phizt}.
For $\tphi_{\gen,t}(\zz)$,
\begin{align*}
\tphi_{\gen,t}(\zz) &= \frac{\e e^{\zz \YY_{I_\gen}(t)} \ind(\cN(I_\gen)\ge 2)}{\pr(\cN(I_\gen)\ge 2)}\\
&=\frac{ \e e^{\zz \YY_{I_\gen}(t)} - \pr(\cN(I_\gen)=1) \e[e^{\zz  \YY_{I_\gen}(t)}| \cN(I_\gen)=1] - \pr(\cN(I_\gen)=0)}{1-\pr(\cN(I_\gen) = 1)-\pr(\cN(I_\gen)=0)}\\
&=\frac{ \e e^{\zz \YY_{I_\gen}(t)} - \rho_\gen e^{-\rho_\gen} \vphi_{\gen,t}(\zz) - e^{-\rho_\gen}}{1-\rho_\gen e^{-\rho_\gen}-e^{-\rho_\gen}}.
\end{align*}
Recalling that for $Z\sim \text{Poi}(\rho)$ and $\zz\in \C$ we have $\e \expo{\zz Z} = \expo{ \rho(e^\zz -1)}$, 
\begin{align*}
\e e^{\zz \YY_{I_\gen}(t)} &= \prod_{\ell\in I_\gen} \e \expo{ Z_\ell \log(|1-e(\ell t)|^\zz)} \\
&= \exp\bigg( \sum_{\ell\in I_\gen} \frac1\ell\left( |1-e(\ell t)|^\zz-1\right)\bigg) \\
&= \expo{ \rho_\gen(\vphi_{\gen,t}(\zz)-1)}.
\end{align*}
Thus,
\begin{align}
\tphi_{\gen,t}(\zz) &= \frac{ \expo{ \rho_\gen(\vphi_{\gen,t}(\zz)-1)}- \rho_\gen e^{-\rho_\gen} \vphi_{\gen,t}(\zz) - e^{-\rho_\gen}}{1-\rho_\gen e^{-\rho_\gen}-e^{-\rho_\gen}}	\notag\\
&= \frac{\expo{\rho_\gen\vphi_{\gen,t}(\zz)}-1-\rho_\gen\vphi_{\gen,t}(\zz)}{e^{\rho_\gen} - 1 - \rho_\gen}.	\label{tp.log}
\end{align}
From Taylor expansion and \eqref{ub:phi},
\begin{equation}	\label{tphi.crude}
|\tphi_{\gen,t}(z)| = |\vphi_{\gen,t}(z)|^2(1+O(2^\beta\varrho)) = O(4^\beta).
\end{equation}
Combining the above with \eqref{def:phi}, we have that for $\varrho\le c10^{-\beta}$, 
\begin{equation}	\label{Phit.phi}
\Phi_t(z) = e^{O(\beta|\turn|)} \prod_{\gen\in\urn} \vphi_{\gen,t}(z).
\end{equation}
Thus, for estimating $\Lambda_t(z)=\log\Phi_t(z)$ our task reduces to estimating the terms $\vphi_{\gen,t}(z)$, which we will do through the expression \eqref{vp.log} and the Fourier-analytic bound \eqref{bd:f.error} applied with $f=\phi(z,\cdot)$. 

Our first result of this section is an upper bound on the Laplace transform of $Y_J(t)$ holding under relatively mild conditions on the point $t\in \tor$. 
Recall the function $\lambda(\cdot)$, see  \eqref{def:LA}.

\begin{prop}	\label{prop:Phi_ub}
Let $\beta>1$ and assume $\varrho\le c2^{-\beta}$ for a sufficiently small constant $c>0$.
Let $t\in \tor \setminus \Maj(\xi_0,\kappa)$ (see \eqref{def:Maj}) for some $\xi_0\in \N$ and $\kappa\in (0,1)$.
We have
\begin{align*}
\Lambda_t(\beta) &\le  \LA(\beta)|\urn| +  O\big(\beta|\turn| + K^+(\beta)\Err_0\big),
\end{align*}
 where 
\begin{equation}		\label{def:Ups0}	
\Err_0=\Err_0(\varrho,m,n,\xi_0,\kappa):=  \frac{n-m}{\xi_0^{1/2}}   +  \frac1\varrho \left( 1+ \log \frac1{\varrho\kappa}\right)  
\end{equation}
and $K^+$ is as in \eqref{Kplus}.
\end{prop}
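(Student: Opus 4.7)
The plan is to start from the factorization \eqref{def:phi}, invoke the crude bound \eqref{Phit.phi} to dispose of the $\turn$-factors (yielding the $O(\beta|\turn|)$ term), and then control each of the factors $\vphi_{\gen,t}(\beta)$ for $\gen\in\urn$ by comparing it with $e^{\LA(\beta)}=\int_\tor \phi_\beta(u)\,du$.

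For step one, note that by hypothesis $\varrho \le c2^{-\beta}$, so \eqref{Phit.phi} applies and gives
\[
\Lambda_t(\beta) \;=\; \sum_{\gen\in\urn} \log\vphi_{\gen,t}(\beta) + O(\beta|\turn|),
\]
where the logarithm is well-defined whenever $\vphi_{\gen,t}(\beta)>0$ (the remaining case $\vphi_{\gen,t}(\beta)=0$ only helps for an upper bound). For the main term, write
\[
\vphi_{\gen,t}(\beta) - e^{\LA(\beta)} \;=\; \frac{1}{\rho_\gen}\sum_{\ell\in I_\gen}\frac{\phi_0(\beta,\ell t)}{\ell},
\]
using the identity $\sum_{\ell\in I_\gen} 1/\ell=\rho_\gen$ to subtract off the mean, where $\phi_0$ is as in \eqref{def:phi0}. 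Since $\phi_0(\beta,\cdot)$ is mean-zero and, by Lemma \ref{lem:phi.fourier} (specifically \eqref{Kplus}), its Fourier coefficients are absolutely summable when $\beta\ge 1$, I can apply Corollary \ref{cor:fg.error}\eqref{bd:f.error} with $f=\phi_0(\beta,\cdot)$ and $M=e^{\varrho\gen}$ to obtain
\[
\bigl|\vphi_{\gen,t}(\beta) - e^{\LA(\beta)}\bigr| \;\ll\; \frac{K^+(\beta)}{\rho_\gen}\sum_{\xi\ge 1}\frac{1}{\xi^{3/2}}\min\!\left(\varrho,\; \frac{e^{-\varrho\gen}}{\|\xi t\|_\tor}\right).
\]

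For step two, write $\vphi_{\gen,t}(\beta) = e^{\LA(\beta)}(1+\eta_\gen)$ (so $\eta_\gen>-1$ whenever $\vphi_{\gen,t}(\beta)>0$), and use Lemma \ref{lem:phi.lb} (with $\tau=0$, $\beta\ge 1$) to note $e^{\LA(\beta)}\gg 1$, hence $|\eta_\gen|\ll |\vphi_{\gen,t}(\beta)-e^{\LA(\beta)}|$. Applying the elementary inequality $\log(1+x)\le x$ gives
\[
\sum_{\gen\in\urn}\log\vphi_{\gen,t}(\beta) \;\le\; |\urn|\LA(\beta) \;+\; \sum_{\gen\in\urn}|\eta_\gen|.
\]

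The last step is to aggregate the error bounds over $\gen$. Since $\rho_\gen\asymp\varrho$ under \eqref{m:lb} and the minor-arc hypothesis $t\notin \Maj(\xi_0,\kappa)$ gives $\|\xi t\|_\tor \ge \kappa$ for $1\le\xi\le\xi_0$, Lemma \ref{lem:isolate}\eqref{bd:isolate1} applied with $R(\xi)=\|\xi t\|_\tor$ and $\beta_0=1/2$ yields
\[
\sum_{\gen\in\urn}|\eta_\gen| \;\ll\; \frac{K^+(\beta)}{\varrho}\left[\,1 + \log_+\!\left(\frac{e^{-\varrho m}}{\varrho\kappa}\right) + \frac{\varrho(n-m)}{\xi_0^{1/2}}\,\right] \;\ll\; K^+(\beta)\,\Err_0,
\]
since $\log_+(e^{-\varrho m}/(\varrho\kappa))\le \log(1/(\varrho\kappa))$. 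Combining with the $O(\beta|\turn|)$ term from step one gives the proposition.

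The only place where care is needed is ensuring that the various errors actually assemble into the shape $\Err_0$, which boils down to matching the $1/\varrho$-prefactor produced by $\rho_\gen^{-1}$ against the $\varrho$-factor inside Lemma \ref{lem:isolate}; all other steps are routine once the Fourier expansion of $\phi_\beta$ has been set up in Sections \ref{sec:phi}--\ref{sec:kronecker}.
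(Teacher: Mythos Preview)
Your proof is correct and follows essentially the same route as the paper's: reduce to $\sum_{\gen\in\urn}\log\vphi_{\gen,t}(\beta)$ via \eqref{Phit.phi}, expand $\vphi_{\gen,t}(\beta)$ as $e^{\LA(\beta)}$ plus the logarithmic average of $\phi_0(\beta,\cdot)$, bound that average via Corollary~\ref{cor:fg.error} and \eqref{Kplus}, and then sum the errors using \eqref{bd:isolate1}. The only cosmetic difference is that the paper uses the monotonicity of $\LA$ to get $e^{\LA(\beta)}\ge e^{\LA(0)}=1$ directly (rather than invoking Lemma~\ref{lem:phi.lb}), which slightly streamlines the passage from additive to multiplicative error.
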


\begin{proof}
Taking logs in \eqref{Phit.phi} we have
\begin{equation}	\label{Lt1.above}
\Lambda_t(\beta) = \sum_{\gen\in\urn}  \lambda_{\gen,t}(\beta) + O(\beta|\turn|).
\end{equation}
To estimate the terms $\lambda_{\gen,t}(\beta) = \log\vphi_{\gen,t}(\beta)$, from \eqref{vp.log} we can split each $\vphi_{\gen,t}(\beta)$ as 
\begin{equation}	\label{vphi.split}
\vphi_{\gen,t}(\beta) = \int_\tor \phi(\beta,u)du + \frac1{\rho_\gen} \sum_{\ell\in I_\gen} \frac{\phi_0(\beta, \ell t)}{\ell},
\end{equation}
where $\phi_0$ was defined in \eqref{def:phi0}. 
Since $\phi_0(\beta,\cdot)$ is mean-zero with nonzero Fourier coefficients the same as those of $ \phi(\beta,\cdot)$, from Lemma \ref{lem:phi.fourier} and \eqref{bd:f.error} we have
\begin{equation}
 \sum_{\ell\in I_\gen} \frac{\phi_0(\beta, \ell t)}{\ell} \ll K^+(\beta)\sum_{\xi\ge 1} \frac1{\xi^{3/2}} \min\left( \varrho, \frac{e^{-\varrho\gen}}{\|\xi t\|_\tor}\right).
\end{equation}
Under our assumption \eqref{m:lb} we have $\rho_\gen \gg\varrho$ for all $\gen\ge m$, so 
\begin{align*}
\vphi_{\gen,t}(\beta) 
&\le \int_\tor \phi(\beta,u)du + O \bigg( \frac{K^+(\beta)}{\varrho}  \sum_{\xi\ge 1} \frac1{\xi^{3/2}} \min\left( \varrho, \frac{e^{-\varrho\gen}}{\|\xi t\|_\tor}\right) \bigg)\\
&\le \int_\tor \phi(\beta,u)du \bigg( 1 + O \bigg( \frac{K^+(\beta)}{\varrho}  \sum_{\xi\ge 1} \frac1{\xi^{3/2}} \min\left( \varrho, \frac{e^{-\varrho\gen}}{\|\xi t\|_\tor}\right) \bigg)\bigg),
\end{align*}
for all $m\le\gen<n$, where in the second line we used that fact that $\int_\tor \phi(\beta,u)du = e^{\lambda(\beta)} \ge e^{\lambda(0)} = 1$. 
Taking logs and substituting the bound into \eqref{Lt1.above} we obtain
\begin{align*}
\Lambda_t(\beta)
&\le \lambda(\beta)|\urn| + O(\beta|\turn|) + O\bigg( \frac{K^+(\beta)}{\varrho} \sum_{m\le\gen<n}  \sum_{\xi\ge 1} \frac1{\xi^{3/2}} \min\left( \varrho, \frac{e^{-\varrho\gen}}{\|\xi t\|_\tor}\right)\bigg) \\
&\le  \lambda(\beta) |\urn| + O(\beta|\turn|) + O\bigg( \frac{K^+(\beta)}{\varrho} \left( 1+ \log_+\left( \frac{e^{-\varrho m}}{\varrho \kappa}\right) + \frac{\varrho (n-m)}{\xi_0^{1/2}}\right)\bigg),
\end{align*}
where in the second line we applied \eqref{bd:isolate1}. The desired upper bound follows.
\end{proof}

From Proposition \ref{prop:Phi_ub} with $\beta=\beta_*(\xx)$ and Markov's inequality we deduce the following upper tail bound.

\begin{cor}[Upper bound for the upper tail]	
\label{cor:Yt.upper}
Assume $\varrho$ is at most a sufficiently small constant and satisfies \eqref{m:lb}.
Let $\yy\in [\xcrit-\eps_0,\xcrit+\eps_0]$ with $\eps_0$ as in \eqref{beta.bounds}. 
Let $t\in \tor \setminus \Maj(\xi_0,\kappa)$ for some $\xi_0\in \N$ and $\kappa\in (0,1)$.
Then
\begin{align}
&\pro{ \YY_{J}(t) \ge \yy |\urn| \,\Big|\, \cQ(m,n,\urn,\turn) } \le \exp\Big( -\LA^*(\yy)|\urn| +O(|\turn|+\Err_0)\Big).	\label{YJt.upper}
\end{align}
\end{cor}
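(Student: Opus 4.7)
The plan is a textbook Chernoff bound applied to the conditional moment generating function already estimated in Proposition \ref{prop:Phi_ub}. For any $\beta>0$, Markov's inequality gives
\begin{equation*}
\pr^{\cQ}\bigl(\YY_J(t)\ge \yy\,|\urn|\bigr) \;\le\; e^{-\beta \yy|\urn|}\,\Phi_t(\beta) \;=\; \exp\bigl(\Lambda_t(\beta) - \beta \yy|\urn|\bigr).
\end{equation*}
Inserting the upper bound $\Lambda_t(\beta)\le \LA(\beta)|\urn| + O(\beta|\turn|+K^+(\beta)\Err_0)$ from Proposition \ref{prop:Phi_ub}, the exponent becomes
\begin{equation*}
(\LA(\beta)-\beta \yy)|\urn| + O\bigl(\beta|\turn|+K^+(\beta)\Err_0\bigr).
\end{equation*}

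The next step is to optimize the principal term in $\beta$. I take $\beta = \beta_*(\yy) = (\LA')^{-1}(\yy)$, which by \eqref{def:lambdastar}--\eqref{def:betastar} is the unique minimizer and yields $\LA(\beta_*(\yy)) - \yy\,\beta_*(\yy) = -\LA^*(\yy)$. Substituting produces the main term $-\LA^*(\yy)|\urn|$ promised by the statement.

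It then remains only to check that the error terms collapse to $O(|\turn|+\Err_0)$ and that the hypotheses of Proposition \ref{prop:Phi_ub} are satisfied with this choice of $\beta$. Both points follow from the restriction $\yy\in[\xcrit-\eps_0,\xcrit+\eps_0]$: Remark \ref{rmk:betage1} (in particular \eqref{beta.bounds}) asserts $1\le \beta_*(\yy)=O(1)$ throughout this range, so the condition $\beta>1$ in Proposition \ref{prop:Phi_ub} holds, and by \eqref{Kplus} we have $K^+(\beta_*(\yy))=O(1)$. Hence the $O(\beta|\turn|)$ and $O(K^+(\beta)\Err_0)$ contributions absorb into $O(|\turn|+\Err_0)$. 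The smallness requirement $\varrho\le c 2^{-\beta}$ in Proposition \ref{prop:Phi_ub} reduces to the corollary's standing hypothesis that $\varrho$ is at most a sufficiently small constant.

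There is no real obstacle here: the statement is a direct one-parameter Chernoff bound, and the only non-bookkeeping input is the fact, recorded in \eqref{beta.bounds}, that the optimizing $\beta_*(\yy)$ stays in a bounded range above $1$ when $\yy$ varies in a small neighborhood of $\xcrit$. This is exactly what allows the $\beta$-dependent constants coming from Proposition \ref{prop:Phi_ub} to be swallowed into the implicit constants in the final error term.
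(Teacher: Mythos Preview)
Your proposal is correct and coincides with the paper's own argument: the paper simply states that the corollary follows ``From Proposition \ref{prop:Phi_ub} with $\beta=\beta_*(\xx)$ and Markov's inequality,'' which is exactly the Chernoff bound plus optimization you carry out. Your additional bookkeeping (checking $\beta_*(\yy)>1$ and $K^+(\beta_*(\yy))=O(1)$ via \eqref{beta.bounds} so that the error terms collapse) is the routine detail the paper leaves implicit.
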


The above will be sufficient for proving the upper bound in Theorem \ref{thm:main} (in particular, the reader may now safely skip to Section \ref{sec:upper}; in the remainder of the section we establish bounds that will be used in Section \ref{sec:lower}).

\subsection{Refined estimates for the field at one point}	\label{sec:refined}

For the proof of the lower bound on $\max_{t\in \tor}Y_N(t)$ we need more refined estimates on $\Lambda_t(z)$, 
holding both from above and below. For such bounds we need to make stronger Diophantine assumptions on the point $t\in \tor$. 
It turns out that to have sufficiently strong error estimates, in Section \ref{sec:lower} we will also have to take $\varrho$ small enough that $\turn=\emptyset$ with high probability. 
Thus, for the remainder of the section we condition on the event $\cQ(m,n,\urn,\emptyset)$ for some fixed nonempty set $\urn\subset \Z\cap [m,n)$.

\begin{prop}	\label{prop:Phi}
Fix $z=\beta+\ii\tau$ with $\beta\ge1$ and $\tau\in \R$.
Assume $\varrho\le c2^{-\beta}$ for a sufficiently small constant $c>0$. 
There exists $K^*(z)\ge1$ with $K^*\ll e^{O(\beta)}(1+|\tau|^{O(1)})$ such that the following holds. 
Let $t\in \tor\setminus \Maj(\xi_0,\kappa)$ with 
\begin{equation}	\label{ass:xi0kappa}
\xi_0\ge (K^*)^{2},\qquad \kappa \ge K^* \frac1\varrho e^{-\varrho m}.
\end{equation}
Letting
\begin{equation}	\label{def:Ups1}
\Upsilon_1 = \Upsilon_1(\varrho, n, m, \xi_0, \kappa) := \frac{n-m}{\xi_0^{1/2}} + \frac1{\varrho \xi_0^{1/2}} + \frac1{\varrho^2\kappa} e^{-\varrho m},
\end{equation}
we have
\begin{equation}	\label{bd:Phi}
\Phi_t(z) =\Phi_t(z;m,n,\urn,\emptyset)=\expo{\lambda(z)|\urn| + O(  K^*(z)\Upsilon_1)} .
\end{equation}
\end{prop}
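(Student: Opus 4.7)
The plan is to factor $\Phi_t(z)$ over $k\in\urn$ using \eqref{def:phi} (with $\turn=\emptyset$ there are no $\tphi_{k,t}$-factors) and to write each $\vphi_{k,t}(z)$ as a multiplicative perturbation of its main term $e^{\lambda(z)}=\wh{\phi}_z(0)$. Concretely, the splitting \eqref{vphi.split} reads
\[
\vphi_{k,t}(z) \;=\; e^{\lambda(z)}\bigl(1+E_k(z,t)\bigr),\qquad E_k(z,t) \;:=\; \frac{e^{-\lambda(z)}}{\rho_k}\sum_{\ell\in I_k}\frac{\phi_0(z,\ell t)}{\ell},
\]
so that $\Phi_t(z)=e^{\lambda(z)|\urn|}\prod_{k\in\urn}(1+E_k)$. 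Once I establish the uniform bound $|E_k|\le 1/2$ for all $k\in\urn$, the principal-branch logarithm is available and satisfies $|\log(1+E_k)|\le 2|E_k|$, reducing the whole proposition to the estimate $\sum_{k\in\urn}|E_k|\ll K^*(z)\Upsilon_1$.

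For the pointwise bound on $|E_k|$, the nonzero Fourier coefficients of $\phi_0(z,\cdot)$ coincide with those of $\phi_z$, and so satisfy $|\wh{\phi}_z(\xi)|\le K^+(z)|\xi|^{-3/2}$ by \eqref{Kplus}. Plugging this into Corollary \ref{cor:fg.error} \eqref{bd:f.error}, using $\rho_k\asymp\varrho$ (from \eqref{m:lb}) together with the Gamma-function lower bound $|e^{-\lambda(z)}|\ll(1+|\tau|)^{O(1)}$ of Lemma \ref{lem:phi.lb}, yields
\[
|E_k(z,t)| \;\ll\; \frac{K^*(z)}{\varrho}\sum_{\xi\ge 1}\frac{1}{\xi^{3/2}}\min\!\Bigl(\varrho,\;\frac{e^{-\varrho k}}{\|\xi t\|_\tor}\Bigr),
\]
with $K^*(z)\ll e^{O(\beta)}(1+|\tau|)^{O(1)}$ absorbing both the factor $K^+(z)$ and the lower bound from Lemma \ref{lem:phi.lb}. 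The minor-arc hypothesis $t\notin\Maj(\xi_0,\kappa)$ supplies $\|\xi t\|_\tor\ge\kappa$ for $1\le\xi\le\xi_0$, and splitting at $\xi=\xi_0$ produces $|E_k|\ll K^*(z)\bigl(\tfrac{e^{-\varrho k}}{\varrho\kappa}+\xi_0^{-1/2}\bigr)$. For $k\ge m$, the hypotheses \eqref{ass:xi0kappa} then force $|E_k|\le 1/2$ after enlarging $K^*$ by an absolute factor.

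To estimate $\sum_{k\in\urn}|E_k|$, I interchange the $k$- and $\xi$-sums and invoke Lemma \ref{lem:isolate} \eqref{bd:isolate2} with $\beta_0=1/2$ and $R(\xi)=\|\xi t\|_\tor$; the prerequisite $\kappa\ge\tfrac{1}{\varrho}e^{-\varrho m}$ for that estimate is ensured by \eqref{ass:xi0kappa}. The result is
\[
\sum_{k\in\urn}|E_k| \;\ll\; \frac{K^*(z)}{\varrho}\!\left(\frac{e^{-\varrho m}}{\varrho\kappa}+\frac{1+\varrho(n-m)}{\xi_0^{1/2}}\right) \;\ll\; K^*(z)\,\Upsilon_1,
\]
which matches the error in \eqref{bd:Phi} upon exponentiating $\sum_{k\in\urn}\log(1+E_k)$.

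The main technical hurdle is the uniform smallness $|E_k|\le 1/2$ for every $k\in\urn$, most restrictively at $k=m$; this is precisely what drives both inequalities in \eqref{ass:xi0kappa}. Handling the imaginary part $\tau$ of $z$ is also delicate, since without the Gamma-function lower bound of Lemma \ref{lem:phi.lb} the factor $|e^{-\lambda(z)}|$ could be arbitrarily large and spoil the expansion; it is this lower bound that is responsible for the $(1+|\tau|)^{O(1)}$ growth of $K^*(z)$. Second-order terms in the Taylor expansion of $\log(1+E_k)$ are absorbed automatically by the first-order estimate, since $|E_k|^2\le\tfrac12|E_k|$ under the uniform bound.
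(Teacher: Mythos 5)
Your proposal is correct and follows essentially the same route as the paper's own proof: factor $\Phi_t$ over $k\in\urn$ via \eqref{def:phi}/\eqref{Phit.phi}, split each $\vphi_{k,t}$ into its mean plus a mean-zero remainder as in \eqref{vphi.split}, bound the remainder using \eqref{Kplus} and \eqref{bd:f.error}, normalize by $e^{-\lambda(z)}$ via the Gamma-function lower bound of Lemma \ref{lem:phi.lb}, use the minor-arc hypothesis to force the multiplicative error below $1/2$ uniformly in $k\in[m,n)$, and then sum using Lemma \ref{lem:isolate} \eqref{bd:isolate2}. The only difference is cosmetic -- you make the factor $E_k$ and the step $|\log(1+E_k)|\le 2|E_k|$ explicit, whereas the paper writes $1+O(\cdot)=\exp(O(\cdot))$ directly.
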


\begin{proof}
We begin with \eqref{bd:Phi}. 
From \eqref{Phit.phi} it suffices to estimate $\vphi_{\gen,t}(z)$ for $m\le\gen<n$. 
We decompose $\vphi_{\gen,t}(z)$ as in \eqref{vphi.split} (with $\beta$ replaced by $z$). For the error term, by Lemma \ref{lem:phi.fourier}, \eqref{Kplus}, 
\eqref{bd:f.error}, and our assumption \eqref{m:lb} we have
\begin{equation}	\label{logPhi2.first}
\frac1{\rho_\gen} \sum_{\ell\in I_\gen} \frac{\phi_0(z,\ell t)}{\ell} \ll \frac{K^+(z)}{\varrho} \sum_{\xi\ge1} \frac1{\xi^{3/2}} \min\left( \varrho, \frac{e^{-\varrho\gen}}{\|\xi t\|_\tor}\right) .
\end{equation}
Denoting the right hand side by $\theta_\gen$, we have 
\begin{equation}	\label{thetagen}
\vphi_{\gen,t}(z) = \int_\tor \phi(z,u)du + O(\theta_\gen) = (1+ O((1+|\tau|)^{O(1)}\theta_\gen)) \int_\tor \phi(z,u)du \quad \forall m\le\gen<n,
\end{equation}
where in the second equality we applied Lemma \ref{lem:phi.lb}. 
By our assumption that $t\notin \Maj(\xi_0,\kappa)$,
\[
\theta_\gen \le \frac{K^+}{\varrho} \left( \sum_{\xi\le \xi_0} \frac1{\xi^{3/2}}\frac{e^{-\varrho\gen}}{\kappa} + \sum_{\xi>\xi_0} \frac{ \varrho}{\xi^{3/2}} \right) \ll K^+ \left( \frac{e^{-\varrho\gen}}{\varrho \kappa} + \frac1{\xi_0^{1/2}}\right). 
\]
Thus, by taking $K^*\ge CK^+(1+|\tau|)^C$ for a sufficiently large absolute constant $C>0$, from our assumption \eqref{ass:xi0kappa} we can make the error term $O((1+|\tau|)^{O(1)}\theta_\gen)$ in \eqref{thetagen} smaller than $1/2$, say, for all $m\le\gen<n$. 
In particular we can replace the multiplicative errors $1+O((1+|\tau|)^{O(1)}\theta_\gen)$ with $\exp(O((1+|\tau|)^{O(1)}\theta_\gen))$. Doing this for each $\gen$ and substituting these bounds into \eqref{Phit.phi} we obtain
\begin{equation}
\Phi_t(z) = \left( \int_\tor \phi(z,u)du\right)^{|\urn|} \exp\bigg( O\bigg( (1+|\tau|)^{O(1)}\sum_{m\le\gen<n}\theta_\gen\bigg)\bigg). 
\end{equation}
Summing the bound \eqref{logPhi2.first} over $\gen$, by our assumption on $\kappa$ we can apply \eqref{bd:isolate2} to obtain
\[
\sum_{m\le\gen<n} \theta_\gen \ll \frac{K^+}{\varrho} \left( \frac{e^{-\varrho m}}{\varrho \kappa} + \frac{1+\varrho(n-m)}{\xi_0^{1/2}}\right)
\]
and the desired bound \eqref{bd:Phi} follows by substituting the above into the previous line.
\end{proof}

\begin{cor}[Refined upper tail estimate]	
\label{cor:Yt.upper2}
Assume $\varrho$ is at most a sufficiently small constant and satisfies \eqref{m:lb}.
Let $\yy\in [\xcrit-\eps_0,\xcrit+\eps_0]$ with $\eps_0$ as in \eqref{beta.bounds}. 
Let $t\in \tor\setminus \Maj(\xi_0,\kappa)$, where $\xi_0,\kappa$ satisfy \eqref{ass:xi0kappa} with $K^*=K^*(\beta_*(\yy))=O(1)$. 
Then
\begin{equation}
\pro{ Y_J(t) \ge \yy |\urn| \,\Big|\, \cQ(m,n,\urn, \emptyset) } \le \expo{ -\lambda^*(\yy)|\urn| + O(\Upsilon_1)}.
\end{equation}
\end{cor}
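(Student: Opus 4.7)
The plan is to derive the tail bound via an optimized Chernoff--Markov inequality, feeding in the Laplace transform estimate from Proposition \ref{prop:Phi}. Concretely, for any real $\beta>0$, Markov applied to $e^{\beta Y_J(t)}$ yields
\[
\pr^\cQ\bigl(Y_J(t) \ge \yy|\urn|\bigr) \;\le\; e^{-\beta \yy |\urn|}\,\Phi_t(\beta).
\]

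The next step is to choose $\beta = \beta_*(\yy)$ (the Legendre optimizer defined in \eqref{def:betastar}). Before applying Proposition \ref{prop:Phi} with this $\beta$ (and $\tau=0$), I need to verify its hypotheses. By \eqref{beta.bounds}, the range of $\yy$ is exactly tailored so that $\beta_*(\yy)\in[1,O(1)]$, hence $\beta\ge1$ as required; and since $\beta=O(1)$, the condition $\varrho\le c\,2^{-\beta}$ in the proposition reduces to the assumption that $\varrho$ is at most an absolute small constant. The hypothesis \eqref{ass:xi0kappa} on $(\xi_0,\kappa)$ is built into the assumption of the corollary, with $K^* = K^*(\beta_*(\yy))$. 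Plugging in,
\[
\Phi_t(\beta_*(\yy)) \;=\; \exp\!\Bigl(\lambda(\beta_*(\yy))\,|\urn| \;+\; O\bigl(K^*(\beta_*(\yy))\,\Upsilon_1\bigr)\Bigr).
\]

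Combining the two displays,
\[
\pr^\cQ\bigl(Y_J(t)\ge \yy|\urn|\bigr) \;\le\; \exp\!\Bigl(-\bigl(\beta_*(\yy)\yy - \lambda(\beta_*(\yy))\bigr)|\urn| + O\bigl(K^*(\beta_*(\yy))\,\Upsilon_1\bigr)\Bigr).
\]
By the very definition \eqref{def:lambdastar}--\eqref{def:betastar}, the bracketed quantity equals $\lambda^*(\yy)$. Finally, since $\beta_*(\yy)=O(1)$ uniformly over the specified range and $K^*(z)\ll e^{O(\beta)}(1+|\tau|^{O(1)})$ from Proposition \ref{prop:Phi}, we have $K^*(\beta_*(\yy))=O(1)$, absorbing this constant into the big-$O$ in the exponent yields the stated bound.

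The proof is essentially routine once Proposition \ref{prop:Phi} is in hand; there is no real obstacle beyond the bookkeeping of constants. The only subtle point is ensuring that the range of $\yy$ under consideration keeps $\beta_*(\yy)$ bounded away from $0$ and from $\infty$, so that a single choice of threshold $\varrho\le c$ suffices uniformly in $\yy$ and so that $K^*(\beta_*(\yy))$ can be swallowed into the implied constant; this is precisely what \eqref{beta.bounds} and the hypothesis $K^*(\beta_*(\yy))=O(1)$ provide.
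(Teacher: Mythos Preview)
Your proof is correct and is exactly the intended argument: the corollary is an immediate consequence of Proposition~\ref{prop:Phi} via the exponential Markov bound with $\beta=\beta_*(\yy)$, using \eqref{beta.bounds} to ensure $\beta_*(\yy)\in[1,O(1)]$ so that both the hypothesis $\varrho\le c2^{-\beta}$ and the absorption of $K^*(\beta_*(\yy))$ into the implied constant go through uniformly in $\yy$. The paper does not spell out a proof for this corollary, leaving it as routine.
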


\subsection{Fourier--Laplace transform for the field at two points}	\label{sec:2points}

In this section we let $s, t\in \tor$ and consider the mixed Fourier--Laplace transform for the Poisson field $Y_J$ at two points:
\begin{equation}
\Psi_{s,t}(w,z) = \Psi_{s,t}(w,z; m,n,\urn) := \e \left( \exp( wY_J(s) + zY_J(t)) \mid \cQ(m,n,\urn,\emptyset) \right), 
\end{equation}
where $w=\alpha+\ii\sigma$ and $z=\beta+\ii\tau$ are points in the right half-plane. 
Whereas in the previous subsection we showed $\Phi_t(z)$ is well approximated by $\exp(\lambda(z)|\urn|)$, assuming $t$ does not lie in a low-frequency Bohr set, here we will need to rule out atypical arithmetic relationships between $s$ and $t$. 
Recall that ``distance" $d_{\xi_0}(s,t)$ defined in \eqref{def:dq}. 
In this subsection we establish the following:

\begin{prop}	\label{prop:Psi}
Let $w=\alpha+\ii\sigma$ and $z=\beta+\ii\tau$ with $\alpha,\beta\ge1$, and 
assume $ (n-m)^{-1}\ll \varrho\le c2^{-\alpha-\beta}$ for a sufficiently small constant $c>0$. 
There exists $\wt{K}(w,z)\ge1$ with $\wt{K}\ll e^{O(\alpha+\beta)}(1+|\sigma|+|\tau|)^{O(1)})$ such that the following holds. 
Let $s,t\in \tor\setminus \Maj(\xi_0,\kappa)$ with 
\begin{equation}	\label{ass:xi0kappabis}
\xi_0\ge \wt{K}^{4},\qquad \kappa \ge \wt{K} \frac1\varrho e^{-\varrho m}.
\end{equation}
Additionally assume 
\begin{equation}
d_{\xi_0}(s,t) \ge \Delta \frac1\varrho e^{-\varrho m} \qquad \text{ for some} \quad \Delta \ge \wt{K}^2.
\end{equation}
Letting
\begin{equation}	\label{def:Ups2}
\Upsilon_2 = \Upsilon_2(\varrho, n, m, \xi_0, \kappa,\Delta) := \frac{n-m}{\xi_0^{1/2}} + \frac1{\varrho\Delta} + \frac{e^{-\varrho m}}{\varrho^2\kappa},
\end{equation}
we have
\begin{equation}	\label{bd:Psi}
\Psi_{s,t}(w,z) = \expo{(\lambda(w)+\lambda(z))|\urn|+ O(  \wt{K}^2\Upsilon_2 )}.
\end{equation}
\end{prop}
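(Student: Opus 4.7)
The plan is to mimic the one-point argument of Proposition \ref{prop:Phi}, but working on $(\tor)^2$ and replacing the single-variable Bohr-set hypothesis with the arithmetic-distance hypothesis $d_{\xi_0}(s,t) \ge \Delta\varrho^{-1}e^{-\varrho m}$. First, since the variables $\{Y_{I_k}(s), Y_{I_k}(t)\}_{k\in \urn}$ are conditionally independent across $k$ given $\cQ(m,n,\urn,\emptyset)$, I would factorize
\begin{equation*}
\Psi_{s,t}(w,z) = \prod_{k\in \urn}\psi_{k,s,t}(w,z), \qquad \psi_{k,s,t}(w,z) := \e\big[\expo{wY_{I_k}(s)+zY_{I_k}(t)}\,\big|\,\cN(I_k)=1\big].
\end{equation*}
Exactly as in the derivation of \eqref{vp.log}, the conditional law of the unique active index $\ell\in I_k$ gives
\begin{equation*}
\psi_{k,s,t}(w,z) = \frac1{\rho_k}\sum_{\ell\in I_k}\frac{\phi(w,\ell s)\phi(z,\ell t)}{\ell}.
\end{equation*}

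The second step is to compare this logarithmic average with the integral over $(\tor)^2$, which equals $e^{\lambda(w)+\lambda(z)}$ by definition of $\lambda$ in \eqref{def:LA}. I would split
\begin{equation*}
\phi(w,u)\phi(z,v) = \bar\phi(w)\bar\phi(z) + \phi_0(w,u)\bar\phi(z) + \bar\phi(w)\phi_0(z,v) + \phi_0(w,u)\phi_0(z,v),
\end{equation*}
with $\bar\phi(z) = e^{\lambda(z)} = \wh{\phi}_z(0)$ and $\phi_0$ the mean-zero projection from \eqref{def:phi0}. The two ``mixed'' error terms are one-dimensional logarithmic averages of $\phi_0(w,\cdot)$ and $\phi_0(z,\cdot)$ along $\ell s$ and $\ell t$, respectively, and can be handled exactly as in the proof of Proposition \ref{prop:Phi} using Corollary \ref{cor:fg.error}\eqref{bd:f.error}, the decay estimate \eqref{Kplus}, and the Bohr-set hypothesis $s,t\notin \Maj(\xi_0,\kappa)$; these contribute the terms $\frac{e^{-\varrho m}}{\varrho^2\kappa}$ and $\frac{n-m}{\xi_0^{1/2}}$ in $\Upsilon_2$, up to the factor $\wt K^2$.

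The genuinely new contribution, which I expect to be the main obstacle, is the purely two-dimensional error term involving $\phi_0(w,\ell s)\phi_0(z,\ell t)$. Applying the two-variable estimate \eqref{bd:fg.error} of Corollary \ref{cor:fg.error} gives
\begin{equation*}
\frac1{\rho_k}\sum_{\ell\in I_k}\frac{\phi_0(w,\ell s)\phi_0(z,\ell t)}{\ell} \ll \frac{(K^+(w)K^+(z))}{\varrho}\sum_{\xi,\xi'\ne 0}\frac{1}{|\xi|^{3/2}|\xi'|^{3/2}}\min\!\left(\varrho,\frac{e^{-\varrho k}}{\|\xi s+\xi' t\|_\tor}\right).
\end{equation*}
When $|\xi|,|\xi'|\le \xi_0$ the hypothesis on $d_{\xi_0}(s,t)$ forces $\|\xi s+\xi' t\|_\tor\ge \Delta\varrho^{-1}e^{-\varrho m}$, so summing this regime in $k\in [m,n)$ by the second part of Lemma \ref{lem:isolate} (with the role of $R(\xi)$ played by $\|\xi s+\xi' t\|_\tor$ and $\kappa$ replaced by $\Delta\varrho^{-1}e^{-\varrho m}$) yields an $O(1/(\varrho\Delta))$ contribution, producing the $\frac{1}{\varrho\Delta}$ term in $\Upsilon_2$. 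The complementary regime $\max(|\xi|,|\xi'|)>\xi_0$ is treated by the trivial bound $\min(\varrho,\cdot)\le \varrho$ and the tail estimate $\sum_{|\xi|>\xi_0}|\xi|^{-3/2}\ll \xi_0^{-1/2}$, giving the $\frac{n-m}{\xi_0^{1/2}}$ term.

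Finally, taking $\wt K(w,z) = C(K^+(w)+K^+(z))^{2}(1+|\sigma|+|\tau|)^{C}$ so that Lemma \ref{lem:phi.lb} allows us to rewrite each factor as $\psi_{k,s,t}(w,z) = e^{\lambda(w)+\lambda(z)}(1+O(\theta_k))$ with $\theta_k$ uniformly bounded by $1/2$ (which uses \eqref{ass:xi0kappabis} and $\Delta\ge \wt K^2$), one can pass to logarithms, sum $O(\theta_k)$ over $k\in \urn$ using the bounds of the previous paragraph, and conclude
\begin{equation*}
\log\Psi_{s,t}(w,z) = (\lambda(w)+\lambda(z))|\urn| + O(\wt K^2\Upsilon_2),
\end{equation*}
which is \eqref{bd:Psi}. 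The hypothesis $\varrho\le c2^{-\alpha-\beta}$ is used as in \eqref{Phit.phi} to absorb $\turn$-type terms (here vacuous, since $\turn=\emptyset$) and to ensure the Taylor-expansion step is valid.
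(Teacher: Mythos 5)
Your proposal follows essentially the same route as the paper's proof: the same factorization of $\Psi_{s,t}$ into a product over $k\in\urn$ of conditional logarithmic averages, the same four-term split of $\phi(w,\cdot)\phi(z,\cdot)$ into mean and mean-zero parts, the same use of Corollary~\ref{cor:fg.error} with the decay estimate \eqref{Kplus} and Lemma~\ref{lem:isolate}, and the same low/high-frequency dichotomy for the double sum over $(\xi,\xi')$. The only differences are cosmetic (the paper bundles the two cross-error terms into a single $\theta_\gen'$ via $\|\xi s\|_\tor\wedge\|\xi t\|_\tor$, and its $\wt K$ is linear rather than quadratic in $K^+(w)+K^+(z)$), neither of which affects the stated bound.
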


\begin{proof}
By similar lines as in \eqref{def:phi}--\eqref{tp.log} we can express
\begin{equation}	\label{psik.st}
\Psi_{s,t}(w,z) = \prod_{\gen\in\urn} \bigg( \frac1{\rho_\gen} \sum_{\ell\in I_\gen} \frac{\phi(w,\ell s)\phi(z,\ell t)}{\ell}\bigg) =: \prod_{\gen\in\urn} \psi_\gen^{s,t}(w,z) .
\end{equation}
Splitting $\phi(w,\cdot),\phi(z,\cdot)$ into their average and mean-zero components, we have
\begin{align}
\psi_\gen^{s,t}(w,z) &= \left( \int_\tor \phi(w, u)du\right)\left( \int_\tor \phi(z,u)du\right)	\notag\\
& + \left( \int_\tor \phi(w, u)du\right)\frac1{\rho_\gen} \sum_{\ell\in I_\gen} \frac{\phi_0(z, \ell t) 
}{\ell} +\left( \int_\tor \phi(z,u)du\right)\frac1{\rho_\gen} \sum_{\ell\in I_\gen} \frac{\phi_0(w, \ell s) 
}{\ell}   \notag\\
&+ \frac1{\rho_\gen} \sum_{\ell\in I_\gen} \frac{\phi_0(w, \ell s)\phi_0(z, \ell t)}{\ell}.	\label{psi.split}
\end{align}
From \eqref{bd:f.error} and Lemma \ref{lem:phi.fourier},
\begin{equation}
 \sum_{\ell\in I_\gen} \frac{\phi_0(w, \ell s) }{\ell} \ll K^+(w) \sum_{\xi\ge 1} \frac1{\xi^{3/2}} \min\left(\varrho, \frac{e^{-\varrho\gen}}{\|\xi s\|_\tor}\right)
\end{equation}
and similarly with $w,\alpha,\sigma,s$ replaced by $z,\beta,\tau,t$.
From \eqref{bd:fg.error} and Lemma \ref{lem:phi.fourier},
\begin{equation}
 \sum_{\ell\in I_\gen} \frac{\phi_0(w, \ell s) \phi_0(z,\ell t) }{\ell} \ll K^+(w)K^+(z) \sum_{|\xi|,|\xi'|\ne 0} \frac1{|\xi\xi'|^{3/2}} \min\left(\varrho, \frac{e^{-\varrho\gen}}{\|\xi s+\xi' t\|_\tor}\right).
\end{equation}
Inserting these estimates into \eqref{psi.split} and applying Lemma \ref{lem:phi.lb} we have
\begin{equation}	\label{psik.st.above}
\psi_\gen^{s,t}(w,z) = \left( \int_\tor \phi(w, u)du\right)\left( \int_\tor \phi(z,u)du\right) \left( 1+ O(\check{K} \theta_\gen'+\check{K}^2\theta_\gen'')  \right)
\end{equation}
where
\[
\theta_\gen':= \frac1{\varrho}\sum_{\xi\ge 1} \frac1{\xi^{3/2}} \min\left(\varrho, \frac{e^{-\varrho\gen}}{\|\xi s\|_\tor\wedge\|\xi t\|_\tor}\right)
\]
and
\[
\theta_\gen'':=\frac1\varrho\sum_{|\xi|,|\xi'|\ne 0} \frac1{|\xi\xi'|^{3/2}} \min\left(\varrho, \frac{e^{-\varrho\gen}}{\|\xi s+\xi' t\|_\tor}\right)
\]
and we have taken
\begin{equation}
\check{K}(w,z) = C(1+|\sigma|+|\tau|)^C(K^+(w)+K^+(z))
\end{equation}
for a sufficiently large absolute constant $C>0$.
Following similar arguments as in the proof of Proposition \ref{prop:Phi}, by taking $\wt{K}=C'\check{K}$ for a sufficiently large constant $C'>0$ and using our assumptions on $\xi_0,\kappa$ and $\Delta$, we can make the multiplicative error $1+O(\check{K}\theta_\gen' + \check{K}^2\theta_\gen'')$ in \eqref{psik.st.above} bounded between $1/2$ and $2$ for all $m\le\gen<n$. In particular, this term can be replaced by $\expo{ O(\check{K}\theta_\gen' + \check{K}^2\theta_\gen'')}$. 
Substituting the resulting expression for $\psi_\gen^{s,t}(w,z)$ into \eqref{psik.st} we obtain
\begin{equation}	\label{Psi.above2}
\Psi_{s,t}(w,z) = e^{(\lambda(w)+ \lambda(z))|\urn|} \exp\bigg( O\bigg(  \wt{K} \sum_{m\le\gen<n} \theta_\gen' + \wt{K}^2 \sum_{m\le\gen<n} \theta_\gen''\bigg)\bigg).
\end{equation}
Applying \eqref{bd:isolate1},
\[
\sum_{m\le\gen<n} \theta_\gen' \ll \frac{n-m}{\xi_0^{1/2}} + \frac1{\varrho \xi_0^{1/2}} + \frac1{\varrho^2\kappa}e^{-\varrho m} \ll \frac{n-m}{\xi_0^{1/2}} + \frac1{\varrho^2\kappa}e^{-\varrho m},
\]
where we noted the middle term in the first bound is controlled by the first term by our assumed lower bound on $\varrho$.
We express the sum over $\theta_\gen''$ as
\begin{align*}
\sum_{m\le\gen<n} \theta_\gen'' 
&= \frac1\varrho \sum_{|\xi|\ge1} \frac1{\xi^{3/2}} \sum_{m\le\gen<n} \sum_{|\xi'|\ge1}\frac1{|\xi'|^{3/2}} \min\left(\varrho, \frac{e^{-\varrho\gen}}{\|\xi s+ \xi't\|_\tor}\right) .
\end{align*}
For fixed $\xi$ with $|\xi|\le \xi_0$, the inner sum over $\gen$ and $\xi'$ is bounded by
\[
\frac1\Delta + \frac{\varrho (n-m)}{\xi_0^{1/2}}
\]
by \eqref{bd:isolate2}, our assumption on $\Delta$, and the lower bound on $\varrho$. 
For $|\xi|>\xi_0$ we can bound the inner sum by $\varrho(n-m)$. 
Combining these and summing over $\xi$ gives
\[
\sum_{m\le\gen<n} \theta_\gen'' \ll \frac1\varrho\left( \frac1\Delta + \frac{1+\varrho(n-m)}{\xi_0^{1/2}} \right).
\]
Substituting the bounds on the sums over $\theta_\gen'$ and $\theta_\gen''$ into \eqref{Psi.above2} we obtain
\begin{align*}
&\Psi_{s,t}(w,z) = \exp\bigg\{ (\lambda(w)+ \lambda(z))|\urn| 
+O\bigg( \wt{K} \frac{e^{-\varrho m}}{\varrho^2 \kappa} + \wt{K}^2 \bigg( \frac{n-m}{\xi_0^{1/2}}  + \frac1{\varrho\Delta}  \bigg) \bigg)\bigg\}.
\end{align*}
The desired bound follows from replacing the first instance of $\wt{K}$ above with $\wt{K}^2$ (which we can do since $\wt{K}\ge 1$). 
\end{proof}

\subsection{Approximation and decorrelation for tail events}	\label{sec:decorr}

Throughout this subsection we drop the subscript $J$ from $\YY_J(t)$.
We will also use our notation \eqref{def:eHK} (as in the previous two subsections we take $\cQ=\cQ(m,n,\urn,\emptyset)$).

In this section we approximate the upper tail of $Y(t)$ (conditional on the event $\cQ$) for fixed $t\in \tor$ by the upper tail of a sum of i.i.d.\ random variables $\tY=\sum_{p\le q}\LL_p$, and also the joint upper tail of $Y(s),Y(t)$ for fixed $(s,t)\in \tor^2$ to the joint upper tail of two independent copies of $\tY$.
Of course, for the approximation to be accurate we will need to make arithmetic assumptions on the points $s,t$.

Let $U_1,\dots, U_q$ be i.i.d.\ uniform random elements of $\tor$, put $\LL_p=\log|1-e(U_p)|$ for $1\le p\le q$, and denote $\tY=\sum_{p\le q}\LL_p$.
From \eqref{def:LA}, $\tY$ has Fourier--Laplace transform
\begin{equation}
\e e^{\zz \tY} = e^{\lambda(z)q}.
\end{equation}
From the Bahadur--Rao theorem \cite{BaRa60} (see also \cite[p.\ 110]{DeZe_book}), for any $\yy>0$, letting $\beta>0$ such that $\yy=\lambda'(\beta)$, we have
\begin{equation}	\label{BaRa}
\pr(\tY\ge \yy q)=(1+ o_{\yy;\, q\to \infty}(1))\frac{e^{-\lambda^*(\yy) q}}{\beta\sqrt{\lambda''(\beta) q}} .
\end{equation}

\begin{prop}	\label{prop:Yt.approx}
Let $\yy\in [\xcrit-\eps_0,\xcrit+\eps_0]$ with $\eps_0$ as in \eqref{beta.bounds}. 
Assume $\varrho\asymp (n-m)^{-b}$ for some fixed constant $b\in (0,1)$, and that $q:=|\urn| \asymp \varrho (n-m)$.
Let $\xi_0\ge (n-m)^{C_1}$ for a sufficiently large constant $C_1>0$, and let $\kappa\ge e^{-\varrho m/2}$.
Let $t\in \tor \setminus \Maj(\xi_0,\kappa)$ (see \eqref{def:Maj}).
Then, with $m$ satisfying \eqref{m:lb} with a sufficiently large absolute constant $C_0$,
\begin{equation}
\pr^\cQ(\YY(t)\ge \yy q) = (1+ o_{\yy;\,n-m\to \infty}(1)) \pr(\tY\ge \yy q). 
\end{equation}
\end{prop}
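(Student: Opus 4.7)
The plan is to prove the asymptotic by exponential (Esscher) tilting and Fourier inversion, with Proposition \ref{prop:Phi} providing the key estimate on the Fourier--Laplace transform $\Phi_t$. Set $\beta := \beta_*(\yy)$ so $\LA'(\beta) = \yy$; by \eqref{beta.bounds}, $\beta \asymp 1$. Let $\wt\e^\cQ_\beta$ denote expectation under the tilted measure with density $e^{\beta \YY(t)}/\Phi_t(\beta)$ with respect to $\pr^\cQ$, and $\wt\e_\beta$ the analogue for $\tY$. By the Esscher identity,
\[
\pr^\cQ(\YY(t) \ge \yy q) = \Phi_t(\beta)\, e^{-\beta \yy q}\, J_t, \qquad \pr(\tY \ge \yy q) = e^{\LA(\beta)q - \beta\yy q}\, J,
\]
where $J_t := \wt\e^\cQ_\beta[e^{-\beta(\YY(t)-\yy q)} \ind\{\YY(t)\ge \yy q\}]$ and $J$ is defined analogously. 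Under the assumed parameter regime ($\xi_0 \ge (n-m)^{C_1}$ with $C_1$ large, $\kappa \ge e^{-\varrho m/2}$, and $m$ as in \eqref{m:lb} with $C_0$ large), each term of $\Upsilon_1$ in \eqref{def:Ups1} is a negative power of $n-m$, so $\Upsilon_1 = o(1)$; since $K^*(\beta)=O(1)$, Proposition \ref{prop:Phi} yields $\Phi_t(\beta) = e^{\LA(\beta)q}(1+o(1))$. The exponential prefactors thus match to $1+o(1)$, and the claim reduces to $J_t = J(1+o(1))$.

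For this reduction I would use Fourier inversion. Writing $\wt\phi_t(\tau) := \Phi_t(\beta+i\tau)/\Phi_t(\beta) \cdot e^{-i\tau \yy q}$ and $\wt\phi(\tau) := e^{q(\LA(\beta+i\tau) - \LA(\beta)) - i\tau \yy q}$ for the tilted characteristic functions of $\YY(t) - \yy q$ and $\tY - \yy q$, the inversion formula expresses
\[
J_t = \frac{1}{2\pi}\int_{\R}\frac{\wt\phi_t(\tau)}{\beta+i\tau}\,d\tau, \qquad J = \frac{1}{2\pi}\int_{\R}\frac{\wt\phi(\tau)}{\beta+i\tau}\,d\tau.
\]
Split each integral at $|\tau| = T$ with $T := (n-m)^c$ for a small $c > 0$. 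On $|\tau|\le T$, Proposition \ref{prop:Phi} at $z = \beta+i\tau$ gives $\wt\phi_t(\tau) = \wt\phi(\tau) \exp(O(K^*(\beta+i\tau)\Upsilon_1))$; since $K^*(\beta+i\tau) \ll (1+|\tau|)^{O(1)} \le T^{O(1)}$, choosing $c$ small (relative to $C_0, C_1$) forces $T^{O(1)}\Upsilon_1 = o(1)$, so $\wt\phi_t(\tau) = \wt\phi(\tau)(1+o(1))$ uniformly. On $|\tau|>T$, I would exploit the decay $|\int_\tor |1-e(u)|^{\beta+i\tau}\,du| \ll (1+|\tau|)^{-1/2}$ (from the beta-function identity in the proof of Lemma \ref{lem:phi.lb} together with Stirling's formula) to conclude $|\wt\phi(\tau)| \ll |\tau|^{-q/2}$ for $|\tau|\ge 1$; the same factor-wise decay transfers to $|\wt\phi_t(\tau)|$ via each $\vphi_{k,t}(\beta+i\tau)$ in the product decomposition of $\Phi_t$. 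Both tail contributions are then super-polynomially small. The classical Bahadur--Rao argument (local CLT for the tilted i.i.d.\ sum) evaluates $J = (\beta\sqrt{2\pi q\LA''(\beta)})^{-1}(1+o(1))$, and the uniform comparison on $|\tau|\le T$ transfers this asymptotic to $J_t$, giving $J_t = J(1+o(1))$ as required.

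The main obstacle is the intermediate $|\tau|$ regime: Proposition \ref{prop:Phi}'s error factor $e^{O(K^*(\beta+i\tau)\Upsilon_1)}$ is only small for $|\tau| \le T$ with $T$ a small power of $n-m$, whereas the oscillatory decay $|\wt\phi(\tau)| \ll |\tau|^{-q/2}$ only becomes genuinely effective for $|\tau| \gtrsim 1$. Both constraints can be met simultaneously because $\Upsilon_1$ is a negative power of $n-m$ and $q \asymp (n-m)^{1-b}$ is a positive power, so $T$ can be chosen strictly between $1$ and $\Upsilon_1^{-1/C}$. A secondary technicality is that $\YY(t)$ has a discrete distribution, so the inversion formula requires $\yy q$ to be a continuity point of the CDF (which is generic in $\yy$ and $t$), or, equivalently, a routine smoothing argument at scale much smaller than $q^{-1/2}$ that does not affect the leading-order asymptotic.
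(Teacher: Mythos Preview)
Your overall architecture---Esscher tilting with $\beta=\beta_*(\yy)$, followed by a Fourier comparison of the tilted laws using Proposition~\ref{prop:Phi}---matches the paper's, and your bookkeeping on the prefactor $\Phi_t(\beta)e^{-\beta\yy q}$ versus $e^{(\LA(\beta)-\beta\yy)q}$ is correct. The gap is in your treatment of the tail $|\tau|>T$ for $\wt\phi_t$. The claim that ``the same factor-wise decay transfers to $|\wt\phi_t(\tau)|$ via each $\vphi_{k,t}(\beta+i\tau)$'' is not justified and is in fact false for $|\tau|$ beyond the range where Proposition~\ref{prop:Phi} applies. Each factor $\vphi_{k,t}(\beta+i\tau)/\vphi_{k,t}(\beta)$ is the characteristic function of a random variable supported on the \emph{finite} set $\{\log|1-e(\ell t)|:\ell\in I_k\}$; such a characteristic function is almost-periodic and does not decay as $|\tau|\to\infty$. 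Proposition~\ref{prop:Phi} only transfers the $|\tau|^{-1/2}$ decay from the integral to the discrete sum when $K^*(\beta+i\tau)\Upsilon_1=o(1)$, i.e.\ for $|\tau|\le (n-m)^{c'}$ with $c'$ small; beyond that, you have only the trivial bound $|\wt\phi_t(\tau)|\le 1$, and $\int_{|\tau|>T'}|\beta+i\tau|^{-1}\,d\tau$ diverges. Consequently your inversion integral for $J_t$ is not absolutely convergent, and the split-and-bound argument breaks down. The smoothing you mention at the end is not a ``secondary technicality'' about CDF continuity points---it is precisely what is needed to repair this.

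The paper handles this by smoothing the test function rather than the measure: it sandwiches $f_B(u)=e^{-Bu}\ind_{[0,\infty)}(u)$ between piecewise-linear functions $f^-_{B,\eps}\le f_B\le f^+_{B,\eps}$ whose derivatives have bounded variation, so that $|\wh{f^\pm_{B,\eps}}(\eta)|\ll \min(1/B,\eps^{-1}\eta^{-2})$ is integrable. Lemma~\ref{lem:fourier} then compares $\int f^\pm_{B,\eps}\,d\nu_\yy^t$ with $\int f^\pm_{B,\eps}\,d\mu_\yy$ using only $\sup_{|\eta|\le\ET}|\wch{\nu_\yy^t}-\wch{\mu_\yy}|$ on a \emph{bounded} window (supplied by Proposition~\ref{prop:Phi}), paying an error $O(\ET^{-1}\eps^{-1/2}B^{-1/2})$ from the tail of $\wh{f^\pm}$ rather than from $\wt\phi_t$. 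The price of smoothing is the gap $\int(f_B-f^-_{B,\eps})\,d\mu_\yy\le\mu_\yy([-\eps,\eps])$, which the paper controls by an anticoncentration estimate: Lemma~\ref{lem:clt3} (an order-$q^{-1/2}$ Edgeworth refinement, valid since $V$ is non-lattice) gives $\mu_\yy([-\eps,\eps])\ll\eps+o(q^{-1/2})$, and the Bahadur--Rao lower bound $\int f_B\,d\mu_\yy\gg 1/B$ makes this relative error $O(B\eps)+o(1)$. Choosing $\eps=(n-m)^{-c_0C_1}$ balances all terms. Your argument can be repaired along exactly these lines, but the smoothing and anticoncentration steps are essential, not optional.
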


We will prove Proposition \ref{prop:Yt.approx} after performing some preliminary steps.
In the sequel we denote the Fourier transform of a function $f:\R^d\to \C$ by 
\[
\wh{f}(\bet) = \int_{\R^d} f(\bs{x}) e^{-\ii\bx\cdot \bet}d\bx
\]
and the inverse Fourier transform of a measure $\mu$ on $\R^d$, that is, its characteristic function, by 
\[
\cmu(\bet) = \frac1{(2\pi)^d} \int_{\R^d} e^{\ii \bx\cdot\bet} d\mu(\bx).
\]

\begin{lemma}[Fourier inversion]	\label{lem:fourier}
Let $d\in \N$ and let $\mu,\nu$ be probability measures on $\R^d$.
Let $f\in L^1(\R)$ be an absolutely continuous function whose derivative $f'$ has bounded variation, and let $K_0,K_1$ be constants such that $\|f\|_{L^1(\R)}\le K_0$, $\|f'\|_{\TV}\le K_1$.
Suppose that for some $\ET>0$ and $\delta\in (0,1)$ we have
\begin{equation}	\label{ass:fourier}
\sup_{\|\bet\|_\infty \le \ET} |\wch{\mu}(\bet) - \wch{\nu}(\bet)|\le \delta.
\end{equation}
Put $F=f^{\otimes d}$. 
Then
\begin{equation}	\label{fourier:goal}
\left| \int_{\R^d} F\, d\mu - \int_{\R^d} F\,d\nu\right| \ll_d (K_0K_1)^{d/2} \left( \delta + \frac{1}{\ET} \sqrt{\frac{K_1}{K_0}}\right).
\end{equation}
\end{lemma}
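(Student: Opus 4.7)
The plan is to apply Fourier inversion to $F = f^{\otimes d}$ and split the resulting integral at the box $B := \{\bet \in \R^d : \|\bet\|_\infty \le \ET\}$, so that the hypothesis \eqref{ass:fourier} controls the contribution of $B$ while decay of $\wh{F}$ controls the complement. Concretely, with the paper's convention $\cmu(\bet) = (2\pi)^{-d}\int e^{\ii\bx\cdot\bet}d\mu(\bx)$, Fourier inversion combined with Fubini yields
\begin{equation*}
\int_{\R^d} F\,d\mu - \int_{\R^d} F\,d\nu \;=\; \int_{\R^d}\wh{F}(\bet)\bigl(\cmu(\bet) - \cnu(\bet)\bigr)\,d\bet,
\end{equation*}
and the problem reduces to bounding $\int_B|\wh{F}|\,d\bet$ and $\int_{\R^d \setminus B}|\wh{F}|\,d\bet$.

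Since $\wh{F}(\bet) = \prod_{j=1}^d \wh{f}(\eta_j)$, everything further reduces to two pointwise estimates on $\wh{f}$. The first is trivial: $|\wh{f}(\xi)| \le \|f\|_{L^1} \le K_0$. For the second, I would integrate by parts twice. Since $f$ is absolutely continuous with $f \in L^1$ and $f'$ is of bounded variation, both $f$ and $f'$ admit limits at $\pm\infty$ (by BV) and those limits must vanish (by $L^1$), so the boundary terms drop and
\begin{equation*}
\wh{f}(\xi) \;=\; -\frac{1}{\xi^2}\int_\R e^{-\ii x\xi}\,df'(x), \qquad \xi \ne 0,
\end{equation*}
where $df'$ denotes the Lebesgue--Stieltjes signed measure of total mass $\|f'\|_{\TV} \le K_1$. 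This gives $|\wh{f}(\xi)| \le K_1/\xi^2$. Combining the two bounds and optimizing at the threshold $|\xi| = \sqrt{K_1/K_0}$ then yields
\begin{equation*}
\int_\R|\wh{f}(\xi)|\,d\xi \;\ll\; \sqrt{K_0 K_1}, \qquad \int_{|\xi|>T}|\wh{f}(\xi)|\,d\xi \;\ll\; \frac{K_1}{T} \quad (T \ge \sqrt{K_1/K_0}).
\end{equation*}

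For the contribution of $B$, the hypothesis \eqref{ass:fourier} and the tensor bound $\int_{\R^d}|\wh{F}|\,d\bet \le \bigl(\int_\R|\wh{f}|\bigr)^d \ll_d (K_0 K_1)^{d/2}$ give a contribution of $\ll_d \delta (K_0 K_1)^{d/2}$. For the complement, I would write $\R^d \setminus B = \bigcup_{j=1}^d \{|\eta_j|>\ET\}$ and, on each slab, use the trivial bound $|\cmu|,|\cnu| \le (2\pi)^{-d}$ together with the decay estimate integrated over the distinguished coordinate; this gives each slab a contribution of $\ll (K_1/\ET)(K_0 K_1)^{(d-1)/2}$ and hence a total of $\ll_d (K_0 K_1)^{d/2}\sqrt{K_1/K_0}/\ET$. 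Summing the two bounds is exactly \eqref{fourier:goal}. The regime $\ET < \sqrt{K_1/K_0}$ may be handled separately by noting that there $(K_0 K_1)^{d/2}\sqrt{K_1/K_0}/\ET \gg_d \|F\|_\infty \ge \tfrac12|\int F\,d(\mu-\nu)|$, so the inequality is trivial.

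I expect the main obstacle to be purely technical, namely, carefully justifying the double integration by parts under only the stated regularity so as to obtain $|\widehat{df'}(\xi)| \le \|f'\|_{\TV}$ as a legitimate pointwise bound; once this is secured, the remainder is routine Fourier-analytic bookkeeping exploiting the tensor product structure of $F$.
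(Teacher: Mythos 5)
Your argument is correct, and it differs from the paper's proof in two places worth noting. First, you establish the decay bound $|\wh{f}(\xi)|\le K_1/\xi^2$ by a direct double integration by parts against the Lebesgue--Stieltjes measure $df'$, carefully justifying that $f$ and $f'$ vanish at $\pm\infty$ (which follows, as you note, from $f\in L^1$, $f'$ bounded via BV, and the limits of $f'$ being forced to zero by integrability of $f$). The paper instead sidesteps the boundary-term bookkeeping by convolving $f$ with a mollifier, proving the estimate for the smoothed function with $\|(f*\phi_\eps)''\|_{L^1}\le\|f'\|_{\TV}$, and passing to the limit; both routes are legitimate. Second, and more substantially, after writing $\int F\,d(\mu-\nu)=\int\wh{F}(\cmu-\cnu)$ you perform a \emph{hard} cutoff at the box $\|\bet\|_\infty\le\ET$ and cover the complement by the $d$ slabs $\{|\eta_j|>\ET\}$, using the trivial bound $|\cmu-\cnu|\ll 1$ there together with $\int_{|\eta|>\ET}|\wh{f}|\ll K_1/\ET$. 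The paper instead uses a \emph{soft} cutoff: a Schwartz $\omega$ with $\wh\omega$ supported in $[-1,1]$ and a telescoping decomposition $1=\prod_j\wh\omega(\eps\eta_j)+\sum_k(1-\wh\omega(\eps\eta_k))\prod_{j<k}\wh\omega(\eps\eta_j)$ (with $\eps=1/\ET$), then proves $\int|\wh{f}||1-\wh\omega(\eps\eta)|\,d\eta\ll K_1\eps$ by splitting at $|\eta|=1/\eps$. Both decompositions produce exactly the same bound $\ll_d(K_0K_1)^{d/2}(\delta+\ET^{-1}\sqrt{K_1/K_0})$; your hard cutoff is the more elementary and arguably shorter, while the paper's soft cutoff keeps the localization weight smooth (at the cost of introducing the auxiliary bump). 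Your observation that the regime $\ET<\sqrt{K_1/K_0}$ is vacuous because the right-hand side then dominates $2\|F\|_\infty\ll_d(K_0K_1)^{d/2}$ is also a valid shortcut that the paper's soft cutoff handles implicitly.
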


\begin{proof}
We express
\begin{equation}	\label{fourier:express}
\int_{\R^d} F d(\mu-\nu)
= \int_{\R^d} \wh{F}(\bet) (\cmu(\bet)-\cnu(\bet))d\bet.
\end{equation}
First we note that our assumptions on $f$ imply the estimate
\begin{equation}	\label{fourier:pointwise}
|\wh{f}(\eta)| \ll \min\left( K_0, K_1\eta^{-2}\right).
\end{equation}
Indeed, the first bound is immediate from the pointwise bound $|\wh{f}(\eta)|\le \|f\|_{L^1(\R)}$. The second follows from integrating by parts twice and noting that for a smooth function $f$ we have $\|f'\|_{\TV}=\|f''\|_{L^1(\R)}$ (note that we may assume $f$ is smooth by convolving with a mollifier -- since the estimate is independent of the support of the mollifier we can take it to be arbitrarily small). 
Integrating the above pointwise bound yields
\begin{equation}	\label{whf:L1}
\int_{-\infty}^\infty |\wh{f}(\eta)|d\eta \ll \sqrt{K_0K_1}. 
\end{equation}

Let $\omega:\R\to \R$ be a non-negative 
Schwartz function that integrates to $1$ and
whose Fourier transform $\wh{\omega}$ is supported on $[-1,1]$. 
Set $\eps=1/\ET$. 
We partition the identity as a telescoping sum
\[
1 = \prod_{j=1}^d \wh{\omega}(\eps\eta_j) + \sum_{k=1}^d(1-\wh{\omega}(\eps\eta_k)) \prod_{j=1}^{k-1} \wh{\omega}(\eps\eta_j).
\]
Inserting this into the integrand on the right hand side of \eqref{fourier:express} yields
\begin{align*}
&\left| \int_{\R^d} \wh{F}(\bet) (\cmu(\bet)-\cnu(\bet))d\bet\right|\\
&\qquad=\left| \int_{\R^d} \wh{F}(\bet) \!\prod_{j=1}^d \wh{\omega}(\eps\eta_j) (\cmu(\bet)-\cnu(\bet))d\bet 
\!+ \!\sum_{k=1}^d\int_{\R^d} \wh{F}(\bet)(1-\wh{\omega}(\eps\eta_k)) \!\prod_{j=1}^{k-1} \wh{\omega}(\eps\eta_j)(\cmu(\bet)-\cnu(\bet)) d\bet \right|\\
&\qquad\le \int_{\|\bet\|_\infty\le 1/\eps} |\wh{F}(\bet)||\cmu(\bet)-\cnu(\bet)|d\bet  + \sum_{k=1}^d \int_{\R^d} |\wh{F}(\bet)| |1-\wh{\omega}(\eps\eta_k)| |\cmu(\bet)-\cnu(\bet)|d\bet \\
&\qquad\le \delta\int_{\R^d}|\wh{F}(\bet)|d\bet  + \frac{2}{(2\pi)^d}\sum_{k=1}^d \int_{\R^d} \prod_{j=1}^d |\wh{f}(\eta_j)| |1-\wh{\omega}(\eps\eta_k)| d\bet \\
&\qquad= \delta\left(\int_{-\infty}^\infty |\wh{f}(\eta)|d\eta\right)^d  + \frac{2d}{(2\pi)^d}\left(\int_{-\infty}^\infty |\wh{f}(\eta)|d\eta\right)^{d-1}
\int_{-\infty}^\infty |\wh{f}(\eta)| |1-\wh{\omega}(\eps\eta)| d\eta,
\end{align*}
where in the third line we applied our assumption \eqref{ass:fourier} and the fact that $\eps=1/\ET$.
Inserting the estimate \eqref{whf:L1}, the claim will follow if we can show
that
\begin{equation}	\label{fourier:goal1}
\int_{-\infty}^\infty |\wh{f}(\eta)| |1-\wh{\omega}(\eps \eta)|d\eta \ll K_1\eps.
\end{equation}

Note that $|\wh{\omega}(\eta)|\ll 1$ on $\R$ and $|1-\wh{\omega}(\eta)| \ll \eta^2$ on $[-1,1]$ (with implied constants depending on the choice of $\omega$).  
Thus, the left hand side in \eqref{fourier:goal1} is bounded by
\[
\int_{|\eta|\le 1/\eps} |\wh{f}(\eta)|\eps^2\eta^2d\eta + \int_{|\eta|>1/\eps} |\wh{f}(\eta)|d\eta.
\]
Inserting the estimate $|\wh{f}(\eta)|\le K_1\eta^{-2}$ from \eqref{fourier:pointwise} and integrating yields \eqref{fourier:goal1} as desired.
\end{proof}

We will also apply the following normal approximation estimate. 
We denote by $\gamma$ the standard Gaussian measure on $\R$. 

\begin{lemma}[cf.\ {\cite[p.\ 538]{Feller_vol2}}]	\label{lem:clt3}
Let $X_1,\dots, X_q$ be i.i.d.\ centered variables of law $\mu$, with unit variance and finite third moment $m_3:= \e X_1^3$, and let $\nu$ be the law of the normalized sum $(X_1+\cdots+X_q)/\sqrt{q}$. Assume further that the distribution of $X_1$ is non-lattice.
Then
\[
\sup_{x\in \R} \left| \nu((-\infty,x]) - \gamma((-\infty,x]) - \frac{m_3}{6\sqrt{2\pi q}} (1-x^2)e^{-x^2/2}\right| = o_{\mu,q\to\infty}(q^{-1/2}). 
\]
\end{lemma}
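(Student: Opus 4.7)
The statement is the first-order Edgeworth expansion for i.i.d.\ non-lattice summands with finite third moment, and my plan is to follow Esseen's classical approach via the smoothing inequality and Fourier inversion. Write $\varphi(t):=\int e^{itx}d\mu(x)$ for the characteristic function of $X_1$, put $F_q(x):=\nu((-\infty,x])$, and define the Edgeworth approximation
$$G_q(x):=\Phi(x)+\frac{m_3}{6\sqrt{2\pi q}}(1-x^2)e^{-x^2/2},$$
where $\Phi$ is the standard normal distribution function. A short computation (using $\frac{d}{dx}[(1-x^2)e^{-x^2/2}]=(x^3-3x)e^{-x^2/2}$) yields the uniformly bounded density $G_q'(x)=\phi_\gamma(x)\bigl[1+\tfrac{m_3}{6\sqrt{q}}(x^3-3x)\bigr]$, with $\phi_\gamma$ the standard Gaussian density, and the Fourier--Stieltjes transform
$$\int e^{itx}dG_q(x)=e^{-t^2/2}\Bigl(1-\tfrac{im_3t^3}{6\sqrt{q}}\Bigr),$$
while $\int e^{itx}dF_q(x)=\varphi(t/\sqrt{q})^q$.

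Apply Esseen's smoothing inequality: for every $T>0$,
$$\sup_x\bigl|F_q(x)-G_q(x)\bigr|\le\frac{1}{\pi}\int_{-T}^T\frac{|\varphi(t/\sqrt{q})^q-e^{-t^2/2}(1-im_3t^3/(6\sqrt{q}))|}{|t|}dt+\frac{C}{T}.$$
Choose $T=T_q$ growing faster than $\sqrt{q}$ (to be fixed below) so the boundary term is $o(q^{-1/2})$, and split the integral at $|t|=\delta_0\sqrt{q}$ for a small fixed $\delta_0>0$. In the inner range $|t|\le\delta_0\sqrt{q}$, a Taylor expansion $\log\varphi(s)=-s^2/2-im_3s^3/6+o(s^3)$ as $s\to 0$ (with the $o(\cdot)$ quantified by the finite third moment) and the substitution $s=t/\sqrt{q}$ give, uniformly in $|t|\le\delta_0\sqrt{q}$,
$$\bigl|\varphi(t/\sqrt{q})^q-e^{-t^2/2}\bigl(1-\tfrac{im_3t^3}{6\sqrt{q}}\bigr)\bigr|\ll\frac{1+|t|^6}{q}e^{-t^2/4}+o_{\mu,q\to\infty}(q^{-1/2})(1+|t|^3)e^{-t^2/4},$$
so dividing by $|t|$ and integrating over $\R$ produces $o(q^{-1/2})$.

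The main technical obstacle is the outer range $\delta_0\sqrt{q}<|t|\le T_q$, where the Gaussian factor in the subtracted term is trivially negligible. Here the non-lattice hypothesis gives $|\varphi(s)|<1$ for every $s\ne 0$ and hence $\sup_{\delta_0\le|s|\le A}|\varphi(s)|\le 1-\eta(A)<1$ on every bounded interval, from which $|\varphi(t/\sqrt{q})|^q$ decays exponentially in $q$ on any fixed compact annulus; combined with the near-origin inequality $|\varphi(s)|\le 1-cs^2$, this already handles $\delta_0\sqrt{q}\le|t|\ll\sqrt{q}$. The delicate point is that the non-lattice property does \emph{not} imply $|\varphi(s)|\to 0$ as $s\to\infty$, so pushing $T_q$ past $\sqrt{q}$ as required by the boundary term must be coordinated with the compact-interval rate $\eta(A)$ via a dyadic partition of $\delta_0\le|s|\le T_q/\sqrt{q}$ and a careful choice of $T_q=T_q(\mu)$; this is essentially the only genuine technical issue and is carried out in detail in Chapter XVI.4 of \cite{Feller_vol2}, which we invoke. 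Combining the three range estimates yields the desired bound.
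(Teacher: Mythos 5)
The paper does not prove this lemma; it states it as a known result with a reference to \cite{Feller_vol2}. Your outline correctly reconstructs the standard Esseen-smoothing argument (the Edgeworth transform identity $\int e^{itx}dG_q = e^{-t^2/2}(1-im_3 t^3/(6\sqrt{q}))$, the inner-range Taylor expansion of $\log\varphi$, and the inner/outer split), and since at the single genuinely delicate point — the outer-range estimate, where the non-lattice hypothesis must be coordinated with the choice of cutoff $T_q$ — you explicitly invoke Feller's Chapter XVI.4, your treatment is consistent with, and no less rigorous than, the paper's.
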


The key point is that the above lemma refines the Berry--Ess\'een bound $\nu((-\infty,x]) - \gamma((-\infty,x])  = O(q^{-1/2}\e|X_1|^3)$.

\begin{proof}[Proof of Proposition \ref{prop:Yt.approx}]
Let $\we_\yy,\e_\yy^{(t)}$ denote expectation with respect to the probability 
measures $\P$, $\P^\cQ$ tilted by $e^{\beta\tY}$ and $e^{\beta \YY(t)}$, respectively,
where $\beta$ is chosen so that
\begin{equation}
\label{eq-beta}
\lambda^*(y)=\beta y-\lambda(\beta) \quad (\mbox{\rm and therefore, $\yy=\lambda'(\beta)$}).
\end{equation}
Write $\sigma:=\sqrt{\lambda''(\beta)}$
and define the normalized variables
\begin{equation}
\tW = \frac{\tY - \yy q}{\sigma \sqrt{q}},\qquad W(t) = \frac{\YY(t)-\yy q}{\sigma\sqrt{q}}.
\end{equation}
Let $\mu_\yy,\nu_\yy^t$ denote the laws of $\tW,W(t)$ under $\we_\yy,\e_\yy^{(t)}$, respectively -- that is, for any bounded continuous function $f:\R\to \R$, 
\begin{equation}
\int_\R f(u) d\mu_\yy(u) = \we_\yy f(\tW),
\end{equation}
and similarly for $\nu_\yy^t$. 
We express
\begin{align}
\pr(\tY \ge \yy q)  
&= e^{\lambda(\beta)q} \we_\yy e^{-\beta \tY} \ind(\tY\ge \yy q)	\notag\\
& = e^{(\lambda(\beta) - \beta\yy)q} \we_\yy e^{-\beta \sigma\sqrt{q} \tW} \ind(\tW\ge 0)	\notag\\
&= e^{-\lambda^*(\yy) q} \int_\R f_{\beta \sigma\sqrt{q}} d\mu_\yy 	\label{express:tY}
\end{align}
where we used \eqref{eq-beta} and the notation
\[
f_B(u) := e^{ -B u}1_{[0,\infty)}(u).
\]
For $Y(t)$ we have
\begin{align}
\pr^\cQ(\YY(t)\ge \yy q) 
&=  \Phi_t(\beta) e^{-\beta \yy q} \e_\yy^{(t)} e^{-\beta\sigma\sqrt{q} W(t)}\ind(W(t)\ge 0)  \notag\\
&= \Phi_t(\beta) e^{-\beta \yy q} \int_\R f_{\beta\sigma \sqrt{q}} d\nu_\yy^t.	\label{express:Yt}
\end{align}

In order to compare the last expression with \eqref{express:tY} we use Lemma \ref{lem:fourier}.
To apply the lemma we need to regularize the jump discontinuity in $f$, which we do as follows (note the same regularization was used for a similar purpose in \cite{ABB}). 
Let $\eps>0$ to be chosen sufficiently small depending on $n-m$, and denote 
\begin{equation}	\label{def:fpm}
f^+_{B,\eps} (u)
= \begin{cases}
e^{-B u} & u\ge 0\\
0 & u\le -\eps,
\end{cases}
\qquad
f^-_{B,\eps} (u)
= \begin{cases}
e^{-B u} & u\ge \eps\\
0 & u\le 0
\end{cases},
\end{equation}
with $f^\pm_{B, \eps}$ linearly interpolated on $(-\eps, 0)$ and $(0,\eps)$, respectively.
We will assume 
\begin{equation}	\label{assume:epsB}
\eps \le c/B
\end{equation}
for a sufficiently small absolute constant $c>0$.
For later reference we note that by a straightforward computation,
\begin{equation}	\label{fpm.est}
\|f^\pm_{B,\eps}\|_{L^1(\R)} \ll \frac1B, \qquad \|(f^\pm_{B,\eps})'\|_{\TV} \ll \frac1\eps.
\end{equation}
We compare the inverse Fourier transforms $\wch{\nu_\yy^{t}}$ and $\wch{\mu_\yy}$ using Proposition \ref{prop:Phi}.
First, recalling \eqref{m:lb},
we note that under our assumptions, the error summary parameter $\Upsilon_1$ from that proposition satisfies
\begin{equation}	\label{Ups1.bound}
\Upsilon_1 \ll   (n-m)^{-C_1/4}
\end{equation}
for all $C_1>0$ sufficiently large, if $C_0$ in \eqref{m:lb} is chosen sufficiently large as function of $C_1$. 
Now for $\eta\in \R$,
\begin{align}
\wch{\nu_\yy^t} (\eta)
&= \e_\yy^{(t)} \expo{ \ii\eta W(t)}	\notag\\
&= \expo{-\Lambda_t(\beta) -\frac{\ii \yy \sqrt{q}}{\sigma }\eta} 
\e \expo{ \left(\beta + \frac{\ii\eta}{\sigma \sqrt{q}}\right)\YY(t)}	\notag\\
&= \expo{ -\frac{\ii \yy\sqrt{q}}{\sigma}\eta + \Lambda_t\Big(\beta+ \frac{\ii \eta}{\sigma\sqrt{q}}\Big) - \Lambda_t(\beta)} .\label{nut.check}
\end{align}
By Proposition \ref{prop:Phi} and our assumptions on parameters, 
\begin{align}
\wch{\nu_\yy^t} (\eta)
&= \expo{ 	-\frac{\ii \yy\sqrt{q}}{\sigma}\eta + \left(\lambda\Big(\beta+ \frac{\ii \eta}{\sigma\sqrt{q}}\Big) - \lambda(\beta)\right) q + O\Big( K^*\Big( \beta+ \frac{\ii\eta}{\sigma\sqrt{q}}\Big)\Upsilon_1\Big)}	\notag\\
&= \wch{\mu_\yy}(\eta) \expo{ O\Big( K^*\Big( \beta+ \frac{\ii\eta}{\sigma\sqrt{q}}\Big)\Upsilon_1\Big)}	\\
&= \wch{\mu_\yy} (\eta) \expo{ O\big(e^{O(\beta)} (n-m)^{-C_1/8}\big) }	\qquad \text{for }\; |\eta|\le \sigma\sqrt{q}(n-m)^{c_1C_1}  \label{nus.est}
\end{align}
for a sufficiently small absolute constant $c_1>0$,
where in the last line we used \eqref{Ups1.bound} and the fact that
\[
K^*\Big( \beta + \frac{\ii \eta}{\sigma\sqrt{q}}\Big) \ll e^{O(\beta)} \left( 1+  \frac{|\eta|}{\sigma\sqrt{q}}\right)^{O(1)}.
\]
Thus,
\begin{equation}	\label{nus.diff}
\left| \wch{\nu_\yy^t}(\eta) - \wch{\mu_\yy}(\eta)\right| \ll e^{O(\beta)}(n-m)^{-C_1/8} |\wch{\mu_\yy}(\eta)| \le e^{O(\beta)}(n-m)^{-C_1/8}
\end{equation}
uniformly for $|\eta|\le \sigma\sqrt{q}(n-m)^{c_1C_1}$. 
The above estimate combines with \eqref{fpm.est} and Lemma \ref{lem:fourier} to give
\begin{equation}	\label{fpm.numu}
\left|\int_\R f_{B,\eps}^\pm d(\nu_\yy^t-\mu_\yy)\right| 
\ll e^{O(\beta)} (n-m)^{-c_1'C_1} \left( \frac1{\sqrt{B\eps}}+ \frac{1}{\eps\sqrt{\sigma^2 q}}\right)
\ll \frac{ e^{O(\beta)} (n-m)^{-c_1'C_1} }{ B \eps}
\end{equation}
for an absolute constant $c_1'>0$, where in the second bound we used the definition of $B$ and our assumption \eqref{assume:epsB}.

Now from \eqref{express:Yt} and \eqref{express:tY} we have
\begin{align*}
\pr^\cQ(Y(t)\ge \yy q)
&\ge \Phi_t(\beta)e^{-\beta \yy q} \int_\R f_{B,\eps}^- d\nu_\yy^t\\
&= \Phi_t(\beta) e^{-\beta \yy q} \int_\R f_{B,\eps}^- d\mu_\yy \left( 1+ \frac{\int_\R f_{B,\eps}^- d(\nu_\yy^t-\mu_\yy)}{\int_\R f_{B,\eps}^- d\mu_\yy }\right)\\
&= \pr^\cQ(\tY\ge \yy q) e^{\Lambda_t(\beta)-\lambda(\beta)q} \left( 1- \frac{\int_\R (f_B-f_{B,\eps}^-)d\mu_\yy}{\int_\R f_B d\mu_\yy}\right) \left( 1+ \frac{\int_\R f_{B,\eps}^- d(\nu_\yy^t-\mu_\yy)}{\int_\R f_{B,\eps}^- d\mu_\yy }\right).
\end{align*}
From Proposition \ref{prop:Phi} and \eqref{Ups1.bound}, 
\begin{equation}
\Lambda_t(\beta) - \lambda(\beta)q \ll e^{O(\beta)} \Upsilon_1 \ll e^{O(\beta)} (n-m)^{-C_1/4}.
\end{equation}
Applying Lemma \ref{lem:clt3} with $\mu_\yy$ in place of $\nu$ (with $(\LL_p-y)/\sigma$, under the tilted expectation $\we_\yy$, playing the role of the $X_p$ for $1\le p\le q$), we have
\[
\mu_\yy( (-x, x]) = \gamma((-x,x]) + o_{\yy;\,q\to \infty}(q^{-1/2})
\]
for any $x>0$, where the error term is uniform in the choice of $x$.
Taking $x=2\eps$ and using monotonicity and the boundedness of the Gaussian density, we deduce
\begin{equation}	\label{muy.anti}
\int_\R (f_B-f_{B,\eps}^-)d\mu_\yy \le \int_\R (f^+_{B,\eps}-f^-_{B,\eps}) d\mu_\yy \le \mu_\yy([-\eps,\eps]) \ll \eps + o_{\yy;\,q\to \infty}(q^{-1/2}).
\end{equation}
Note that by our assumptions we have $q\to \infty$ when $n-m\to \infty$. 
We next note the lower bound
\begin{equation}	\label{fB.LB}
\int_\R f_B d\mu_\yy \gg 1/B,
\end{equation}
which follows from (the proof of) the Bahadur--Rao theorem (which uses Lemma \ref{lem:clt3}) -- see \cite[pp. 110--111]{DeZe_book}.
Together with \eqref{muy.anti} this gives
\begin{equation}	\label{fBminus.LB}
\int_\R f_{B,\eps}^-d\mu_\yy \gg 1/B - O(\eps) \gg 1/B,
\end{equation}
recalling our assumption \eqref{assume:epsB}.
Combining the previous seven displays, we have
\begin{align*}
\pr^\cQ(Y(t)\ge \yy q)
&\ge \pr^\cQ(\tY\ge \yy q) \left( 1 - O_\beta\left( (n-m)^{-C_1/4}\right) \right)
\left( 1- O(\eps B)\right) \left( 1- O_\beta\left( \frac1\eps (n-m)^{-c_1'C_1}\right)\right)\\
&= (1+ o_{\yy;\,n-m\to \infty} (1))\pr^\cQ(\tY\ge \yy q),
\end{align*}
where we took 
\begin{equation}
\eps = (n-m)^{-c_0C_1}
\end{equation}
for a suitable constant $c_0>0$. 
Following similar lines with $f_{B,\eps}^-$ replaced by $f_{B,\eps}^+$ we can show a matching upper bound, and the claim follows.
\end{proof}

By following a similar approach with the two-dimensional case of Lemma \ref{lem:fourier} we can show the following:

\begin{prop}	\label{prop:decorr}
Let $\yy, \varrho,Q, \xi_0, C_0, C_1$ and $\kappa$ be as in Proposition \ref{prop:Yt.approx}, and let
$s,t\in \tor \setminus \Maj(\xi_0,\kappa)$ (see \eqref{def:Maj})
Let $\Delta\ge (n-m)^{C_1}$, 
and assume $d_{\xi_0}(s,t)\ge \Delta \frac1{\varrho}e^{-\varrho m}$ (see \eqref{def:dq}).
Then
\begin{equation}	\label{YsYt.asymp}
\pr^\cQ(\YY(s), \YY(t)\ge \yy q ) = (1+o_{\yy;\,n-m\to \infty}(1))\pr(\tY\ge \yy q)^2.
\end{equation}
\end{prop}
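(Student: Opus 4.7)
The plan is to follow the proof of Proposition \ref{prop:Yt.approx} almost verbatim, substituting the two-dimensional case of Lemma \ref{lem:fourier} and Proposition \ref{prop:Psi} in place of Proposition \ref{prop:Phi}. Set $\beta = \beta_*(\yy)$ so that $\yy = \lambda'(\beta)$ and $\lambda^*(\yy) = \beta\yy - \lambda(\beta)$, put $\sigma = \sqrt{\lambda''(\beta)}$ and $B = \beta\sigma\sqrt{q}$, and define
$$
W(s) = \frac{\YY(s) - \yy q}{\sigma\sqrt{q}},\qquad W(t) = \frac{\YY(t) - \yy q}{\sigma\sqrt{q}}.
$$
Let $\nu_\yy^{s,t}$ denote the joint law of $(W(s),W(t))$ under the tilt of $\pr^\cQ$ by $e^{\beta(\YY(s)+\YY(t))}$, and let $\mu_\yy$ denote the one-dimensional tilt law from the proof of Proposition \ref{prop:Yt.approx}, so that $\mu_\yy\otimes\mu_\yy$ is the law of two independent normalized copies of $\tY$. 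Mimicking \eqref{express:Yt} and \eqref{express:tY}, and writing $f_B(u) = e^{-Bu}1_{[0,\infty)}(u)$, we obtain
$$
\pr^\cQ(\YY(s), \YY(t) \ge \yy q) = \Psi_{s,t}(\beta,\beta)\, e^{-2\beta\yy q}\int_{\R^2} f_B(u)f_B(v)\, d\nu_\yy^{s,t}(u,v), \qquad \pr(\tY\ge \yy q)^2 = e^{-2\lambda^*(\yy)q}\Big(\int_\R f_B \, d\mu_\yy\Big)^2.
$$

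The central input is a two-dimensional analogue of \eqref{nus.diff}. By the same computation as in \eqref{nut.check}, for $(\eta_1,\eta_2)\in\R^2$,
$$
\wch{\nu_\yy^{s,t}}(\eta_1,\eta_2) = \expo{-\frac{\ii \yy\sqrt{q}}{\sigma}(\eta_1+\eta_2)+\log\Psi_{s,t}\Big(\beta+\frac{\ii\eta_1}{\sigma\sqrt q},\,\beta+\frac{\ii\eta_2}{\sigma\sqrt q}\Big)-\log\Psi_{s,t}(\beta,\beta)}.
$$
Under our hypotheses on $s,t$ and on $\xi_0,\kappa,\Delta$, Proposition \ref{prop:Psi} gives $\log\Psi_{s,t}(w,z) = (\lambda(w)+\lambda(z))|\urn| + O(\wt{K}^2\Upsilon_2)$, and a repetition of the bookkeeping at \eqref{Ups1.bound} (now for $\Upsilon_2$, using $\Delta\ge (n-m)^{C_1}$ to kill the $1/(\varrho\Delta)$ contribution) together with the proof of \eqref{nus.est}--\eqref{nus.diff} yields
$$
\big|\wch{\nu_\yy^{s,t}}(\eta_1,\eta_2) - \wch{\mu_\yy}(\eta_1)\wch{\mu_\yy}(\eta_2)\big| \ll e^{O(\beta)}(n-m)^{-c_1 C_1}
$$
uniformly for $\max(|\eta_1|,|\eta_2|) \le \sigma\sqrt q\,(n-m)^{c_1' C_1}$, for some absolute constants $c_1,c_1' > 0$; here $\wch{\mu_\yy}(\eta_1)\wch{\mu_\yy}(\eta_2)$ is the characteristic function of $\mu_\yy\otimes\mu_\yy$.

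The remainder is a direct two-dimensional transcription of the closing steps in the proof of Proposition \ref{prop:Yt.approx}. Feeding the above bound into Lemma \ref{lem:fourier} with $d=2$, $F=(f_{B,\eps}^{\pm})^{\otimes 2}$, $\mu = \nu_\yy^{s,t}$ and $\nu = \mu_\yy\otimes\mu_\yy$, and using \eqref{fpm.est}, produces
$$
\Big|\int_{\R^2}(f_{B,\eps}^\pm)^{\otimes 2}\, d(\nu_\yy^{s,t}-\mu_\yy\otimes\mu_\yy)\Big| \ll \frac{e^{O(\beta)}(n-m)^{-c_1'' C_1}}{B\eps},
$$
which is $o(1/B^2)$ once $\eps$ is chosen to be a suitable small negative power of $n-m$. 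Applying the one-dimensional anti-concentration estimate \eqref{muy.anti} to each coordinate separately gives $(\mu_\yy\otimes\mu_\yy)(([-\eps,\eps]\times\R) \cup (\R\times[-\eps,\eps])) \ll \eps + o(q^{-1/2})$, so sandwiching $\int f_B^{\otimes 2}\,d\nu_\yy^{s,t}$ between the integrals of $(f_{B,\eps}^-)^{\otimes 2}$ and $(f_{B,\eps}^+)^{\otimes 2}$ against $\mu_\yy^{\otimes 2}$, and applying the product Bahadur--Rao lower bound $(\int f_B d\mu_\yy)^2 \gg 1/B^2$ from \eqref{fB.LB}, yields $\int f_B^{\otimes 2}\,d\nu_\yy^{s,t} = (1+o(1))(\int f_B\,d\mu_\yy)^2$. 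Combined with $\Psi_{s,t}(\beta,\beta) = e^{2\lambda(\beta)q + o(1)}$ (Proposition \ref{prop:Psi} at real arguments), this gives \eqref{YsYt.asymp}.

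The main obstacle is the loss of a square root in Lemma \ref{lem:fourier} when $d=2$: the relevant Fourier error now scales like $1/(B\eps)$ rather than $1/\sqrt{B\eps}$, so the margin for choosing $\eps$ as a small negative power of $n-m$ is tighter than before. This is not a serious difficulty because the decorrelation savings from Proposition \ref{prop:Psi} are polynomial in $(n-m)$ with an exponent that can be made arbitrarily large by choosing $C_1$ large. The substantive new ingredient compared with the one-point argument is the cross-term in \eqref{psi.split}, and the hypothesis $d_{\xi_0}(s,t) \ge \Delta (1/\varrho)e^{-\varrho m}$ is precisely what is needed to keep that term $O(\wt{K}^2/(\varrho\Delta))$, ensuring that the joint tail genuinely factorizes.
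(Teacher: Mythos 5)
Your proposal follows the paper's proof essentially step for step: the same two-point tilted measure $\nu_\yy^{s,t}$, the same factorization isolating $\Psi_{s,t}(\beta,\beta)e^{-2\lambda(\beta)q}$ and the ratio of $f_B^{\otimes 2}$ integrals, the same application of the $d=2$ case of Lemma~\ref{lem:fourier} with the Fourier estimate supplied by Proposition~\ref{prop:Psi}, and the same regularization/anticoncentration argument via $f_{B,\eps}^\pm$ and~\eqref{muy.anti}. In fact your intermediate formula for $\wch{\nu_\yy^{s,t}}$ in terms of $\log\Psi_{s,t}$ is the cleaner form; the paper nominally writes $\Lambda_s+\Lambda_t$ at that step but then estimates the combination exactly via Proposition~\ref{prop:Psi}, so the two are consistent.

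One small inaccuracy: the Fourier-inversion error you quote as $\ll e^{O(\beta)}(n-m)^{-c_1''C_1}/(B\eps)$ actually picks up an extra factor of $(B\eps)^{-1/2}$ from the $\frac{1}{\ET}\sqrt{K_1/K_0}$ term in Lemma~\ref{lem:fourier} (since $B\eps=o(1)$, that term dominates), so the correct scaling is closer to $e^{O(\beta)}(n-m)^{-c''C_1}(B\eps)^{-3/2}$. This does not affect the conclusion, as you rightly observe: the gain $(n-m)^{-cC_1}$ can be made to swallow any fixed polynomial in $B$ and $1/\eps$ by taking $C_1$ large relative to the exponent $c_0'$ used to set $\eps=(n-m)^{-c_0'C_1}$, which is precisely the paper's bookkeeping at that point.
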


We remark that for the proof of Theorem \ref{thm:main} we only need the upper bound in \eqref{YsYt.asymp}. For the proof, we will use Fourier inversion to compare the joint law of $(Y(s),Y(t))$ under a tilted probability measure to the product of the tilted laws of $Y(s),Y(t)$. 

\begin{proof}
Let $\beta$ be as in \eqref{eq-beta}.
Let $\e_\yy^{(s)}, \e_\yy^{(t)}, \e_\yy^{(s,t)}$ denote expectation with respect to the probability measure $\P^\cQ$ tilted by $e^{\beta \YY(s)}, e^{\beta \YY(t)},$ and  $e^{\beta(\YY(s)+  \YY(t))}$, respectively. 
We continue the notation $\we_\yy$, $\sigma= \sqrt{\lambda''(\beta)}$, $\tW, W(s), W(t), \mu_\yy$ and $\nu_\yy^t$ from the proof of Proposition \ref{prop:Yt.approx}, let $\nu_\yy^s$ be defined analogously to $\nu_\yy^t$, and also let $\nu_\yy^{s,t}$ be the law of $(W(s),W(t))\in \R^2$ under $\e_\yy^{(s,t)}$.
By similar lines to the proof of Proposition \ref{prop:Yt.approx}, 
\begin{align}
\pr^\cQ(\YY(s),\YY(t)\ge \yy q) 
&= \Psi_{s,t}(\beta,\beta)e^{-2\beta \yy q} \e_\yy^{(s,t)} e^{-\beta\sigma (W(s) + W(t))} \ind(W(s),W(t)\ge 0)	\notag\\
&= \Psi_{s,t}(\beta,\beta)e^{-2\beta \yy q}\int_{\R^2} f_{B}^{\otimes 2}d\nu_\yy^{s,t}	\notag\\
&= \big(\Psi_{s,t}(\beta,\beta) e^{-2\lambda(\beta)q} \big) e^{-2\lambda^*(\yy)q} \left( \int_{\R} f_B d\mu_\yy\right)^2 \left( \frac{ \int_{\R^2} f_B^{\otimes 2} d\nu_\yy^{s,t}}{ \left( \int_{\R} f_B d\mu_\yy \right)^2}\right) 	\notag\\
&= \pr(\tY\ge \yy q)^2 \big(\Psi_{s,t}(\beta,\beta) e^{-2\lambda(\beta)q} \big)  \frac{ \int_{\R^2} f_B^{\otimes 2} d\nu_\yy^{s,t}}{ \left( \int_{\R} f_B d\mu_\yy \right)^2} ,
\label{YsYt.split}
\end{align}
where $B=\beta\sigma\sqrt{q}$ and $f_B$ and are as in the proof of Proposition \ref{prop:Yt.approx}.

For the middle factor in \eqref{YsYt.split},
by our assumptions and \ref{prop:Psi},
\[
\Psi_{s,t}(\beta,\beta) e^{-2\lambda(\beta)q}  \le \expo{ O( \wt{K}(\beta,\beta)^2 \Upsilon_2)} = \expo{ e^{O(\beta)} \Upsilon_2}.
\]
Note that under our assumptions, the error summary parameter $\Upsilon_2$ from Proposition \ref{prop:Psi} satisfies
\begin{equation}	\label{Ups2.bound}
\Upsilon_2 \ll (n-m)^{-C_1/4}
\end{equation}
for all $C_1>0$ sufficiently large and $C_0$ sufficiently large depending on $C_1$. 
Hence,
\begin{equation}	\label{YsYt.middle}
\Psi_{s,t}(\beta,\beta) e^{-2\lambda(\beta)q} \le \expo{ e^{O(\beta)}  (n-m)^{-C_1/4} } .
\end{equation}

For the last factor in \eqref{YsYt.split}, as in the proof of Proposition \ref{prop:Yt.approx} we let
\begin{equation}	\label{eps2.def}
\eps = (n-m)^{-c_0'C_1}
\end{equation}
for a constant $c_0'>0$ to be chosen later. 
Now letting $f_{B,\eps}^+$ as in \eqref{def:fpm} we have
\begin{equation}	\label{YsYt.upper1}
\frac{ \int_{\R^2} f_B^{\otimes 2} d\nu_\yy^{s,t}}{ \left( \int_{\R} f_B d\mu_\yy \right)^2}
\le 
\frac{ \int_{\R^2} (f_{B,\eps}^+)^{\otimes 2} d\nu_\yy^{s,t}}{ \left( \int_{\R} f_B d\mu_\yy \right)^2} 
= \left( 1+ \frac{\int_\R (f_{B,\eps}^+ - f_B) d\mu_\yy }{ \int_\R f_Bd\mu_\yy} \right)^2 + \frac{\int_{\R^2} (f_{B,\eps}^+)^{\otimes 2} d(\nu_\yy^{s,t} - \mu_\yy^{\otimes 2})}{\left( \int_\R f_B d\mu_\yy\right)^2}.
\end{equation}
Applying \eqref{muy.anti} and \eqref{fB.LB}, 
\begin{equation}	\label{fo2.above}
\frac{ \int_{\R^2} f_B^{\otimes 2} d\nu_\yy^{s,t}}{ \left( \int_{\R} f_B d\mu_\yy \right)^2}
\le \big( 1 + O(B(\eps + o_{\yy;\,q\to\infty}(q^{-1/2})))\big)^2 + O\left( B^2 \int_{\R^2} (f_{B,\eps}^+)^{\otimes 2} d(\nu_\yy^{s,t} - \mu_\yy^{\otimes 2})\right).
\end{equation}
To bound the integral on the right hand side above we use Lemma \ref{lem:fourier}. First we compare the inverse Fourier transforms $\wch{\nu_\yy^{s,t}} $ and  $\wch{\mu_\yy^{\otimes 2}}=\wch{\mu_\yy}^{\otimes 2}$ using Proposition \ref{prop:Psi}.
Following similar lines as in the proof of Proposition \ref{prop:Yt.approx}, from Proposition \ref{prop:Psi} and our assumptions, for any $\eta_1,\eta_2\in \R$,
\begin{align*}
&\wch{\nu_\yy^{s,t}}  (\eta_1,\eta_2)\\
&\quad=\exp\bigg[ -\frac{\ii \yy\sqrt{q}}{\sigma}(\eta_1+\eta_2) + \Lambda_s\Big(\beta+ \frac{\ii \eta_1}{\sigma\sqrt{q}}\Big)+ \Lambda_t\Big(\beta+ \frac{\ii \eta_2}{\sigma\sqrt{q}}\Big) - \Lambda_s(\beta)- \Lambda_t(\beta) \bigg] \\
&\quad= \exp\bigg[ 	-\frac{\ii \yy\sqrt{q}}{\sigma}(\eta_1+\eta_2) + \left(\lambda\Big(\beta+ \frac{\ii \eta_1}{\sigma\sqrt{q}}\Big)+ \lambda\Big(\beta+ \frac{\ii \eta_2}{\sigma\sqrt{q}}\Big) - 2\lambda(\beta)\right) q \\
&\qquad\qquad\qquad\qquad\qquad\qquad\qquad\qquad\qquad\qquad + O\left(\wt{K}\Big( \beta+ \frac{\ii\eta_1}{\sigma\sqrt{q}},\; \beta + \frac{\ii \eta_2}{\sigma\sqrt{q}}\Big)^2 \Upsilon_2\right)   \bigg]\\
&\quad = \wch{\mu_\yy^{\otimes 2}} (\eta_1,\eta_2) 
\expo{ O\left(\wt{K}\Big( \beta+ \frac{\ii\eta_1}{\sigma\sqrt{q}},\; \beta + \frac{\ii \eta_2}{\sigma\sqrt{q}}\Big)^2 \Upsilon_2\right) }.
\end{align*}
Applying \eqref{Ups2.bound} and the fact that
\[
\wt{K}\Big( \beta + \frac{\ii \eta_1}{\sigma\sqrt{q}}, \; \beta+ \frac{\ii\eta_2}{\sigma\sqrt{q}}\Big) \ll e^{O(\beta)} \left( 1+ \frac{|\eta_1|+ |\eta_2|}{\sigma\sqrt{q}}\right)^{O(1)}
\]
we conclude
\begin{equation}	\label{nust.diff}
\left| \wch{\nu_\yy^{s,t}}(\eta_1,\eta_2) - \wch{\mu_\yy^{\otimes 2}}(\eta_1,\eta_2)\right| \ll e^{O(\beta)}(n-m)^{-C_1/8} |\wch{\mu_\yy}(\eta_1)| |\wch{\mu_\yy}(\eta_2)| \le e^{O(\beta)}(n-m)^{-C_1/8}
\end{equation}
uniformly for $|\eta_1|+|\eta_2| \le \sigma\sqrt{q}(n-m)^{c_2C_1}$, where $c_2>0$ is a sufficiently small absolute constant.
Applying Lemma \ref{lem:fourier} with the above estimate along with \eqref{fpm.est}, we have
\begin{align*}
\left|\int_{\R^2} (f^+_{B,\eps})^{\otimes 2} d( \nu_\yy^{s,t} - \mu_\yy^{\otimes 2}) \right|
&\ll \frac{e^{O(\beta)}(n-m)^{-c_2'C_1}}{B\eps}\left(  1+ \sqrt{\frac{B}{\sigma^2q \eps}}\right) \\
&\ll \frac{e^{O(\beta)}(n-m)^{-c_2'C_1}}{B^{1/2}\eps^{3/2}}	
\end{align*}
for some absolute constant $c_2'>0$, where in the second line we used the definition of $B$ and took $C_1$ sufficiently large depending on $c_0'$ that $\eps = o_{ n-m\to \infty} (1/B)$.
Inserting the above estimate in \eqref{fo2.above}, and combining the resulting bound with \eqref{YsYt.middle} and \eqref{YsYt.split} yields
\begin{align*}
\pr^\cQ(\YY(s),\YY(t)\ge \yy q) 
&\le \pr(\tY\ge \yy q)^2 \expo{ e^{O(\beta)}  (n-m)^{-C_1/4} }   \\
&\qquad \;
\left[  \big( 1 + O(B(\eps + o_{\yy;\,q\to\infty}(q^{-1/2})))\big)^2 + O\left( (B/\eps)^{3/2}e^{O(\beta)}(n-m)^{-c_2'C_1}	  \right)  \right] \\
&=  \pr(\tY\ge \yy q)^2  (1+o_{\yy;\, n-m\to \infty}(1))
\end{align*}
for a sufficiently small choice of $c_0'$ in \eqref{eps2.def}.
This establishes the upper bound in \eqref{YsYt.asymp}.
The lower bound is established by similar lines, replacing $f_B$ with $f_{B,\eps}^-$ instead of $f_{B,\eps}^+$ in \eqref{YsYt.upper1}.
\end{proof}

\section{Proof of Theorem \ref{thm:main} (upper bound)}	
\label{sec:upper}

In this section we prove the upper bound in Theorem \ref{thm:main}. 
Specifically, we establish the following:

\begin{theorem}	\label{thm:upper}
For any $\epp>0$, 
\begin{equation}
\sup_{t\in \tor} X_N(t) \le (\xcrit+\epp)\log N\qquad \text{ with probability $1-o_\epp(1)$.}
\end{equation}
\end{theorem}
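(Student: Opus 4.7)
My plan is to follow the program sketched in Sections \ref{sec:overview.arcs} and \ref{sec:overview.tails}: truncate $X_N$ to its low-frequency part, discretize the torus, apply the Poisson approximation of Theorem \ref{thm:arta}, handle major arcs separately, and control the maximum over minor-arc mesh points by a first-moment argument based on Corollary \ref{cor:Yt.upper}.

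Fix $\epp > 0$ and a slowly growing parameter $W = W(N) \to \infty$ (say $W = \log\log N$), and split $X_N = X_N^{\le} + X_N^{>}$ as in \eqref{XNW}. The high-frequency tail is straightforward: since $|\log|1-e(u)|| \le \log 2$ pointwise and $\e \sum_{\ell > N/W} C_\ell(P_N) = O(\log W)$, a Chebyshev bound gives $\sup_{t\in\tor} X_N^{>}(t) \le O(\log W) = o(\log N)$ with probability $1-o(1)$. I would then pass to a maximum over a mesh $T_N \subset \tor$ of cardinality $\asymp N$, using the fact that $X_N^{\le}$ is polynomially Lipschitz away from its (few) singularities, where it is very negative, so that a two-scale discretization loses only $o(\log N)$. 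Theorem \ref{thm:arta} with $M = N/W$ then couples $(C_\ell(P_N))_{\ell \le N/W}$ to independent $Z_\ell \sim \mathrm{Poi}(1/\ell)$ at total variation cost $\exp(-(1+o(1)) W \log W) = o(1)$, reducing the task to showing $\max_{t \in T_N} Y_{N/W}(t) \le (\xcrit + \epp)\log N$ with probability $1-o_\epp(1)$.

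For this bound I would choose small constants $\varrho = \varrho(\epp),\, \kappa = \kappa(\epp)$ and $\xi_0 = \xi_0(\epp) \in \N$, and integers $m$ satisfying \eqref{m:lb} and $n$ with $\varrho n = \log(N/W) + O(1)$. Lemma \ref{lem:typical} guarantees that with probability $1-o(1)$ the event $\cQ := \cQ(m, n, \urn_1 \cap [m,n), \urn_{\ge 2} \cap [m,n))$ is realized by a typical pair $(\urn, \turn)$ with $|\urn| = (1+O(\varrho))\varrho n$ and $|\turn| = O(\varrho^2 n)$, and I would condition on such a $\cQ$. The low-index contribution from $\ell < e^{\varrho m}$ and the residual range $\ell \in [e^{\varrho n}, N/W]$ contribute $O_\epp(1)$ with probability $1-o(1)$ by a crude cycle count. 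For each minor-arc mesh point $t \in T_N \setminus \Maj(\xi_0, \kappa)$, Corollary \ref{cor:Yt.upper} at $\yy = \xcrit + \epp$ gives
\begin{equation*}
\pr\bigl( \YY_{[e^{\varrho m}, e^{\varrho n})}(t) \ge (\xcrit+\epp)|\urn| \,\bigm|\, \cQ \bigr) \le \exp\bigl(-\LA^*(\xcrit+\epp)|\urn| + O(|\turn| + \Err_0)\bigr) \le N^{-1-c(\epp)}
\end{equation*}
for some $c(\epp) > 0$, since $\LA^*$ is strictly increasing with $\LA^*(\xcrit) = 1$ and $|\turn| + \Err_0 = o(\log N)$ for the chosen parameters. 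A union bound over the $O(N)$ minor-arc mesh points combined with Markov's inequality yields the desired $o_\epp(1)$ bound.

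It remains to handle $t \in \Maj(\xi_0, \kappa)$. For any such $t \in B_\xi(\kappa)$ with $\xi \le \xi_0$, the Poisson variables $Z_\ell$ indexed by small multiples $\ell = j\xi \le N/W$ contribute summands $\log|1 - e(j\xi t)|$ which are very negative because $\|j\xi t\|_\tor \le j\kappa$; a Poisson concentration argument shows that for bounded $\xi_0$ and $\kappa = \kappa(\epp)$ a sufficiently small constant, these negative contributions dominate the trivial $(\log 2 + o(1))\log N$ upper bound \eqref{XN.crudeUB} on all other summands with high probability, so $Y_{N/W}(t) < 0$ uniformly on $\Maj(\xi_0, \kappa)$. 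The main technical obstacle is to pick $\varrho, \xi_0, \kappa$ simultaneously so that (i) the minor-arc error $\Err_0 = (n-m)\xi_0^{-1/2} + \varrho^{-1}(1 + \log(\varrho\kappa)^{-1})$ is $o(\varrho n) = o(\log N)$, (ii) the major-arc suppression just described holds uniformly on $\Maj(\xi_0, \kappa)$ with probability $1-o(1)$, and (iii) the leftover low-index and high-index cycle counts are $o(\log N)$; with $\varrho, \xi_0^{-1}, \kappa$ taken as small constants depending on $\epp$ and $m = (C_0/\varrho)\log(1/\varrho)$, all three conditions can be met and the argument closes.
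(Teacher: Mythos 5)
Your overall plan tracks the paper's strategy well — truncation, Poisson approximation via Theorem~\ref{thm:arta}, discretization to a mesh of size $\asymp N$, conditioning on a typical cycle-count profile $\cQ(m,n,\urn,\turn)$ via Lemma~\ref{lem:typical}, and a first-moment bound on minor arcs driven by Corollary~\ref{cor:Yt.upper}. (Two minor cosmetic points: the paper discretizes with the cleaner Lemma~\ref{lem:CMN} from \cite{CMN}, and the error $|\turn|+\Err_0$ is not literally $o(\log N)$ but a small-constant-times-$\log N$ that must be beaten by the margin $\lambda^*(\xcrit+\epp)-1 \asymp \epp$; this is still a valid comparison once $\varrho$, $\xi_0^{-1}$ are chosen small relative to that margin.)

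The genuine gap is in the major-arc step, and it stems from your choice $\kappa = \kappa(\epp)$ a \emph{constant}. For $t \in B_\xi(\kappa)$, write $t=(a+\theta)/\xi$ with $|\theta|\le\kappa$; then $\|j\xi\cdot t\|_\tor = \|j\theta\|_\tor \le j\kappa$, so only the indices $j \lesssim 1/\kappa$ place the orbit near the singularity of $\log|1-e(\cdot)|$. With $\kappa$ a constant, the Poisson count of cycle lengths among these multiples has expectation $\sum_{j\lesssim 1/\kappa}\frac{1}{j\xi} \asymp \frac{1}{\xi}\log\frac1\kappa = O_\epp(1)$, and each visit costs at most $\log(j\kappa) \gtrsim \log\kappa = O_\epp(1)$; the total negative contribution is therefore $O_\epp(1)$, which cannot overcome a positive contribution of order $\log N$ (indeed even the typical fluctuations of $Y_{N/W}(t)$ are $\asymp\sqrt{\log N}$). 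In the paper, $\kappa$ is taken to be \emph{polynomially} small, $\kappa = N^{-\alpha}$ with $\alpha = c\epp^2$: then for $j\le N^{\alpha/2}/\xi$ one has $\|\ell t\|_\tor \le N^{-\alpha/2}$, there are $\asymp\frac{\alpha}{\xi}\log N$ Poisson points in this range, and each costs $\lesssim -\frac{\alpha}{2}\log N$, so the total negative contribution is $\asymp -\frac{\alpha^2}{\xi}\log^2 N$, which comfortably dominates (this is Proposition~\ref{prop:upper.major} and Lemma~\ref{lem:bad}). Crucially, this polynomially small $\kappa$ is still large enough that the minor-arc error $\frac1\varrho\log\frac1{\varrho\kappa} \asymp \frac{\alpha}{\varrho}\log N = O(\epp\log N)$ remains controlled, which is exactly why the exponent $\alpha$ must be a small power of $\epp$. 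So the fix is to replace your constant $\kappa$ by $\kappa = N^{-\alpha}$ with $\alpha$ a small constant depending on $\epp$, and to verify both the major-arc suppression and the minor-arc error simultaneously at that scale.
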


We now begin the proof of Theorem \ref{thm:upper} with some preliminary reductions.
Fix $\epp>0$. We may assume $\epp\in (0,1)$.
As a first step we pass from a supremum over the continuum $\tor$ to a maximum over a sufficiently dense net, via the following lemma from \cite{CMN}. 
Here and in the sequel, for $q\in \N$ we write
\begin{equation}	\label{def:torq}
\torr_{q}=\{j/q: 0\le j\le q-1\}.
\end{equation}

\begin{lemma}[{\cite[Lemma 4.3]{CMN}}]		\label{lem:CMN}
For any polynomial $\psi$ of degree at most $N_0\ge 1$, one has
\[
\sup_{t\in \tor} |\psi(e(t))| \le 14 \sup_{t\in \torr_{2N_0}}|\psi(e(t))|.
\]
\end{lemma}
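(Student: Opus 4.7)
My plan is to apply the Bernstein--Szegő inequality to the nonnegative trigonometric polynomial $f(t) := |\psi(e(t))|^2$, whose Fourier support lies in $\{-N_0,\ldots,N_0\}$ so that $f$ has degree at most $N_0$.

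First I would set $M := \|\psi(e(\cdot))\|_\infty^2 = \sup_{t\in \tor} f(t)$, attained at some $t_*\in \tor$, and pick a mesh point $t_0 \in \torr_{2N_0}$ with $\|t_0 - t_*\|_\tor \le 1/(4N_0)$. Since $2f/M - 1$ is a real trigonometric polynomial of degree at most $N_0$ bounded by $1$ in modulus, the Bernstein--Szegő inequality (in the $e(t) = e^{2\pi i t}$ normalization) yields
\[
|f'(t)| \le 2\pi N_0 \sqrt{f(t)\,(M - f(t))}, \qquad t \in \tor.
\]

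The crucial consequence is that $g(t):= \sqrt{M - f(t)}$ is Lipschitz on $\tor$ with constant $\pi N_0 \sqrt{M}$: wherever $g(t) > 0$,
\[
|g'(t)| = \frac{|f'(t)|}{2\sqrt{M - f(t)}} \le \pi N_0 \sqrt{f(t)} \le \pi N_0 \sqrt{M},
\]
and this bound extends across the (isolated) zeros of $g$ by continuity. Applying this at $t = t_0$ using $g(t_*) = 0$ gives $g(t_0) \le \pi \sqrt{M}/4$, hence $f(t_0) \ge (1 - \pi^2/16)\, M$. Taking square roots,
\[
\sup_{t\in \torr_{2N_0}} |\psi(e(t))| \ge |\psi(e(t_0))| \ge \sqrt{1 - \pi^2/16}\;\|\psi(e(\cdot))\|_\infty,
\]
so the stated inequality in fact holds with constant $(1 - \pi^2/16)^{-1/2} \approx 1.62$, far below $14$.

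The only subtle point is the necessity of the refined Bernstein--Szegő bound: the naive Bernstein estimate $|(\psi \circ e)'(t)| \le 2\pi N_0 \|\psi \circ e\|_\infty$ would give $\|\psi\circ e\|_\infty \le \sup_{\torr_{2N_0}}|\psi\circ e| + (\pi/2)\|\psi\circ e\|_\infty$, which fails to close since $\pi/2 > 1$. Passing to $f = |\psi \circ e|^2$ and exploiting that $|f'|$ must vanish at the maximizer of $f$ is precisely what makes the Lipschitz argument succeed at the stated mesh density $2N_0$.
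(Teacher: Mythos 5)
The paper does not actually prove this lemma; it imports it directly from \cite[Lemma~4.3]{CMN}, so there is no internal argument to compare against. Your proof is therefore a genuinely independent route, and it is correct.

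Checking the details: $f=|\psi\circ e|^2$ is a nonnegative real trigonometric polynomial of degree at most $N_0$, and $q=2f/M-1$ has $\|q\|_\infty\le1$, so Bernstein--Szeg\H{o} (in the $e(t)=e^{2\pi i t}$ normalization, $|q'|\le 2\pi N_0\sqrt{1-q^2}$) gives exactly $|f'|\le 2\pi N_0\sqrt{f(M-f)}$ as you state. The function $g=\sqrt{M-f}$ then satisfies $|g'|\le\pi N_0\sqrt{M}$ off its (finitely many, isolated, unless $f$ is constant -- a trivial case) zeros, and a continuous function with bounded derivative outside a finite set is globally Lipschitz with that constant, so the extension across zeros is sound. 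Combining $g(t_*)=0$, $\|t_0-t_*\|_\tor\le 1/(4N_0)$, and the Lipschitz bound yields $f(t_0)\ge(1-\pi^2/16)M$, hence the constant $(1-\pi^2/16)^{-1/2}\approx 1.62$, comfortably below $14$. Your closing remark is also on the mark: a direct Bernstein bound on $\psi\circ e$ would lose a factor $\pi/2>1$ and fail to close at mesh $2N_0$; passing to $f=|\psi\circ e|^2$ so that the gradient must vanish at the maximizer is precisely the right refinement. By contrast, the CMN proof is an elementary estimate based on factoring $\psi$ into linear factors and controlling the ratio $|\psi(e(t_*))|/|\psi(e(t_0))|$ term by term, which is what produces the coarse constant $14$; your Bernstein--Szeg\H{o} argument is cleaner and sharper, at the modest cost of invoking a (standard but nontrivial) refinement of Bernstein's inequality rather than pure product manipulations.
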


From the above, since $X_N$ is the logarithm of the modulus of a polynomial of degree $N$, then for any $N_0\ge N$,
\[
\sup_{t\in \tor} X_N(t)\le \sup_{t\in \torr_{2N_0}}X_N(t) + O(1).
\]
Hence,
letting
\begin{equation}	\label{upper.N0}
N\le N_0 \ll N,
\end{equation}
it is enough to show 
\begin{equation}	\label{upper.goal1}
\sup_{t\in \torr_{2N_0}} X_N(t) \le (\xcrit+\epp)\log N	\qquad \text{ with probability $1-o_\epp(1)$}.
\end{equation}

Our second step is to truncate the sum in \eqref{XN.cyc} so that we may pass to consideration of the Poisson field $Y_N$ via Theorem \ref{thm:arta}. 
Let $W$ be a slowly growing parameter:
\begin{equation}	\label{upper.W}
\omega(1)\le W\le N^{o(1)}
\end{equation}
and decompose $X_N$ as in \eqref{XNW}.
We easily obtain uniform control from above on the high frequency tail $X_{N}^>$ via a second moment argument:

\begin{lemma}	\label{lem:Xtail.ub}
Let $2\le W\le N$. Then
\[
\pro{ \sup_{t\in\tor} X_{N}^>(t)\ge 2\log W} \ll \frac1{\log W}. 
\]
\end{lemma}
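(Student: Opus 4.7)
The plan is to derive the lemma from the trivial uniform bound $\log|1-e(u)|\le \log 2$ combined with a Chebyshev estimate on the number of long cycles. Setting
\[
M := \sum_{N/W<\ell\le N} C_\ell(P_N),
\]
the pointwise inequality $\log|1-e(\ell t)|\le \log 2$ gives $\sup_{t\in\tor} X_N^>(t)\le (\log 2)M$ surely, so it suffices to show that $\pr(M\ge 2\log W/\log 2)\ll 1/\log W$.

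For the first two moments of $M$ I would invoke the classical joint moment identities for cycle counts of a uniform random permutation: $\e C_\ell(P_N)=1/\ell$; $\e C_\ell(P_N)(C_\ell(P_N)-1)=\ell^{-2}$ when $2\ell\le N$ (and $0$ otherwise); and $\e [C_\ell(P_N) C_{\ell'}(P_N)] = (\ell\ell')^{-1}$ when $\ell\ne\ell'$ and $\ell+\ell'\le N$ (and $0$ otherwise). These are proved by counting permutations with prescribed disjoint cycles of the given lengths, and they imply
\[
\e M = \sum_{N/W<\ell\le N}\frac{1}{\ell} = \log W + O(1),\qquad \mathrm{Var}\,C_\ell(P_N)\le \frac{1}{\ell},\qquad \mathrm{Cov}(C_\ell(P_N),C_{\ell'}(P_N))\le 0.
\]
Combining the variance bound with the non-positivity of covariances yields
\[
\mathrm{Var}\, M \le \sum_{N/W<\ell\le N}\mathrm{Var}\, C_\ell(P_N)\le \log W + O(1).
\]

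Since $2/\log 2 - 1 > 1$, for $W$ large enough Chebyshev's inequality gives
\[
\pr\!\left(M \ge \frac{2\log W}{\log 2}\right)\le \pr\!\left(|M - \e M| \ge \log W\right)\le \frac{\mathrm{Var}\,M}{(\log W)^2}\ll \frac{1}{\log W},
\]
while for bounded $W$ the claimed bound $\ll 1/\log W$ is trivial. There is no real obstacle here: the only ingredients beyond Chebyshev's inequality are the classical factorial-moment identities for cycle counts, which could equally well be read off from the Feller coupling that expresses the total number of cycles of $P_N$ as a sum of independent Bernoullis.
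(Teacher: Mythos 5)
Your proposal is correct and takes essentially the same approach as the paper: bound $X_N^>(t)$ pointwise by (a constant times) the number of long cycles, compute its mean and variance from the standard factorial-moment identities for cycle counts, and apply Chebyshev. The only cosmetic differences are that you retain the factor $\log 2$ in the pointwise bound and track the covariance sign explicitly, whereas the paper uses the cruder bound $\log 2<1$ and asserts $\Var = \e$ (which is really $\le$ once the boundary constraints $k+\ell\le N$ are accounted for, but that only helps).
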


\begin{proof}
Denote
\[
\cN_{N,W}^>:= \sum_{N/W<\ell\le N} C_\ell(P_N).
\]
Since $\log|1-e(u)|\le \log 2<1$, $X_{N}^>(t)$ is bounded pointwise by $\cN_{N,W}^>$. 
We have
\[
\e \cN_{N,W}^> = \sum_{N/W<\ell\le N} \frac1\ell = \log W + O(1).
\]
Moreover, by an elementary computation we have $\e C_\ell(P_N)^2 = \frac1\ell + \frac1{\ell^2}$ and $\e C_k(P_N)C_\ell(P_N) = 1/(k \ell)$ for  $k\ne \ell$, from which it follows follows that 
\[
\Var(\cN_{N,W})= \e \cN_{N,W}^>.
\]
By Chebyshev's inequality,
\[
\pro{ \sup_{t\in \tor} X_{N}^>(t) \ge 2\log W}\le \pro{ \cN_{N,W}^>\ge 2\log W} \ll \frac1{\log W} .\qedhere
\]
\end{proof}

From the above lemma and the upper bound in \eqref{upper.W}, in order to show \eqref{upper.goal1} it is enough to show 
\[	
\sup_{t\in \torr_{2N_0}} X_{N}^{\le}(t) \le (\xcrit+\epp) \log N \qquad \text{ with probability $1-o_\epp(1)$.}
\]
Now from the lower bound in \eqref{upper.W} and Theorem \ref{thm:arta}, it suffices to show 
\begin{equation}	\label{upper.goal2}
\sup_{t\in \torr_{2N_0}} Y_{N/W}(t) \le (\xcrit+\epp) \log N \qquad \text{ with probability $1-o_\epp(1)$.}
\end{equation}

We will establish \eqref{upper.goal2} by controlling the maximum on complementary subsets of the unit circle -- informally called the ``major arcs" and ``minor arcs" -- by different arguments (see Section \ref{sec:overview.arcs} for a high-level discussion discussion). 
The following provides uniform control for $Y_{N/W}$ on major arcs. 
Recall our notation \eqref{def:bohr}. 

\begin{prop}[Upper bound for the Poisson field on major arcs]	\label{prop:upper.major}
Let $\alpha\in (0,1)$ and $\xi_0\ge1$ such that 
\begin{equation}	\label{assume:xi0alpha}
\xi_0\le C\alpha^2\log N 
\end{equation}
for a sufficiently large constant $C>0$.
Then except with probability $O(\xi_0 N^{-c\alpha/\xi_0})$, 
\begin{equation}
\sup_{t\in \Maj( \xi_0, N^{-\alpha}) } Y _N(t) \le  -\frac{\alpha^2\log^2N}{128\xi_0}.
\end{equation}
\end{prop}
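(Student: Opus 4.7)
The plan is to construct, for each $\xi\in\{1,\ldots,\xi_0\}$, a non-negative Poisson functional $R_\xi$ that lower bounds the magnitude of the negative contribution to $Y_N(t)$ from cycle lengths divisible by $\xi$, \emph{uniformly} over all $t\in B_\xi(N^{-\alpha})$. Such a uniform-in-$t$ pointwise bound eliminates the need for any net argument on $B_\xi(N^{-\alpha})$; only a union bound over the $\xi_0$ denominators $\xi$ will remain. Concretely, for $t\in B_\xi(N^{-\alpha})$, writing $\delta:=\|\xi t\|_\tor\le N^{-\alpha}$, for every integer $k$ with $1\le k\le K_\ast:=\lfloor N^\alpha/(2\pi)\rfloor$ the point $k\xi\,t$ reduces modulo $1$ to $k\delta\in[-\tfrac1{2\pi},\tfrac1{2\pi}]$, so from the elementary bound $|1-e(u)|\le 2\pi\|u\|_\tor$ one gets $\log|1-e(k\xi\,t)|\le\log(2\pi k)-\alpha\log N\le 0$.

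Decomposing
\[
Y_N(t)=(\log 2)\sum_{\ell\le N} Z_\ell-\sum_{\ell\le N}Z_\ell\bigl(\log 2-\log|1-e(\ell t)|\bigr),
\]
in which both sums on the right are non-negative, and retaining only the indices $\ell=k\xi$, $k\le K_\ast$, in the second sum, one obtains
\[
Y_N(t)\le (\log 2)\sum_{\ell\le N} Z_\ell-R_\xi, \qquad R_\xi:=\sum_{k=1}^{K_\ast}Z_{k\xi}\bigl(\alpha\log N-\log(\pi k)\bigr),
\]
where $R_\xi\ge 0$ does not depend on $t$. Standard partial-sum asymptotics---$H_{K_\ast}=\alpha\log N+O(1)$ and $\sum_{k\le K_\ast}(\log k)/k=(\log K_\ast)^2/2+O(1)$---then yield $\e R_\xi=(\alpha\log N)^2/(2\xi)+O(\alpha\log N/\xi)$ and $\Var(R_\xi)\ll(\alpha\log N)^3/\xi$. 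Since $R_\xi$ is a non-negative linear combination of independent Poissons, the Chernoff bound $\log\e e^{-\theta R_\xi}\le-\theta\e R_\xi+\tfrac12\theta^2\Var(R_\xi)$, which follows from $e^{-x}-1+x\le x^2/2$ for $x\ge 0$, gives, after optimizing in $\theta\ge 0$,
\[
\pr\!\big(R_\xi\le \tfrac12\e R_\xi\big)\le \exp\!\left(-\frac{(\e R_\xi)^2}{8\Var(R_\xi)}\right)\ll N^{-c\alpha/\xi}
\]
for an absolute constant $c>0$.

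A union bound over $\xi\in\{1,\ldots,\xi_0\}$ together with the Poisson tail estimate $\sum_{\ell\le N}Z_\ell\le 2\log N$ (which fails with probability $N^{-\Omega(1)}$, readily absorbed into the main bound) shows that outside an event of probability $O(\xi_0 N^{-c\alpha/\xi_0})$, every $t\in\Maj(\xi_0,N^{-\alpha})=\bigcup_{\xi\le\xi_0}B_\xi(N^{-\alpha})$ satisfies $Y_N(t)\le 2(\log 2)\log N-(\alpha\log N)^2/(8\xi_0)$. Under the hypothesis $\xi_0\le C\alpha^2\log N$ with $C$ sufficiently small (any $C\le 1/64$ works), the negative term dominates and exceeds $\alpha^2\log^2 N/(128\xi_0)$ in magnitude by a wide margin, giving the asserted bound. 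The delicate step is the concentration estimate: to secure the exponent $\alpha/\xi_0$ one must verify that the deviation scale $r=\e R_\xi/2$ lies inside the sub-Gaussian regime of the Cram\'er transform (where the exponent is $(\e R_\xi)^2/\Var(R_\xi)$) rather than in the weaker Poissonian regime, and the hypothesis $\xi_0\le C\alpha^2\log N$ is precisely what ensures that we are in this regime.
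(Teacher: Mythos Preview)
Your proof is correct and follows essentially the same strategy as the paper: bound the positive part of $Y_N(t)$ globally by $O(\log N)$ via the total cycle count, extract a large uniform negative contribution from cycle lengths divisible by $\xi$ with $k\xi\, t$ near zero, apply Poisson concentration, and union bound over $\xi\le\xi_0$. The only cosmetic difference is that the paper truncates further to $k\le N^{\alpha/2}$ so that every retained coefficient is uniformly $\le -\tfrac{\alpha}{2}\log N$, reducing the negative part to a constant times a Poisson count $\cN'$; you instead keep all $k\le K_\ast$ and handle the weighted sum $R_\xi$ directly via Chernoff. Both routes give the same exponent $N^{-c\alpha/\xi}$. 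Two small remarks: your final comment that the hypothesis on $\xi_0$ is needed to stay in a ``sub-Gaussian regime'' is not quite the point, since for \emph{lower} tails of non-negative weighted Poisson sums the quadratic bound $e^{-x}-1+x\le x^2/2$ is valid for all $\theta\ge 0$ unconditionally; the hypothesis $\xi_0\le C\alpha^2\log N$ is rather what makes the negative term $\asymp\alpha^2\log^2 N/\xi_0$ dominate the $O(\log N)$ positive part. You are also right that $C$ must be \emph{sufficiently small} (the paper's ``sufficiently large'' is a slip).
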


(Note that we do not need the reduction to the net $\torr_{2N_0}$ for the major arc case.)
We defer the proof of Proposition \ref{prop:upper.major} to Section \ref{sec:major}. 

For the control on minor arcs it will be convenient to group terms as in \eqref{def:micro}--\eqref{2goal.upper4}.
Fix a constant $c_0>0$ to be chosen sufficiently small, and let $\varrho(N)\in (0,1)$, $n(N)\in \N$ be sequences such that
\begin{equation}	\label{assume:rhon.upper}
\varrho = (1+o(1))c_0\epp, \qquad \varrho n = (1+o(1))\log N.
\end{equation}
Note that by our assumption \eqref{upper.W} on $W$ the above allow $\varrho, n$ to be taken so that $Y_{[1,e^{\varrho n})} = Y_{N/W}$. 

\begin{prop}[Upper bound for the Poisson field on minor arcs]	\label{prop:upper.minor}
Let $\xi_0\ge C/\epp^4$ and $e^{-c\epp^3n}\le \kappa<1$ for constants $C,c>0$ sufficiently large and small, respectively. 
For $\varrho, n$ satisfying \eqref{assume:rhon.upper}, if $T\subset\tor$ with $|T|\le e^{(1+c_0\epp )\varrho n}$ for a sufficiently small constant $c_0>0$, then
\begin{equation}
\pro{\max_{t\in T\setminus \Maj(\xi_0, \kappa)}\YY_{[1,e^{\varrho n})}(t) \ge (\xcrit+\epp )\varrho n} \ll_{\epp } \expo{-\epp \varrho n} .
\end{equation} 
\end{prop}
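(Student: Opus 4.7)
The strategy is a conditional first moment argument using the uniform upper tail estimate of Corollary~\ref{cor:Yt.upper}. Since the target event is monotone decreasing in $\epp$, we may assume $\epp$ is at most the constant $\eps_0$ appearing in \eqref{beta.bounds}. Pick an integer $m=m(\epp)=O_\epp(1)$ satisfying \eqref{m:lb} and split
\[
\YY_{[1,e^{\varrho n})}(t)=\YY_{[1,e^{\varrho m})}(t)+\YY_{[e^{\varrho m},e^{\varrho n})}(t).
\]
Uniformly in $t$, the low-frequency piece is pointwise dominated by $(\log 2)\sum_{\ell\le e^{\varrho m}}Z_\ell$, a Poisson sum of expectation $O_\epp(1)$. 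Standard Poisson tail estimates give
\[
\pro{(\log 2)\sum_{\ell\le e^{\varrho m}}Z_\ell\ge (\epp/2)\varrho n}\ll_\epp e^{-2\epp\varrho n},
\]
so it suffices to establish the stated bound with the event replaced by $\{\max_{t\in T\setminus\Maj(\xi_0,\kappa)}\YY_{[e^{\varrho m},e^{\varrho n})}(t)\ge(\xcrit+\epp/2)\varrho n\}$.

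Next we condition on a typical coarse cycle configuration. Let $\mathcal{E}$ be the set of pairs $(\urn,\turn)$ of disjoint subsets of $[m,n)\cap\Z$ satisfying the typical estimates of Lemma~\ref{lem:typical}, namely
\[
|\urn|=(1+O(\varrho))\varrho(n-m)\qquad\text{and}\qquad|\turn|\le K\varrho^2(n-m)
\]
for a fixed large constant $K$. By Lemma~\ref{lem:typical}, the probability that $(\urn_1\cap[m,n),\urn_{\ge 2}\cap[m,n))\notin\mathcal{E}$ is $O(e^{-c\epp\varrho n})$, well within the target bound. Hence it is enough to control $\pr^\cQ$-probabilities uniformly over $(\urn,\turn)\in\mathcal{E}$ with $\cQ=\cQ(m,n,\urn,\turn)$, and then sum out the conditioning.

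For fixed such $(\urn,\turn)$ and each $t\in T\setminus\Maj(\xi_0,\kappa)$, set $\yy=\xcrit+3\epp/4$; for $c_0$ small enough one checks $\yy|\urn|\ge(\xcrit+\epp/2)\varrho n$. Corollary~\ref{cor:Yt.upper} then yields
\[
\pr^\cQ\!\bigl(\YY_{[e^{\varrho m},e^{\varrho n})}(t)\ge(\xcrit+\epp/2)\varrho n\bigr)\le\expo{-\LA^*(\yy)|\urn|+O(|\turn|+\Err_0)}.
\]
Since $\LA^*(\xcrit)=1$ and $(\LA^*)'(\xcrit)=\betacrit$ (with $\betacrit>10$), a Taylor expansion gives $\LA^*(\yy)=1+(3\betacrit/4)\epp+O(\epp^2)$, so $\LA^*(\yy)|\urn|\ge(1+(\betacrit/2)\epp)\varrho n$ for $c_0$ and $\epp$ small. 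Under the hypotheses $\xi_0\ge C/\epp^4$ and $\kappa\ge e^{-c\epp^3 n}$ (with $C$ large and $c$ small in terms of $c_0$), the error $\Err_0$ defined in \eqref{def:Ups0} satisfies $\Err_0=o(\epp\varrho n)$, and $|\turn|\le K\varrho^2(n-m)=O(c_0\epp\varrho n)$ is similarly absorbed. A union bound over $T$ combined with $|T|\le e^{(1+c_0\epp)\varrho n}$ then yields
\[
\pr^\cQ\!\Big(\max_{t\in T\setminus\Maj}\YY_{[e^{\varrho m},e^{\varrho n})}(t)\ge(\xcrit+\epp/2)\varrho n\Big)\le e^{(c_0-\betacrit/2+o(1))\epp\varrho n}\le e^{-\epp\varrho n}
\]
for $c_0$ sufficiently small, as required.

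The only real obstacle is parameter bookkeeping: the constants $c_0$, $C$, and $c$ controlling $\varrho$, $\xi_0$, and $\kappa$ must all be chosen compatibly so that the two error terms $|\turn|$ and $\Err_0$ appearing in Corollary~\ref{cor:Yt.upper} remain $o(\epp\varrho n)$. The numerical largeness of $\betacrit$ leaves ample budget, so all the constraints can be met simultaneously, and the argument is otherwise routine.
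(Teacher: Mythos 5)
Your overall strategy matches the paper's: condition on a typical coarse cycle configuration via Lemma~\ref{lem:typical}, apply the uniform upper tail bound from Corollary~\ref{cor:Yt.upper} at each minor-arc point, and take a union bound over $T$. The one structural difference is that you take $m=O_\epp(1)$ and control the low-frequency block by a direct Poisson tail, whereas the paper takes $m=\lf c_1\epp n\rf$ and invokes Lemma~\ref{lem:upper.crude}; both choices are admissible and both satisfy \eqref{m:lb}, so this is a harmless variation.

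There is, however, a genuine error in the passage where you pick $\yy$. You set $\yy=\xcrit+3\epp/4$ and check $\yy|\urn|\ge(\xcrit+\epp/2)\varrho n$, and then assert that Corollary~\ref{cor:Yt.upper} bounds $\pr^\cQ\bigl(\YY_{[e^{\varrho m},e^{\varrho n})}(t)\ge(\xcrit+\epp/2)\varrho n\bigr)$. This direction is backwards. The corollary bounds $\pr^\cQ(\YY_J(t)\ge\yy|\urn|)$, and to transfer that estimate to the event $\{\YY_J(t)\ge(\xcrit+\epp/2)\varrho n\}$ you need this event to be \emph{contained} in $\{\YY_J(t)\ge\yy|\urn|\}$, which requires $\yy|\urn|\le(\xcrit+\epp/2)\varrho n$. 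With $\yy|\urn|\ge(\xcrit+\epp/2)\varrho n$ you have the opposite containment, so the corollary controls a \emph{smaller} event and gives no bound on the one you want. This is precisely why the paper chooses $\yy=\frac{\xcrit+\epp/2}{1+c_2\epp}$, \emph{below} $\xcrit+\epp/2$, so that $\yy|\urn|\le(\xcrit+\epp/2)\varrho(n-m)$.

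The fix is routine: since on $\cE$ one has $|\urn|\le(1+O(\varrho))\varrho n$, take instead $\yy=(\xcrit+\epp/2)(1+O(\varrho))^{-1}\ge\xcrit+\epp/4$ for $c_0$ small. The Taylor expansion then gives $\LA^*(\yy)\ge 1+(\betacrit/4)\epp+O(\epp^2)$ rather than $1+(3\betacrit/4)\epp$, which still leaves more than enough budget after subtracting the entropy $(1+c_0\epp)\varrho n$ and the error terms $O(|\turn|+\Err_0)$. With this correction the rest of your bookkeeping and the union bound go through, and the argument coincides with the paper's proof.
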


We defer the proof of Proposition \ref{prop:upper.minor} to Section \ref{sec:minor}, and complete the proof of Theorem \ref{thm:upper} on the above propositions. 
We take $\alpha = c\epp^2$ and $\xi_0= C/\epp^4$ for suitable constants $c,C>0$. We take $\varrho , n$ such that $Y_{[1,e^{\varrho n})} = Y_{N/W}$, and put 
\[
\kappa = (N/W)^{-\alpha} = e^{-c\epp^2\varrho n} \ge e^{-c'\epp^3n}.
\]
From Proposition \ref{prop:upper.major} we have that with probability $1-O_\epp(N^{-c'\epp^6})$, 
\[
\sup_{t\in \Maj(\xi_0,\kappa)} Y_{N/W}(t) \le 0
\]
(say). 
From Proposition \ref{prop:upper.minor} with $T=\torr_{2N_0}$ (which from \eqref{upper.N0} has size at most $e^{(1+c_0\epp)\varrho n}$ for all $n$ sufficiently large depending on $\epp$), with probability $1-O_\epp(N^{-c'\epp })$, 
\[
\sup_{t\in \torr_{2N_0}\setminus \Maj(\xi_0,\kappa)} Y_{N/W}(t) \le (\xcrit+2\epp)\log N.
\]
Now \eqref{upper.goal2}, and hence Theorem \ref{thm:upper}, follows from the previous two displays and the union bound.

\subsection{Control on major arcs}	\label{sec:major}

In this subsection we prove Proposition \ref{prop:upper.major}.
The main idea is that for points $t$ in major arcs, the sequence $\ell t\in \tor$ returns often to the vicinity of the singularity of $\log|1-e(\,\cdot\,)|$, which (crucially) points in the direction of $-\infty$. 

We begin with a crude upper bound on the maximum of the Poisson field.

\begin{lemma}	\label{lem:upper.crude}
For any $M\ge 1$,
\[
\sup_{t\in \tor} Y_M(t) \le 3\log M \qquad \text{ with probability $1-O(M^{-c})$}. 
\]
\end{lemma}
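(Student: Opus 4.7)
The argument reduces to a tail bound on a single Poisson variable. Observe from \eqref{ub:phi} (or direct computation) that $\log|1-e(u)| \le \log 2$ for all $u \in \tor$. Since the $Z_\ell$ are non-negative almost surely, this yields the pointwise bound
\[
\sup_{t\in\tor} Y_M(t) \;\le\; (\log 2)\,\cN([1,M])
\]
with probability $1$. Because $\log 2 < 1$, the claimed estimate follows once we show $\cN([1,M]) \le 3(\log M)/\log 2$ except with probability $O(M^{-c})$. Since $3/\log 2 > 4$ while $\e\,\cN([1,M]) = \log M + O(1)$, this is a very crude upper-tail event for a Poisson variable.

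For that Poisson tail I would compute the moment generating function directly, using independence of the $Z_\ell$ and the Poisson mgf:
\[
\e\,\exp\!\big(\theta\,\cN([1,M])\big) \;=\; \prod_{\ell\le M}\exp\!\big((e^\theta - 1)/\ell\big) \;=\; \exp\!\big((e^\theta - 1)(\log M + O(1))\big).
\]
Markov's inequality at level $k = 3(\log M)/\log 2$ together with the optimal choice $\theta = \log(3/\log 2)$ then produces a bound of the form $\exp(-c\log M) = M^{-c}$ for some absolute $c > 0$ (a quick arithmetic check shows that any $c < 3$ is admissible). This is pure Cram\'er--Chernoff for a Poisson sum and uses none of the Fourier or Diophantine machinery developed earlier; the absence of any pointwise information about where in $\tor$ the supremum is attained is precisely why the resulting bound is ``crude'' in the sense of the lemma's name, and why it is nevertheless sufficient as a coarse input for the major-arc analysis.
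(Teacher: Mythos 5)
Your proof is correct and follows essentially the same route as the paper's: bound $Y_M(t)$ pointwise by $(\log 2)\,\cN([1,M])$ using $\log|1-e(u)|\le\log 2$, observe that $\cN([1,M])$ is Poisson with mean $\log M + O(1)$, and apply a Poisson upper-tail bound. The only cosmetic difference is that the paper cites the concentration bound $\pr(\Poi(\rho)\ge 2\rho)\ll e^{-c\rho}$ as a standard fact while you write out the Cram\'er--Chernoff computation explicitly.
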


\begin{proof}
We note the following standard concentration bound (easily established by bounding the moment generating function): if $Z$ is Poisson-distributed with expectation $\rho>0$, then
\begin{equation}	\label{poi.conc}
\pr(Z\ge 2\rho)\,, \; \pr(Z\le \rho/2) \ll e^{-c\rho}.
\end{equation}
The claim follows from the above the upper tail estimate above and the pointwise bound $Y_{M}(t) \le \sum_{\ell\le M}Z_\ell\eqd \Poi(\rho)$ with $\rho = \log M+O(1)$.
\end{proof}

The following shows that the Poisson field $Y_N$ is in fact very \emph{negative} on individual low frequency Bohr sets.

\begin{lemma}[Control of the maximum over Bohr sets]	\label{lem:bad}
Let $N,\xi\ge 1$, $\alpha\in (0,1)$ and 
$0<M\le \xi N^{\alpha/4}$.
There is an event $\cB(\xi,\alpha)$ with 
\begin{equation}	\label{bd:cB}
\pro{ \cB(\xi,\alpha)} \ll N^{-c\alpha/\xi}
\end{equation}
such that on $\cB(\xi,\alpha)^c$,
\[
\sup_{t\in B_{\xi}(N^{-\alpha})} \YY_{(M,N]}(t) \le -\frac{\alpha^2}{64\xi}\log^2N + 4\log N .
\]
\end{lemma}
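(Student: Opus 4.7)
The plan is to exploit the singularity of $\log|1-e(\,\cdot\,)|$ at $0$: for any $t\in B_\xi(N^{-\alpha})$, every integer multiple $\ell = j\xi$ with $j$ not too large satisfies $\|\ell t\|_\tor \le jN^{-\alpha}$, which forces $\log|1-e(\ell t)|$ to be very negative. Since the Poisson variables $Z_\ell$ are non-negative, the corresponding contributions to $Y_{(M,N]}(t)$ yield a large negative term, uniformly in $t\in B_\xi(N^{-\alpha})$, while all other terms contribute at most $(\log 2)\sum_{\ell\le N}Z_\ell = O(\log N)$ with high probability.

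Concretely, I will choose $M_0 = \max(\lceil M/\xi\rceil, 1)$ and $M_1 = \min(\lfloor N/\xi\rfloor, \lfloor N^{\alpha/2}\rfloor)$, noting that the hypothesis $M\le \xi N^{\alpha/4}$ forces $M_1/M_0 \ge N^{\alpha/4}$, so $j\xi \in (M,N]$ for every $j\in J := \Z\cap (M_0,M_1]$ (the case $\xi \gg \alpha^2 \log N$, where the conclusion is trivial, is handled separately via Lemma \ref{lem:upper.crude}). For $j\in J$ and $t\in B_\xi(N^{-\alpha})$, the bound $j \le N^{\alpha/2}\le N^\alpha/2$ gives $\|j\xi t\|_\tor \le j\|\xi t\|_\tor \le jN^{-\alpha}$, hence
\[
\log|1-e(j\xi t)| \;\le\; \log(2\pi j N^{-\alpha}) \;\le\; -\tfrac{\alpha}{2}\log N + O(1).
\]
Splitting $Y_{(M,N]}(t) = A(t) + A^c(t)$ with $A(t) = \sum_{j\in J} Z_{j\xi}\log|1-e(j\xi t)|$, non-negativity of $Z_{j\xi}$ yields
\[
A(t) \;\le\; \Bigl(-\tfrac{\alpha}{2}\log N + O(1)\Bigr)\sum_{j\in J} Z_{j\xi},
\]
while uniformly in $t$,
\[
A^c(t) \;\le\; (\log 2)\sum_{\ell\le N}Z_\ell.
\]

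It then remains to invoke Poisson concentration on two deterministic sums. The variable $\sum_{j\in J} Z_{j\xi}$ is Poisson with mean $\sum_{j\in J}\frac{1}{j\xi} = \frac{1}{\xi}\log(M_1/M_0) + O(1/\xi) \ge \frac{\alpha}{5\xi}\log N$ for $N$ large, so standard Poisson deviation bounds give an event $\cB_1$ with $\pr(\cB_1)\ll N^{-c\alpha/\xi}$ outside of which $\sum_{j\in J} Z_{j\xi} \ge \frac{\alpha}{10\xi}\log N$. Similarly $\sum_{\ell\le N}Z_\ell$ is Poisson with mean $\log N + O(1)$, so there is an event $\cB_2$ with $\pr(\cB_2)\ll N^{-c}$ outside of which $\sum_{\ell\le N}Z_\ell \le 2\log N$. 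On $\cB(\xi,\alpha)^c := (\cB_1\cup\cB_2)^c$ we combine the two estimates to obtain, uniformly in $t\in B_\xi(N^{-\alpha})$,
\[
Y_{(M,N]}(t) \;\le\; -\frac{\alpha^2}{20\xi}\log^2 N + O(\log N) + 2\log N \;\le\; -\frac{\alpha^2}{64\xi}\log^2 N + 4\log N,
\]
which is the claimed bound, with the stated probability estimate. The argument is essentially mechanical; the only subtlety is bookkeeping the range $J$ under the hypothesis $M\le \xi N^{\alpha/4}$ and verifying the linearization $\|j\xi t\|_\tor \le j\|\xi t\|_\tor$ holds throughout $J$ (which is what forces the cap $j\le N^{\alpha/2}$, and ultimately the factor $\alpha^2/\xi$ rather than $\alpha/\xi$ in the exponent).
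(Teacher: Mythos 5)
Your proof is correct and takes essentially the same route as the paper: in both cases one isolates the multiples of $\xi$ in a range $(M,\xi N^{\alpha/2}]$, uses $\|j\xi t\|_\tor\le jN^{-\alpha}\le N^{-\alpha/2}$ to make each such term contribute $\lesssim -\tfrac{\alpha}{2}\log N$, lower-bounds the Poisson count of these terms by $\gg\tfrac{\alpha}{\xi}\log N$ with probability $1-N^{-c\alpha/\xi}$, and controls the remaining terms by the total cycle count $\le 2\log N$. One small bookkeeping slip: the claim $M_1/M_0\ge N^{\alpha/4}$ is not literally true when $M/\xi>1$ (since then $M_0$ can be as large as $N^{\alpha/4}+1$ while $M_1\le N^{\alpha/2}$), but what you actually use is $\log(M_1/M_0)\ge(\tfrac{\alpha}{4}-o(1))\log N$, which does hold and gives the stated mean; like the paper, you are implicitly restricting to the regime where $\alpha\log N$ is sufficiently large, which is fine since the complementary regime is your ``trivial'' case.
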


\begin{proof}
Let
\[
\YY'(t) = \sum_{\substack{M<\ell\le \xi N^{\alpha/2}\\ \xi |\ell}} Z_\ell\log|1-e(\ell t)|, \qquad \YY''(t) = \YY_{(M,N]}(t) - \YY'(t)
\]
and
\[
\cN = \sum_{\ell\le N} Z_\ell, \qquad \cN' = \sum_{\substack{M<\ell\le \xi N^{\alpha/2}\\ \xi |\ell}} Z_\ell.
\]
Then $\cN$ and $\cN'$ are Poisson random variables with means 
\[
\rho =\e \cN = \log N + O(1)
\]
and
\begin{align*}
 \rho' &= \e \cN' =\frac1{\xi }\left(\log(N^{\alpha/2}) - \log\left(\frac{M}\xi\vee 1\right)\right) + O\left(\frac1{\xi\wedge M}\right) \\
 &\ge \frac1\xi\left(\frac{\alpha}{4}\log N - O(1)\right) \ge \frac{\alpha}{8\xi}\log N. 
\end{align*}
(For the last bound, note we are free to assume $\frac{\alpha}\xi\log N$ is larger than any fixed constant as the claim is trivial otherwise.)
Let $\cB(\xi,\alpha)$ be the event that either $\cN> 2\log N$ or $\cN'< \frac{\alpha}{16{\xi }}\log N$.
Then \eqref{bd:cB} follows from \eqref{poi.conc} and the union bound.

For the remainder of the proof we restrict to $\cB(\xi,\alpha)^c$.
Let $t\in B_{ \xi }(N^{-\alpha})$ be arbitrary, and let $a\in \Z$ and $\theta$ be such that $t=(a+\theta)/\xi$, with $|\theta|\le N^{-\alpha}$. 
Since $\log|1-e(\cdot)| \le 2$ pointwise, we have
\[
\YY''(t) \le 2 \cN\le 4\log N.
\]
Thus,
\begin{align*}
\YY'(t) &= \sum_{\frac{M}\xi<\ell\le N^{\alpha/2}} Z_{\ell\xi}\log|1-e(\ell\xi t)|\\
&= \sum_{\frac{M}\xi<\ell\le N^{\alpha/2}} Z_{\ell\xi}\log|1-e(\ell\theta)|\\
&\le  \sum_{\frac{M}\xi<\ell\le N^{\alpha/2}} Z_{\ell\xi}\log(2\|\ell\theta\|_\tor)\\
&\le  \sum_{\frac{M}\xi<\ell\le N^{\alpha/2}} Z_{\ell\xi}\log(2N^{-\alpha/2})\\
&= \cN' \log(2N^{-\alpha/2})\\
& \le -\frac{\alpha^2}{64\xi} \log^2N ,
\end{align*}
where in the final bound we assumed $\alpha\log N\ge 4\log 2$.
\end{proof}

Now we conclude the proof of Proposition \ref{prop:upper.major}.
Applying Lemma \ref{lem:upper.crude} and Lemma \ref{lem:bad} with $M=N^{\alpha/4}$, along with the union bound,
\[
\sup_{t\in \Maj(\xi_0, N^{-\alpha})} Y_N(t) \le \frac{-\alpha^2}{64\xi_0} \log^2 N + \left( 4+ \frac{\alpha}4\right)\log N \le \frac{-\alpha^2}{64\xi_0} \log^2 N + 5\log N
\]
except with probability $O(\xi_0N^{-c\alpha/\xi_0} + N^{-c\alpha}) = O(\xi_0N^{-c\alpha/\xi_0} )$. 
The claim now follows by taking the constant $C$ in \eqref{assume:xi0alpha} sufficiently large.

\subsection{Control on minor arcs}	\label{sec:minor}

In this subsection we prove Proposition \ref{prop:upper.minor}.
 
Let $\epp\in (0,1)$.
Up to adjusting the constant $c_1$ we are free to assume $\epp$ is smaller than any fixed constant. 
We may also assume $n$ is sufficiently large depending on $\epp$.

Recall our notation \eqref{def:Qa}.
Let $c_1,c_2>0$ be sufficiently small constants, 
put $m=\lf c_1\epp n\rf$, and
let $\cG$ be the event that
\begin{equation}	\label{Q1Q2}
\big||\urn_1\cap [m,n)| -\varrho (n-m)\big|, \; |\urn_{\ge 2}\cap [m,n)|\;\le c_2\epp\varrho (n-m).
\end{equation}
From our assumption \eqref{assume:rhon.upper} we have that \eqref{assume:typical} holds for any fixed constant $C>0$ and all $n$ sufficiently large. 
From Lemma \ref{lem:typical} we have 
\begin{equation}	\label{Geps2}
\pr(\cG) = 1- O(e^{ -c\epp\varrho^2(n-m)})  = 1-O(e^{-c\epp^3n}).
\end{equation}

Let $\xi_0\ge1$ and $\kappa\in (0,1)$ to be chosen later. 
Fix arbitrary disjoint sets $\urn, \turn\subset[m,n)\cap \Z$ satisfying the bounds for $\urn_1\cap [m,n)$ and $\urn_{\ge2}\cap [m,n)$, respectively, in \eqref{Q1Q2}. 
From Corollary \ref{cor:Yt.upper}, and assuming $\epp$, and hence $\varrho$, is at most a sufficiently small constant, then
for any $t\in \tor\setminus \Maj(\xi_0,\kappa)$,
\begin{align*}
&\pr \left( \YY_{[e^{\varrho m}, e^{\varrho n})}(t) \ge (\xcrit+\epp/2) \varrho (n-m) \,\middle|\, \cQ(m,n,\urn,\turn) \right) \\
&\qquad\qquad \le \pr \left( \YY_{[e^{\varrho m}, e^{\varrho n})}(t) \ge \frac{\xcrit+\epp/2}{1+c_2\epp} |\urn| \,\middle|\, \cQ(m,n,\urn,\turn) \right) \\
&\qquad\qquad \le \expo{ -\lambda^*\left( \frac{\xcrit+\epp/2}{1+c_2\epp}\right) (1-c_2\epp) \varrho (n-m) + O\left( \epp \varrho n + \Err'_0\right)},
\end{align*}
where 
\[
\Err'_0=\frac{n}{\xi_0^{1/2}} + \frac1\varrho \left( 1+ \log\frac1{\varrho \kappa}\right)
\]
and in the final line we took $\epp$ smaller than the constant $\eps_0$ in \eqref{beta.bounds}.
Hence,
\begin{equation}
\pr \left( \YY_{[e^{\varrho m}, e^{\varrho n})}(t) \ge (\xcrit+\epp/2) \varrho (n-m) \,\middle|\, \cQ(m,n,\urn,\turn) \right)
\le  \expo{ - (1+c\epp)\varrho (n-m) + O(\Err'_0)}
\end{equation}
where we have taken the constant $c_2$ sufficiently small and used that $\lambda^*$ is strictly increasing on $\R_+$ and equal to 1 at $\xcrit$.
Averaging over the choices of $\urn, \turn$, on $\cG$ we have
\begin{equation}	\label{onGe1}
\pr\left(\cG\cap \Big\{\, \YY_{[e^{\varrho m}, e^{\varrho n})}(t) \ge (\xcrit+\epp/2) \varrho (n-m) \,\Big\} \right)
\le \expo{ - (1+c\epp)\varrho (n-m) + O(\Err'_0)}
\end{equation}
for any $t\in \tor\setminus \Maj(\xi_0, \kappa)$. 

Let
\begin{equation}
\cA_m= \left\{ \sup_{t\in \tor} Y_{[1,e^{\varrho m})}(t) \le 4\varrho m\right\}.
\end{equation}
From Lemma \ref{lem:upper.crude},
\begin{equation}	\label{Amc}
\pro{\cA_m^c }  \ll e^{-c\varrho m} \ll e^{-c\epp^2n}.
\end{equation}
Applying the union bound,
\begin{align}
&\pr\Big(\cA_m\cap \cG\cap \Big\{\, \exists t\in T\setminus \Maj(\xi_0,\kappa): \YY_{[1,e^{\varrho n})}(t) \ge (\xcrit+\epp)\varrho n \,\Big\}\Big)	\notag\\
&\qquad\qquad\qquad\qquad\le 
\sum_{t\in T\setminus \Maj(\xi_0,\kappa)} 
\pro{ \cG\cap \Big\{\, \YY_{[e^{\varrho m}, e^{\varrho n})}(t) \ge (\xcrit+\epp-4c_1\epp)\varrho (n-m) \,\Big\}}	\notag\\
&\qquad\qquad\qquad\qquad\le 
|T|\expo{ - (1+c\epp)\varrho (n-m) + O(\Err'_0)}	\notag\\
&\qquad\qquad\qquad\qquad\le 
|T|\expo{ - (1+c\epp/2)\varrho n + O(\Err'_0)}	\notag\\
&\qquad\qquad\qquad\qquad \le \expo{ -c\epp\varrho n/4 + O(\Err'_0)}	\label{onAonG}
\end{align}
where in the third line we applied \eqref{onGe1}, in the third and fourth lines we took the constant $c_1$ sufficiently small, and in the final line we applied the assumption on $|T|$ with $c_0$ sufficiently small. 
Combining \eqref{Geps2}, \eqref{Amc} and \eqref{onAonG} we conclude
\begin{align*}
\pr\Big( \exists t\in T\setminus \Maj(\xi_0,\kappa): \YY_{[1,e^{\varrho n})}(t) \ge (\xcrit+\epp)\varrho n\Big) 
&\ll e^{-c\epp^3 n} + e^{-c\epp^2 n}   + \expo{ -c\epp \varrho n/4 + O(\Err'_0)}\\
&\ll  \expo{ -c'\epp\varrho n + O(\Err'_0)}.
\end{align*}
The claim now follows from our assumptions on $\xi_0$ and $\kappa$ and the definition of $\Err'_0$ (recalling that $\varrho\gg \epp$).

\section{Lower bound: Early and middle generations}	
\label{sec:lower}

In this and the following sections we complete the proof of Theorem \ref{thm:main} by establishing the following complement to Theorem \ref{thm:upper}.
For high-level motivation of ideas and some notation we refer to Section \ref{sec:overview.lower}.

\begin{theorem}	\label{thm:lower}
For any $\epp>0$, 
\begin{equation}
\sup_{t\in \tor} X_N(t) \ge (\xcrit-\epp)\log N \qquad \text{ with probability $1-o_\epp(1)$.}
\end{equation}
\end{theorem}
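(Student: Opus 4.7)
The plan is to follow the three-epoch strategy sketched in Section \ref{sec:overview.lower}. Fix $\epp \in (0,1)$, a mesh $T_N \subset \tor$ of cardinality $\asymp N$ (a small rotation of $\torr_{N'}$ for some $N' = O(N)$), and a slowly growing parameter $W$ as in \eqref{2W}. For the first two epochs I would replace $(C_\ell(P_N))_{\ell \le N/W}$ by independent Poisson variables $(Z_\ell)$ using Theorem \ref{thm:arta}, so that $X_{\le M}$ can be studied through the Poisson field $Y_{\le M}$. For the \emph{early} stage, for small $c > 0$ and large $C > 0$ I would show $|\{t \in T_N : Y_{\le N^c}(t) \ge -C\log N^c\}| \gg N$ with probability $1-o(1)$: since $\log|1-e(\cdot)|$ has mean zero (recall \eqref{EV}), the variance of $Y_{\le N^c}(t)$ is $O(\log N^c)$ uniformly in $t$, so Chebyshev plus a Fubini argument transferring from $\tor$ to $T_N$ produces a dense ``starting population''.

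For \emph{middle} generations, set $\varrho \asymp n^{-b}$ for a small $b > 0$ with $\varrho n = (1+o(1))\log(N/W)$ and $m_0 := \lfloor c_1 \epp n\rfloor$, and condition on a typical event $\cQ(m_0, n, \urn, \emptyset)$; by Lemma \ref{lem:typical} this occurs with probability $1-o(1)$, provided $\varrho$ decays sufficiently slowly. Following Kistler's multiscale paradigm, partition $\urn$ into $K = K(\epp)$ legs $\urn_i$ of comparable size and define the ``rapidly-rising'' set $\cR_n(T_N, \xcrit - \epp)$ of points whose leg-increments $\Delta_{(i-1)n/K,\, in/K}(t)$ each clear $(\xcrit - \epp) |\urn_i|$. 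The first moment $\e^\cQ|\cR_n| \gg N^{c(\epp)}$ follows from the sharp one-point asymptotic of Proposition \ref{prop:Yt.approx} applied independently across legs (independence being preserved by the conditioning on $\cQ$). For the second moment, split $T_N \times T_N$ into dyadic shells according to the arithmetic distance $d_{\xi_0}(s,t)$: for a pair at scale $e^{-\varrho n_k}$, apply the two-point decorrelation of Proposition \ref{prop:decorr} on legs $i$ for which the separation hypothesis is met and the one-point bound of Corollary \ref{cor:Yt.upper} on the remaining legs; summing telescopically over scales matches the square of the first moment. Paley--Zygmund then gives $|\cR_n| \ge N^{c(\epp)/2}$ with high probability, and intersecting with the early-stage survivors produces $|\cS_{\le N/W}(\xcrit - 2\epp)| \ge N^{c(\epp)/2}$, having chosen $K$ large enough to absorb the $-C \log N^c$ deficit.

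For \emph{late} generations, reveal the cycles of $P_N$ of length $\le N/W$---thereby freezing the deterministic set $\cS_{\le N/W}(\xcrit - 2\epp) \subset T_N$ of size $\ge N^{c(\epp)/2}$---and argue that the high-frequency tail $X_N^> = X_N - X_{\le N/W}$ cannot wipe out every survivor. I would split into two cases along a structural dichotomy. In the \emph{unstructured} case, the set of ``dangerous frequencies'' $\{\ell \in (N/W,N] : |\cS_{\le N/W} \cap B_\ell(N^{-\epp})| \ge \tfrac12 |\cS_{\le N/W}|\}$ has cardinality $\le N/W^{C'}$; since the probability that $P_N$ contains a cycle of a given length $\ell$ is $O(1/\ell)$, a union bound shows that with high probability $P_N$ has no dangerous cycle, so at least half of the survivors persist into $\cS_{\le N}(\xcrit - 3\epp)$. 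In the \emph{structured} case, where the dangerous set is large, a Vinogradov-type covering lemma (Lemma \ref{lem:vino}) combined with pigeonholing forces some point of $\cS_{\le N/W}(\xcrit - 2\epp)$ to lie in a low-frequency Bohr set $B_\xi(\eta)$ with small $\xi$ and $\eta$; but Proposition \ref{prop:upper.major}, applied in the middle stage to exclude major-arc points from $\cR_n$ at the outset, rules this out. Either way $\cS_{\le N}(\xcrit - 3\epp) \ne \emptyset$, which after reparametrising $\epp$ yields the theorem.

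The hard part will be the second-moment computation in the middle-generation argument: Proposition \ref{prop:decorr} only applies when $d_{\xi_0}(s,t)$ exceeds a polynomial in $n - m$ times $e^{-\varrho m}/\varrho$, so for arithmetically close pairs one must fall back on the cruder one-point bound, and the contribution of such pairs at every scale must be absorbed into the diagonal. A secondary obstacle is making the quantitative dependence in Lemma \ref{lem:vino} compatible with the rate $W = N^{o(1)}$, so that the covering argument converting widespread Bohr-set overlap into a single low-frequency Bohr set hitting $\cS_{\le N/W}$ still produces a small enough $\xi$ for Proposition \ref{prop:upper.major} to apply.
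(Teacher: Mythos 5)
Your proposal follows the same three-epoch architecture as the paper (early / middle / late, with the Poisson reduction via Theorem \ref{thm:arta}, the conditioning on $\cQ$, Kistler's multiscale second moment over the arithmetic distance $d_{\xi_0}$, and the structured/unstructured dichotomy for the tail), and the middle-generation second moment and the Vinogradov/pigeonhole step match the paper's Proposition \ref{prop:rapid} and Section \ref{sec:late} closely; your use of Proposition \ref{prop:upper.major} to dispatch the structured case is explicitly noted in the paper's closing remark as a valid alternative to its own device of simply deleting $\Maj(\xi_0,\kappa')$ from the mesh. However, two steps as you have written them contain genuine gaps.

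First, the early-generation argument. The claim that $\Var\, Y_{\le N^c}(t)=O(\log N^c)$ \emph{uniformly} in $t$ is false: $\Var\, Y_{\le M}(t)=\sum_{\ell\le M}\tfrac1\ell\log^2|1-e(\ell t)|$, which blows up as $t\to p/q$ with small $q$ and, even restricted to the shifted mesh $\torr_{N',\theta}$ of the paper, is only $O(\log^3 N)$ rather than $O(\log N)$. More importantly, the \emph{mean} $\e Y_{\le M}(t)=\sum_{\ell\le M}\tfrac1\ell\log|1-e(\ell t)|$ is also not uniformly near zero on the mesh (it is of order $-\log^2 N$ for $t$ in low-frequency Bohr sets), so a pointwise Chebyshev estimate does not give a usable bound on $\pr(Y_{\le M}(t)\le -C\log M)$ for those $t$, and the Fubini step as stated does not close. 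The paper avoids this by working \emph{deterministically} with the average over the mesh: Lemma \ref{lem:avgFell} gives $\sum_{t\in T\cap I}\log|1-e(\ell t)|\ll\log M$ using the exact cancellation $\int_I\log|1-e(\ell s)|\,ds=0$ and the rotation $\theta$, and Lemma \ref{lem:easyLB} (a pigeonhole on the net, using the uniform \emph{upper} bound from Lemma \ref{lem:upper.crude}) converts the small average into many points with $Y_{\le M}(t)\ge-O(\log M)$. You would need some substitute for this to make your version rigorous.

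Second, in the unstructured branch of the late-generation argument, you invoke ``the probability that $P_N$ contains a cycle of length $\ell$ is $O(1/\ell)$'' and take a union bound over $\ell\in F$. But $F$ is a random set determined by the short cycles you have just revealed, so what is needed is the \emph{conditional} probability $\pr(C_\ell(P_N)\ge1\mid (C_{\ell'}(P_N))_{\ell'\le N/W})$, whose conditional law is not that of $P_N$ unconditioned. The paper establishes $\e(C_\ell(\pi)\mid\pi\in\fkS_N^{>M})\le N/(\ell(N-\ell))$ via a switching argument (Lemma \ref{lem:Cell}), together with a separate treatment of $\ell$ near $N$ (where the bound degenerates), and packages this into Corollary \ref{cor:EF} and Lemma \ref{lem:HcapF}. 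Your union bound needs to be replaced by an estimate of this kind. A minor additional slip: to guarantee $\turn=\emptyset$ with high probability you need $\varrho^2(n-m)=o(1)$ (see \eqref{EQ2}), i.e.\ $\varrho\sim n^{-b}$ with $b>1/2$, so $\varrho$ must decay \emph{fast enough}, not ``sufficiently slowly''; the paper takes $b=2/3$ and notes any $b\in(1/2,1)$ works.
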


For this section it will be convenient to work with slight rotations of the nets defined in \eqref{def:torq}. For $\theta\in \R$ and $q\in \N$ we denote
\begin{equation}	\label{def:torq2}
\torr_{q,\theta} = \frac{\theta}{q^2} +\torr_q = \left\{ \frac{j}q+ \frac{\theta}{q^2}: 0\le j\le q-1\right\}.
\end{equation}
(Thus, $\torr_{q,0}=\torr_q$.)

Our goal in this section is to prove the following proposition, which treats the Poisson field. From Theorem \ref{thm:arta} this implies a similar result for the truncated field $X_{N}^\le$ from \eqref{XNW} (see Corollary \ref{cor:early.middle}). We defer treatment of the high frequency tail $X_{N}^>$ to the next section. 
Recall the notation \eqref{def:superY} for the super-level sets of the Poisson field $Y_N$.

\begin{prop}[Many high points for the Poisson field]	\label{prop:poi.highpts}
Let $N^{-50}\le \theta \le 1$, $N_0= N^{1+o(1)}$, and $T_0\subset \torr_{N_0,\theta}$ with $|T_0|\ge (1-o(1))N_0$. 
For all $\epp\in (0,1)$ there exists $c(\epp)>0$ such that, with probability $1-o_\epp(1)$, 
\begin{equation}
\big| \super^Y_N(T_0,\xcrit-\epp)  \big| \ge N^{c(\epp)}.
\end{equation}
\end{prop}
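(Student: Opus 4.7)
The plan is to follow the two-epoch strategy of Section \ref{sec:overview.lower}, with the late-generation passage $Y_{N/W} \to Y_N$ deferred to the next section. Fix $\varrho \asymp \epp$ small enough for the hypotheses of Propositions \ref{prop:Phi}--\ref{prop:decorr}, set $m = \lceil (C_0/\varrho)\log(1/\varrho) \rceil$ so that \eqref{m:lb} holds and $\varrho m = O_\epp(1)$, and choose $n$ with $\varrho n = (1+o(1))\log N$. By Lemma \ref{lem:typical}, with probability $1-o(1)$ we may condition on an event $\cQ = \cQ(m,n,\urn,\emptyset)$ where $|\urn| = (1+o(1))\varrho(n-m) = (1+o(1))\log N$; the sparse set $\urn_{\ge 2} \cap [m,n)$ has size $o(|\urn|)$ and can be absorbed into error terms as in Section \ref{sec:onept}.

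For early generations, a direct variance computation gives $\Var(Y_{[1,e^{\varrho m})}(t)) \le \sum_{\ell \le e^{\varrho m}} \ell^{-1} (\log 2)^2 = O(\varrho m) = O_\epp(1)$, and $\e\, Y_{[1,e^{\varrho m})}(t) = 0$ by \eqref{EV}. Applying Chebyshev pointwise and then Markov summed over $T_0$, one finds that for any $\delta > 0$ and $C = C(\epp,\delta)$ large enough, the set
\[
T' := \{ t \in T_0 : Y_{[1, e^{\varrho m})}(t) \ge -C \}
\]
has cardinality at least $(1-\delta)|T_0|$ with probability $1 - o_\epp(1)$. After further conditioning on the cycles of length $\le e^{\varrho m}$, $T'$ becomes deterministic while the Poisson variables $(Z_\ell)_{\ell > e^{\varrho m}}$ remain independent, setting the stage for a second moment argument.

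For middle generations, I apply the Paley--Zygmund inequality to the number of ``rapidly rising'' points. Choose a large $K = K(\epp)$, partition $\urn \cap [m,n)$ into $K$ consecutive legs $\urn_1, \ldots, \urn_K$ of comparable sizes, and pick Diophantine parameters $\xi_0 = (\log N)^{C_1}$ and $\kappa = \xi_0 e^{-\varrho m / 2}$ satisfying the hypotheses of Propositions \ref{prop:Yt.approx}--\ref{prop:decorr}. Define
\[
\cR := \bigl\{ t \in T' \setminus \Maj(\xi_0,\kappa) : \Delta_{m_{i-1}, m_i}(t) \ge (\xcrit - \epp/2) |\urn_i| \text{ for all } 1 \le i \le K \bigr\},
\]
with $m_i = m + i(n-m)/K$ and $\Delta$ as in \eqref{eq-june11a}. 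Any $t \in \cR$ satisfies $Y_N(t) \ge (\xcrit - \epp/2)|\urn| - C \ge (\xcrit - \epp) \log N$ once $K$ is sufficiently large, so $\cR \subseteq \super^Y_N(T_0, \xcrit - \epp)$ and it suffices to show $|\cR| \ge N^{c(\epp)}$ with probability bounded below. Proposition \ref{prop:upper.major} ensures $|\Maj(\xi_0, \kappa) \cap T_0| = o(|T_0|)$ whp, so this exclusion is harmless. Independence across legs combined with Proposition \ref{prop:Yt.approx} and the Bahadur--Rao asymptotic \eqref{BaRa} give, for each minor arc $t \in T'$,
\[
\pr^\cQ(t \in \cR) = (1+o(1)) \prod_{i=1}^K \pr\bigl(\tY \ge (\xcrit-\epp/2)|\urn_i|\bigr) = N^{-\lambda^*(\xcrit - \epp/2) + o(1)},
\]
so $\e^\cQ |\cR| \ge N^{c(\epp)}$ with $c(\epp) := 1 - \lambda^*(\xcrit - \epp/2) > 0$ since $\lambda^*$ is strictly increasing. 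For the second moment, Proposition \ref{prop:decorr} applied leg-by-leg gives $\pr^\cQ(s,t \in \cR) \le (1+o(1)) (\e^\cQ |\cR| / |T'|)^2$ for pairs with $d_{\xi_0}(s,t) \ge \Delta \varrho^{-1} e^{-\varrho m}$.

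The main obstacle is the arithmetic bookkeeping for ``bad pairs'' $(s,t) \in T_0^2$ whose distance $d_{\xi_0}(s,t)$ is too small for Proposition \ref{prop:decorr} to apply. The shift $\theta/N_0^2$ in the definition \eqref{def:torq2} of $\torr_{N_0,\theta}$ is designed precisely to break the rational alignments that would otherwise force $d_{\xi_0}(s,t)$ to vanish identically; using this, one expects that for each $\xi,\xi'\in [-\xi_0,\xi_0]\setminus\{0\}$, the number of pairs $(s,t) \in \torr_{N_0,\theta}^2$ with $\|\xi s + \xi' t\|_\tor \le \eta$ is $O(\eta N_0^2 + N_0)$ uniformly in $\theta \in [N^{-50}, 1]$. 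Summing over the $O(\xi_0^2)$ choices and setting $\eta = \Delta\varrho^{-1} e^{-\varrho m}$ gives a bad-pair count of $o(N_0^2)$ for polylogarithmic $\xi_0$; on these bad pairs the trivial bound $\pr^\cQ(s,t \in \cR) \le \pr^\cQ(t \in \cR) = N^{-1+c(\epp)}$ suffices to keep their total contribution to $\e^\cQ |\cR|^2$ below $(\e^\cQ |\cR|)^2$. Carrying out this counting cleanly, and threading the successive conditioning steps together, is the core technical content; once done, Paley--Zygmund immediately yields $|\cR| \ge \tfrac12 \e^\cQ |\cR| \ge N^{c(\epp)/2}$ with probability bounded below, and a standard boosting via independence across disjoint subsets of the legs upgrades this to ``with probability $1-o_\epp(1)$''.
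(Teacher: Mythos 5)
Your outline (early survival, then a leg-wise second-moment argument via Paley--Zygmund, with decorrelation governed by $d_{\xi_0}$ and a bad-pair count controlled by the shift $\theta$) does match the structure of the paper's proof. However, there are two concrete gaps that would cause the argument as written to fail.

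First, the choice $\varrho\asymp\epp$ (a fixed constant) is incompatible with the tools you invoke. Proposition \ref{prop:Yt.approx} and hence Proposition \ref{prop:decorr} are stated under the hypothesis $\varrho\asymp(n-m)^{-b}$ for some fixed $b\in(0,1)$, i.e.\ $\varrho\to 0$ polynomially in $n-m$, and more importantly they are proved only on events $\cQ(m,n,\urn,\emptyset)$ with $\turn=\emptyset$. You claim $\urn_{\ge 2}\cap[m,n)$ ``can be absorbed into error terms,'' but with $\varrho$ of constant order Lemma \ref{lem:typical} gives $|\urn_{\ge 2}|\asymp\varrho^2(n-m)\asymp\epp\log N$, and feeding this through \eqref{Phit.phi} produces a multiplicative error of size $e^{O(\beta\epp\log N)}=N^{O(\epp)}$ in $\Phi_t$. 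That is a polynomial-size error, which is fatal for the Bahadur--Rao-level two-sided tail comparison \eqref{sketch:tail1} and \eqref{sketch:tail2} (the errors there must be $1+o(1)$ multiplicatively). The paper addresses exactly this by taking $\varrho=(1+o(1))n^{-2/3}\to 0$ in \eqref{assume:rhon.lower}, which forces $\turn=\emptyset$ with high probability on the relevant range $[n/\Eons, n)$ and simultaneously satisfies the hypothesis of Proposition \ref{prop:Yt.approx}. The parameter $\Eons$ (your $K$) then does the job of an ``effective $1/\epp$'' without forcing $\varrho$ to be bounded below.

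Second, the early-generations step is broken as stated. You claim $\Var\big(Y_{[1,e^{\varrho m})}(t)\big)\le\sum_{\ell\le e^{\varrho m}}\ell^{-1}(\log 2)^2$, but the summand $Z_\ell\log|1-e(\ell t)|$ has variance $\tfrac1\ell\big(\log|1-e(\ell t)|\big)^2$, and $\log|1-e(\cdot)|$ is \emph{unbounded below} (it diverges to $-\infty$ at $0$); it is only bounded above by $\log 2$. At the points $t\in T_0$ nearest to $0$ (at distance $\asymp\theta/N_0^2\ge N^{-52}$) the variance is of order $(\log N)^2$, and summing over $t\in T_0$ the variance contribution of the singular region is comparable to $|T_0|(\log N)^2$ rather than $O(|T_0|)$. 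A pointwise Chebyshev bound therefore does not give a uniform-in-$t$ tail bound, and the Markov step over $T_0$ does not recover it either without exploiting cancellation. The paper's Proposition \ref{prop:early} avoids second moments entirely: it uses the exact mean-zero identity \eqref{Fell:mean0} of $\log|1-e(\ell\cdot)|$ over period-$1/\ell$ intervals, via Lemma \ref{lem:avgFell}, to show $\frac{1}{|T_0|}\sum_{t\in T_0}Y_{[1,M)}(t)$ has tiny expectation (of order $\tfrac{M}{N_0}\log N_0$), and combines this with the uniform upper bound of Lemma \ref{lem:upper.crude} and the elementary counting Lemma \ref{lem:easyLB}. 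This first-moment cancellation is the key ingredient your sketch is missing.

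A minor misattribution: the claim $|\Maj(\xi_0,\kappa)\cap T_0|=o(|T_0|)$ follows from the density estimate Lemma \ref{lem:bohr.mesh}, not from Proposition \ref{prop:upper.major} (which bounds the \emph{value} of the field on major arcs, not the cardinality). Finally, the closing ``boosting'' is unnecessary: the paper gets probability $1-o_{\epp,\Eons}(1)$ directly from Paley--Zygmund with $\delta\to 0$ once the second-moment ratio is shown to be $o(1)$ in \eqref{R.conc}.
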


For the proof we take sequences $\varrho=\varrho(N)\in (0,1)$ and $n=n(N)\in \N$ such that
\begin{equation}	\label{assume:rhon.lower}
\varrho = (1+o(1))n^{-2/3}
\qquad \text{ and } \qquad \varrho n = (1+o(1))\log N.
\end{equation}
Note that here we take $\varrho=o(1)$, in contrast to \eqref{assume:rhon.upper} in the proof of the upper bound.
The reason for taking $\varrho$ so small is to ensure that with high probability, $\cN(I_\gen)\le 1$  for all $\gen$ in a certain range. This allows us to restrict to an event $\cQ$ as in \eqref{def:Qevent} with $\turn=\emptyset$, which then gives access to the stronger tail comparison results from Section \ref{sec:FL}.
The specific choice of $-2/3$ for the exponent of $\varrho$ is not important -- any fixed constant in $(-1,-1/2)$ would do just as well.

It will be convenient to parametrize the sets of survivors $\cS^Y$ with $n$ rather than $N$. 
For $n\ge1$, $T\subset \tor$ and $\yy\in \R$ we abbreviate
\begin{equation}	\label{def:SY}
\duper_n(T, \yy):= \super^Y_{\lf e^{\varrho n}\rf}(T,\yy)= \{t\in T: Y_{[1,e^{\varrho n})}(t) \ge \yy\varrho n\}.
\end{equation}

In Section \ref{sec:early} we prove the following.

\begin{prop}[Early generations]	\label{prop:early}
For any $m\ge1$, $N_0\ge e^{3\varrho m}$ and $\theta\in \R$ with $N_0^{-100}\le |\theta|\le 1$, we have
\begin{equation}
\left|\duper_m(\torr_{N_0,\theta}, -2) \right| \gg N_0\qquad \text{ with probability $1-O(e^{-c\varrho m})$},
\end{equation}
where $c>0$ is a sufficiently small absolute constant. 
\end{prop}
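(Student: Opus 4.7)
My plan is to bound the expected cardinality of the complementary set $B := \torr_{N_0,\theta}\setminus\duper_m(\torr_{N_0,\theta},-2)$ and conclude by Markov's inequality. Concretely, I aim to show that $\e|B|\le o(N_0)+N_0 e^{-c\varrho m}$, so that with probability $1-O(e^{-c\varrho m})$ one has $|B|\le (1-c')N_0$ and hence $|\duper_m(\torr_{N_0,\theta},-2)|\ge c'N_0$.

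The first ingredient is a bound on the number of arithmetically exceptional points of the net. Choose cutoffs $\xi_0=(\varrho m)^{C_1}$ and $\kappa=(\varrho m)^{-C_1}$ for a large constant $C_1$. For $t=j/N_0+\theta/N_0^2$ and $1\le q\le\xi_0$, the condition $\|qt\|_\tor\le\kappa$ restricts $qj\bmod N_0$ to an interval of length $O(\kappa N_0)$, and summing the resulting counts over $q$ gives
\[
|\torr_{N_0,\theta}\cap\Maj(\xi_0,\kappa)|\ll \xi_0\kappa N_0+\xi_0^2=o(N_0),
\]
using $N_0\ge e^{3\varrho m}\gg\xi_0^2$. (The lower bound $|\theta|\ge N_0^{-100}$ serves only to guarantee that no point of the net is exactly rational, so that $Y_{[1,e^{\varrho m})}$ is almost surely finite there.)

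The second and main ingredient is a pointwise exponential tail bound, namely
\[
\pr\!\big(Y_{[1,e^{\varrho m})}(t)<-2\varrho m\big)\le e^{-c\varrho m}
\]
for every $t\in\torr_{N_0,\theta}\setminus\Maj(\xi_0,\kappa)$. By exponential Markov with parameter $\beta\in(0,1/2)$ to be chosen,
\[
\pr\!\big(Y_{[1,e^{\varrho m})}(t)<-2\varrho m\big)\le \exp\!\bigg(-2\beta\varrho m+\sum_{\ell<e^{\varrho m}}\frac{|1-e(\ell t)|^{-\beta}-1}{\ell}\bigg).
\]
Since $\LA(0)=0$ and $\LA'(0)=\e\LL=0$ by \eqref{EV}, we have $\LA(-\beta)=O(\beta^2)$ as $\beta\to 0$. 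Adapting the Fourier-analytic machinery of Section~\ref{sec:FL} (namely, analogues of Corollary~\ref{cor:fg.error} and Lemma~\ref{lem:isolate} applied to $\phi_{-\beta}$, whose Fourier coefficients admit decay estimates by the same integration-by-parts argument underlying Lemma~\ref{lem:phi.fourier}), for $t\notin\Maj(\xi_0,\kappa)$ one obtains
\[
\sum_{\ell<e^{\varrho m}}\frac{|1-e(\ell t)|^{-\beta}-1}{\ell}=\varrho m\,(e^{\LA(-\beta)}-1)+o(\varrho m)\le C\beta^2\varrho m+o(\varrho m).
\]
Choosing $\beta$ a sufficiently small positive constant yields $\pr(Y_{[1,e^{\varrho m})}(t)<-2\varrho m)\le e^{-c\varrho m}$. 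Summing over $t$ gives the expectation bound, and Markov's inequality completes the proof.

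The main obstacle is the extension of the Laplace-transform machinery of Section~\ref{sec:FL}, stated only for $\Re z\ge 1$ (cf.\ Remark~\ref{rmk:betage1}), to a small negative argument $\Re z=-\beta$. The function $\phi_{-\beta}$ lies in $L^1(\tor)$ for $\beta<1$, with Fourier coefficients decaying like $|\xi|^{-1+\beta}$; for small $\beta$ this is only marginally slower than $|\xi|^{-1}$, so summability becomes delicate and the estimates of Section~\ref{sec:prelim} must be reworked in this regime. The saving is that only a crude bound is needed (the factor $\beta^2$ in front of $\varrho m$, not a sharp rate), so it suffices to bound the Fourier tail to order $O(\beta)$ and rely on the harmonic weight $1/\ell$ to absorb the remaining logarithmic factor.
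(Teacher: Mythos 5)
Your plan is a genuinely different route from the paper. The paper's argument (Section~\ref{sec:early}) avoids pointwise tail bounds and Fourier analysis altogether: it combines (i) the deterministic pigeonhole Lemma~\ref{lem:easyLB}, (ii) the elementary estimate Lemma~\ref{lem:avgFell}, which exploits that $\log|1-e(\ell\,\cdot\,)|$ integrates to zero over any interval of length $1/\ell$ to bound the \emph{empirical average} $\frac1{|T|}\sum_{t\in T}\YY_{[1,M)}(t)$ by $\ll\frac{\log N_0}{N_0}\sum_{\ell\le M}\ell Z_\ell$, and (iii) the crude uniform upper bound Lemma~\ref{lem:upper.crude}. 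A single Markov inequality on the average and a pigeonhole then give the result. Your approach instead seeks a \emph{pointwise} exponential lower-tail bound at each minor-arc point and sums over the net. Unfortunately, there are several concrete gaps in that route.

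First and most seriously, the Fourier-analytic machinery of Sections~\ref{sec:phi}--\ref{sec:FL} does not extend to $z=-\beta$ with $\beta>0$, even for small $\beta$, and not merely because the constants are not tracked in that regime. The function $\phi_{-\beta}(t)=|1-e(t)|^{-\beta}$ has a singularity $\sim(2\pi t)^{-\beta}$ at $t=0$, so its Fourier coefficients decay like $|\xi|^{\beta-1}$; this is \emph{not} absolutely summable. Absolute summability is not a technical convenience here — it is the hypothesis of Lemma~\ref{lem:fvec.error} and Corollary~\ref{cor:fg.error}, and it is precisely what makes the sums $\sum_\xi|\wh{f}(\xi)|\min(\varrho, e^{-\varrho\gen}/\|\xi t\|_\tor)$ finite. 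With decay $|\xi|^{\beta-1}$, the contribution from $\xi>\xi_0$ (where only the trivial branch $\varrho$ of the $\min$ is available without arithmetic information about $\|\xi t\|_\tor$) already diverges. The remark that "the harmonic weight $1/\ell$ absorbs the remaining logarithmic factor" does not address this: the divergence is in the Fourier index $\xi$, not in $\ell$.

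Second, your parameter choice is inconsistent with the target. With $\xi_0=(\varrho m)^{C_1}$ and $\kappa=(\varrho m)^{-C_1}$ you have $\xi_0\kappa=1$, so $|\torr_{N_0,\theta}\cap\Maj(\xi_0,\kappa)|\ll\xi_0\kappa N_0+\xi_0^2=N_0+O((\varrho m)^{2C_1})$ is of order $N_0$, not $o(N_0)$. To make the major-arc contribution compatible with the claimed failure probability $O(e^{-c\varrho m})$ you would need $\xi_0\kappa\ll e^{-c\varrho m}$, i.e.\ exponentially small; but then either $1/\kappa$ or $\xi_0$ is exponentially large, and the Fourier error terms (which already fail to be summable) would grow rather than shrink. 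Third, even setting aside the Fourier issue, the claimed uniform pointwise bound $\pr(\YY_{[1,M)}(t)<-2\varrho m)\le e^{-c\varrho m}$ over all minor-arc $t$ is false. The condition $t\notin\Maj(\xi_0,\kappa)$ with $\xi_0$ polynomial in $\varrho m$ controls only the frequencies $\xi\le\xi_0$, while $\YY_{[1,M)}$ involves $\|\ell t\|_\tor$ for $\ell$ up to $M=e^{\varrho m}$. Taking $t$ extremely close to $p/\ell_0$ for some $\ell_0\in(\xi_0,2\xi_0]$ produces a minor-arc point at which $\pr(Z_{\ell_0}\ge 1)\gg 1/\ell_0\gg(\varrho m)^{-C_1}$ and on this event $\YY_{[1,M)}(t)$ drops far below $-2\varrho m$; so the lower tail at that $t$ is only polynomially small in $\varrho m$, not exponential. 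One could try to rescue the argument by averaging these exceptional $t$ against their frequency in the net, but that is a genuinely different (and more delicate) argument than the one sketched. The paper's average-plus-pigeonhole route sidesteps all of this because it never requires any tail estimate for $\YY$ from below.
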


The core of the proof of Proposition \ref{prop:poi.highpts}
is a second moment argument, which is encapsulated by Proposition \ref{prop:rapid} below.
See Section \ref{sec:overview} for a high-level motivation of the ideas.

First we need to set up some notation.
Recall the notation \eqref{def:micro}. Let $\Eons \in \N$ be a large integer (which we will later take to infinity) and let
\begin{equation}
J_\eon= \bigcup_{\frac{\eon-1}{\Eons }n \le \gen< \frac{\eon}\Eons n} I_\gen,
\end{equation}
so that
\[
\YY_{[1,e^{\varrho n})}(t) = \sum_{1\le \gen<n} Y_{I_\gen}(t) = \sum_{\eon=1}^\Eons  \YY_{J_\eon}(t).
\]
We consider nonempty sets $\urn^{\eon}\subset [(\eon-1)n/\Eons, \eon n/\Eons)$ for $2\le \eon\le \Eons$ and denote $\urn=\bigcup_{2\le \eon \le \Eons} \urn^{\eon}$.
We write $q_\eon:=|\urn^{\eon}|$. 
Recalling our notation \eqref{def:Qa}--\eqref{def:eHK}, we denote the events
\begin{equation}	\label{def:good.urn}
\cQ_\eon := \cQ\left( \frac{\eon-1}{\Eons}n, \frac{\eon}{\Eons}n, \urn^{\eon}, \emptyset\right),\qquad \cQ^*:= \bigcap_{2\le \eon\le \Eons} \cQ_\eon
\end{equation}
and let $\pr^{\cQ^*}, \e^{\cQ^*}$ and $\Var^{\cQ^*}$ denote probability, expectation and variance conditional on $\cQ^*$. 
For a finite set $T\subset\tor$ and $\yy>0$ we define the associated (random) set of ``rapid" points
\begin{equation}
\cR_n(T,\yy)=\cR_n(T,\yy; \Eons, \urn) = \big\{ t\in T: \YY_{J_\eon}(t) \ge \yy q_\eon \;\; \forall \eon\in [2,\Eons]\big\}.
\end{equation}
Note that for $\epp>0$ small,  $\cR_n(T,\xcrit-\epp)$ is the set of points in $T$ at which the sequence $\{\YY_{J_\eon}(t)\}_{2\le \eon\le \Eons}$ rises at the near-maximum rate allowed by Proposition \ref{prop:upper.minor}. 

In Section \ref{sec:middle} we prove the following.

\begin{prop}[Middle generations]	\label{prop:rapid}
Let $\epp \in (0,1/2)$ and $\Eons\in \N$ sufficiently large. 
Let $\varrho, n$ be as in \eqref{assume:rhon.lower} and
assume 
\begin{equation}	\label{assume:qell}
q_\eon = (1+o(1)) \varrho  n/\Eons, \quad 2\le \eon \le \Eons.
\end{equation}
Let $N_0\in \N$ with $N_0=N^{1+o(1)}$. 
For any $\theta\in \R$ and any $T\subset \torr_{N,\theta}$ with $|T|=N^{1+o(1)}$, we have
\begin{equation}	\label{manyrapid}
\pr^{\cQ^*} \big(|\cR_n(T,\xcrit-\epp)| \ge e^{c(\epp)\varrho n} \big) = 1- o_{\epp,\Eons}(1).
\end{equation}

\end{prop}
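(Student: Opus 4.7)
The plan is to apply Chebyshev's inequality to $X:=|\cR_n(T',\xcrit-\epp)|$, where $T':=T\setminus \Maj(\xi_0,\kappa)$ with $\xi_0:=(n/\Eons)^{C_1}$ and $\kappa:=N^{-c}$ for a large constant $C_1$ and a small constant $c>0$. By Proposition~\ref{prop:upper.major} the major-arc points contribute negligibly to $|\cR_n|$, so we may work with $T'$ and $|T'|=(1-o(1))|T|$. Conditional on $\cQ^*$ the increments $\{Y_{J_\eon}(\cdot)\}_{\eon=2}^{\Eons}$ are independent across $\eon$, which factorizes both moments of $X$ into products over legs.

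For the first moment, Proposition~\ref{prop:Yt.approx} together with the Bahadur--Rao asymptotic \eqref{BaRa} gives
\[
\pr^{\cQ_\eon}\!\bigl(Y_{J_\eon}(t)\ge (\xcrit-\epp)q_\eon\bigr) = (1+o(1))\,\frac{e^{-\LA^*(\xcrit-\epp)q_\eon}}{\beta_*\sqrt{\LA''(\beta_*)q_\eon}},
\]
with $\beta_*=\beta_*(\xcrit-\epp)$, uniformly over $t\in T'$. Multiplying over $\eon\in[2,\Eons]$ and using $\sum_\eon q_\eon=(1-1/\Eons+o(1))\varrho n$ together with $\LA^*(\xcrit)=1$ yields $\e^{\cQ^*}X\ge N^{c_1(\epp)+o(1)}$ where $c_1(\epp):=1-\LA^*(\xcrit-\epp)(1-1/\Eons)>0$ (strictly positive since $\LA^*(\xcrit-\epp)<1$).

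For the second moment I classify each pair $(s,t)\in T'\times T'$ by its \emph{branching index} $\eon^*(s,t):=\min\{\eon\ge 2:\,d_{\xi_0}(s,t)\ge \Delta\varrho^{-1}e^{-\varrho(\eon-1)n/\Eons}\}$, with $\Delta:=(n/\Eons)^{C_1}$ so that the hypotheses of Proposition~\ref{prop:decorr} hold at every leg $\eon\ge \eon^*(s,t)$. For $\eon\ge \eon^*(s,t)$ Proposition~\ref{prop:decorr} gives asymptotic decorrelation of the joint upper tail, and for $\eon<\eon^*(s,t)$ I use the trivial bound (joint $\le$ marginal). Combined with Bahadur--Rao and a standard Fourier/pigeonhole count showing $\#\{(s,t):\eon^*(s,t)\le \eon\}\ll \xi_0^2|T'|^2\Delta\varrho^{-1}e^{-\varrho(\eon-1)n/\Eons}$ (exploiting $T'\subset \torr_{N_0,\theta}$), the contribution from pairs with $\eon^*(s,t)=\eon$ is bounded by
\[ (1+o(1))(\e^{\cQ^*}X)^2\cdot (\log N)^{O(\eon)}\,N^{-(1-\LA^*(\xcrit-\epp))(\eon-2)/\Eons}. \]
The $\eon=2$ stratum accounts for $(1+o(1))(\e^{\cQ^*}X)^2$, while the strata $\eon\ge 3$ form a geometric series of total size $o(1)(\e^{\cQ^*}X)^2$ since $1-\LA^*(\xcrit-\epp)>0$. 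Adding the diagonal contribution $\e^{\cQ^*}X$, which is dwarfed by $(\e^{\cQ^*}X)^2$, we obtain $\e^{\cQ^*}X^2=(1+o(1))(\e^{\cQ^*}X)^2$; Chebyshev then gives $X\ge \tfrac12 \e^{\cQ^*}X\ge e^{c(\epp)\varrho n}$ with probability $1-o_{\epp,\Eons}(1)$ for any $c(\epp)<c_1(\epp)$.

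The main obstacle is the second-moment step: the counting loss $\xi_0^2\Delta\varrho^{-1}=(\log N)^{O(1)}$ and the polynomial Bahadur--Rao corrections must be absorbed by the exponential gain $N^{-(1-\LA^*(\xcrit-\epp))/\Eons}$ afforded by the strict inequality $\LA^*(\xcrit-\epp)<1=\LA^*(\xcrit)$. This absorption is precisely what forces the scaling $\varrho\asymp n^{-2/3}$ in \eqref{assume:rhon.lower}: $\varrho$ must be large enough that $\varrho n/\Eons\to\infty$ (so the Bahadur--Rao asymptotic applies within each leg), yet small enough that the conditioning $\turn=\emptyset$ built into $\cQ^*$ occurs with high probability, thereby giving access to the refined one- and two-point estimates of Propositions~\ref{prop:Yt.approx} and~\ref{prop:decorr}.
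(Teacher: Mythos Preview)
Your approach is essentially the same as the paper's: remove the major-arc points (the paper does this by the elementary count of Lemma~\ref{lem:bohr.mesh} rather than invoking Proposition~\ref{prop:upper.major}, but either works), compute the first moment via Proposition~\ref{prop:Yt.approx} and Bahadur--Rao, stratify the second moment by the arithmetic distance $d_{\xi_0}(s,t)$ using Proposition~\ref{prop:decorr} on the decorrelated legs and the marginal bound on the others, and conclude by Paley--Zygmund/Chebyshev. One typo: in your counting step the inequality should read $\eon^*(s,t)\ge \eon$ (close pairs), not $\le \eon$.
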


\begin{remark}
The proof shows we can take $c(\epp)$ to be any constant strictly smaller than $1-\lambda^*(\xcrit-\epp))$.
\end{remark}

Now we conclude the proof of Proposition \ref{prop:poi.highpts} on Propositions \ref{prop:early} and \ref{prop:rapid}.
We may select $\varrho, n$ satisfying \eqref{assume:rhon.lower} such that $[1,e^{\varrho n})\cap \Z = [N]$. 
Now it suffices to show
\begin{equation}	\label{highpts.goal1}
|\duper_n(T_0, (\xcrit-\epp)\varrho n)| \ge \expo{ c(\epp) \varrho n} \qquad \text{ with probability $1-o_\epp(1)$}.
\end{equation}
Let $\Eons\ge 1$ to be taken sufficiently large depending on $\epp$, and set 
\[
m= \lf n/\Eons\rf, \qquad T_1= \duper_{m}(T_0, -2).
\]
From Proposition \ref{prop:early} and our assumption $|T_0|\ge (1-o(1))N_0$ we have
\begin{equation}
|T_1|\gg N_0 \qquad \text{ with probability $1-O(e^{-c\varrho n/\Eons})$}. 
\end{equation}
Let us condition on a realization of the Poisson variables $(Z_\ell)_{\ell<n/\Eons}$ such that the above holds, thus fixing the set $T_1$. 
By independence this does not affect the distribution of the variables $(Z_\ell)_{\ell\ge n/\Eons}$.

Recall the notation \eqref{def:Qa}. For $2\le \eon \le \Eons$ denote
\[
Q_1^{\eon} = Q_1\cap \left[ \frac{\eon - 1}\Eons n, \frac{\eon}\Eons n\right).
\]
For $\eps\in (0,1)$ let $\cG_\Eons (\eps)$ be the event that 
\begin{equation}	\label{GLeps1}
Q_{\ge2}\cap [n/\Eons, n) = \emptyset 
\end{equation}
and
\begin{equation}	\label{GLeps2}
\left| |Q_1^\eon| -\frac{\varrho n}{\Eons } \right| \le \eps \frac{\varrho n}{\Eons } \quad \forall 2\le \eon\le \Eons .
\end{equation}
We now argue $\cG_\Eons (\eps)$ is a likely event using the estimates in Lemma \ref{lem:typical}. 
First note that the assumption \eqref{assume:typical} amount to assuming $\varrho \ge e^{-c\varrho n/\Eons}$ for a sufficiently small constant $c>0$, which holds for all $n$ sufficiently large depending on $\Eons$ by our assumption \eqref{assume:rhon.lower}.
From \eqref{EQ2}, Markov's inequality and our assumption \eqref{assume:rhon.lower}, \eqref{GLeps1} holds with probability $1-O(n^{-1/3})$. 
Conditional on the event that \eqref{GLeps1} holds, we have 
$
|Q_1^\eon| = \cN(J_\eon)
$ (recall the notation \eqref{def:NJ}), and so $\cG_\Eons (\eps)$ is contained in the event that \eqref{GLeps1} holds and $\cN(J_\eon) = (1+O(\eps))\varrho n/\Eons$ for $2\le \eon\le \Eons $. 
The latter event holds with probability $1-O(\Eons  \exp(-c\eps^2\varrho n/\Eons))$ by \eqref{S.conc}, so by the union bound,
\[
\pr(\cG_\Eons (\eps)) = 1-O(n^{-1/3}) -O(\Eons  \exp(-c\eps^2\varrho n/\Eons)).
\]
Taking $\eps=n^{-1/10}$ and recalling $\varrho = (1+o(1))n^{-2/3}$, we conclude
\begin{equation}	\label{goodL}
\pr(\cG_\Eons (n^{-1/10})) = 1-o_{\Eons }(1).
\end{equation}

Now we condition on a realization of the sets $Q_1^\eon$ satisfying the conditions of $\cG_\Eons $ -- that is, we condition on the event $\cQ^*$ from \eqref{def:good.urn} with $Q^\eon := Q_1^\eon$ for $2\le \eon\le \Eons $. 
Under this conditioning, note that for any $t\in \cR_n(T_1,\xcrit-\epp)$, 
\[
Y_{[1,e^{\varrho n})}(t) \ge -2\frac{\varrho n}{\Eons } + (\xcrit-\epp)\sum_{\eon=2}^\Eons  q_\eon
\ge \varrho n\left( \xcrit-\epp - o(1)- O(1/\Eons )\right).
\]
Thus, fixing $\Eons =\Eons (\eps)$ sufficiently large, we have that on $\cG_\Eons $,
\begin{equation}	\label{RS.contained}
\cR_n(T_1,\xcrit-\epp) \subseteq \duper_n(T_0, \xcrit-2\epp)
\end{equation}
for all $n$ sufficiently large.
From Proposition \ref{prop:rapid},
\begin{equation*}	
\pr^{\cQ^*} \big( \cG_\Eons  \cap \big\{ |\cR_n(T_1,\xcrit-\epp)| \ge e^{c(\epp)\varrho n} \big\}\big) = 1- o_{\epp}(1).
\end{equation*}
Together with \eqref{goodL} this implies
\begin{equation}
|\cR_n(T_1,\xcrit-\epp)| \ge e^{c(\epp)\varrho n} \qquad \text{ with probability $1-o_{\epp}(1)$.}
\end{equation}
\eqref{highpts.goal1} now follows from the above and \eqref{RS.contained} (and replacing $\epp$ with $\epp/2$).
This concludes the proof of Proposition \ref{prop:poi.highpts} on Propositions \ref{prop:early} and \ref{prop:rapid}.

\subsection{Early generations: Many survivors}	
\label{sec:early}

In this section we prove Proposition \ref{prop:early}.
First we need two lemmas.

\begin{lemma}	\label{lem:easyLB}
Let $T$ be a finite set and $y:T\to \R$. Suppose that for some $\xx,\delta>0$, 
\begin{equation}
y(t)\le y_0\quad \forall t\in T
\end{equation}
and
\[
\frac{1}{|T|}\sum_{t\in T} y(t) \ge -\delta.
\]
Let $T_\ge = \{t\in T: y(t)\ge -2\delta \}$. Then
\begin{equation}
|T_\ge |\ge \frac{\delta|T|}{y_0+2\delta} .
\end{equation}
\end{lemma}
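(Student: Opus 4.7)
The plan is to prove this by a simple first-moment (averaging) argument, splitting the sum $\sum_{t\in T} y(t)$ according to whether $t$ lies in $T_\ge$ or its complement, and playing the upper bound $y(t)\le y_0$ off the definition $y(t)<-2\delta$ on $T\setminus T_\ge$.

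Concretely, I would write
\[
\sum_{t\in T} y(t) \;=\; \sum_{t\in T_\ge} y(t) + \sum_{t\in T\setminus T_\ge} y(t) \;\le\; y_0 |T_\ge| - 2\delta\bigl(|T|-|T_\ge|\bigr),
\]
using the pointwise bound on $T_\ge$ and the strict upper bound $y(t)<-2\delta$ on the complement. Rearranging gives $\sum_{t\in T} y(t) \le (y_0+2\delta)|T_\ge| - 2\delta |T|$. Combining with the hypothesis $\sum_{t\in T} y(t) \ge -\delta |T|$ yields
\[
(y_0+2\delta)|T_\ge| \;\ge\; 2\delta|T| - \delta|T| \;=\; \delta |T|,
\]
from which the claim follows by dividing by $y_0+2\delta>0$.

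There is no real obstacle here — this is a one-line averaging argument. The only mild care needed is sign/strictness bookkeeping (the inequality $y(t)<-2\delta$ on $T\setminus T_\ge$ versus $y(t)\le y_0$ on $T_\ge$, and noting that $y_0+2\delta>0$ follows automatically since $-\delta \le \frac{1}{|T|}\sum y(t) \le y_0$ gives $y_0\ge -\delta$, hence $y_0+2\delta\ge \delta>0$, so the division is legitimate).
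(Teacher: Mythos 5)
Your proof is correct and is essentially identical to the paper's: both split $\sum_{t\in T} y(t)$ over $T_\ge$ and its complement, bound the two pieces by $y_0|T_\ge|$ and $-2\delta(|T|-|T_\ge|)$ respectively, and rearrange against the hypothesis $\frac1{|T|}\sum y(t)\ge -\delta$. Your added note that $y_0+2\delta>0$ follows automatically is a small but sensible bit of bookkeeping that the paper leaves implicit.
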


\begin{proof}
We have
\begin{align*}
-\delta |T| 
&\le \sum_{t\in T_\ge}y(t) + \sum_{t\in T\setminus T_\ge }y(t)\le y_0|T_\ge| -2\delta(|T|-|T_{\ge}|)
\end{align*}
and the result follows from rearranging.
\end{proof}

Thus, to establish Proposition \ref{prop:early} we can combine a uniform upper bound on $\YY_{[1,e^{\varrho m})}(t)$ 
with a lower bound on the average of $\YY_{[1,e^{\varrho m})}$ over $\torr_{N_0,\theta}$. 
The former is provided by Lemma \ref{lem:upper.crude}, while the latter can be obtained from the following.

\begin{lemma}	\label{lem:avgFell}
For $\ell\in \N$ write
\[
F_\ell(t) = \log|1-e(\ell t)|.
\]
Let $0\le a<\ell$ and $M\ge 2\ell$ be integers and put $I=[\frac{a}\ell, \frac{a+1}\ell)$. 
Let $\theta\in [-1,1]$ with $|\theta|\ge M^{-b}$ for some $b>0$. 
Then
\begin{equation}	\label{bd:avgFell}
\sum_{t\in \torr_{M,\theta}\cap I} F_\ell(t) \ll (1+b)\log M.
\end{equation}
\end{lemma}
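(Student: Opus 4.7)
The plan is to identify $\torr_{M,\theta}\cap I$ with an arithmetic progression of $K$ points in $[0,1)$ and compare it to an idealized progression of the same length but with spacing exactly $1/K$, for which the classical cyclotomic product identity yields a single-logarithm expression. First I would enumerate $\torr_{M,\theta}\cap I = \{t_k = t_0 + k/M : 0\le k<K\}$ with $K\in \{\lfloor M/\ell\rfloor, \lceil M/\ell\rceil\}$, and set $u_k := \{\ell t_k\}\in[0,1)$. A direct calculation shows the $u_k$ form an arithmetic progression $u_0 + kd$ with common difference $d = \ell/M$ and initial point
\begin{equation*}
u_0 = \frac{\ell}{M}\Bigl\{\frac{\theta}{M} - \frac{aM}{\ell}\Bigr\}\in [0, d),
\end{equation*}
where $\{\cdot\}$ denotes fractional part; so the sum of interest becomes $S = \sum_{k=0}^{K-1}\log|1-e(u_k)|$.

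Next I would introduce the ``idealized'' progression $v_k := u_0 + k/K$ and apply the classical identity
\begin{equation*}
\prod_{k=0}^{K-1}\bigl(1-e(u_0+k/K)\bigr) = 1-e(Ku_0)
\end{equation*}
to obtain $\sum_k \log|1-e(v_k)| = \log|1-e(Ku_0)|$. Since $K\ell/M \in [1/2,1]$ (from $M\ge 2\ell$), one has $\|Ku_0\|_\tor \asymp \|\theta/M - aM/\ell\|_\tor$, and a short case analysis on $r_a := aM\bmod\ell$ shows this quantity is $\asymp|\theta|/M \ge M^{-1-b}$ when $r_a = 0$ and $\ge 1/(2\ell)$ when $r_a \ne 0$ (using $|\theta|/M \le 1/M \le 1/(2\ell)$ from $M\ge 2\ell$). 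Hence $\bigl|\log|1-e(Ku_0)|\bigr| \ll (1+b)\log M$.

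It remains to bound the spacing error $E := \sum_k [\log|1-e(u_k)| - \log|1-e(v_k)|]$. One has $|u_k - v_k| = k|d - 1/K| \le |r|/M \le d$, where $r := M - K\ell$ satisfies $|r|\le\ell$. Applying the mean-value estimate
\begin{equation*}
\bigl|\log|1-e(u)| - \log|1-e(v)|\bigr| \ll |u-v|\max_{s\in [u,v]}\|s\|_\tor^{-1}
\end{equation*}
(which follows from $|\cot\pi s|\le 1/(2\|s\|_\tor)$) and the symmetry $\log|1-e(u)| = \log|1-e(1-u)|$ to split the sum at $k = K/2$, the interior contribution ($1\le k\le K-2$) sums to $O(\log M)$. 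The main obstacle is the endpoint $k = K-1$, where both $u_{K-1}$ and $v_{K-1}$ may lie within $O(d)$ of the singularity at $u = 1$; here the Diophantine hypothesis $|\theta|\ge M^{-b}$, when combined with the explicit formula for $u_0$, forces $1 - u_{K-1} \gg M^{-(1+b)}$, whence each of the individual terms -- and hence their difference -- is $O((1+b)\log M)$. The endpoint $k = 0$ contributes $0$ since $u_0 = v_0$. Combining yields $|S|\le \bigl|\log|1-e(Ku_0)|\bigr| + |E| \ll (1+b)\log M$, as required.
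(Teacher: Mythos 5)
Your cyclotomic-product approach is genuinely different from the paper's (which compares the discrete sum to $M\int_I F_\ell = 0$ via a Riemann-sum Taylor estimate and isolates the two endpoints). The structure of your argument is sound and the identity $\prod_{k=0}^{K-1}(1-e(u_0+k/K))=1-e(Ku_0)$ is a nice reference point, but there is a real gap in the step that bounds the main term.

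The claim that $K\ell/M\in[1/2,1]$ is false: since $K$ may equal $\lceil M/\ell\rceil$, one only has $K\ell/M\in[1/2,3/2]$, and in particular $K\ell/M>1$ is possible. More seriously, the assertion $\|Ku_0\|_\tor\asymp\|\theta/M-aM/\ell\|_\tor$ does not follow from $Ku_0=\tfrac{K\ell}{M}\delta$ with $\tfrac{K\ell}{M}\asymp 1$: a multiplicative relation $Ku_0\asymp\delta$ says nothing about the distances to the nearest integer when both quantities are near $1$. Concretely, write $s=K\ell-M$ (so $|s|\le\ell$). Then $1-Ku_0=(1-\delta)-\tfrac{s\delta}{M}$, and when $s>0$ (the case $K=\lceil M/\ell\rceil$) this expression vanishes precisely when $1-\delta=\tfrac{s\delta}{M}$, which is entirely compatible with $\|\delta\|_\tor\gg 1/M$. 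Thus the one-sided inequality $\|Ku_0\|_\tor\gg\|\delta\|_\tor$ that you actually need is not a consequence of $Ku_0\asymp\delta$. To make it true, one must invoke the structural constraint that $\torr_{M,\theta}\cap I$ contains \emph{exactly} $K$ points, which forces
\begin{equation*}
u_0\in\Bigl[\max\Bigl(0,\,1-\tfrac{K\ell}{M}\Bigr),\;1-\tfrac{(K-1)\ell}{M}\Bigr),
\end{equation*}
equivalently $\delta<\tfrac{M}{\ell}-(K-1)$; plugging this into the expression above gives $1-Ku_0>(K-1)s/M\ge 1/M$ when $s>0$, while for $s\le 0$ one has $1-Ku_0\ge 1-\delta\ge M^{-1-b}$, and in every case $Ku_0\ge\tfrac12\delta\gg M^{-1-b}$. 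Your proof never uses this constraint, so as written the bound $\bigl|\log|1-e(Ku_0)|\bigr|\ll(1+b)\log M$ is not established; and the same constraint is needed to control $\log|1-e(v_{K-1})|$ in the $k=K-1$ term of the error $E$ (since $\|v_{K-1}\|_\tor = |1-Ku_0|/K$), which your argument addresses only by discussing $1-u_{K-1}$. (A minor secondary point: the estimate you state, $1-u_{K-1}\gg M^{-(1+b)}$, should be $\gg M^{-(2+b)}$ in the case $(a+1)M\equiv 0\pmod\ell$; this is harmless since $2+b\asymp 1+b$.) Once the constraint on $u_0$ is inserted the argument goes through and yields the same conclusion as the paper's proof, so the gap is fixable, but it is a genuine omission in the key step.
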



The key reason we obtain so much cancellation in the sum \eqref{bd:avgFell} is the fact that for any interval $I\subset\tor$ of length $1/\ell$,
\begin{equation}	\label{Fell:mean0}
\int_I F_\ell(s) ds =0,
\end{equation}
which follows from \eqref{EV} and change of variable.

\begin{proof}
We will abbreviate $\theta'=\theta/M^2$ and $T=\torr_{M,\theta}$. 
Let $t^-, t^+$ be the smallest and largest elements of $T\cap I$, respectively. 
For $t\in T$ write $I(t) = [t-1/2M, t+ 1/2M)$, and denote $\tilde{I} = I\setminus (I(t^-)\cup I(t^+))$. 
From \eqref{Fell:mean0} we have
\begin{equation}	\label{avgFell.1}
\sum_{t\in T\cap I} F_\ell(t) = F_\ell(t^+) + F_\ell(t^-) + \left( \sum_{t\in T\cap \tilde{I}} F_\ell(t) - M\int_{\tilde{I}} F_\ell(s)ds\right) -M\int_{I\setminus \tilde{I}} F_\ell(s)ds.
\end{equation}
For the first two terms above, note that
\begin{equation}	\label{avgFell.2}
F_\ell(t) = \log\frac1{\|\ell t\|_\tor} + O(1) .
\end{equation}
Suppose $\|\ell t\|_\tor=\delta$ for some $t\in T\cap I$ and  $\delta>0$. Write $t= \theta' +\frac{k}{N}$.  Then there are $d\in \Z$ and $\psi\in \R$ with $|\psi|\le \delta$ such that 
\[
\ell\left(\theta'+ \frac{k}M\right) = d+ \psi.
\]
Thus,
\[
|\ell\theta' M - cM + \ell k| =|\psi M|\le \delta M,
\]
which implies $\|\ell\theta' M\|_\tor\le \delta M$. Hence
\[
\|\ell t\|_\tor=\delta \ge \frac1{M} \|\ell\theta' M\|_\tor = \frac1M|\ell\theta' M| \ge |\theta'| \ge M^{-2-b}.
\]
Together with \eqref{avgFell.2} this implies
\begin{equation}	\label{avgFell.3}
  \max_{t\in T\cap I} F_\ell(t) \ll (1+b)\log M.
\end{equation}

We next consider the term in parentheses in \eqref{avgFell.1}.
We have
\begin{align*}
\frac1M \sum_{t\in T\cap \tilde{I}} F_\ell(t) - \int_{\tilde{I}} F_\ell(s)ds
&= \sum_{t\in T\cap \tilde{I}} \int_{I(t)} F_\ell(t) - F_\ell(s) ds.
\end{align*}
From Taylor expansion, for each $t\in T\cap \tilde{I}$ and $s\in I(t)$,
\[
F_\ell(s)-F_\ell(t) = (s-t)F_\ell'(t) + O\left( \frac1{M^2}\|F_\ell''1_{I(t)}\|_\infty\right)
\]
so
\[
\int_{I(t)}F_\ell(t) - F_\ell(s) ds \ll \frac1{M^3} \|F''_\ell1_{I(t)}\|_\infty. 
\]
We have
\[
F_\ell''(t) = \frac{-\pi^2 \ell ^2}{\sin^2(\pi \ell t)} \ll \frac{\ell ^2}{\|\ell t\|_\tor^2}.
\]
Now for $t\notin\{t^-,t^+\}$ we have $\|\ell t\|_\tor \gg \frac{\ell }{M} \min(k-k^-, k^+-k)$, where we write $t= \theta'+ \frac{k}M$ and $t^\pm = \theta' + \frac{k^\pm}{M}$. 
Combining the previous displays thus gives
\begin{equation}	\label{avgFell.4}
\frac1M\sum_{t\in T\cap \tilde{I}} F_\ell(t) - \int_{\tilde{I}} F_\ell(s)ds \ll \frac1M \sum_{k^-<k<k^+} \frac1{\min(k-k^-, k^+-k)^2} \ll \frac1M .
\end{equation}

Finally, for the last term on the right hand side of \eqref{avgFell.1}, bounding $|1-e(\ell t)|\ll \ell t$ we have
\[
\int_{I\setminus \tilde{I}} F_\ell(s) ds \ll \int_0^{1/M} |\log(\ell x)|dx + O(1/M) \ll \frac{\log M}{M}.
\]

The estimate \eqref{bd:avgFell}
follows by substituting the above bound along with \eqref{avgFell.3} and \eqref{avgFell.4} into \eqref{avgFell.1}.
\end{proof}

\begin{proof}[Proof of Proposition \ref{prop:early}]
Write $T=\torr_{N_0,\theta}$ and $M=e^{\varrho m}$. 
From Lemma \ref{lem:avgFell}, 
\begin{equation}
\frac{1}{|T|}\sum_{t\in T} \YY_{[1,M )}(t) \ll \frac{\log N_0 }{N_0 } \sum_{\ell \le M} \ell Z_\ell. 
\end{equation}
In particular,
\begin{equation}	\label{avgT:expected}
\e\frac{1}{|T|}\sum_{t\in T} \YY_{[1,M )}(t) \ll \frac{M}{N_0 }\log N_0 .
\end{equation}
From Markov's inequality,
\begin{equation}
\pro{ \left|\frac{1}{|T|}\sum_{t\in T} \YY_{[1,M )}(t)\right|\ge K\frac{M}{N_0 }\log N_0  }  = O(1/K)
\end{equation}
for any $K>0$. 
From Lemma \ref{lem:upper.crude} we have
\[
\sup_{t\in \tor} Y_{[1,M)}(t) \ll \log M
\]
except with probability $O(M^{-c})$. 
Applying Lemma \ref{lem:easyLB} we conclude that 
with probability
$
1-O(1/K) - O(M^{-c})
$,
\[
\left|\left\{ t\in T: \YY_{[1,M )}(t) \ge -2K\frac{M}{N_0 }\log N_0 \right\}\right|  \gg \frac{|T|}{1+ \frac{1}{K}\frac{N_0 }M\frac{\log M}{\log N_0 }}.
\]
Taking
\[
K = \frac{N_0 \log M}{M\log N_0 },
\]
by our assumption $N_0\ge e^{3\varrho m} = M^3$ we have $K\gg M$, and so with probability $1-O(M^{-\min(c,1)})$,
\[
|\{ t\in T: \YY_{[1,M )}(t) \ge -2\log M\}|  \gg |T|.
\]
\end{proof}

\subsection{Middle generations: Many descendants are large}		\label{sec:middle}

In this section we prove Proposition \ref{prop:rapid}.
We will apply the following simple consequence of the Cauchy--Schwarz inequality.

\begin{lemma}[Paley--Zygmund inequality]	\label{lem:PZ}
Let $Z$ be a non-negative random variable with nonzero mean. Then for any $\delta\in (0,1)$, 
\[
\pr(Z\ge \delta \e Z) \ge (1-\delta)^2\left(1+ \frac{\Var Z}{(\e Z)^2}\right)^{-1}.
\]
\end{lemma}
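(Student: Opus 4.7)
The plan is to use the standard Cauchy--Schwarz-based argument for Paley--Zygmund, exactly as the hint (``simple consequence of the Cauchy--Schwarz inequality'') suggests. First I would split the expectation of $Z$ according to whether $Z$ is above or below the threshold $\delta \e Z$:
\[
\e Z = \e\big[Z\,\ind(Z<\delta\e Z)\big] + \e\big[Z\,\ind(Z\ge \delta\e Z)\big].
\]
The first term is trivially bounded by $\delta\e Z$, so rearranging gives the lower bound
\[
\e\big[Z\,\ind(Z\ge \delta\e Z)\big] \ge (1-\delta)\e Z.
\]

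Next I would apply Cauchy--Schwarz to the left-hand side to introduce the indicator probability and the second moment:
\[
\e\big[Z\,\ind(Z\ge\delta\e Z)\big]^2 \le \e[Z^2]\cdot \pr(Z\ge \delta \e Z).
\]
Combining the two previous displays yields
\[
\pr(Z\ge \delta\e Z) \ge (1-\delta)^2\,\frac{(\e Z)^2}{\e[Z^2]}.
\]

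Finally, using $\e[Z^2] = (\e Z)^2 + \Var Z$, the ratio $(\e Z)^2/\e[Z^2]$ equals $(1+\Var Z/(\e Z)^2)^{-1}$, which is exactly the claimed bound. Nothing here is subtle; the only ``obstacle'' is writing the indicator decomposition correctly and noting that $\e Z\ne 0$ combined with $Z\ge0$ makes $\e Z>0$, so the threshold $\delta\e Z$ is well-defined and the ratio $(\e Z)^2/\e[Z^2]$ is a well-defined element of $(0,1]$.
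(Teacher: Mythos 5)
Your proposal is correct and is essentially identical to the paper's proof: both bound $\e\big[Z\,\ind(Z\ge\delta\e Z)\big]$ below by $(1-\delta)\e Z$ and above via Cauchy--Schwarz by $(\e Z^2)^{1/2}\pr(Z\ge\delta\e Z)^{1/2}$, then rearrange and use $\e Z^2=(\e Z)^2+\Var Z$. No gaps.
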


\begin{proof}
We have
\[
(1-\delta)\e Z \le \e Z\ind(Z\ge \delta \e Z) \le (\e Z^2)^{1/2} \pr(Z\ge \delta \e Z)^{1/2},
\]
where we applied Cauchy--Schwarz. The result follows from rearranging terms. 
\end{proof}

We also need the following elementary estimate on the density of the nets $\torr_{q,\theta}$ in Bohr sets (recall \eqref{def:bohr} and \eqref{def:torq2}). 

\begin{lemma}	\label{lem:bohr.mesh}
Let $q,\xi\ge 1$, $\theta\in \R$, and $\kappa\in (0,1/2)$. 
Then
\[
|\torr_{q,\theta }\cap B_\xi(\kappa)| = 2\kappa q + O(\xi).
\]
\end{lemma}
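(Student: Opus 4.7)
The plan is to reduce the count to counting elements of an arithmetic progression (AP) in an arc of the torus, which is a standard elementary estimate.

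Write the element of $\torr_{q,\theta}$ corresponding to index $j\in \{0,1,\dots,q-1\}$ as $t_j := j/q + \theta/q^2$. Then
\[
\|\xi t_j\|_\tor = \left\| \frac{\xi j}{q} + \frac{\xi\theta}{q^2} \right\|_\tor,
\]
so the condition $t_j\in B_\xi(\kappa)$ depends only on $\xi j \bmod q$ together with the fixed shift $\xi\theta/q^2$. First I would set $d=\gcd(\xi,q)$ and write $\xi=d\xi'$, $q=dq'$ with $\gcd(\xi',q')=1$. As $j$ runs through $\{0,1,\dots,q-1\}$, the residues $\xi j \bmod q$ run through the multiples of $d$ in $\{0,1,\dots,q-1\}$, each attained exactly $d$ times; equivalently, $\xi j/q \bmod 1$ ranges over the $q'$ equally-spaced points $\{0,1/q',\dots,(q'-1)/q'\}$, each with multiplicity $d$.

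Next I would fix the shift $\alpha := \xi\theta/q^2 \bmod 1$ and count the number of $k\in\{0,\dots,q'-1\}$ with $\|k/q' + \alpha\|_\tor\le \kappa$. This set is a coset of the cyclic subgroup of order $q'$ generated by $1/q'$, so it consists of $q'$ equally-spaced points in $\torr$ with common spacing $1/q'$. The set $\{u\in\tor : \|u\|_\tor\le \kappa\}$ is an arc of length $2\kappa$, and any arc of length $L$ contains $Lq' + O(1)$ points of an AP with spacing $1/q'$ (the $O(1)$ absorbing at most one boundary point on each side). Hence the number of such $k$ is $2\kappa q' + O(1)$.

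Putting the two steps together, since each admissible residue contributes $d$ values of $j$,
\[
|\torr_{q,\theta}\cap B_\xi(\kappa)| = d\bigl(2\kappa q' + O(1)\bigr) = 2\kappa q + O(d),
\]
and the claim follows from $d = \gcd(\xi,q)\le \xi$. There is no real obstacle here beyond careful bookkeeping; the one point worth a line of care in the write-up is that the bound $O(1)$ on the AP count in an arc holds uniformly in the shift $\alpha$ and in $q'$ (it follows immediately from comparing the count with $2\kappa q'$ and noting the discrepancy is at most $2$).
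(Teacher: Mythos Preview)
Your argument is correct, but it takes a different route from the paper's. The paper proceeds geometrically: since $\kappa<1/2$, the Bohr set $B_\xi(\kappa)$ is the disjoint union of $\xi$ arcs of length $2\kappa/\xi$ (centered at the points $j/\xi$), and each such arc contains $2\kappa q/\xi + O(1)$ points of the mesh $\torr_{q,\theta}$; summing over the $\xi$ arcs gives $2\kappa q + O(\xi)$ directly. Your approach instead pushes the multiplication by $\xi$ onto the mesh, reducing to counting a single coset of a cyclic group of order $q'=q/\gcd(\xi,q)$ inside one arc of length $2\kappa$, then multiplying by the multiplicity $d=\gcd(\xi,q)$. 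This actually yields the slightly sharper error $O(\gcd(\xi,q))$ before relaxing to $O(\xi)$, which the paper does not need but is a nice byproduct. The paper's version is shorter and avoids the gcd bookkeeping; yours is a clean alternative and equally valid.
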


\begin{proof}
Since $\kappa<1/2$, $B_\xi(\kappa)$ is the disjoint union of $\xi$ intervals in $\tor$ of length $2\kappa/\xi$. The number of points in $\torr_{q,\theta}$ inside each interval is $q\cdot 2\kappa/\xi + O(1)$, and the claim follows after summing this estimate over the $\xi$ intervals. 
\end{proof}

\begin{proof}[Proof of Proposition \ref{prop:rapid}]
For ease of writing we drop the superscript $\cQ^*$ from $\e, \pr$ for the duration of the proof. 
We may assume $\epp<\eps_0$, where the constant $\eps_0$ was defined in \eqref{beta.bounds}.

We introduce parameters $\kappa\in (0,1)$, $\xi_0,\Delta>0$ with
\begin{equation}	\label{KXD}
\kappa = \expo{-\varrho n/(2\Eons )}, \qquad   n^{C_0} \le \xi_0,\Delta \le e^{o(\varrho n)}
\end{equation}
for a sufficiently large absolute constant $C_0>0$.
We begin by arguing we may assume
\begin{equation}	\label{TB.reduc}
T\subseteq T_{N_0,\theta} \setminus \Maj(\xi_0,\kappa).
\end{equation}
Indeed, from Lemma \ref{lem:bohr.mesh}, for any $|\xi|\le \xi_0$, 
\[
|T_{N_0, \theta} \cap B_\xi(\kappa)| = 2\kappa N_0 + O(\xi_0) 
\]
so
\[
|T_{N_0, \theta} \cap \Maj(\xi_0, \kappa)|
\le 2\kappa \xi_0N_0 + O(\xi_0^2) = \expo{ \varrho n\left( 1- \frac1{2\Eons } + o(1)\right)}.
\]
Since we are assuming $|T|= \exp( \varrho n(1+o(1)))$, we can replace $T$ with $T\setminus \Maj(\xi_0,\kappa)$ without affecting the hypothesis. We henceforth assume \eqref{TB.reduc} holds.

As a first step we establish the first moment estimate
\begin{equation}	\label{ER.est}
\e^{\cQ^*}|\cR_n(T,\xcrit-\epp)|  \ge \exp\big[(a(\epp)+o_{\epp,\Eons }(1)) \varrho n\big].
\end{equation}
Since the conditioning on $\cQ^*$ does not affect the independence of the families of Poisson variables $\{Z_\ell\}_{\ell\in J_\eon}$ across $\eon$ we have
\begin{equation}	\label{Rn:1mom}
\e |\cR_n(T,\xcrit-\epp)|= \sum_{t\in T} \prod_{2\le \eon \le \Eons } \pr\big(Y_{J_\eon}(t)\ge (\xcrit-\epp) q_\eon \big).
\end{equation}

Let $(U_p)_{p\in \urn}$ be a sequence of i.i.d.\ uniform elements of $\tor$ indexed by the set $\urn$ and let $\LL_p=\log|1-e(U_p)|$ for each $p\in \urn$. 
For $\urn'\subset \urn$ we write 
\[
\tY_{\urn'} = \sum_{p\in \urn'} \LL_p.
\] 
Consider an arbitrary $\yy\in [\xcrit-\eps_0, \xcrit+\eps_0]$ with $\eps_0$ as in \eqref{beta.bounds}, and abbreviate
\[
p_\eon(\yy) := \pr( \tY_{\urn^{\eon}}\ge \yy q_\eon),
\]
where we recall that $q_i=|Q^i|$, see the notation above \eqref{def:good.urn} .
From \eqref{BaRa} (or just Cram\'er's theorem) and our assumptions on $\varrho, \Eons $ and \eqref{assume:qell}, for each $2\le \eon\le \Eons $ we have
\begin{equation}	\label{ply}
p_\eon(\yy) =  \expo{-(1+o_{y,\Eons }(1))\lambda^*(\yy)\frac{\varrho n}\Eons  }.
\end{equation}
From Proposition \ref{prop:Yt.approx} and our assumptions on $\varrho,q_\eon,\kappa,\xi_0$, 
\begin{equation}	\label{YJs.tail}
\pr (Y_{J_\eon}(s)\ge \yy q_\eon) = (1+o_{\yy,\Eons }(1)) p_\eon(\yy) ,\qquad \forall s\in \tor\setminus \Maj(\xi_0,\kappa).
\end{equation}
Substituting the estimates
\eqref{YJs.tail} and \eqref{ply} into \eqref{Rn:1mom} we have
\begin{align}	
\e |\cR_n(T, \xcrit-\epp)| 
&= |T| \expo{o_{\epp,\Eons }(1)} \prod_{\eon=2}^\Eons  p_\eon(\xcrit-\epp)	\label{eR.1} \\
& = |T| \expo{ -(1+o_{\epp,\Eons }(1)) \lambda^*(\xcrit-\epp)\varrho n \left( 1- \frac1\Eons \right)}.	\notag
\end{align}
Now by definition, $\lambda^*(\xcrit) =1$, and since $\lambda^*$ is strictly increasing and continuous on $\R_+$,
\begin{equation}	\label{def:aepp}
a(\epp):= 1-\lambda^*(\xcrit-\epp) >0.
\end{equation}
By our assumption on $|T|$ we thus have
\[
\e |\cR_n(T, \xcrit-\epp)|\gg \expo{ (1+o_{\epp,\Eons }(1)) \left( a(\epp)+ \frac{1-a(\epp)}{\Eons }\right)\varrho n} \ge \expo{ (a(\epp)+o_{\epp,\Eons }(1)) \varrho n}
\]
which gives \eqref{ER.est}.

Next we prove the following second moment estimate, showing that $|\cR_n(T,\xcrit-\epp)|$ is concentrated around its mean:
\begin{equation}	\label{R.conc}
\frac{\Var^{\cQ^*} |\cR_n(T,\xcrit-\epp)|}{\big(\e^{\cQ^*}|\cR_n(T,\xcrit-\epp)| \big)^2} = o_{\epp, \Eons }(1).
\end{equation}
The claim \eqref{manyrapid} follows from by combining the above with \eqref{ER.est} and Lemma \ref{lem:PZ} (taking $\delta=e^{-a(\epp)\varrho n/2}$, say). Thus, it only remains to establish \eqref{R.conc}.

Consider again an arbitrary $y\in [\xcrit-\eps_0,\xcrit+\eps_0]$. 
As we argued for \eqref{ER.est}, by the independence of the families of Poisson variables $\{Z_\ell\}_{\ell\in J_\eon}$ across $\eon$, 
\begin{equation}	\label{Rn:2mom}
\e|\cR_n(T,\yy)|^2 
=\e|\cR_n(T,\yy)| + \sum_{s\ne t\in T} \prod_{2\le \eon\le \Eons } \pr \big( Y_{J_\eon}(s), Y_{J_\eon}(t) \ge \yy q_\eon \big).
\end{equation}

We stratify the sum over distinct points $s,t\in T$ in \eqref{Rn:2mom} according to the size of $d_{\xi_0}(s,t)$. This distance will determine the structure of correlations in the second moment computation below.
For $s\in \tor$ and $r\ge0$ define the ``$r$-neighborhoods" of $s$:
\begin{equation}
D_r(s) = 
\begin{cases}
\{s\} & r=0,\\
\big\{ t\in \tor: d_{\xi_0}(s,t) < \delta_r \big\} & r\ge 1
\end{cases},
\qquad \text{ where } \delta_r=\frac{\Delta}\varrho  \expo{-\varrho n\frac{\Eons -r}{\Eons }}.
\end{equation}
Note that for fixed $s\in \tor$ these sets are increasing in $r$; moreover, since $d_{\xi_0}(s,t)\le 1$ for all $s,t\in \tor$ we have $D_r(s) = \tor$ for $r\ge \Eons $. 
For \eqref{Rn:2mom} we now have
\begin{align*}
&\e |\cR_n(T,\yy)|^2\\
&= \e |\cR_n(T,\yy)| + \sum_{s\ne t\in T } \prod_{\eon=2}^\Eons  \pr (Y_{J_\eon}(s), Y_{J_\eon}(t) \ge \yy q_\eon)\\
&= \e  |\cR_n(T,\yy)| + \sum_{r=0}^{\Eons -1} \sum_{s\in T} 
 \sum_{t\in T\cap (D_{r+1}(s)\setminus D_r(s))} \prod_{\eon = 2}^\Eons \pr (Y_{J_\eon}(s), Y_{J_\eon}(t) \ge \yy q_\eon)\\
 &\le \e  |\cR_n(T,\yy)| + \sum_{r=0}^{\Eons -1} \sum_{s\in T} 
 \sum_{t\in T\cap (D_{r+1}(s)\setminus D_r(s))} \prod_{\eon=2}^{\Eons -r} \pr (Y_{J_\eon}(s) \ge \yy q_\eon) \prod_{\eon = \Eons -r+1}^\Eons \pr(Y_{J_\eon}(s), Y_{J_\eon}(t) \ge \yy q_\eon) ,
\end{align*}
where in the last line we have bounded $\pr  \big( Y_{J_\eon}(s), Y_{J_\eon}(t) \ge \yy q_\eon \big)$ by $\pr  \big( Y_{J_\eon}(s)\ge \yy q_\eon\big)$ for $2\le \eon\le \Eons -r$, the idea being that for these values of $\eon$ the events are not sufficiently decorrelated, so we do not lose much by assuming they are perfectly correlated.

From Proposition \ref{prop:decorr} and our assumptions on $\varrho,q_\eon,\kappa,\xi_0$ and $\Delta$,
\begin{equation}	\label{YJst.tail}
\pr (Y_{J_\eon}(s), Y_{J_\eon}(t) \ge \yy q_\eon) \le (1+o_{\yy,\Eons }(1)) p_\eon(\yy) ^2 \qquad  \forall s,t\in \tor \setminus \Maj(\xi_0, \kappa): t\notin D_r(s).
\end{equation}
Substituting the bounds \eqref{YJs.tail}, \eqref{YJst.tail} and reordering the sum, 
\begin{align*}
&\e|\cR_n(T,\yy)|^2 - \e |\cR_n(T,\yy)|\\
&\qquad\le  e^{o_{\yy,\Eons }(1)}\sum_{r=0}^{\Eons -1} \sum_{s\in T} 
 \sum_{t\in T\cap (D_{r+1}(s)\setminus D_r(s))} \prod_{\eon=2}^{\Eons -r} p_\eon(\yy) \prod_{\eon = \Eons -r+1}^\Eons p_\eon(\yy) ^2 \\
&\qquad=   e^{o_{\yy,\Eons }(1)} \left(\prod_{\eon=2}^\Eons p_\eon(\yy)  \right) 
\sum_{r=0}^{\Eons -1}  \sum_{s\in T} |T\cap (D_{r+1}(s)\setminus D_r(s))| \prod_{\eon = \Eons -r+1}^\Eons  p_\eon(\yy)  \\
&\qquad\le  e^{o_{\yy,\Eons }(1)} \bigg[    |T|(|T|-1) \prod_{\eon=2}^\Eons  p_\eon(\yy)^2 
+ 
\left(\prod_{\eon=2}^\Eons  p_\eon(\yy) \right)
\sum_{r=0}^{\Eons -2}  \sum_{s\in T}|T\cap (D_{r+1}(s)\setminus D_r(s))|\!\prod_{\eon = \Eons -r+1}^\Eons  p_\eon(\yy)   \bigg].
\end{align*}
From \eqref{eR.1} we thus have
\begin{equation}	\label{ER2}
\e|\cR_n(T,\yy)|^2 \le e^{o_{\yy,\Eons }(1)} \left[ (\e|\cR_n(T,\yy)|)^2 + R_0(\yy)\right]
\end{equation}
where
\begin{align*}
R_0(\yy)
&:=\left(\prod_{\eon=2}^\Eons  p_\eon(\yy) \right)
\sum_{r=0}^{\Eons -2}  \sum_{s\in T}|T\cap (D_{r+1}(s)\setminus D_r(s))|\prod_{\eon = \Eons -r+1}^\Eons  p_\eon(\yy)  .
\end{align*}

Now we estimate the cardinalities of the sets $ T\cap \left(D_{r+1}(s)\setminus D_r(s)\right)$.
For $r=\Eons -1$ we simply bound
\begin{equation}	\label{Tcap1}
 |T\cap \left(D_\Eons (s)\setminus D_{\Eons -1}(s)\right)|=|T\setminus D_{\Eons -1}(s)|\le |T| . 
\end{equation}
For $0\le r\le \Eons -2$, note that $D_r(s)$ can be expressed as a union of shifted Bohr sets (recall \eqref{def:bohr}):
\begin{align*}
D_r(s) &= \bigcup_{\xi,\xi'\in \{-\xi_0,\dots, \xi_0\}\setminus \{0\}} \{t\in \tor: \|\xi s+\xi't\|_\tor <\delta_r\}\\
&=\bigcup_{\xi,\xi'\in \{-\xi_0,\dots, \xi_0\}\setminus \{0\}} \left( -\frac{\xi}{\xi'}s + B_{\xi'}(\delta_r)\right).
\end{align*}
Applying Lemma \ref{lem:bohr.mesh},
we have that for each fixed $\xi,\xi'$, 
\begin{align*}
\left| \torr_{N_0,\theta} \cap \left(-\frac{\xi}{\xi'}s + B_{\xi'}(\delta_r)\right) \right| 
&= 2N_0\delta_r+ O(|\xi'|)
\end{align*}
(note that the shift $-\xi s/\xi'$ can be absorbed into the parameter $\theta$ in the lemma). 
Thus, by monotonicity and the union bound, for any $0\le r\le \Eons -2$, 
\begin{equation}	\label{Tcap2}
|T\cap \left(D_{r+1}(s)\setminus D_r(s)\right)| \le |\torr_{N_0,\theta}\cap D_{r+1}(s)|\le 8\xi_0^2N_0\delta_{r+1} + O(\xi_0^3).
\end{equation}
From the upper bound on $\Delta$ in \eqref{KXD},
\[
N_0\delta_{r+1} = \expo{ \varrho n\left( \frac{r+1}\Eons  + o(1)\right)} \ge \expo{ \varrho n(1+o(1))/\Eons }.
\]
From the upper bound on $\xi_0$ in \eqref{KXD} we conclude the the second term in the final bound in \eqref{Tcap2} is of lower order then the first, and hence
\begin{equation}	\label{Tcap3}
|T\cap \left(D_{r+1}(s)\setminus D_r(s)\right)| \ll \xi_0^2 N_0 \delta_{r+1} = \expo{ \varrho n\left( \frac{r+1}{\Eons } + o(1)\right)}.
\end{equation}

From \eqref{eR.1}, \eqref{Tcap3}, and \eqref{ply},
\begin{align*}
\frac{R_0(\yy)}{(\e|\cR_n(T,\yy)|)^2 } 
&\ll 
\frac{|T| \left(\prod_{\eon=2}^\Eons  p_\eon(\yy) \right)
\sum_{r=0}^{\Eons -2} \expo{ \varrho n\left( \frac{r+1}{\Eons } + o(1)\right)}\prod_{\eon = \Eons -r+1}^\Eons  p_\eon(\yy)  }{ |T|^2 \prod_{\eon=2}^\Eons  p_\eon(\yy)^2}
\\
&= \expo{- \varrho n(1+o(1))}
\sum_{r=0}^{\Eons -2} \expo{ \varrho n\left( \frac{r+1}{\Eons } \right)}\left(\prod_{\eon = 2}^{\Eons -r} p_\eon(\yy)\right)^{-1} \\
&= \sum_{r=0}^{\Eons -2} \expo{ -(1+o_{\yy,\Eons }(1)) (1-\lambda^*(y))(\Eons -r-1) \frac{\varrho n}{\Eons } }.
\end{align*}
Thus, 
\begin{align*}
\frac{R_0(\xcrit-\epp)}{(\e|\cR_n(T,\xcrit-\epp)|)^2 } 
& \ll_{\epp,\Eons } \sum_{r=0}^{\Eons -2} \expo{ -\frac{a(\epp)\varrho n}{2\Eons } ( \Eons -r-1)}
\ll_{\epp,\Eons } \expo{ -\frac{a(\epp)\varrho n}{2\Eons }},
\end{align*}
where in the second bound we summed the geometric series. 
Together with \eqref{ER2} this gives
\begin{align*}
\frac{\Var|\cR_n(T,\xcrit-\epp)|}{(\e |\cR_n(T,\xcrit-\epp)|)^2} 
&\ll_{\epp, \Eons } e^{o_{\epp,\Eons }(1)}-1 +   \expo{ -\frac{a(\epp)\varrho n}{2\Eons }} =o_{\epp,\Eons } (1)
\end{align*}
which yields \eqref{R.conc}, and hence the claim.
\end{proof}

\section{Lower bound: Late generations}	\label{sec:late}

\subsection{Structural dichotomy for high points}

Let $W$ be a slowly-growing function of $N$ to be chosen later, with
\begin{equation}	\label{Wrange}
\omega(1)\le W\le N^{o(1)},
\end{equation}
and recall from \eqref{XNW} the truncated field
\begin{equation}
X_{N}^{\le}(t) =\sum_{\ell\le N/W} C_\ell(P_N) \log |1-e(\ell t)|. 
\end{equation}
In this section we abbreviate
\begin{equation}
\super_\epp(T) :=  \big\{ t\in T: X_{N}^\le (t) \ge (\xcrit-\epp)\log N\big\}.
\end{equation}
Recall the notation $\torr_{N,\theta}$, see \eqref{def:torq2}.
From Theorem \ref{thm:arta} we deduce the following corollary of Proposition \ref{prop:poi.highpts}.

\begin{cor}	\label{cor:early.middle}
Let $N$ be a large integer and $N^{-20}\le \theta\le 1$. 
Let $T_0\subset \torr_{N,\theta}$ with $|T_0|\ge (1-o(1)) N$. For any fixed $\epp>0$, 
\begin{equation}	\label{early.middle.bd}
\left|\super_\epp(T_0)\right| \ge N^{c'(\epp)}\qquad \text{ with probability $1-o_\epp(1)$.}
\end{equation}
\end{cor}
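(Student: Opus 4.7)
The plan is to transfer the Poisson-field result of Proposition \ref{prop:poi.highpts} directly to the truncated permutation field $X_N^\le$ via the Arratia--Tavar\'e coupling of Theorem \ref{thm:arta}. The key observation is that $X_N^\le(t)$ depends only on the cycle counts $(C_\ell(P_N))_{\ell\le N/W}$, and for any $\omega(1)\le W\le N^{o(1)}$ these can be coupled to independent Poissons $(Z_\ell)_{\ell\le N/W}$ at a total variation cost of $\exp(-(1+o(1))W\log W) = o(1)$. This is a super-polynomial saving that dwarfs every other error in sight, so the coupling step is essentially free.

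Concretely, I would first fix a slowly growing parameter $W=W(N)$ with $\omega(1)\le W\le N^{o(1)}$, chosen slowly enough that $(\xcrit-\epp/2)\log W\le (\epp/2)\log N$ for $N$ large (for instance $W=\log\log N$ is safe), and set $M:=\lfloor N/W\rfloor$. Then $N=M^{1+o(1)}$, and the Diophantine condition $\theta\ge M^{-50}$ required to invoke Proposition \ref{prop:poi.highpts} follows from $\theta\ge N^{-20}$ together with $W\le N^{o(1)}$. Hence $T_0\subset\torr_{N,\theta}$ satisfies the hypotheses of Proposition \ref{prop:poi.highpts} with $M$ playing the role of ``$N$'' and our $N$ playing the role of ``$N_0$''. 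Applying that proposition with $\epp/2$ in place of $\epp$ yields, with probability $1-o_\epp(1)$,
\[
\bigl|\{t\in T_0 : Y_M(t)\ge (\xcrit-\epp/2)\log M\}\bigr| \ge M^{c(\epp/2)}.
\]
Theorem \ref{thm:arta} then provides a coupling of $(C_\ell(P_N))_{\ell\le M}$ with $(Z_\ell)_{\ell\le M}$ that agrees with probability $1-o(1)$; on this event $X_N^\le(t)=Y_M(t)$ identically in $t$, and our choice of $W$ ensures $(\xcrit-\epp/2)\log M\ge (\xcrit-\epp)\log N$. Combining, and noting that $M^{c(\epp/2)}\ge N^{c'(\epp)}$ for $c'(\epp):=c(\epp/2)/2$ once $N$ is large enough (again using $W\le N^{o(1)}$), a union bound over the two failure events yields \eqref{early.middle.bd}.

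I do not anticipate a genuine obstacle; the only bookkeeping is to coordinate the cutoff $W$, the mesh parameter, and the height simultaneously. The point is that $\log W=o(\log N)$ absorbs the discrepancy between $\log M$ and $\log N$, and the super-polynomial decay in Theorem \ref{thm:arta} absorbs any reasonable growth in $W$. All of the real work has been done in Proposition \ref{prop:poi.highpts}; the present corollary is a clean transfer lemma from the Poisson setting to the permutation setting.
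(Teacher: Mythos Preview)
Your proposal is correct and takes essentially the same approach as the paper: the paper's proof is the one-line remark that one applies Proposition~\ref{prop:poi.highpts} with $N/W$ in place of $N$ and $N$ in place of $N_0$, followed by Theorem~\ref{thm:arta}. You have simply filled in the bookkeeping (verifying the Diophantine hypothesis on $\theta$, adjusting the height from $\log M$ to $\log N$ at the cost of halving $\epp$, and checking $M^{c(\epp/2)}\ge N^{c'(\epp)}$), all of which is straightforward and correctly handled.
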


Indeed, one simply applies Proposition \ref{prop:poi.highpts} with $N$ in place of $N_0$ and $N/W$ in place of $N$ (using \eqref{Wrange}), followed by Theorem \ref{thm:arta}. 

Now we want to rule out the event that the population of high points $\super_\epp(T_0)$ is wiped out by the high frequency tail 
\[
X_{N}^>(t) = \sum_{N/W<\ell\le N} C_\ell(P_N) \log|1-e(\ell t)|.
\] 
Let 
\[
H = \{\ell\in (N/W, N]: C_\ell(P_N)\ge 1\}
\]
be the random set of high frequencies contributing to the tail. 
Note that for any $\ell\in (N/W, N]$, 
\begin{equation}	\label{off.Bohr}
\log|1-e(\ell t)| > -\epp \log N + O(1)\qquad \forall t\notin B_\ell (N^{-\epp}). 
\end{equation}
(Recall the notation for Bohr sets defined in \eqref{def:bohr}.)
Thus, letting
\begin{equation}	\label{bad.high}
\cH(T_0,\epp) = \Big\{ \super_\epp(T_0) \subset \bigcup_{\ell\in H} B_\ell (N^{-\epp})\Big\},\end{equation}
we would like to show $\pr(\cH(T_0,\epp)) = o_\epp(1)$, 
perhaps under additional hypotheses on $T_0$.

From Lemma \ref{lem:bohr.mesh} we know that $B_\ell(N^{-\epp})$ 
contains at most around $2N^{1-\epp}$ elements of $T_0$. While this is a proportion $o(1)$, it is much larger than the number of high points $|S(T; \xcrit-\epp)|$, so we cannot rule out \eqref{bad.high} from a simple union bound.
Instead we will condition on the cycles $C_\ell(P_N)$ with $\ell \le N/W$ to fix $\super_\epp(T_0)$, and consider the (now deterministic) set $F$ of frequencies $\ell\in (N/W,N]$ for which the corresponding Bohr set $B_\ell (N^{-\epp})$ captures a large fraction of $\super_\epp(T_0)$. 
We will then use a dichotomy: either $F$ is a sparse subset of $(N/W,N]$ (the ``unstructured" case), in which case we argue it is unlikely to overlap with the sparse random set $H$, 
or $F$ is dense (the ``structured case"). If $F$ is dense -- that is, if $\super_\epp(T_0)$ has large overlap with a large proportion of the high frequency Bohr sets $B_\ell (N^{-\epp})$ -- 
it turns out one can use a double counting argument and a Vinogradov-type lemma (standard in applications of the circle method) to show that $\super_\epp(T_0)$ must contain a \emph{highly} structured element $t$ -- specifically, an element $t$ which is very close to a rational number with denominator of size $W^{O(1)}$. But we can easily arrange for $T_0$ to be disjoint from all such elements (which are contained in a union of Bohr sets of the form $\Maj(W^{O(1)},\kappa')$ for some small $\kappa'>0$). 

We turn to the details. 
Let 
\begin{equation}
\cG= \{ |H|\le C_0\log W\}
\end{equation}
for a suitable absolute constant $C_0>0$.
Since $H\le \sum_{N/W<\ell\le N} C_\ell(P_N)$, a straightforward second moment computation shows 
\begin{equation}	\label{hH:bd}
\pr(\cG^c) \ll 1/\log W
\end{equation}
if $C_0$ is taken sufficiently large. 
Let
\[
F(T_0,\epp) = \left\{\ell\in (N/W, N]: |\super_\epp(T_0)\cap B_\ell(N^{-\epp})| \ge \frac{ |\super_\epp(T_0)|}{C_0\log W}\right\}.
\]
Note that $F(T_0,\epp)$ is determined by the cycles $(C_\ell(P_N))_{\ell\le N/W}$. 
First we consider the unstructured case that
\begin{equation}	\label{FSa:unstruct}
|F(T_0,\epp)| \le N/W^3.
\end{equation}
Note that on the event $\cG\cap \cH(T_0,\epp)$, by the pigeonhole principle we have $|\super_\epp(T_0)\cap 
B_\ell(N^{-\epp})|\ge  |\super_\epp(T_0)|/(C_0\log W)$ for some $\ell\in H$,
and hence $H\cap F(T_0,\epp)\ne \emptyset$. 
The following lemma concludes the argument for the case that \eqref{FSa:unstruct} holds. We defer the proof the next subsection. 

\begin{lemma}	\label{lem:HcapF}
Assume $W\ge 10\log N$.
For any fixed $F\subset(N/W,N]$ of size $|F|\le N/W^3$, 
\[
\pr\big(H\cap F \ne \emptyset \mid (C_\ell(P_N))_{\ell\le N/W}\big) \ll 1/W.
\]
\end{lemma}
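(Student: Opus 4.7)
The plan is to condition on the short-cycle counts $(C_\ell(P_N))_{\ell\le L}$ with $L=N/W$, and reduce the claim to a bound on conditional expectations of long cycle counts. Writing $M=\sum_{\ell\le L}\ell\, C_\ell(P_N)$ and $m=N-M$, a standard symmetry argument identifies the conditional distribution of the long cycle counts $(C_\ell(P_N))_{\ell>L}$ with that of the cycle counts of a uniform random permutation of a set of size $m$, subject to the restriction that no cycle has length $\le L$. Letting $p_n$ denote the probability that a uniform permutation of $[n]$ has no cycles of length $\le L$ (so $p_0=1$, $p_n=0$ for $1\le n\le L$, and $p_n=1/n$ for $L+1\le n\le 2L$, since only a single $n$-cycle is then admissible), a short cycle-index computation yields
\[
\e\bigl[C_\ell(P_N)\,\big|\,(C_k(P_N))_{k\le L}\bigr] \;=\; \frac{p_{m-\ell}}{\ell\,p_m},\qquad L<\ell\le m.
\]

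Applying $\pr(C_\ell\ge 1\mid\cdot)\le \e[C_\ell\mid\cdot]$ and the union bound over $\ell\in F$ reduces the task to bounding $\sum_{\ell\in F}p_{m-\ell}/(\ell\,p_m)$. The central technical input is the two-sided estimate $p_n\asymp 1/L$, uniform in $n\ge L+1$, which I would derive by coefficient extraction from the classical identity
\[
\sum_{n\ge 0}p_n x^n \;=\; \frac{\exp\bigl(-\sum_{k\le L}x^k/k\bigr)}{1-x}.
\]
Three cases arise for $\ell\in F\cap(L,m]$: (i)~if $m-\ell\ge L+1$, the ratio $p_{m-\ell}/p_m$ is bounded by an absolute constant, so the summand is $O(1/\ell)$ and the total contribution is $\ll |F|/L\le 1/W^2$; (ii)~if $1\le m-\ell\le L$ then $p_{m-\ell}=0$ and the summand vanishes; (iii)~the at most one value $\ell=m$ (if $m\in F$) contributes $1/(m\,p_m)\ll L/m$, which on the ``typical'' event $\{m\ge N/2\}$ is $\ll 1/W$.

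The complementary event $\{m<N/2\}$ has probability at most $2L/N = 2/W$ by Markov's inequality applied to $\e M=L$, and can be absorbed into the claimed bound. The principal obstacle is the quantitative bound $p_n\asymp 1/L$ uniformly in the intermediate range $n\in(2L,cN)$: the pointwise asymptotic $p_n\to\exp(-\sum_{k\le L}1/k)=\Theta(1/L)$ is classical (and matches the explicit formula in the range $L<n\le 2L$), but sharpening this to a uniform two-sided estimate requires some care, most naturally via a contour integral around $x=1$ to isolate the leading residue. A secondary subtlety visible in case~(iii) is that the lemma is most naturally read modulo a conditioning event of probability $O(1/W)$; this is harmless for the downstream application in Theorem~\ref{thm:lower}, where the lemma is combined with other small-probability events via the union bound.
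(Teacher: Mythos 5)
Your proposal is correct and genuinely different from the paper's argument. Where you compute the exact conditional expectation $\e[C_\ell\mid(C_k)_{k\le L}] = p_{m-\ell}/(\ell\,p_m)$ via the cycle-index generating function, the paper instead establishes the \emph{upper bound} $\e(C_\ell(\pi)\mid\pi\in\fkS_N^{>M})\le N/(\ell(N-\ell))$ by a combinatorial switching argument (Lemma~\ref{lem:Cell}): one compares permutations where a fixed $\ell$-subset $S$ is a cycle against permutations where $S$ is an arc, and counts simple switchings both ways. The switching bound sidesteps any need to estimate the sequence $p_n$ uniformly; in exchange, your formula is exact and conceptually transparent, and as you note the two deliver comparable numerics.

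Two remarks on your ``principal obstacle.'' First, the upper bound $p_n\le 1/(L+1)$ for all $n>L$ is actually easy and needs no contour integral: differentiating $\log P(x)$ where $P(x)=\exp(\sum_{k>L}x^k/k)$ gives the recursion $n\,p_n=\sum_{j=0}^{n-L-1}p_j$, from which $p_n\le 1/(L+1)$ follows by induction (for $n\le 2L+1$ trivially, and for $n>2L+1$ because $n\,p_n\le 1+(n-2L-1)/(L+1)=(n-L)/(L+1)<n/(L+1)$). Second, you do not need the lower bound $p_n\gg 1/L$ uniformly: inspecting your three cases, the lower bound is only invoked at the single index $n=m$ (to control the denominator $p_m$), and there you are already restricted to $m\ge N/2\gg L\log L$, where Theorem~\ref{thm:arta} gives $p_m=e^{-H_L}(1+o(1))\gg 1/L$ directly. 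This is precisely how the paper handles the analogous estimate in the proof of \eqref{bd:CN}. So the uniform two-sided estimate, which you flagged as the hard part, reduces to one easy induction and one application of a result already in the paper. Your observation about the lemma being most naturally read modulo a small conditioning event ($\{m<N/2\}$, of probability $O(1/W)$) is correct and matches how the paper itself applies the bound.
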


On the above lemma we have
\begin{align}
\pr(\cH(T_0,\epp)\cap \{ \text{ \eqref{FSa:unstruct} holds }\}) 
&\le \pr(\cG^c) + \pr( \cG\cap \cH(T_0,\epp) \cap \{\text{ \eqref{FSa:unstruct} holds }\}) \notag\\
&\le \pr(\cG^c) + \pr( H\cap F(T_0,\epp)\ne \emptyset) \notag\\
&= \pr(\cG^c) + \e\pr\big( H\cap F(T_0,\epp)\ne \emptyset \mid (C_\ell(P_N))_{\ell\le N/W}\big) 	\notag\\
&\ll \frac{1}{\log W} + \frac1{W} = o(1).	\label{HonF}
\end{align}
It only remains to show
\begin{equation}	\label{final.goal}
\pr(\cH(T_0,\epp)\cap \{  F(T_0,\epp)>N/W^3 \}) = o_\epp(1).
\end{equation}
On the event that $|F(T_0,\epp)|>N/W^3$, by Markov's inequality,
\begin{align*}
N/W^3
&\le |F(T_0,\epp)| \\
&= \sum_{N/W<\ell\le N} \ind\left(|\super_\epp(T_0)\cap B_\ell(N^{-\epp})|\ge \frac{|\super_\epp(T_0)|}{C_0\log W}\right)	\\
&\le C_0\log W\frac{1}{|\super_\epp(T_0)|} \sum_{N/W<\ell\le N} |\super_\epp(T_0)\cap B_\ell(N^{-\epp})|	\\
&= C_0\log W\frac{1}{|\super_\epp(T_0)|} \sum_{t\in \super_\epp(T_0)} \sum_{N/W<\ell\le N} \ind(\|\ell t\|_\tor\le N^{-\epp}).	
\end{align*}
From the pigeonhole principle it follows that for some $t\in \super_\epp(T_0)$ we have
\begin{equation}
\left|\big\{\ell\in (N/W,N]: \|\ell t\|_\tor \le N^{-\epp}\big\}\right| \ge \frac{ N}{C_0W^3\log W}.
\end{equation}
To summarize, we have shown that if a large number of high frequency Bohr sets $B_\ell(N^{-\epp})$ each captures a sizeable proportion of the population $\super_\epp(T_0)$, then there must exist an element $t\in \super_\epp(T_0)$ which is contained in a large proportion of the high frequency Bohr sets $\{B_\ell(N^{-\epp})\}_{N/W<\ell\le N}$. 
It turns out this implies $t$ must lie in a (very thin) \emph{low frequency} Bohr set, as the following lemma shows.

\begin{lemma}[Vinogradov lemma]	\label{lem:vino}
Let $I\subset \Z$ be interval of length at most $M$ and let $\theta\in \tor$ be such that for some $\kappa, \delta \in (0,1)$, $\|\ell\theta\|_\tor\le \kappa$ for at least $\delta M$ values of $\ell\in I$. Then either
\begin{equation}	\label{vino1}
M\le 2/\delta
\end{equation}
or
\begin{equation}	\label{vino2}
\kappa\ge \delta/100
\end{equation}
or else there exists a positive integer $\xi\le 2/\delta$ such that $\|\xi\theta\|_\tor\ll \kappa/(\delta M)$. 
\end{lemma}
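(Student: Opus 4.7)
The plan is to combine Dirichlet's pigeonhole (to produce a candidate $\xi$) with a residue-class covering argument that exploits the density hypothesis $|S|\ge\delta M$ to sharpen the bound on $\|\xi\theta\|_\tor$. We may assume we are not in case (i) or (ii), so $M>2/\delta$ and $\kappa<\delta/100$, and we let $S=\{\ell\in I:\|\ell\theta\|_\tor\le\kappa\}$, which has $|S|\ge\delta M$.

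By Dirichlet's approximation theorem applied with denominator bound $Q=\lfloor 2/\delta\rfloor$, there is a positive integer $\xi\le 2/\delta$ with $\|\xi\theta\|_\tor\le 1/Q\le\delta/2$; fix one such $\xi$ minimizing $\eta:=\|\xi\theta\|_\tor$. Write $\xi\theta=m+\epsilon$ with $m\in\Z$ and $|\epsilon|=\eta$. Partition $I$ into residue classes modulo $\xi$: $I=\bigsqcup_{r=0}^{\xi-1}I_r$, with $N_r:=|I_r|$ and $\sum_r N_r\le M$. For $\ell=k\xi+r\in I_r$, we have $\ell\theta\equiv k\epsilon+r\theta\pmod{1}$, so $\ell\in S$ iff $k\epsilon\bmod 1$ lies in a window $W_r\subset\tor$ of length $2\kappa$.

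As $k$ ranges over the $N_r$ consecutive integers indexing $I_r$, the sequence $\{k\epsilon\bmod 1\}$ forms an arithmetic progression in $\tor$ of common difference $\eta$. A direct count shows the number of its terms inside $W_r$ is $\ll \kappa N_r+\kappa/\eta+1$: the AP makes at most $\lceil N_r\eta\rceil$ loops around $\tor$, each contributing at most $\lceil 2\kappa/\eta\rceil+1$ hits. Summing over the $\xi$ residue classes gives $\delta M\le|S|\ll \kappa M+\kappa\xi/\eta+\xi$. Using $\kappa<\delta/100$ and $\xi\le 2/\delta$, the first and third terms are absorbed into $\delta M$ (modulo minor adjustments in the marginal regime $M\asymp 1/\delta^2$), and rearranging yields $\eta\ll \kappa\xi/(\delta M)\ll \kappa/(\delta M)$, which is case (iii).

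The main technical obstacle is the AP-in-window count, which must be simultaneously sharp in the bulk regime (the $\sim 2\kappa N_r$ hits per residue from wraparounds) and the edge regime (the $\sim 2\kappa/\eta$ hits from a single unfinished loop); only then does the sum over residues close cleanly against the density bound $\delta M$. A secondary subtlety is the boundary regime where $M$ is only slightly larger than $2/\delta$, in which the naive residue-counting formally loses a factor $1/\delta$; this can be handled either by a more careful choice of $\xi$ via continued fractions of $\theta$, or by observing that if $M\le C/\delta^2$ one can pass to an initial reduction that directly produces a suitable $\xi$ through a coarser Dirichlet step.
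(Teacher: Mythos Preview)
The paper does not prove this lemma; it cites \cite[Lemma A.4]{GrTa08_mobius} (via the Erd\H{o}s--Tur\'an inequality) and Tao's blog for a direct argument. Your approach is in the spirit of the latter, but the final step has a genuine gap: the inequality $\kappa\xi/(\delta M)\ll\kappa/(\delta M)$ is false, since your Dirichlet step only guarantees $\xi\le 2/\delta$, not $\xi=O(1)$. Summing your per-residue bound over all $\xi$ classes therefore yields only $\|\xi\theta\|_\tor\ll\kappa/(\delta^2 M)$, off by a factor $1/\delta$ from the claim. This loss is present for \emph{all} $M$, not just in the regime you flag as marginal; your closing paragraph seems to conflate it with the separate (and genuinely boundary) issue of absorbing the additive $+\xi$ term into $\delta M$. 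As a minor aside, your AP-in-window count $\ll\kappa N_r+\kappa/\eta+1$ also drops an $N_r\eta$ term (when $\eta>2\kappa$, each loop contributes $O(1)$ hits and there are $\asymp N_r\eta$ loops), though this is harmless here since it sums to $M\eta\le\delta M/2$.

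The standard repair is to run Dirichlet with parameter $Q\asymp M$ rather than $Q\asymp 1/\delta$, obtaining coprime $a,\xi_1$ with $\xi_1\le 2M$ and $|\theta-a/\xi_1|\le 1/(2M\xi_1)$. For $\ell\in S$ one then has $\|\ell a/\xi_1\|_\tor\le\kappa+1/(2\xi_1)$; since $\|\ell a/\xi_1\|_\tor$ is a multiple of $1/\xi_1$, either $\xi_1\ge 1/(2\kappa)$ (ruled out by a direct count giving $|S|\ll\kappa M<\delta M$), or $S\subset\xi_1\Z$. In the latter case $|S|\le M/\xi_1+1$ forces $\xi_1\le 2/\delta$, and writing $\ell=\xi_1 k$ one is counting $k$ in an interval of length $\le M/\xi_1$ with $\|k\cdot\xi_1\theta\|_\tor\le\kappa$; crucially $(M/\xi_1)\|\xi_1\theta\|_\tor\le 1/2$, so there is no wraparound and one reads off $\|\xi_1\theta\|_\tor\le 2\kappa/(\delta M-1)\ll\kappa/(\delta M)$. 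Concentrating $S$ in a single residue class is precisely what recovers the factor $\xi$ that your sum over all classes throws away.
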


A variant of this lemma is proved in \cite[Lemma A.4]{GrTa08_mobius} using the Erd\H{o}s--Tur\'an discrepancy inequality (see \cite{Montgomery_10lectures}). For a more direct proof see \cite{Tao_blog_254A8}. 

Applying the above lemma with $I=(N/W,N]$, $M=N$, $\kappa= N^{-\epp}$ and $\delta = 1/(C_0W^3\log W)$,
since $W=N^{o(1)}$ (here we only need $W\le N^{\epp/4}$, say) then neither of the alternatives \eqref{vino1}, \eqref{vino2} hold for all $N$ sufficiently large, and we conclude
$
t\in \Maj(\xi_0,\kappa')
$
for some 
\[
\xi_0\ll W^3\log W,\qquad \kappa'\ll \frac{\kappa W^3\log W}{N}= W^{O(1)} N^{-\epp-1}. 
\]
From Lemma \ref{lem:bohr.mesh}, for such $\xi_0,\kappa'$,
\begin{equation}	\label{bohr.small}
|\Maj(\xi_0,\kappa') \cap \torr_{N,\theta}| = W^{O(1)} =N^{o(1)}.
\end{equation}
Thus, taking 
\[
T_0:= \torr_{N,\theta}\setminus \Maj(\xi_0,\kappa') 
\]
with $\theta = N^{-20}$, say, we have $|T_0|\ge (1-o(1))N$, and
\[
\pr\big(\cH(T_0,\epp)\cap \{ F(T_0,\epp) > N/W^3\}\big) = 0
\]
for all $N$ sufficiently large. 
Together with \eqref{HonF} this gives
\[
\pr( \cH(T_0,\epp) ) = o_\epp(1)
\]
for this choice of $T_0$. Now by \eqref{bohr.small} we can apply Corollary \ref{cor:early.middle} with this choice of $T_0$ to conclude that
\eqref{early.middle.bd} holds. 
Finally, the intersection of $\cH(T_0,\epp)^c$ and the event in \eqref{early.middle.bd} has probability $1-o_\epp(1)$, and on this event, from \eqref{off.Bohr}, 
\[
 \big\{t\in \tor: X_N(t) \ge (\xcrit-2\epp)\log N\big\} \ne \emptyset.
\]
This concludes the proof of the lower bound in Theorem \ref{thm:main}.

\begin{remark}
Above we reduced the bad event $\cH(T_0,\epp)$ to the event that $X_{N}^\le(t)\ge (\xcrit-\epp)\log N$ for some $t\in \Maj(\xi_0, N^{-1-\epp})$, and then ruled out the latter by removing all low-frequency Bohr sets from $T_0$.
An alternative would have been to argue along the lines of Proposition \ref{prop:upper.major} that $X_{N}^{\le}(t)$ is unlikely to be large for such points $t$ in the first place. 
\end{remark}

\subsection{Proof of Lemma \ref{lem:HcapF}}

For this section it will be convenient to refer to the permutations themselves rather than their associated permutation matrices. 
Denote by $\fkS_N$ the set of all permutations on $[N]$. 
For $\sigma\in \mathfrak{S}_N$ we abuse notation and write $C_\ell(\sigma)$ for the number of $\ell$-cycles in $\sigma$. 
Throughout this subsection we denote by $\pi=\pi_N$ a uniform random element of $\fkS_N$. 
For $1\le M<N$ let
\[
\fkS_{N}^{>M}=\{\sigma \in \fkS_N: C_\ell(\sigma)=0\, \text{ for all } 1\le \ell\le M\}.
\] 

\begin{lemma}	\label{lem:Cell}
Let $1\le M<N$. For $\ell\le N$ we have
\begin{equation}	\label{bd:Cell}
\e \big(C_\ell(\pi) \mid \pi\in \fkS_N^{>M}\big) 
\le 
\begin{cases}
0 & \ell\le M \;\;\text{ or }\;\; N-M\le \ell \le N-1\\
\frac{N}{\ell(N-\ell)} & M+1\le \ell\le N-M-1.
\end{cases}
\end{equation}
Furthermore, if $M\le N/\log N$ then 
\begin{equation}	\label{bd:CN}
\e \big(C_N(\pi) \mid \pi\in \fkS_N^{>M}\big) = \pr \big(C_N(\pi)=1 \mid \pi\in \fkS_N^{>M}\big) = O(M/N).
\end{equation}
\end{lemma}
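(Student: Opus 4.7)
The plan is as follows. First I would dispose of the extreme ranges of $\ell$: when $\ell \le M$ the bound is immediate since $C_\ell(\pi) \equiv 0$ on $\fkS_N^{>M}$ by definition; and when $N-M \le \ell \le N-1$, an $\ell$-cycle of $\pi$ would force $\pi$ restricted to the remaining $N-\ell \le M$ elements to contain a cycle of length $\le M$, so once again $C_\ell(\pi)\equiv 0$ on the conditioning event.

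For the main range $M+1 \le \ell \le N-M-1$, I would first derive the exact formula
\[
\e\bigl(C_\ell(\pi)\,\bigm|\,\pi\in\fkS_N^{>M}\bigr)=\frac{p_M(N-\ell)}{\ell\, p_M(N)},\qquad p_M(k):=\pr(\pi\in\fkS_k^{>M}),
\]
by counting pairs $(\pi,K)$ with $K$ an $\ell$-cycle of $\pi\in\fkS_N^{>M}$: there are $\binom{N}{\ell}(\ell-1)!$ choices of $K$ on $[N]$, and each is a cycle of exactly $|\fkS_{N-\ell}^{>M}|$ elements of $\fkS_N^{>M}$. The claimed bound $N/(\ell(N-\ell))$ is then equivalent to $(N-\ell)p_M(N-\ell)\le N p_M(N)$. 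The main conceptual step is the identity
\[
N\,p_M(N)\;=\;\sum_{k=0}^{N-M-1} p_M(k),
\]
which I would prove combinatorially by decomposing any $\pi\in\fkS_N^{>M}$ according to the cycle through the element $1$: this cycle has some length $\ell\ge M+1$, can be chosen in $(N-1)!/(N-\ell)!$ ways once $\ell$ is fixed, and its complement contributes an arbitrary element of $\fkS_{N-\ell}^{>M}$. Subtracting the analogous identity at $N-\ell$ then yields
\[
N p_M(N) \,-\, (N-\ell)\,p_M(N-\ell) \;=\; \sum_{k=N-\ell-M}^{N-M-1} p_M(k)\;\ge\;0,
\]
which closes this case. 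The hypothesis $M+1\le\ell\le N-M-1$ ensures the summation range on the right is valid and nonempty.

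For the remaining case $\ell=N$, the variable $C_N(\pi)=\ind(\pi\text{ is an }N\text{-cycle})$ is unconditionally $\Bernoulli(1/N)$, and the single $N$-cycle automatically lies in $\fkS_N^{>M}$ (as $N>M$), so the conditional expectation equals $1/(Np_M(N))$. The bound $O(M/N)$ therefore reduces to showing $p_M(N)\gg 1/M$ whenever $M\le N/\log N$, and here I would invoke Theorem~\ref{thm:arta}: comparison with independent $\Poi(1/\ell)$ variables gives $|p_M(N) - e^{-H_M}| \le \exp(-(1+o(1))(N/M)\log(N/M))$ with $H_M=\sum_{i=1}^M 1/i$. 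Under $M\le N/\log N$ the right-hand side is $o(1/M)$, while $e^{-H_M}\asymp 1/M$, so $p_M(N)\asymp 1/M$ as required. Once the combinatorial identity for $Np_M(N)$ is in hand, everything else is routine bookkeeping.
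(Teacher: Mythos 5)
Your argument is correct, and the handling of the trivial ranges ($\ell\le M$, $N-M\le\ell\le N-1$) and of \eqref{bd:CN} (writing $\pr(C_N=1\mid\fkS_N^{>M})=1/(Np_M(N))$ and invoking Theorem \ref{thm:arta} to get $p_M(N)\gg1/M$) matches the paper. The treatment of the main range $M+1\le\ell\le N-M-1$, however, takes a genuinely different route. The paper proves $|A_S(M)|\le\frac{1}{\ell(N-\ell)}|B_S(M)|$ by the switching method, i.e.\ by exhibiting a local perturbation relation between permutations in which $S$ is an $\ell$-cycle and permutations in which $S$ is an $\ell$-arc, then sums over $S$ and uses the deterministic bound $\sum_{k>\ell}kC_k(\pi)\le N$ on the number of induced $\ell$-arcs. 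You instead compute the conditional expectation exactly as $\frac{p_M(N-\ell)}{\ell\,p_M(N)}$ via a direct pair-count, and then verify the required inequality $(N-\ell)p_M(N-\ell)\le Np_M(N)$ through the identity $Np_M(N)=\sum_{k=0}^{N-M-1}p_M(k)$, itself obtained by decomposing $\pi$ according to the cycle through the element $1$. (I verified the identity: $|\fkS_N^{>M}|=\sum_{\ell=M+1}^N\frac{(N-1)!}{(N-\ell)!}|\fkS_{N-\ell}^{>M}|$, and dividing by $(N-1)!$ and reindexing $k=N-\ell$ gives it, using $p_M(k)=0$ for $1\le k\le M$ and $p_M(0)=1$. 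Both sides of the monotonicity inequality are then partial sums of a nonnegative series, and the range restriction $M+1\le\ell\le N-M-1$ guarantees $N-\ell>M$ so the identity applies at $N-\ell$ as well.) Your route is arguably more classical in the random-permutation setting (it is essentially the Feller/Chinese-restaurant recursion) and yields an exact formula, at the cost of needing the auxiliary telescoping identity; the paper's switching argument is more local and avoids any global accounting in terms of $p_M(\cdot)$. Both give comparable tightness, since the paper's bound is also exact up to the step $\sum_{k>\ell}kC_k(\pi)\le N$.
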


\begin{proof}
The case $\ell\le M$ in \eqref{bd:Cell} is immediate from the conditioning.
If there is a cycle of length at least $N-M$, but not $N$, then there must exist a cycle of length at most $M$, which is also ruled out by the conditioning. This establishes the bound in the first case.

For the second case in \eqref{bd:Cell} we use the switchings method. Fix $M+1\le \ell\le N-M-1$. We express
\[
\e \big( C_\ell(\pi) \mid  \pi\in \fkS_N^{>M} \big) 
= \sum_{S\subset [N], |S|=\ell}
\pr\big( S \text{ is a cycle of } \sigma \mid  \pi\in \fkS_N^{>M}\big).
\]
Fix $S\subset[N]$ of size $\ell$, and let 
\[
A_S(M) = \{\sigma \in \fkS_N^{>M}: S \text{ is a cycle of } \sigma\}.
\]
For $T\subset[N]$ and $\sigma \in \fkS_N$ we say that $T$ is an arc of $\sigma$ if the directed graph on vertex set $T$ associated to $\sigma$ is a path of $|T|-1$ edges with two ends. 
We let 
\[
B_S(M) = \{ \sigma\in \fkS_N^{>M} : S\text{ is an arc of } \sigma \}.
\]
Now we let $R\subset A_S(M)\times B_S(M)$ be the \emph{simple switching relation}. Specifically, $(\sigma,\tau) \in R$ when $\tau$ can be obtained from $\sigma$ by applying a simple switching, i.e. there exist distinct elements $i_1,i_2 \in [N]$ such that $\sigma(i_1) = \tau(i_2), \sigma(i_2) = \tau(i_1)$, and $\sigma(i)=\tau(i)$ for all $i\ne i_1,i_2$.
For $\sigma\in A_S(M)$, $\tau\in B_S(M)$ we write
\[
R(\sigma) = \{\rho\in B_S(M): (\sigma,\rho)\in R\}, \qquad R^{-1}(\tau) = \{\rho\in A_S(M): (\rho,\tau)\in R\}.
\]
For any $\sigma\in A_S(M)$, there are exactly $\ell(N-\ell)$ simple switchings that yield an element of $B_S(M)$ -- we apply the switching at a pair $\{i_1,i_2\}$ with $i_1\in S$ and $i_2\in S^c$. Thus, $|R(\sigma)| = \ell(N-\ell)$ for all $\sigma\in A_S(M)$ (we only need the lower bound on $|R(\sigma)|$).
On the other hand, for any $\tau \in B_S(M)$ there is exactly one switching that yields an element of $A_S(M)$, namely, at the pair $\{i_1,i_2\}$ with $S\setminus \tau(S) = \{i_1\}$, and $S\setminus \tau^{-1}(S) = \{i_2\}$. Thus, $|R^{-1}(\tau)|= 1$ for all $\tau \in B_S(M)$ (we only need the upper bound on $|R^{-1}(\tau)|$). 
Hence, 
\[
\ell(N-\ell) |A_S(M)| \le \sum_{\sigma \in A_S(M)} |R(\sigma)| = |R| = \sum_{\tau \in B_S(M)} |R^{-1}(\tau)| \le |B_S(M)|
\]
and rearranging we obtain $|A_S(M)| \le \frac1{\ell(N-\ell)} |B_S(M)|$. 
Normalizing these quantities we obtain
\[
\pr\big( \pi \in A_S(M) \mid \pi \in \fkS_N^{>M} \big) \le \frac1{\ell(N-\ell)} \pr\big( \pi \in B_S(M) \mid \fkS_N^{>M}\big). 
\]
Summing over $S\subset [N]$ of size $\ell$, we have
\[
\e \big( C_\ell(\pi ) \mid \fkS_N^{>M}\big) \le \frac1{\ell(N-\ell)} \e \Big( \big|\big\{ \text{ induced $\ell$-arcs in $\pi$ } \big\}\big| \,\big|\, \fkS_N^{>M}\Big).
\]
Since each cycle of length $k>\ell$ contains $k$ induced $\ell$-arcs, the number of induced $\ell$-arcs in $\pi$ is 
\[
\sum_{k>\ell} k C_k(\pi) \le \sum_{k\le N}  kC_k(\pi) = N.
\]
Substituting this deterministic bound in the previous line yields the second case of \eqref{bd:Cell}.

It remains to prove \eqref{bd:CN}. Let $\cC_N\subset \fkS_N$ denote the set of all $N$-cycles, and note that $\cC_N\subset \fkS_N^{>M}$. Thus
\[
\pr( C_N(\pi) \ge 1 \mid \pi \in \fkS_N^{>M} ) = \pr( \pi \in \cC_N \mid \pi \in \fkS_N^{>M}) = \frac{\pr(\pi \in \cC_N)}{ \pr( \pi \in \fkS_N^{>M})}.
\]
The numerator is easily seen to  equal  $1/N$. 
For the denominator, by Theorem \ref{thm:arta} we have
\begin{align*}
\pr( \pi \in \fkS_N^{>M}) 
&= \pr ( C_1(\pi)= \cdots = C_M(\pi) = 0) \\
&= \pr(Z_1=\cdots=Z_M=0) + O(e^{-(1+o(1))(N/M)\log(N/M)})\\
&= \exp\Big( - \sum_{\ell\le M} \frac1\ell\Big) + O(e^{-(1+o(1))(N/M)\log(N/M)})\\
& = (1-o(1)) e^{-\log M + O(1)} 
\gg 1/M,
\end{align*}
and the claim follows.
\end{proof}

For $1\le M\le N$ and $\sigma\in \fkS_N$ we write 
\begin{equation*}
\Len_M(\sigma) = \sum_{\ell \le M} \ell C_\ell(\sigma)
\end{equation*}
for the total length of all cycles of length at most $M$.
Then $\e \Len_M(\pi) = M$. 
We have the following corollary of the previous lemma.

\begin{cor}	\label{cor:EF}
Fix $10\log N\le W\le N$ and $F\subset(N/W,N]$. 
For $\sigma\in \fkS_N$ write $F(\sigma) = \{ \ell\in F: C_\ell(\sigma)\ge 1\}$.
We have
\[
\e\big( |F(\pi)| \mid (C_\ell(\pi))_{\ell\le N/W}\big)\ind\big( \Len_{N/W}(\pi) \le N/2\big) \le W^2|F|/N + O(1/W).
\]
\end{cor}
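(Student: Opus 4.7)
The plan is to reduce the statement to Lemma \ref{lem:Cell} applied to the restriction of $\pi$ to the complement of the short cycles. Set $M=N/W$. Conditioning on $(C_\ell(\pi))_{\ell\le M}$ fixes the set $S\subset[N]$ of points lying in cycles of length $\le M$. A standard property of uniform permutations is that, under this conditioning, $\pi$ restricted to $S^c$ is uniformly distributed on permutations of $S^c$ having no cycle of length $\le M$; by relabeling we may as well view it as a uniform element of $\fkS_{N'}^{>M}$, where $N':=|S^c|=N-\Len_M(\pi)$. The indicator $\ind(\Len_M(\pi)\le N/2)$ lets us assume $N'\ge N/2$ in everything that follows.

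Next I would use that every $\ell\in F$ satisfies $\ell>M$, so any cycle of length $\ell$ must lie entirely in $S^c$. Hence
\[
|F(\pi)|=\sum_{\ell\in F}\ind(C_\ell(\pi|_{S^c})\ge 1)\le \sum_{\ell\in F}C_\ell(\pi|_{S^c}),
\]
and the conditional expectation to estimate becomes $\sum_{\ell\in F}\e(C_\ell(\pi|_{S^c})\mid(C_\ell(\pi))_{\ell\le M})$, which by the above identification equals $\sum_{\ell\in F}\e(C_\ell(\pi')\mid \pi'\in\fkS_{N'}^{>M})$.

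Then I would invoke Lemma \ref{lem:Cell} with $N'$ in place of $N$. For $\ell\in F$, that lemma gives a nonzero contribution only for $\ell\in(M,N'-M-1]$, where it is bounded by $N'/(\ell(N'-\ell))$, or for $\ell=N'$, where it contributes $O(M/N')$. (The hypothesis $W\ge 10\log N$, together with $N'\ge N/2$, ensures $M\le N'/\log N'$, so bound \eqref{bd:CN} applies.) For $\ell$ in the bulk range, both $\ell$ and $N'-\ell$ exceed $M=N/W$, so $\ell(N'-\ell)>N^2/W^2$ and therefore $N'/(\ell(N'-\ell))\le N'W^2/N^2\le W^2/N$. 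Summing over $\ell\in F$ yields at most $W^2|F|/N$, and the single $\ell=N'$ term contributes $O(M/N')=O(1/W)$.

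There is no real obstacle here; the argument is essentially bookkeeping on top of Lemma \ref{lem:Cell}. The one point that needs a brief check is the range condition for the sharper bound on $\e C_N$ in that lemma, which as noted is handled by the hypothesis $W\ge 10\log N$ combined with the restriction $\Len_M(\pi)\le N/2$.
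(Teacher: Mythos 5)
Your proof is correct and follows essentially the same route as the paper: condition on the short cycles, identify the restriction to the complement with a uniform element of $\fkS_{N'}^{>N/W}$ where $N'=N-\Len_{N/W}(\pi)$, and apply Lemma \ref{lem:Cell} (with $N'$ in place of $N$) together with the indicator to bound the bulk contribution by $W^2/N$ per frequency and the $\ell=N'$ term by $O(1/W)$. The bookkeeping, including the check that $W\ge 10\log N$ and $N'\ge N/2$ ensure the hypothesis of \eqref{bd:CN}, matches the paper's argument.
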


\begin{proof}
We have
\begin{equation}	\label{F.sum}
\e\big( |F(\pi)| \mid (C_\ell(\pi))_{\ell\le N/W}\big) \le \sum_{\ell\in F} \e\big( C_\ell(\pi) \mid (C_\ell(\pi))_{\ell\le N/W}\big).
\end{equation}
Conditioning on $(C_\ell(\pi))_{\ell\le N/W}$ fixes $\Len_{N/W}(\pi)$. We note that 
\[
\e\big( C_\ell(\pi) \mid (C_\ell(\pi))_{\ell\le N/W}\big) = 0 \qquad \text{for } \ell>N-\Len_{N/W}(\pi).
\]
Let us further condition on a realization of all of the cycles in $\pi$ of length at most $N/W$. Under this conditioning, the distribution of $\pi$ restricted to the subset $U$ of $[N]$ that is the complement of the union of fixed small cycles is the same as a uniform random permutation on $U$ conditioned to have no cycles of length at most $N/W$. 
After relabelling, we have that for each $N/W<\ell\le N-\Len$, 
\[
\ind\big(\Len_{N/W}(\pi)=\Len\big)\e\big( C_\ell(\pi) \mid (C_\ell(\pi))_{\ell\le N/W}\big) = \e\Big( C_\ell(\pi_{N-\Len}) \,\big|\, \pi_{N-\Len} \in \fkS_{N-\Len}^{>N/W} \Big)
\]
(recall that $\pi_{N-\Len}$ denotes a uniform random element of $\fkS_{N-\Len}$). 
For $\ell\le N-\Len - N/W$, by \eqref{bd:Cell} we have
\[
\e\Big( C_\ell(\pi_{N-\Len}) \,\big|\, \pi_{N-\Len} \in \fkS_{N-\Len}^{>N/W} \Big) 
\le \frac{N-\Len}{\ell(N-\Len - \ell)} \le \frac{N}{\ell(N/W)} = \frac{W}{\ell} \le \frac{W^2}{N},
\]
and for $N-\Len -N/W< \ell< N-\Len$ the left hand side is bounded by zero.
Finally, from \eqref{bd:CN}, our restriction to the event $\Len \le N/2$, and our assumption $W\ge 10\log N$, 
\[
\e\Big( C_\ell(\pi_{N-\Len}) \,\big|\, \pi_{N-\Len} \in \fkS_{N-\Len}^{>N/W} \Big)  = O(1/W).
\]
The claim follows by substituting the preceding bounds into \eqref{F.sum}.
\end{proof}

Now we prove Lemma \ref{lem:HcapF}.
We let $\pi=\pi_N$ be the permutation associated to $P_N$. 
Since $\e \Len_{N/W}(\pi) = \lf N/W\rf$, from Markov's inequality we have
\begin{equation*}	
\pr(\Len_{N/W}(\pi) \le N/2) \ge 1- O(1/W).
\end{equation*}
Thus,
\begin{align*}
\pr( H\cap F\ne\emptyset) 
&\le \pr( \Len_{N/W}(\pi)>N/2) + \e \pr\big( |H\cap F| \ge 1 \mid (C_\ell(\pi))_{\ell\le N/W}\big)\ind(\Len_{N/W}(\pi)\le N/2)\\
&\le  \frac{W^2|F|}{N} + O(1/W)
\ll 1/W,
\end{align*}
where in the second line we applied Corollary \ref{cor:EF} and our assumption on $W$, and in the final line we applied the assumption $|F|\le N/W^3$.

\bibliographystyle{alpha}
\bibliography{max-perm}

\end{document}